\DeclareMathAlphabet{\pazocal}{OMS}{zplm}{m}{n}
\numberwithin{equation}{section}
\newcommand{\Aa}{{\mathbb{A}}}
\newcommand{\bsu}{\boldsymbol{\sigma_1}}
\newcommand{\bsd}{\boldsymbol{\sigma_2}}
\newcommand{\bst}{\boldsymbol{\sigma_3}}
\newcommand{\bsi}{\boldsymbol{\sigma_i}}
\newcommand{\bsj}{\boldsymbol{\sigma_j}}
\newcommand{\bmu}{\boldsymbol{m_1}}
\newcommand{\bmd}{\boldsymbol{m_2}}
\newcommand{\bmt}{\boldsymbol{m_3}}
\newcommand{\bmj}{\boldsymbol{m_j}}
\newcommand{\bmk}{\boldsymbol{m_k}}
\newcommand{\Di}{\slashed{D}}
\newcommand{\tvar}{\var_\sharp}
\newcommand{\tmu}{{\mu_\sharp}}
\newcommand{\mub}{{\mu_\flat}}
\newcommand{\QQ}{{\pazocal{Q}}}
\newcommand{\QQQ}{{\mathscr{Q}}}
\newcommand{\VVV}{{\mathscr{V}}}
\newcommand{\KKK}{{\mathscr{K}}}
\newcommand{\UUU}{{\mathscr{U}}}
\newcommand{\WWW}{{\mathscr{W}}}
\newcommand{\R}{\mathbb{R}}
\newcommand{\p}{{\partial}}
\newcommand{\w}{\omega}
\newcommand{\matrice}[1]{\left[ \begin{matrix}
#1
\end{matrix} \right]}
\newcommand{\Det}{{\operatorname{Det}}}
\newcommand{\PP}{{\mathscr{P}}}
\newcommand{\Pp}{{\mathbb{P}}}
\newcommand{\PPP}{{\mathscr{P}}}
\newcommand{\loc}{{\text{loc}}}
\newcommand{\epsi}{\varepsilon}
\newcommand{\sgn}{{\operatorname{sgn}}}
\newcommand{\te}{\theta}
\newcommand{\lr}[1]{\langle #1 \rangle}
\newcommand{\blr}[1]{\left\langle #1 \right\rangle}
\newcommand{\bblr}[1]{\big\langle #1 \big\rangle}
\newcommand{\tW}{\Ww}
\newcommand{\WW}{\pazocal{W}}
\newcommand{\tPP}{\Pp}
\newcommand{\Z}{\mathbb{Z}}
\newcommand{\Dd}{\mathbb{D}}
\newcommand{\Ll}{\mathbb{L}}
\newcommand{\Ww}{\mathbb{W}}
\newcommand{\Ss}{\mathbb{S}}
\newcommand{\Id}{{\operatorname{Id}}}
\newcommand{\EE}{\pazocal{E}}
\newcommand{\CCC}{\pazocal{C}}
\newcommand{\II}{\pazocal{I}}
\newcommand{\UU}{{\pazocal{U}}}
\newcommand{\KK}{{\pazocal{K}}}
\newcommand{\HH}{\pazocal{H}}
\newcommand{\vp}{{\varphi}}
\newcommand{\ove}[1]{{\overline{#1}}}
\newcommand{\systeme}[1]{\left\{ \begin{matrix} #1 \end{matrix} \right.}
\newcommand{\C}{\mathbb{C}}
\newcommand{\var}{{\vartheta}}
\newcommand{\az}{\alpha}
\newcommand{\OO}{{\mathscr{O}}}
\newcommand{\dist}{{\operatorname{dist}}}
\renewcommand{\Re}{\operatorname{Re}}
\newcommand{\pp}{{\operatorname{pp}}}
\newcommand{\ess}{{\operatorname{ess}}}
\renewcommand{\Im}{\operatorname{Im}}
\newcommand{\tDi}{\slashed{\Dd}}
\newcommand{\otau}{\overline{\tau}}
\newcommand{\lamdba}{{\lambda}}
\newcommand{\1}{\mathds{1}}
\newcommand{\RR}{\pazocal{R}}
\newcommand{\de}{ \ \mathrel{\stackrel{\makebox[0pt]{\mbox{\normalfont\tiny def}}}{=}} \ }
\title[Edge states in honeycomb structures]{Characterization of edge states in perturbed honeycomb structures}
\author{Alexis Drouot}
\newtheorem{thm}{Theorem}
\newtheorem{cor}{Corollary}
\newtheorem{defi}{Definition}
\newtheorem{lem}{Lemma}[section]
\newtheorem{conj}{Conjecture}
\newtheorem{theorem}[thm]{Theorem}
\begin{document}
\vspace*{-1cm}
\maketitle

\begin{abstract} This paper is a mathematical analysis of conduction effects at interfaces between insulators. Motivated by Haldane--Raghu \cite{HR,RH}, we continue the study of a linear PDE initiated in Fefferman--Lee-Thorp--Weinstein \cite{FLTW3,FLTW4}. This PDE is induced by a continuous honeycomb Schr\"odinger operator with a line defect. 

This operator exhibits remarkable connections between topology and spectral theory. It has essential spectral gaps about the Dirac point energies of the honeycomb background. In a perturbative regime, the authors of  \cite{FLTW3} construct edge states: time-harmonic waves propagating along the interface, localized transversely. At leading order, these edge states are adiabatic modulations of the Dirac point Bloch modes. Their envelops solve a Dirac equation that emerges from a multiscale procedure.

We develop a scattering-oriented approach that derives \textit{all} possible edge states, at arbitrary precision. The key component is a resolvent estimate connecting the Schr\"odinger operator to the emerging Dirac equation. We discuss topological implications via the computation of the spectral flow, or edge index. 
\end{abstract}


\section{Introduction and results}

A central branch of condensed matter physics studies energy propagation between dissimilar media. In favorable conditions, the interface acts like a unidirectional channel for electronic transport: the material is conducting in the edge direction but remains insulating transversely.  In experiments, this property is remarkably robust: it persists even if the interface becomes bent, sharp or disordered. The first theoretical investigations concerned the quantum Hall effect \cite{AMU,KDP,Hal,TKN,Hat}. The research has since focused on topological insulators \cite{KM1,KM2,FKM,MB,HQW,Roy,ZLQ,JMD}, together with their applications in electronics, photonics, acoustics, mechanics and geophysics \cite{KMY,YVW,WCJ,SG11,RZP,I15,M17,DMV,O18,PDV}.

Energy transport along the interface may be interpreted as a bifurcation phenomenon. In certain periodic materials, the introduction of an edge forces Bloch modes to bifurcate into edges states: time-harmonic waves propagating along rather than across the edge. This seemingly goes back to Tamm \cite{Ta}, who looked at bifurcations from local extrema in the band spectrum. Shockley \cite{Sh} next studied bifurcations from linear crossings in the band spectrum on a one-dimensional example. In contrast with Tamm's work, Shockley's analysis applies to insulators with narrow energy gaps. It was later discovered that Shockley's states may be topologically protected: \textit{they may persist against large local perturbations.}

Honeycomb structures are invariant under $2\pi/3$-rotation and  spatial inversion. These symmetries generate Dirac points: conical degeneracies in the band spectrum. Impurities breaking spatial inversion split the dispersion surfaces away and open energy gaps: the material transits from a metal to an insulator. Here we analyze interface effects at the junction of two such insulators. 

Motivated by  Haldane--Raghu \cite{HR,RH}, Fefferman, Lee-Thorp and Weinstein \cite{FLTW3}  introduced a PDE that models parity-breaking perturbations of a \textit{continuous} honeycomb lattice (see \S\ref{sec:1.1}-\ref{sec:1.2}). The perturbed operator exhibits (a) an edge that separates two asymptotically periodic near-honeycomb structures; (b)  gaps in the essential spectrum centered at Dirac  point energies of the honeycomb background. Under a spectral condition on the unperturbed operator (see \cite[\S1.3]{FLTW3} and \S\ref{sec:1.3}), Fefferman, Lee-Thorp and Weinstein designed edge states as adiabatic modulations of the Dirac point Bloch modes. Their envelops are eigenvectors of a Dirac operator produced via a multiscale procedure. See \cite[Theorem 7.3]{FLTW3}.

Here, we follow instead a scattering approach. We recover the results of \cite{FLTW3,FLTW4}. In addition, we obtain:
\begin{itemize}
\item A resolvent estimate connecting the initial PDE to the emerging Dirac equation.
\item The complete characterization of edges states in the energy gap.
\item Full expansions of the edge states at all order in the size of the perturbation.
\end{itemize}
See \S\ref{sec:1.5} and \S\ref{sec:4.3} for precise statements. 

The full identification of edge states represents the most significant advance. It allows to interpret the results topologically. In \S\ref{sec:1.7}, we compute the signed number of eigenvalues that move accross Dirac point energies when the edge-parallel quasimomentum runs from $0$ to $2\pi$. This is a topological invariant of the system -- called spectral flow or edge index -- and it vanishes here. This calculation confirms numerical simulations \cite{RH,FLTW4,LWZ}. It corroborates the prediction of the Kitaev table \cite{Ki,RSFL} combined with the bulk-edge correspondence: breaking spatial inversion while keeping time-reversal invariance does not create protected edge states. 

In the last part of the work, we consider a magnetic analog of the operator studied in \cite{FLTW3,FLTW4}, similar to those of \cite{RH,HR,LWZ}. It models time-reversal breaking instead of parity breaking. We show that the corresponding spectral flow equals either $2$ or $-2$. This confirms the existence of at least two toplogically protected, unidirectionally propagating waves along the edge; see \cite{HR}, the Kitaev table \cite{Ki,RSFL} as well as the numerical results \cite{RH,LWZ}.

\subsection{Periodic operators and Dirac points}\label{sec:1.1} We start with a description of honeycomb   potentials as in \cite{FW}. Let $\Lambda$ be the equilateral $\Z^2$-lattice. It is generated by two vectors $v_1$ and $v_2$, given in canonical coordinates by
\begin{equation}\label{eq:4b}
v_1 = a\matrice{\sqrt{3} \\ 1}, \ \ \ v_2= a\matrice{\sqrt{3} \\ -1},
\end{equation}
where  $a > 0$ is a constant such that $\Det[v_1,v_2] = 1$. The dual basis $k_1, k_2$ consists of two vectors in $(\R^2)^*$ which satisfy $\blr{k_i,v_j} = \delta_{ij}$. The dual lattice is $\Lambda^* = \Z k_1 \oplus \Z k_2$. The corresponding fundamental cell and dual fundamental cell are 
\begin{equation}\label{eq:4c}
\Ll \de \big\{ sv_1 + s' v_2 : s, s' \in [0,1)  \big\}, \ \ \ \ \Ll^* \de \big\{ \tau k_1 + \tau' k_2 : \tau, \tau' \in [0,2\pi)  \big\}.
\end{equation}

\begin{defi}\label{def:3} We say that $V \in C^\infty\big(\R^2,\R\big)$ is a honeycomb potential if:
\begin{itemize}
\item $V$ is $\Lambda$-periodic: $V(x+w) = V(x)$ for $w \in \Lambda$.
\item $V$ is even: $V(x) = V(-x)$.
\item $V$ is invariant under the $2\pi/3$ rotation:
\begin{equation}
V(Rx) = V(x), \ \ \ \ R \de \dfrac{1}{2} \matrice{ -1 & \sqrt{3} \\ -\sqrt{3} & -1 }.
\end{equation}
\end{itemize}
\end{defi}

\vspace*{-.5cm}

\begin{center}
\begin{figure}
\caption{The equilateral lattice with its generating vectors $v_1,v_2$ and dual vectors $k_1, k_2$ together with the fundamental cell $\Ll$. 
}\label{fig:4}
{\includegraphics{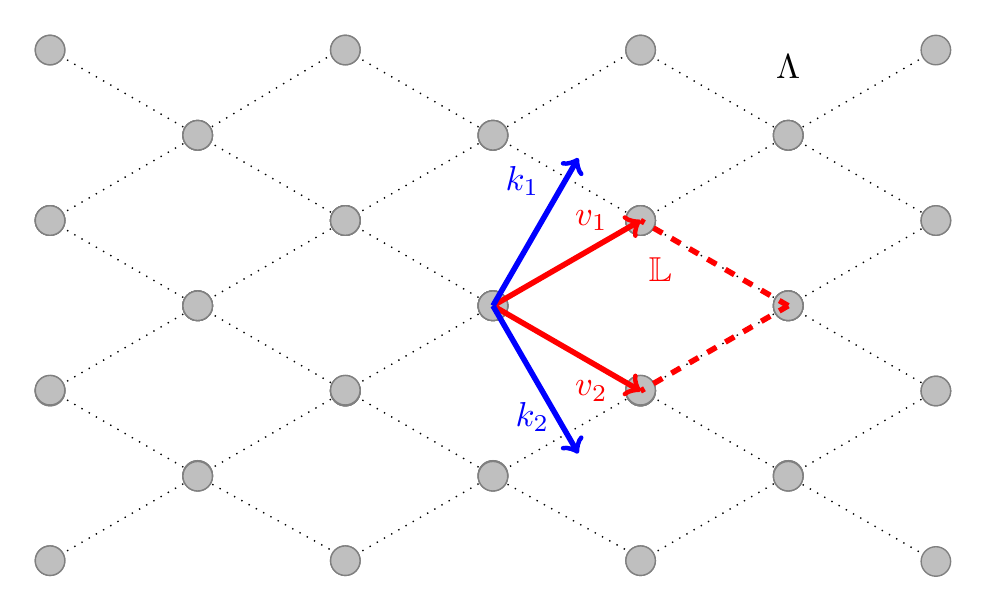}}
\end{figure}
\end{center}

A simple example of honeycomb potential is the periodization of a radial function over the lattice
\begin{equation}
\left(\dfrac{v_1 + v_2}{3}+\Lambda \right) \cup \left(\dfrac{2v_1 + 2v_2}{3}+\Lambda \right),
\end{equation}
see Figure \ref{fig:1}. Given a honeycomb potential $V$, we will study spatially delocalized perturbations of the (unbounded) Schr\"odinger operator
\begin{equation}
P_0 \de -\Delta+V \ : \ L^2\big(\R^2,\C\big) \rightarrow L^2\big(\R^2,\C\big),
\end{equation}
with domain $H^2(\R,\C)$. This operator is periodic with respect to $\Lambda$. This allows to use Floquet--Bloch theory -- see \cite[\S XIII]{RS}: $P_0$ leaves  the space
\begin{equation}
L^2_\xi \de \Big\{ u\in L^2_\loc\big(\R^2,\C\big) : \ u(x+w) = e^{i\blr{\xi,w}} u(x),  \ w \in \Lambda  \Big\}, \ \ \ \ \xi \in \R^2
\end{equation}
invariant. The space $L^2_\xi$ is Hilbertian when equipped with the Hermitian form
\begin{equation}
\blr{f,g}_{L^2_\xi} \de \int_{\Ll} \ove{f(x)} g(x) dx.
\end{equation}
Let $P_0(\xi)$ be formally equal to $P_0 = -\Delta + V$, but acting on $L^2_\xi$. It has compact resolvent and discrete spectrum -- denoted below by $\Sigma_{L^2_\xi}\big(P_0(\xi)\big)$ --  depending on $\xi$:
\begin{equation}
\lambda_{0,1}(\xi) \leq \lambda_{0,2}(\xi) \leq \dots \leq \lambda_{0,j}(\xi) \leq \dots
\end{equation}
The maps $\xi \in \R^2 \mapsto \lambda_{0,j}(\xi)$ are called dispersion surfaces of $P_0$. The $L^2$-spectrum of $P_0$ consists of the ranges of the dispersion surfaces: it equals
\begin{equation}
\Sigma_{L^2}(P_0) = \bigcup_{\xi \in \R} \Sigma_{L^2_\xi}\big( P_0(\xi) \big) =\Big\{ \lambda_{0,j}(\xi) :  \ j \geq 1, \ \xi \in \R^2 \Big\}.
\end{equation}

\vspace*{-.5cm}

\begin{center}
\begin{figure}
\caption{If each gray circle represents the same radially symmetric function -- the \textit{atomic potential} -- the resulting potential has the honeycomb symmetry.} \label{fig:1}
{\includegraphics{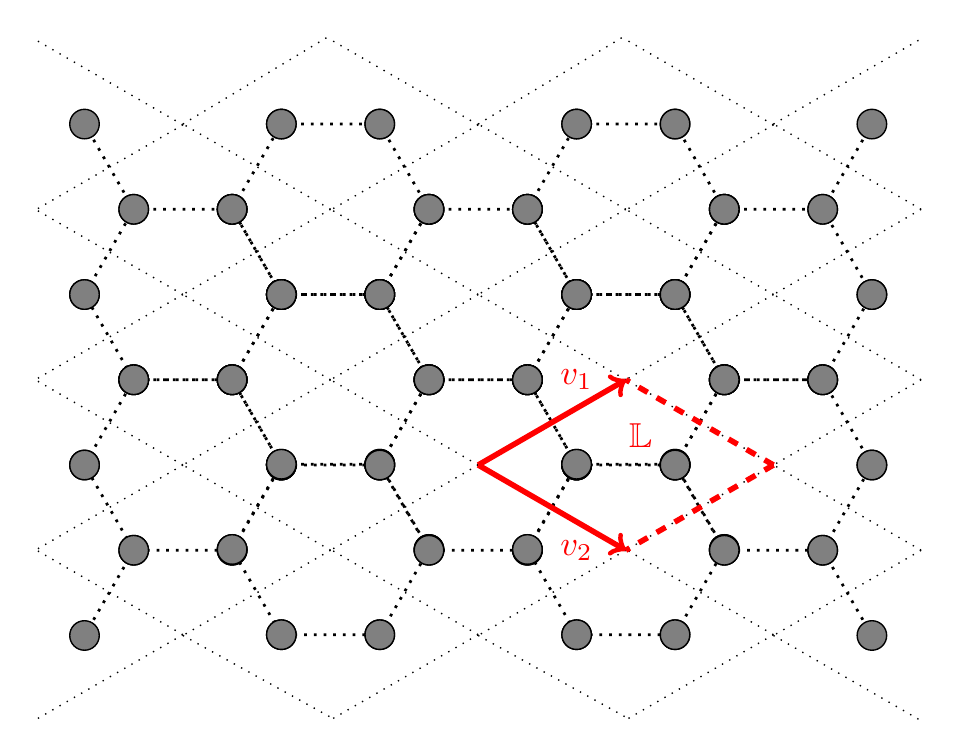}}
\end{figure}
\end{center}

We now discuss Dirac points. Roughly speaking, they correspond to the conical degeneracies in the band spectrum of $P_0$.

\begin{defi}\label{def:1} A pair $(\xi_\star,E_\star) \in \R^2 \times \R$ is a Dirac point of $P_0 = -\Delta+V$ if:
\begin{itemize}
\item[(i)] $E_\star$ is a $L^2_{\xi_\star}$-eigenvalue of $P_0(\xi_\star)$
of multiplicity $2$;
\item[(ii)] There exists an orthonormal basis $\{\phi_1,\phi_2\}$ of $\ker_{L^2_{\xi_\star}}\hspace*{-1.5mm}\big(P_0(\xi_\star)-E_\star\big)$ such that 
\begin{equation}\label{eq:5a}
\phi_1(Rx) = e^{2i\pi/3} \phi_1(x), \ \ \ \ \phi_2(x) = \ove{\phi_1(-x)}, \ \ \ \ \phi_2(Rx) = e^{-2i\pi/3} \phi_2(x).
\end{equation}
\item[(iii)] There exist $j_\star \geq 1$ and $\nu_F > 0$ such that  for $\xi$ close to $\xi_\star$,
\begin{equations}
\lambda_{0,j_\star}(\xi) = E_\star - \nu_F \cdot |\xi-\xi_\star| + O(\xi-\xi_\star)^2,
 \\
\lambda_{0,j_\star+1}(\xi) = E_\star + \nu_F \cdot |\xi-\xi_\star| + O(\xi-\xi_\star)^2.
\end{equations}
\end{itemize}
\end{defi}

When $V$ is a honeycomb potential, Fefferman--Weinstein \cite{FW} showed that $P_0=-\Delta + V$ generically admits Dirac points $(\xi_\star,E_\star)$. We refer to \cite{FW} for details and to \S\ref{sec:3.2} for a review of the identities needed here. Because of \eqref{eq:5a}, $(\xi_\star,E_\star)$ must satisfy
\begin{equation}\label{eq:3y}
\xi_\star \in \big\{\xi_\star^A,\xi_\star^B\big\} \mod 2\pi \Lambda^*, \ \ \ \ 
\xi_\star^A \de \dfrac{2\pi}{3} (2k_1+k_2), \ \ \ \ \xi_\star^B \de \dfrac{2\pi}{3} (k_1+2k_2).
\end{equation}
See Figure \ref{fig:10}. Symmetries impose that $(\xi_\star^A,E_\star)$ is a Dirac point of $P_0$ if and only if $(\xi_\star^B,E_\star)$ is a Dirac point of $P_0$. We call the pair $(\phi_1,\phi_2)$ of \eqref{eq:5a} a Dirac eigenbasis.

As observed in \cite{FW}, Dirac points are stable against small perturbations preserving spatial inversion (parity) and time-reversal symmetry (conjugation). Conversely, breaking parity (while keeping conjugation invariance) generically opens spectral gaps about Dirac point energies. For $\delta \neq 0$, we introduce the operator
\begin{equations}\label{eq:5d}
P_\delta \de P_0 + \delta W = -\Delta + V + \delta W, \ \ \ \ \text{where:} 
\\
W \in C^\infty\big(\R^2,\R\big); \ \ \ W(x+w) = W(x), \ w \in \Lambda; \ \ \ W(-x)=-W(x).
\end{equations}
We will assume in the rest of the paper that the non-degeneracy condition
\begin{equation}\label{eq:3h}
\var_\star \de \blr{\phi_1, W\phi_1}_{L^2_{\xi_\star}} \neq 0
\end{equation}
holds. Under this condition, if $(\xi_\star,E_\star)$ is a Dirac point of $P_0$, then the operator $P_\delta(\xi_\star)$ (equal to $P_\delta$, but acting on $L^2_{\xi_\star}$) admits a $L^2_{\xi_\star}$-spectral gap centered at $E_\star$:
\begin{equation}\label{eq:3j}
\dist\Big(\Sigma_{L^2_{\xi_\star}}\big(P_0(\xi_\star) \big), \ E_\star\Big) = \var_F \cdot \delta + O\big(\delta^2\big), \ \ \ \ \var_F \de |\var_\star|.
\end{equation}
This gap has width $2\var_F \cdot \delta + O(\delta^2)$; see Figure \ref{fig:10}.
This is a simple fact proved via perturbation analysis -- see e.g. \cite[Remark 9.2]{FW} or \S\ref{sec:5.3}. 
 Whether this $L^2_{\xi_\star}$-spectral gap extends to a \textit{global} $L^2$-gap of $P_\delta$ depends on the global behavior of the dispersion surfaces of $P_0$; see \cite[\S1.3 and \S8]{FLTW3}. When it does, the operators $P_{\pm \delta}$ describe insulators at energy $E_\star$ with a narrow gap centered at $E_\star$. These materials are parity-breaking perturbations of the \textit{metal} modeled by $P_0$.
 
 \begin{center}
\begin{figure}
\caption{The picture on the left represents the Dirac points $\xi_\star^A$ and $\xi_\star^B$ inside a dual fundamental cell $\Ll^*$. The two pictures on the right represent the bifurcation of a Dirac point $(\xi_\star,E_\star)$ to an open gap on a one-dimensional section of the Brillouin zone. } \label{fig:10}
{\includegraphics{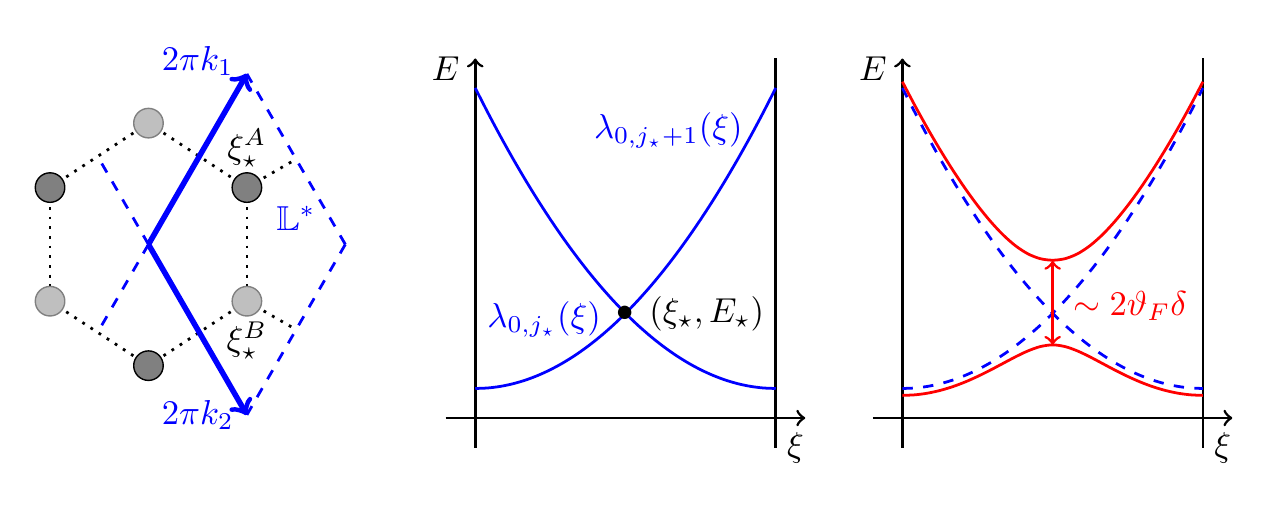}}
\end{figure}
\end{center}

\subsection{Edges and the model}\label{sec:1.2} We now describe the model of Fefferman--Lee-Thorp--Weinstein for honeycomb operators with an edge \cite{FLTW3,FLTW4}. Fix $v = a_1 v_1 + a_2 v_2 \in \Lambda$ with $a_1, a_2 \in \Z$ relatively prime, representing the direction of an edge $\R v$. We introduce $v' \in \Lambda$, $k, \ k' \in \Lambda^*$ such that
\begin{equations}\label{eq:3w}
v' \de b_1 v_1 + b_2 v_2, \ \ \ \ a_1 b_2 - a_2 b_1 = 1, \ \ b_1, \ b_2 \in \Z, \\
k \de b_2 k_1 - b_1 k_2, \ \ \ \ k' \de -a_2 k_1 + a_1 k_2.
\end{equations} 
The pairs $(v,v')$ and $(k,k')$ are dual to one another and span $\Lambda$ and $\Lambda^*$. See \S\ref{sec:6.1}.

Recall that $P_{\pm \delta} = -\Delta +V \pm \delta W$. Fefferman--Lee-Thorp--Weinstein \cite{FLTW3,FLTW4} analyze an operator $\PPP_\delta$ that describes an adiabatic transition from $P_{-\delta}$ to $P_\delta$  transversely to the edge $\R v$. Specifically,
\begin{equation}
\PPP_\delta \de P_0 + \delta \cdot  \kappa_\delta \cdot W = -\Delta + V + \delta \cdot \kappa_\delta \cdot W.
\end{equation}
Above, the function $\kappa_\delta \in C^\infty\big(\R^2,\R\big)$ is an adiabatic modulation of a domain wall $\kappa \in C^\infty(\R,\R)$ along $\R v$:
\begin{equation}\label{eq:3v}
\kappa_\delta(x) = \kappa(\delta \blr{k',x}), \ \ \ \ \exists L > 0, \ \  \kappa(t) = \systeme{ -1 \text{ when } x \leq -L,
\\
1 \ \ \text{ when } \ \ x \geq L.}
\end{equation}
The operator $\PP_\delta$ is a Schr\"odinger operator with potential represented in Figure \ref{fig:12}. It models the soft junction of two insulators modeled by $P_{\pm \delta}$ along the interface~$\R v$.

Although $\PP_\delta$ is not periodic with respect to $\Lambda$, it is periodic with respect to $\Z v$ because $\blr{k',v} = 0$. For every $\zeta \in \R$, $\PP_\delta$ acts as an unbounded operator on
\begin{equation}\label{eq:4n}
L^2[\zeta] \de \left\{ u \in L^2_\loc(\R^2,\C), \ u(x+v) = e^{i\zeta} u(x), \ \int_{\R^2/\Z v} |u(x)|^2 dx   < \infty\right\},
\end{equation}
 with domain $H^2[\zeta]$ -- defined according to \eqref{eq:4n}. Let $\PP_\delta[\zeta]$ be the resulting operator.

We continue the analysis of \cite{FLTW3,FLTW4}: we study the electronic properties of the material modeled by $\PP_\delta$. We investigate whether energy propagates along the edge $\R v$. This boils down to studying \textit{edge states} of $\PP_\delta$. These are time-harmonic waves propagating along $\R v$ and localized transversely to $\R v$. Mathematically, they are the $L^2[\zeta]$-eigenvectors of $\PP_\delta[\zeta]$. Such states correspond to diffusion-less electronic channels along $\R v$; they have great potential in technological applications.

\subsection{The no-fold condition of Fefferman--Lee-Thorp--Weinstein \cite{FLTW3}}\label{sec:1.3} We set $\zeta_\star = \blr{\xi_\star,v}$ and $\zeta_\star^J = \blr{\xi_\star^J,v}$. Thanks to \eqref{eq:3y},
\begin{equation}\label{eq:4k}
\zeta_\star^A = \dfrac{2\pi}{3}(2a_1+a_2); \ \ \ \ \zeta_\star^B = \dfrac{2\pi}{3}(a_1+2a_2).
\end{equation}
Hence, $\zeta_\star \in \{0,2\pi/3,4\pi/3\} \mod 2\pi\Z$.
Recall the no-fold condition \cite[\S1.3]{FLTW3}.

\begin{defi}\label{def:2} The no-fold condition holds along the edge $\R v$ at $\zeta_\star$  if
\begin{equation}
\forall j \geq 1, \ \ \forall \tau \in \R, \ \ \lambda_{0,j}\big(\zeta_\star k + \tau k'\big) = E_\star \ \  \Rightarrow \  \  j \in \{j_\star,j_\star+1\} \text{ and } \tau = \blr{\xi_\star,v'} \hspace*{-2mm} \mod 2\pi. 
\end{equation}
\end{defi}

The essential spectrum of $\PP_\delta[\zeta_\star]$ is obtained from the (essential) spectra of the bulk operators $P_{\pm \delta}[\zeta_\star]$ (the operators formally equal to $P_{\pm \delta}$, but acting on $L^2[\zeta_\star]$). These are conjugated under spatial inversion. Therefore they have the same spectrum. From Floquet--Bloch theory,
\begin{equation}\label{eq:2z}
\Sigma_{L^2[\zeta_\star],\ess}\big(\PP_\delta[\zeta_\star]\big) = \Sigma_{L^2[\zeta_\star]}\big(P_{-\delta}[\zeta_\star]\big) \cup \Sigma_{L^2[\zeta_\star]}\big(P_\delta[\zeta_\star]\big) = \bigcup_{\xi \in \zeta_\star k + \R k'} \Sigma_{L^2_\xi}\big(P_{\delta}(\xi)\big).
\end{equation}
If $(\xi_\star,E_\star)$ is a Dirac point of $P_0$ and $\var_\star \neq 0$, then for small $\delta$, $P_{\pm \delta}(\xi)$ has a $L^2_\xi$-spectral gap centered at $E_\star$ when $\xi$ is $O(\delta)$-away from $\xi_\star$ -- see e.g. \S\ref{sec:5.3}. The no-fold condition requires this gap to extend to a $L^2[\zeta_\star]$-spectral gap of $P_{\pm \delta}[\zeta_\star]$. 

The no-fold condition holds for certain low contrast potentials and the zigzag edge $a_1=1, a_2 = 0$ \cite[Theorem 8.2]{FLTW3}. It holds for high contrast potentials and edges satisfying $a_1 \neq a_2 \mod 3$ \cite[Corollary 6.3]{FLW6}. It may fail in physically relevant cases. See e.g. the case of certain low contrast potentials and the zigzag edge \cite[Theorem 8.4]{FLTW3}; and armchair-type edges $v=a_1v_1+a_2v_2$ where $a_1-a_2 = 0 \mod 3$ \cite[Remark 6.5]{FLW6} or \S\ref{sec:6.1}. In particular, if the no-fold condition holds, \eqref{eq:4k} and $a_1-a_2 \neq 0 \mod 3$ prescribe the possible values of $\zeta_\star$:
\begin{equation}\label{eq:4l}
\zeta_\star \in \big\{ \zeta_\star^A, \zeta_\star^B \big\} = \left\{ \dfrac{2\pi}{3}, \dfrac{4\pi}{3} \right\} \mod 2\pi\Z.
\end{equation}

\subsection{The multiscale approach of \cite{FLTW3} and the Dirac operator.}

Let $(\xi_\star,E_\star)$ be a Dirac point of $P_0$ and $(\phi_1,\phi_2)$ be a Dirac eigenbasis (see Definition \ref{def:1}). The map
\begin{equation}
\eta \in \R^2 \mapsto 2 \big \langle \phi_1, (\eta \cdot D_x) \phi_2 \big \rangle \in \C
\end{equation}
is linear. We look as an application from $\C$ to $\C$. Because of rotational invariance of $P_0 = -\Delta+V$, it acts like multiplication by a complex number:
\begin{equation}
\exists \nu_\star \in \C \setminus \{0\}, \ \ \ \ \forall \eta \in \R^2 \equiv \C, \ \ \  \ \nu_\star \eta = 2 \big \langle \phi_1, (\eta \cdot D_x) \phi_2 \big \rangle.
\end{equation}
See \S\ref{sec:3.2}. Recall that $\var_\star = \lr{\phi_1,W\phi_1}_{L^2_{\xi_\star}} \neq 0$ and that $\kappa$ satisfies \eqref{eq:3v}. In this section, we review the role of the (unbounded) Dirac operator
\begin{equations}\label{eq:5c}
\Di_\star = \matrice{0 & \nu_\star k' \\ \ove{\nu_\star k'} & 0} D_t + \var_\star \matrice{1 & 0 \\ 0 & -1} \kappa \ : \ L^2\big(\R, \C^2\big) \rightarrow L^2\big(\R,\C^2\big)
\end{equations}
 in the analysis of Fefferman--Lee-Thorp--Weinstein \cite{FLTW3}.

When $\var_\star \neq 0$, \cite{FLTW3}
 produces arbitrarily accurate quasimodes of $\PP_\delta[\zeta_\star]$ via a multiscale approach. These are pairs $(u_\delta,E_\delta) \in H^2_{\zeta_\star} \times \R$ satisfying
\begin{equation}
\big( \PP_\delta[\zeta_\star]-E_\delta \big) u_\delta = O_{L^2[\zeta_\star]}\left(\delta^\infty\right), \ \ \ \ E_\delta = E_\star + \delta E_1 + O\left(\delta^2\right).
\end{equation}
They are powers series in $\delta$ whose coefficients solve a hierarchy of equations in order $1, \delta, \delta^2, \dots$. The operator $\Di_\star$ appears in the equation of order $\delta$. This equation admits a solution if and only if $E_1$ is an eigenvalue of $\Di_\star$; see \cite[\S6]{FLTW3}. 

The operator $\Di_\star$ has essential spectrum equal to $(-\infty,\var_F] \cup [\var_F,\infty)$. It has an odd number of eigenvalues $\{  \var_j \}_{j=-N}^N$ in  $(-\var_F, \var_F)$, simple and symmetric about~$0$:
\begin{equation}
\var_{-N} < \dots < \var_{-1} < \var_0 = 0 < \var_1 < \dots < \var_N, \ \ \ \ \var_{-j} = -\var_j.
\end{equation}
In particular, $0$ is always an eigenvalue of $\Di_\star$.
We refer to  see \S\ref{sec:4.2} for details.

When the no-fold condition holds, \cite{FLTW3} uses a sophisticated Lyapounov--Schmidt reduction to prove that each eigenvalue $\var_j$ of $\Di_\star$ seeds a $L^2[\zeta_\star]$-eigenvalue of $\PP_\delta[\zeta_\star]$, with energy $E_\star + \delta \var_j + O(\delta^2)$. They show that to leading order, the corresponding eigenvector equals the first term produced by the multiscale approach: it is
\begin{equation}
\az_1\big( \delta \blr{k',x} \hspace{-.7mm} \big) \cdot \phi_1(x) + \az_2\big( \delta\blr{k',x} \hspace{-.7mm} \big) \cdot  \phi_2(x) + O_{H^2_{\zeta_\star}}\left(\delta^{1/2}\right), \ \ \ \ (\Di_\star - \var_j) \matrice{\az_1 \\ \az_2} = 0.
\end{equation}
In other words, they validate mathematically the formal multiscale procedure at leading order. But some questions persist:
\begin{itemize}
\item Is the multiscale procedure rigorously valid at all order?
\item Do the eigenvalues of $\Di_\star$ seed \textit{all} eigenvalues of $\PP_\delta[\zeta_\star]$ near $E_\star$?
\item How to clarify the relation between $\PP_\delta[\zeta_\star]$ and $\Di_\star$?
\end{itemize}
The present work responds to these questions. 

\subsection{Results}\label{sec:1.5}

Our  first result relates the resolvents of $\PP_\delta[\zeta_\star]$ and $\Di_\star$. It  requires the operator $\Pi$ and its adjoint $\Pi^*$, defined as
\begin{equations}
\Pi : L^2\big(\R^2/\Z v, \C^2\big) \rightarrow L^2\big(\R,\C^2\big), \ \ \ \ \big(\Pi f\big)(t) \de \int_0^1 f(sv + tv') ds; 
\\
\Pi^* : L^2\big(\R, \C^2\big) \rightarrow L^2\big(\R^2/\Z v,\C^2\big), \ \ \ \ \big(\Pi^* g\big)(x) \de g\big(\hspace*{-.5mm}\blr{k',x}\hspace*{-.5mm} \big);
\end{equations}
and the dilation $\UU_\delta$ defined as
\begin{equation}
\UU_\delta : L^2\big(\R,\C^2\big) \rightarrow L^2\big(\R,\C^2\big), \ \ \ \ \big(\UU_\delta f\big)(t) \de f(\delta t).
\end{equation}

Recall that $V$ is a honeycomb potential -- see Definition \ref{def:3}; $W \in C^\infty\big(\R^2,\R\big)$ breaks spatial inversion -- see \eqref{eq:5d}; and $\kappa \in C^\infty(\R,\R)$ is a domain wall function -- see \eqref{eq:3v}. We make the following assumptions:
\begin{itemize}
\item[(H1)] $(\xi_\star,E_\star)$ is a Dirac point of $P_0 = -\Delta + V$ -- see Definition \ref{def:1} -- with $\xi_\star \in \Ll^*$.
\item[(H2)] The no-fold condition -- Definition \ref{def:2} -- holds.
\item[(H3)] The non-degeneracy assumption $\var_\star \neq 0$ holds -- see \eqref{eq:3h}.
\end{itemize}

\begin{theorem}\label{thm:3} Assume $\operatorname{(H1)}$ -- $\operatorname{(H3)}$ hold and fix $\epsilon > 0$. There exists $\delta_0 > 0$ such that~if
\begin{equation}\label{eq:3f}
\delta \in (0,\delta_0), \ \ z \in \Dd(0,\var_F-\epsilon), \ \
\dist \left( \Sigma_{L^2}\big(\Di_\star \big), z \right) \geq \epsilon, \ \  \lambda = E_\star + \delta z
\end{equation}
then $\PP_\delta[\zeta_\star] - \lambda$ is invertible and
\begin{equation}\label{eq:3g}
\big( \PP_\delta[\zeta_\star] - \lambda \big)^{-1} = \dfrac{1}{\delta} \cdot  \matrice{ \phi_1 \\ \phi_2}^\top \cdot \Pi^*   \UU_\delta \cdot \big( \Di_\star - z \big)^{-1} \cdot \UU_\delta^{-1} \Pi  \cdot \ove{\matrice{\phi_1 \\ \phi_2}} + \OO_{L^2[\zeta_\star]}\left(\delta^{-1/3}\right).
\end{equation}
\end{theorem}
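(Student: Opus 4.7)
The plan is to build an approximate right inverse $R_\delta$ matching the displayed principal part of~\eqref{eq:3g}, prove
\begin{equation*}
\bigl(\PPP_\delta[\zeta_\star]-\lambda\bigr)R_\delta = \Id + S_\delta, \qquad \|S_\delta\|_{L^2[\zeta_\star]\to L^2[\zeta_\star]} = \OO\bigl(\delta^{2/3}\bigr),
\end{equation*}
together with a symmetric left identity, and then invert by Neumann series. Since $\UU_\delta$ has norm $\delta^{-1/2}$ while $\Pi,\Pi^*$, the multiplications by $\phi_j$ and $(\Di_\star-z)^{-1}$ are bounded uniformly in $\delta$ (the last by $\dist(\Sigma_{L^2}(\Di_\star),z)\geq\epsilon$), the principal part obeys $\|R_\delta\|=\OO(\delta^{-1})$, and Neumann gives $(\PPP_\delta[\zeta_\star]-\lambda)^{-1}=R_\delta+\OO(\|R_\delta\|\cdot\|S_\delta\|)=R_\delta+\OO(\delta^{-1/3})$, as claimed.

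The algebraic heart is a two-scale chain-rule computation. For $\vec\alpha=(\alpha_1,\alpha_2)\in\mathcal{S}(\R,\C^2)$, the ansatz $u(x):=\alpha_1(\delta t)\phi_1(x)+\alpha_2(\delta t)\phi_2(x)$ with $t=\blr{k',x}$ lies in $L^2[\zeta_\star]$ (since $\blr{k',v}=0$ and $\phi_j\in L^2_{\xi_\star}$). Expanding $\partial_{x_l}[\alpha_j(\delta t)\phi_j] = \delta k'_l\alpha_j'(\delta t)\phi_j + \alpha_j(\delta t)\partial_{x_l}\phi_j$ and using $P_0\phi_j=E_\star\phi_j$, one obtains
\begin{equation*}
\bigl(\PPP_\delta[\zeta_\star]-\lambda\bigr)u = \delta\sum_{j=1,2}\Bigl[-2i\alpha_j'(\delta t)(k'\cdot D_x)\phi_j + \bigl(\kappa(\delta t)W-z\bigr)\alpha_j(\delta t)\phi_j\Bigr] - \delta^2|k'|^2\sum_j\alpha_j''(\delta t)\phi_j.
\end{equation*}
Projecting the bracket by $\Pi\overline{(\phi_1,\phi_2)^\top}$ reproduces exactly $\delta(\Di_\star-z)\vec\alpha$ via the Dirac-point identities recalled in~\S\ref{sec:3.2}: the off-diagonal block comes from $2\blr{\phi_1,(k'\cdot D_x)\phi_2}_{L^2_{\xi_\star}}=\nu_\star k'$ (and conjugate), the diagonal $\var_\star\kappa$ block from $\blr{\phi_1,W\phi_1}_{L^2_{\xi_\star}}=\var_\star=-\blr{\phi_2,W\phi_2}_{L^2_{\xi_\star}}$, and the vanishing of $\blr{\phi_i,(k'\cdot D_x)\phi_i}$ and $\blr{\phi_1,W\phi_2}$ follows from the $R$-equivariance~\eqref{eq:5a}. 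Setting $\vec\alpha:=(\Di_\star-z)^{-1}\UU_\delta^{-1}\Pi\overline{(\phi_1,\phi_2)^\top}f$ therefore makes $(\PPP_\delta[\zeta_\star]-\lambda)R_\delta f$ recover the slow-envelope projection of $f$ at leading order.

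The remaining task is to control $S_\delta$, which collects three contributions: (a) the $\OO(\delta^2)$ second-derivative remainder above, becoming $\OO(\delta)$ after division by $\delta$; (b) the pieces of $(k'\cdot D_x)\phi_j$ and $W\phi_j$ lying outside $\spane_{L^2_{\xi_\star}}\{\phi_1,\phi_2\}$, an order-$\delta$ ``higher-band'' source; and (c) the fact that $\Pi\overline{(\phi_1,\phi_2)^\top}(\phi_1,\phi_2)\Pi^*$ is multiplication by a non-constant $\Lambda$-periodic matrix $M(\tau)$ satisfying $\int_0^1 M\,d\tau=\Id$, rather than the identity itself. Items (b) and (c) are handled by the Floquet-Bloch decomposition $L^2[\zeta_\star]=\int^{\oplus}L^2_{\zeta_\star k+\tau k'}\,d\tau$: the no-fold assumption~(H2) combined with~\eqref{eq:3j} makes $P_\delta(\zeta_\star k+\tau k')-\lambda$ uniformly invertible on the orthogonal complement of the two near-$E_\star$ modes, which absorbs (b) through a single transverse corrector added to the ansatz, while the fast oscillation of $M(\tau)$ on the unit scale against slow envelopes on scale $\delta^{-1}$ homogenizes (c) to the identity with $\OO(\delta)$ error. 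The main obstacle is routing these corrections through a carefully chosen momentum cutoff near $\xi_\star$ -- set at the scale where the quadratic Taylor correction to the linear Dirac dispersion becomes comparable to $\delta$ -- so that (a), (b), and (c) simultaneously balance into the uniform bound $\|S_\delta\|=\OO(\delta^{2/3})$. Once this is achieved, the symmetric left identity follows from self-adjointness of $\PPP_\delta[\zeta_\star]$ and $\Di_\star$, and Neumann inversion closes the argument.
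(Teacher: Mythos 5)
There is a genuine gap, and it sits at the very first step of your plan. Your candidate right inverse $R_\delta$ is exactly the principal term of \eqref{eq:3g}, and that operator factors through $\Pi\,\ove{\matrice{\phi_1\\ \phi_2}}$, which has an infinite-dimensional kernel (any $f\in L^2[\zeta_\star]$ whose restriction to each line $sv+tv'$, $s\in[0,1]$, is orthogonal to $\phi_1$ and $\phi_2$). For such $f$ you get $R_\delta f=0$, hence $(\PP_\delta[\zeta_\star]-\lambda)R_\delta f=0$, which is incompatible with $(\PP_\delta[\zeta_\star]-\lambda)R_\delta=\Id+S_\delta$, $\|S_\delta\|=\OO(\delta^{2/3})$: that identity would force $\|f\|\le C\delta^{2/3}\|f\|$. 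The point is that \eqref{eq:3g} does \emph{not} say the resolvent is $R_\delta$ up to a relatively small error of the Neumann type; it says the resolvent is $R_\delta$ (size $\delta^{-1}$) plus an $\OO(\delta^{-1/3})$ operator which does real work, namely inverting $\PP_\delta[\zeta_\star]-\lambda$ on the complement of the modulated Dirac modes. A parametrix therefore must already contain that $\OO(\delta^{-1/3})$ piece; in the paper it is supplied by $\QQQ_\delta(\zeta,\lambda)=\sum_\pm\chi_{\pm,\delta}\,(P_{\pm\delta}[\zeta]-\lambda)^{-1}$, whose structure is extracted from the fiberwise analysis of $P_{\pm\delta}(\xi)$ (Lemmas \ref{lem:1x}, \ref{lem:1f}, \ref{lem:1g}) integrated along the dual edge (Theorem \ref{thm:1}).

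A second, related miscount: your item (b) is not an ``order-$\delta$ higher-band source.'' With the correct normalization ($\vec\alpha=(\Di_\star-z)^{-1}\UU_\delta^{-1}\Pi\,\ove{\matrice{\phi_1\\\phi_2}}f$ has $L^2$-norm $\OO(\delta^{1/2})\|f\|$, so $\alpha_j'(\delta t)$ has $L^2_t$-norm $\OO(1)\|f\|$), the off-span parts of $-2i\alpha_j'(\delta t)(k'\cdot D_x)\phi_j$ contribute an $\OO(1)$ error to $(\PP_\delta[\zeta_\star]-\lambda)R_\delta f$. This is precisely why, in the paper, the error operator $\KKK_\delta(\zeta,\lambda)$ in $(\PP_\delta[\zeta]-\lambda)\QQQ_\delta=\Id+\KKK_\delta$ is only $\OO_{L^2[\zeta]}(1)$, Neumann series does not apply, and one needs the cyclicity argument of Lemma \ref{lem:1l} together with the homogenization identity $D^{-2}+\UUU^0=\Di(\mu)^2-z^2$ (Lemma \ref{lem:1n} and \eqref{eq:2x}) to invert $\Id+\KKK_\delta$; this inversion is where the hypothesis $\dist(\Sigma_{L^2}(\Di_\star),z)\ge\epsilon$ actually enters, not merely in bounding $(\Di_\star-z)^{-1}$ inside $R_\delta$. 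Your homogenization observation in item (c) is a correct instinct, but it must be deployed inside that inversion step, not to upgrade $(\PP_\delta[\zeta_\star]-\lambda)R_\delta$ to $\Id+o(1)$. No choice of momentum cutoff can repair the two defects above, since they are structural (a rank deficiency of $R_\delta$ and an $\OO(1)$ commutator term), not a matter of balancing Taylor remainders.
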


The leading order term in \eqref{eq:3g} comes with a coefficient $1/\delta$: the remainder term $\OO_{L^2[\zeta_\star]}(\delta^{-1/3})$ is subleading when $z \in \Dd(0,\var_F-\epsilon)$. Hence, Theorem \ref{thm:3} shows that the resolvents of $\PP_\delta[\zeta_\star]$ and of $\Di_\star$ behave similarly, after suitable conjugations.

 Theorem \ref{thm:3} applies to a spectral range that spans -- modulo $\epsilon$ -- the entire spectral gap of $\PP_\delta[\zeta_\star]$ about $E_\star$. The next result describes the spectrum of $\PP_\delta[\zeta_\star]$ in the essential spectral gap in terms of the eigenvalues
\begin{equation}
\var_{-N} < \dots < \var_{-1} < \var_0 = 0 < \var_1 < \dots < \var_N
\end{equation}
of the Dirac operator $\Di_\star$. Let $X$ be the function space equal to
\begin{equation}\label{eq:5h}
\Big \{  f \in C^\infty\big(\R^2 \times \R,\C\big)  :  \ \forall t \in \R, \ f(\cdot,t) \in L^2_{\xi_\star}; \ \exists a > 0, \ \sup e^{a|t|} |f(x,t)| < \infty  \Big\}.
\end{equation}

\vspace*{-.5cm}

\begin{center}
\begin{figure}
\caption{Eigenvalues of $\Di_\star$ in $(-\var_F,\var_F)$ (top) and eigenvalues of $\PP_\delta$  in the spectral gap containing $E_\star$ (bottom). An approximate rescaling equal to $z \mapsto E_\star+\delta z + O(\delta^2)$ maps the top to the bottom. The red dots represent the zero eigenvalue of $\Di_\star$ and the corresponding one for $\PP_\delta$.  Theorem \ref{thm:3} and Corollary \ref{cor:3} do not apply in the lighter gray area near the essential spectrum. 
}\label{fig:3}
{\includegraphics{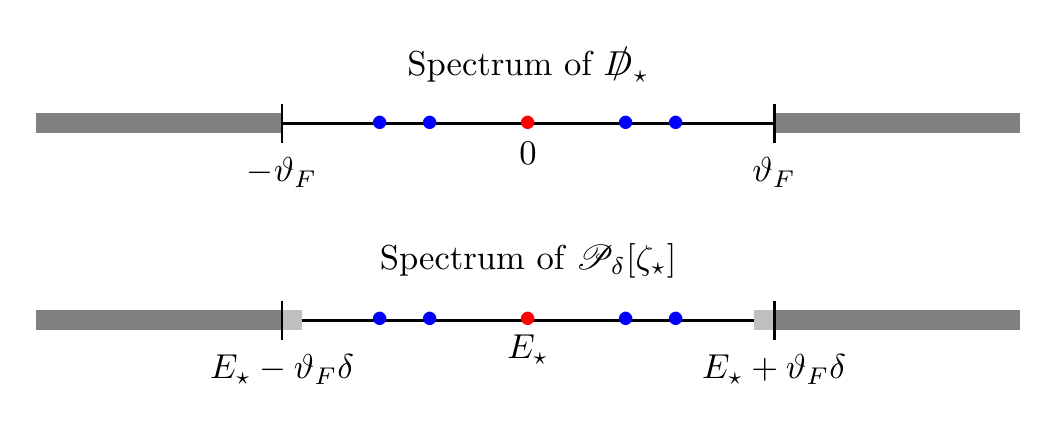}}
\end{figure}
\end{center}

\begin{cor}\label{cor:3} Assume $\operatorname{(H1)}$ -- $\operatorname{(H3)}$ hold and fix $\tvar \in (\var_N,\var_F)$. There exists $\delta_0 > 0$ such that for $\delta \in (0,\delta_0)$, the operator $\PP_\delta[\zeta_\star]$ has exactly $2N+1$ eigenvalues $\{E_{\delta,j}\}_{j \in [-N,N]}$ in $[E_\star-\tvar \delta,E_\star + \tvar\delta]$, that are all simple. 

 The associated eigenpairs $(E_{\delta,j},u_{\delta,j})$ admit full two-scale expansions in powers of $\delta$:
\begin{equations}\label{eq:5e}
E_{\delta,j} = E_\star + \var_j \cdot \delta + a_2 \cdot \delta^2 + \dots + a_M \cdot \delta^M + O\left( \delta^{M+1} \right), 
\\
u_{\delta,j}(x) = f_0\big(x,\delta \blr{k',x} \hspace*{-.5mm}\big) + \delta \cdot f_1\big(x, \delta \blr{k',x}\hspace*{-.5mm}\big) + \dots + \delta^M \cdot f_M\big(x,\delta \blr{k',x}\hspace*{-.5mm}\big) + o_{H^k}\left( \delta^M \right).
\end{equations}
In the above:
\begin{itemize}
\item $M$ and $k$ are any integers; $H^k$ is the $k$-th order Sobolev space.
\item The terms $a_m \in \R$, $f_m \in X$ are recursively constructed via multiscale analysis.
\item The leading order term $f_0$ satisfies
\begin{equation}
f_0(x,t) = \az_1(t)\cdot  \phi_1(x) + \az_2(t) \cdot \phi_2(x), \ \ \ \ \left(\Di_\star - \var_j\right) \matrice{\az_1 \\ \az_2} = 0.
\end{equation}
\end{itemize}
\end{cor}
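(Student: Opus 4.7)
The plan is to combine Theorem \ref{thm:3} with a Cauchy-integral argument to count the eigenvalues and establish their simplicity, and then carry out a two-scale WKB construction to produce the asymptotic expansions \eqref{eq:5e}.

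For the counting step, I fix $\epsilon_0 > 0$ smaller than $\var_F - \tvar$ and than half the minimum spacing of the $\var_j$'s. For any $z$ with $|z| \leq \tvar$ and $\dist(z, \{\var_{-N}, \dots, \var_N\}) \geq \epsilon_0$, Theorem \ref{thm:3} yields $\|(\PP_\delta[\zeta_\star] - E_\star - \delta z)^{-1}\| = O(\delta^{-1})$, so $E_\star + \delta z$ is in the resolvent set. Consequently, all spectrum of $\PP_\delta[\zeta_\star]$ in $[E_\star - \tvar \delta, E_\star + \tvar \delta]$ lies in $\bigcup_j \Dd(E_\star + \delta \var_j, \epsilon_0 \delta)$. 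For each $j$, I encircle this disc by the contour $\{\lambda = E_\star + \delta z : |z - \var_j| = \epsilon_0\}$ and substitute \eqref{eq:3g} in the Cauchy formula for the associated spectral projector. The factor $\delta$ from $d\lambda$ cancels the prefactor $\delta^{-1}$ in \eqref{eq:3g}, and the remainder $\OO(\delta^{-1/3})$ integrates to $\OO(\delta^{2/3})$. This gives
\[
\Pi^{\PP}_{\delta,j} = \matrice{\phi_1 \\ \phi_2}^\top \Pi^* \UU_\delta \, \Pi^{\Di_\star}_{\var_j} \, \UU_\delta^{-1} \Pi \ove{\matrice{\phi_1 \\ \phi_2}} + \OO(\delta^{2/3}),
\]
where $\Pi^{\Di_\star}_{\var_j}$ is the rank-one spectral projector of $\Di_\star$ at the simple eigenvalue $\var_j$. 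The leading term has rank exactly one, with range spanned by $\phi_1(x)\az_1(\delta\blr{k',x})+\phi_2(x)\az_2(\delta\blr{k',x})$, where $(\az_1,\az_2)$ is the Dirac eigenvector at $\var_j$. A standard norm-continuity argument---if $\Pi^{\PP}_{\delta,j}$ had rank $\geq 2$, two orthogonal unit vectors in its range would be $O(\delta^{2/3})$-aligned, which is impossible for small $\delta$---then forces $\Pi^{\PP}_{\delta,j}$ to have rank exactly one. Each disc therefore contains exactly one simple eigenvalue $E_{\delta,j}$ of $\PP_\delta[\zeta_\star]$.

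For the asymptotic expansion \eqref{eq:5e}, I set $t = \delta\blr{k',x}$ and plug the two-scale ansatz $u(x) = \sum_m \delta^m f_m(x,t)$, $E = E_\star + \sum_{m \geq 1}\delta^m a_m$, $f_m \in X$, into $\PP_\delta u = E u$. The chain rule expands $\PP_\delta$ as $P_0 - 2\delta (k'\cdot D_x) D_t + \delta^2 |k'|^2 D_t^2 + \delta \kappa(t) W$, generating a triangular hierarchy $(P_0 - E_\star) f_m = F_m[f_0,\dots,f_{m-1};a_1,\dots,a_m]$ in the $x$-variable, with $t$ as parameter. Fredholm solvability in $L^2_{\xi_\star}$ requires $F_m \perp \{\phi_1,\phi_2\}$. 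At $m=1$ this projection yields exactly $(\Di_\star - a_1)(\az_1,\az_2)^\top = 0$, forcing $a_1 = \var_j$ and fixing $f_0$; at each $m \geq 2$ the projection produces an inhomogeneous Dirac equation whose compatibility against $(\az_1,\az_2)$ uniquely determines $a_m$ (by simplicity of $\var_j$), after which the $\spane\{\phi_1,\phi_2\}$-component of $f_m$ is obtained in $\ker(\Di_\star - \var_j)^\perp$ and its transverse component by inverting $P_0 - E_\star$ on $\{\phi_1,\phi_2\}^\perp \subset L^2_{\xi_\star}$. Because $\kappa \equiv \pm 1$ outside $[-L,L]$, the hierarchy there becomes an autonomous gapped system whose bounded solutions decay exponentially; this inductively propagates membership in $X$ from step $m-1$ to step $m$.

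Truncating at order $M$ yields a quasi-mode $(u^{(M)},E^{(M)})$ satisfying $\|(\PP_\delta[\zeta_\star] - E^{(M)})u^{(M)}\|_{H^k} = O(\delta^{M+1})$ for any $k$, by elliptic regularity for $P_0$. Projecting the residual on the one-dimensional range of $\Pi^{\PP}_{\delta,j}$ yields $|E_{\delta,j} - E^{(M)}| = O(\delta^{M+1})$; projecting on the orthogonal complement, where Theorem \ref{thm:3} bounds $(\PP_\delta[\zeta_\star] - E_{\delta,j})^{-1}$ by $O(\delta^{-1})$, yields the $H^k$ asymptotic for the eigenvector. I expect the main obstacle to lie in the inductive construction of the hierarchy: verifying that each $F_m$ remains in $X$ and that solving the Dirac equation in $\ker(\Di_\star - \var_j)^\perp$ preserves exponential decay in $t$ requires a careful inspection of how $(k' \cdot D_x) D_t$ and $\kappa(t) W$ propagate both regularity in $x$ and decay in $t$ through the hierarchy.
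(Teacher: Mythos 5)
Your proposal is correct and follows essentially the same route as the paper: contour integration of the resolvent estimate of Theorem \ref{thm:3} to exhibit a rank-one spectral projector (hence exactly one simple eigenvalue per disc), the multiscale hierarchy of \S\ref{sec:3.1} to build quasimodes, and a selfadjointness/spectral-separation argument (the paper's \cite[Lemma 3.1]{DFW}) to upgrade quasimodes to genuine eigenpairs. The only blemish is the sign of the cross term, which should be $+2\delta(k'\cdot D_x)D_t$ as in \eqref{eq:1w}; this does not affect the structure of the argument.
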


This corollary: (a) mathematically validates the multiscale procedure of \cite{FLTW3} at all order in $\delta$; (b) shows that all eigenvectors of $\PP_\delta[\zeta_\star]$ are induced by the modes of $\Di_\star$. See Figures \ref{fig:3} and \ref{fig:8}. In particular, (a) improves the result of Fefferman--Lee-Thorp--Weinstein \cite{FLTW3} to arbitrary order in $\delta$. From a general point of view, (b) represents the most important advance. It opens the way for mathematical proofs of the bulk-edge correspondence in \textit{continuous} honeycomb structures. See \S\ref{sec:1.7}.

\vspace{-.8cm}

\begin{center}
\begin{figure}
\caption{Discrete eigenvalues of $\Di_\star$ seed the bifurcation of eigenvalues of $\PP_\delta$ (red dotted curves) from the Dirac point energy $E_\star$ (at $\delta=0$)  of $P_0$  as $\delta$ increases away from zero. The slopes of these curves at $\delta = 0$ (blue lines) are given by the eigenvalues of $\Di_\star$. }\label{fig:8}
{\includegraphics{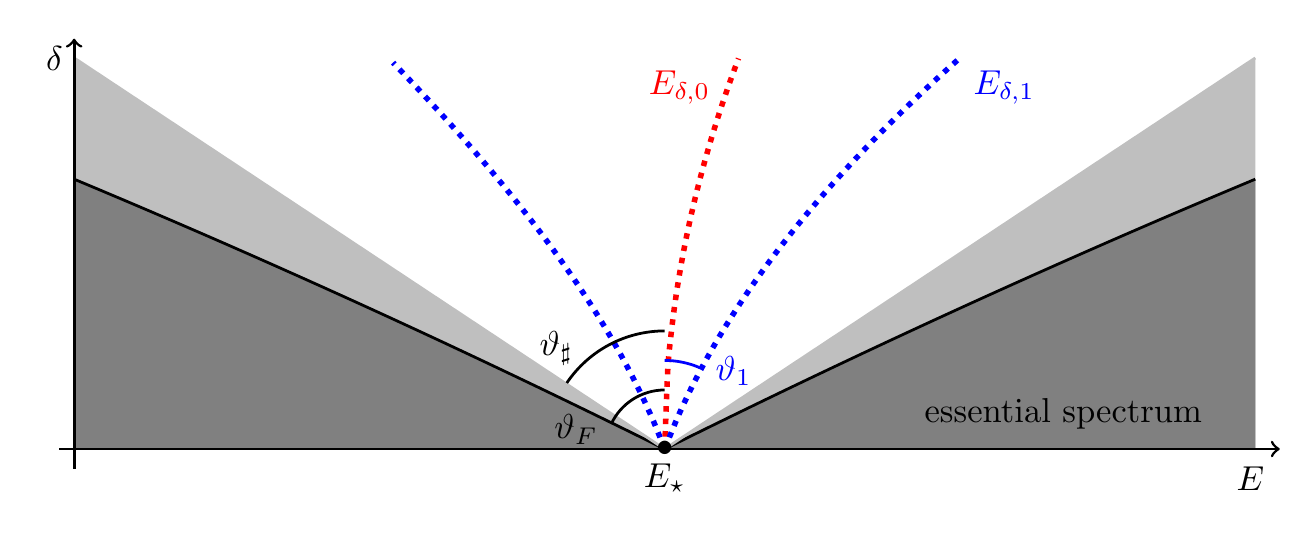}}
\end{figure}
\end{center}

\subsection{Extension to quasimomenta near $\zeta_\star$} Corollary \ref{cor:3} predicts that for $\delta \in (0,\delta_0)$, $\PP_\delta[\zeta_\star]$ has precisely $2N+1$ eigenvalues near $E_\star$. A general perturbation argument shows that $\PP_\delta[\zeta]$ also has $2N+1$ eigenvalues for $\zeta$ close enough to $\zeta_\star$. However this argument does not specify quantitatively how close $\zeta$ needs to be to $\zeta_\star$.

We prove generalizations of Theorem \ref{thm:3} and Corollary \ref{cor:3} that hold for $\zeta$ at distance $O(\delta)$ from $\zeta_\star$; see \S\ref{sec:4.3} for statements. We show that the eigenvalues of $\PP_\delta[\zeta_\star+\mu\delta]$ lying near $E_\star$ and of the Dirac operator
\begin{equation}
\Di(\mu) \de \matrice{0 & \nu_\star k' \\ \ove{\nu_\star k'} & 0} D_t + \mu \matrice{0 & \nu_\star \ell \\ \ove{\nu_\star \ell} & 0} + \var_\star \matrice{1 & 0 \\ 0 & -1} \kappa, \ \ \ \ \ell \de  k - \dfrac{\blr{k,k'}}{|k'|^2} k'
\end{equation} 
are $O(\delta^2)$-away after the rescaling $z \mapsto E_\star + \delta z$. 
  
Interestingly enough, the spectrum of $\Di(\mu)$ can be derived from that of $\Di_\star = \Di(0)$; see \S\ref{sec:4.2} and Figure \ref{fig:7}. We observe that $\Di(\mu)$ has a topologically protected mode that bifurcates linearly from the zero mode of $\Di_\star$. This suggests that under the $\PP_\delta$ time-dependent evolution, $L^2$-wave packets formed from the topologically protected mode of $\Di(\mu)$ propagate dispersion-less along the edge for a very long time. 

All other modes of $\Di(\mu)$ are non-topologically protected and bifurcate quadratically from the modes of $\Di_\star$. $L^2$-wave packets formed from such modes should have a shorter lifetime. This suggests that topologically protected modes are more robust even in the time-dependent situation.

\vspace*{-.5cm}

\begin{center}
\begin{figure}
\caption{The spectrum of $\Di(\mu)$ as a function of $\mu$. The topologically protected eigenvalue (in red) bifurcate linearly while the non-topologically protected eigenvalues (in blue) bifurcate quadratically.} \label{fig:7}
{\includegraphics{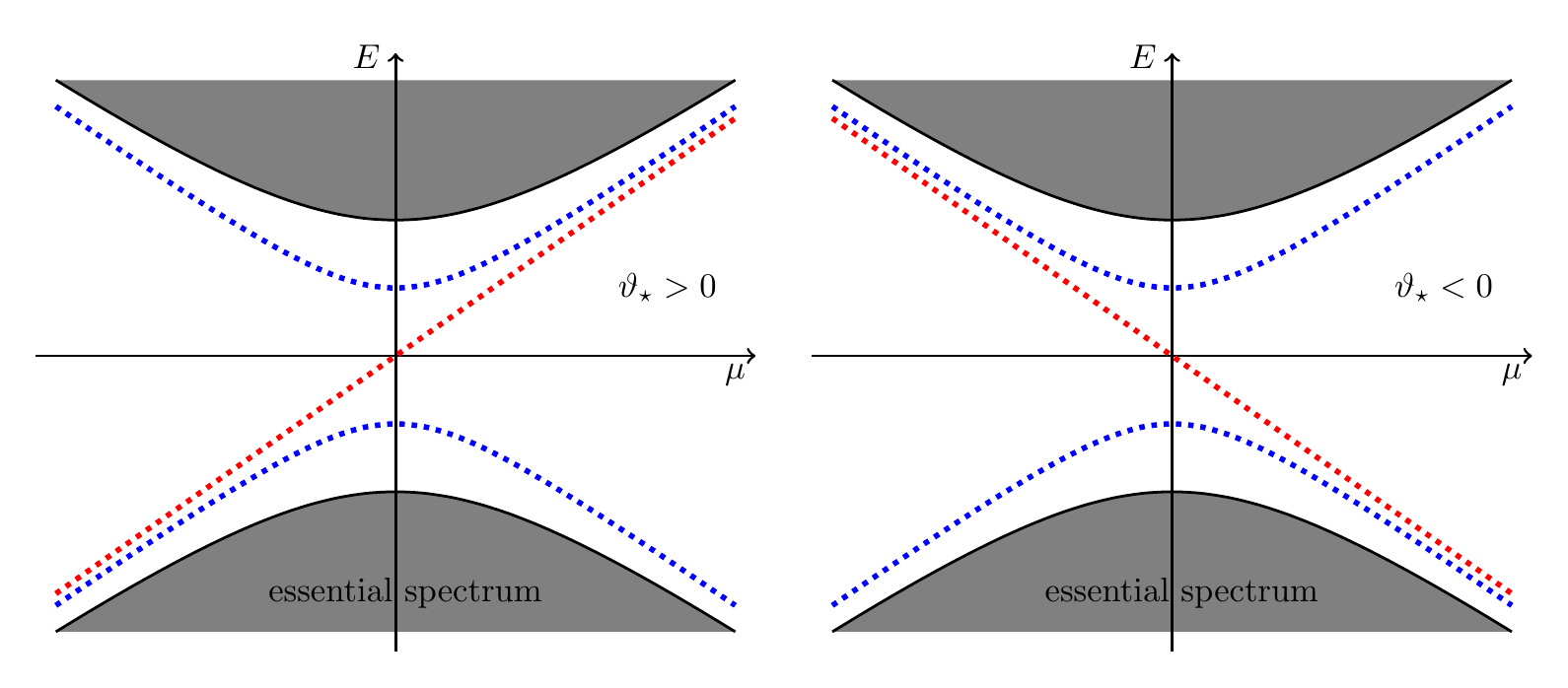}}
\end{figure}
\end{center}

\subsection{A topological perspective}\label{sec:1.7} Recall that $k' \in \Lambda^*$ is the dual direction transverse to an edge $\R v$; and that $\lambda_{0,j}(\xi)$ are the dispersion surfaces of a  honeycomb Schr\"odinger operator $P_0$. Let $(\xi_\star,E_\star) = (\xi_\star,\lambda_{0,j_\star}(\xi_\star))$ denote a Dirac point of $P_0$. We introduce an assumption (H4) that extends (H3) to values $\zeta \neq \zeta_\star$. It asks for the $j_\star$-th $L^2[\zeta]$-gap of $P_0[\zeta]$ to be open when $\zeta \notin \{ 2\pi/3, \ 4\pi/3  \} \mod 2\pi\Z$.
\begin{itemize}
\item[(H4)] For every $\zeta \notin \{ 2\pi/3, \ 4\pi/3  \} \mod 2\pi \Z$, for every $\tau, \tau' \in \R$,
\begin{equation}
\lambda_{0,j_\star}(\zeta k + \tau k') < \lambda_{0,j_\star+1}(\zeta k + \tau' k').
\end{equation}
\end{itemize}
Assumption (H4) holds for non armchair-type edges ($a_1 \neq a_2 \mod 3$) and high-contrast potentials: see \cite[Theorem 6.1 and Remark 6.5]{FLW6}. This follows from two general phenomena:
\begin{itemize}
\item Schr\"odinger operators with multiple-well potentials approach their tight binding limits as the depth of the wells increases \cite{Ha,HS1,Si2,HS2,Ma1,Ou,Ma2,Ca,FLW6,FW3};
\item Wallace's tight binding model of honeycomb lattices \cite{Wal} satisfies a suitable version of (H4).
\end{itemize}

When (H1) -- (H4) hold and $\delta$ is sufficiently small, the $j_\star$-th $L^2[\zeta]$-gap of $P_\delta[\zeta]$ is open. This allows to define the spectral flow of the family
\begin{equation}\label{eq:4m}
\zeta \in [0,2\pi] \mapsto \PP_\delta[\zeta]
\end{equation}
in the $j_\star$-th $L^2[\zeta]$-gap. It is the signed number of $L^2[\zeta]$-eigenvalues of $\PP_\delta[\zeta]$ crossing the $j_\star$-th gap downwards as $\zeta$ runs from $0$ to $2\pi$; see e.g. \cite[\S4]{Wa}. Corollary \ref{cor:2} in \S\ref{sec:4.3} allows to count precisely these eigenvalues. It leads to:

\begin{cor}\label{cor:4} Assume that $\operatorname{(H1)}$ -- $\operatorname{(H4)}$ hold for both Dirac points $(\xi_\star^A,E_\star)$ and $(\xi_\star^B,E_\star)$. There exists $\delta_0 > 0$ such that for all $\delta \in (0,\delta_0)$, the spectral flow of $\PP_\delta$ in the $j_\star$-th $L^2[\zeta]$-gap vanishes.
\end{cor}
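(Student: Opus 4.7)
Assumption (H4) applied to both Dirac points, together with the $2\pi$-periodicity and compactness of $\zeta \in [0,2\pi]$, yields a constant $\eta > 0$ and open neighborhoods $U^A \ni \zeta_\star^A$, $U^B \ni \zeta_\star^B$ such that for every $\zeta \in [0,2\pi] \setminus (U^A \cup U^B)$ the $j_\star$-th $L^2[\zeta]$-gap of $P_0[\zeta]$ has width at least $\eta$ and contains $E_\star$ in its interior. Since $\PP_\delta[\zeta] = P_0[\zeta] + \delta \kappa_\delta W$ is an $O(\delta)$-bounded self-adjoint perturbation of $P_0[\zeta]$, that gap persists for $\PP_\delta[\zeta]$ up to $O(\delta)$. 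Thus, for $\delta$ small enough, no eigenvalue of $\PP_\delta[\zeta]$ crosses $E_\star$ outside $U^A \cup U^B$, and the spectral flow of \eqref{eq:4m} reduces to the sum of local spectral flows through $U^A$ and $U^B$.

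\textbf{Step 2: each local contribution is $\pm 1$ via $\Di(\mu)$.} Around either $\zeta_\star \in \{\zeta_\star^A,\zeta_\star^B\}$, the extension of Corollary \ref{cor:3} announced as Corollary \ref{cor:2} in \S\ref{sec:4.3} identifies the $L^2[\zeta_\star+\mu\delta]$-eigenvalues of $\PP_\delta[\zeta_\star+\mu\delta]$ near $E_\star$ with the list $E_\star + \delta \var_j(\mu) + O(\delta^2)$, where the $\var_j(\mu)$ are the discrete eigenvalues of $\Di(\mu)$. The spectral picture in Figure \ref{fig:7} shows that the topologically protected eigenvalue $\var_0(\mu)$ crosses zero linearly with some nonzero slope $c$, while every other eigenvalue $\var_{\pm j}(\mu)$ ($j \geq 1$) bifurcates quadratically from the nonzero value $\pm \var_j$ and stays bounded away from zero on any fixed bounded $\mu$-interval. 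Hence the only crossing of $E_\star$ inside $U^A$ (respectively $U^B$) comes from the zero mode, contributing $\pm 1$ to the spectral flow with sign $-\sgn c^A$ (respectively $-\sgn c^B$).

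\textbf{Step 3: the two contributions cancel by time-reversal symmetry.} Since $V$, $W$ and $\kappa_\delta$ are real, $\PP_\delta$ commutes with complex conjugation: if $u \in L^2[\zeta]$ satisfies $\PP_\delta u = \lambda u$, then $\overline{u} \in L^2[-\zeta]$ satisfies $\PP_\delta \overline{u} = \lambda \overline{u}$. This yields an antiunitary identification $\Sigma_{L^2[\zeta]}(\PP_\delta[\zeta]) = \Sigma_{L^2[-\zeta]}(\PP_\delta[-\zeta])$, so the full family of eigenvalue curves in $[0,2\pi] \times \R$ is invariant under $\zeta \mapsto -\zeta \mod 2\pi\Z$. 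Because $\zeta_\star^B \equiv -\zeta_\star^A \mod 2\pi\Z$, this reflection maps the crossing curve at $\zeta_\star^A$ onto the one at $\zeta_\star^B$ and reverses the sign of its slope, giving $c^B = -c^A$. The two local contributions therefore cancel and the total spectral flow vanishes.

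\textbf{Main obstacle.} The crux of the argument is Step 2: one must leverage the extended resolvent estimate underlying Corollary \ref{cor:2} sharply enough to match $L^2[\zeta]$-eigenvalues of $\PP_\delta[\zeta]$ in the gap one-to-one with eigenvalues of $\Di(\mu)$ throughout the range $|\zeta - \zeta_\star| \lesssim \delta$, and to exclude stray eigenvalues approaching $E_\star$ from an intermediate regime where neither Step 1's bounded-perturbation argument nor Theorem \ref{thm:3} alone is sharp. Once this quantitative accounting is secured, Steps 1 and 3 combine without additional work to yield the vanishing of the spectral flow.
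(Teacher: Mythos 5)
Your overall architecture — localize the crossings to the two Dirac momenta, identify each local contribution through the Dirac operator, and cancel the two — is the paper's, but the gap you flag at the end is genuine and is where most of the work lies. Your Step 1 controls $\zeta$ outside \emph{fixed} neighborhoods $U^A, U^B$, while Corollary \ref{cor:2} only controls $|\zeta-\zeta_\star^J|\leq \tmu\,\delta$. In the intermediate range $\mub\,\delta \leq |\zeta-\zeta_\star^J| \leq \mathrm{rad}(U^J)$ the $j_\star$-th gap of $P_0[\zeta]$ shrinks to zero as $\zeta\to\zeta_\star^J$, so ``an $O(\delta)$ perturbation of a gap of width $\geq\eta$'' gives nothing there. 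The paper closes this in Lemma \ref{lem:1t}: a compactness argument based on (H2) and (H4) produces the \emph{linear} lower bound $\lambda_{0,j_\star+1}-\lambda_{0,j_\star}\geq 4a\cdot r(\zeta)$, $r(\zeta)=\dist(\zeta,\{2\pi/3,4\pi/3\})$, so the Neumann series for $\PP_\delta[\zeta]-\lambda$ converges as soon as $r(\zeta)\geq \mub\,\delta$ for suitable $\mub$ — exactly matching the $O(\delta)$ window where Corollary \ref{cor:2} takes over. A second, related omission: (H4) only says the gap is \emph{open} away from $2\pi/3,4\pi/3$, not that it contains $E_\star$, so your implicit constant reference level $E_\star$ may exit the gap; the flow must be computed along a continuous curve $E(\zeta)$ in the gap with $E(\zeta_\star^J)=E_\star$, which Lemma \ref{lem:1t} also constructs.

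Your Step 3 is a genuinely different cancellation mechanism and it is correct: since $V$, $W$, $\kappa_\delta$ are real, $\CCC$ intertwines $\PP_\delta[\zeta]$ and $\PP_\delta[-\zeta]$, and $\zeta_\star^A+\zeta_\star^B=2\pi(a_1+a_2)\equiv 0$, so the reflection $\zeta\mapsto-\zeta$ carries the crossing picture at $\zeta_\star^A$ to the one at $\zeta_\star^B$ with reversed orientation and the two local flows cancel. The paper instead proves $\var_\star^A=-\var_\star^B$ (Lemma \ref{lem:1o}, via spatial inversion $\II$ and the oddness of $W$) and reads each local flow $-\sgn(\var_\star^J)$ off the protected eigenvalue $\mu\cdot\nu_F|\ell|\cdot\sgn(\var_\star^J)$ of $\Di(\mu)$ together with the eigenvalue count of Corollary \ref{cor:2}. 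Your symmetry argument avoids computing either sign and makes transparent why the analogue fails for $\tPP_\delta$ (where $\tW$ is not real), consistent with Corollary \ref{cor:5}; but it does not substitute for the quantitative localization above, since you still must know that all crossings occur inside the $O(\delta)$ windows.
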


This is because $\var_\star^A$ and $\var_\star^B$ are opposite -- where  $\var_\star^J$ corresponds to $\var_\star$ for the Dirac point $(\xi_\star^J,E_\star)$. See Figure \ref{fig:5}.  The spectral flow is a topological invariant: it does not change if a $2\pi$-periodic family of compact operators $H^2[\zeta] \rightarrow L^2[\zeta]$ is added to $\PP_\delta[\zeta]$. Hence Corollary \ref{cor:4} is very robust. However, it is a disappointing result: it suggests that the edge states of Corollary \ref{cor:3} shall not be topologically stable. We conjecture:

\vspace*{-.65cm}

\begin{center}
\begin{figure}
\caption{The spectrum of $\PP_\delta[\zeta]$ as a function of $\zeta$. The dark gray region represents the essential spectrum. The dotted curves are the eigenvalues of $\PP_\delta[\zeta]$ (the edge state energies). Zooming about $\delta^{-1}$ times near $(2\pi/3,E_\star)$ or $(4\pi/3,E_\star)$ produces Figure \ref{fig:7}. Because of complex conjugation, $\var_\star^A = -\var_\star^B$: near $2\pi/3$ (resp. $4\pi/3$), the red curves moves upwards (resp. downwards).  This results in a spectral flow cancellation.} \label{fig:5}
{\includegraphics{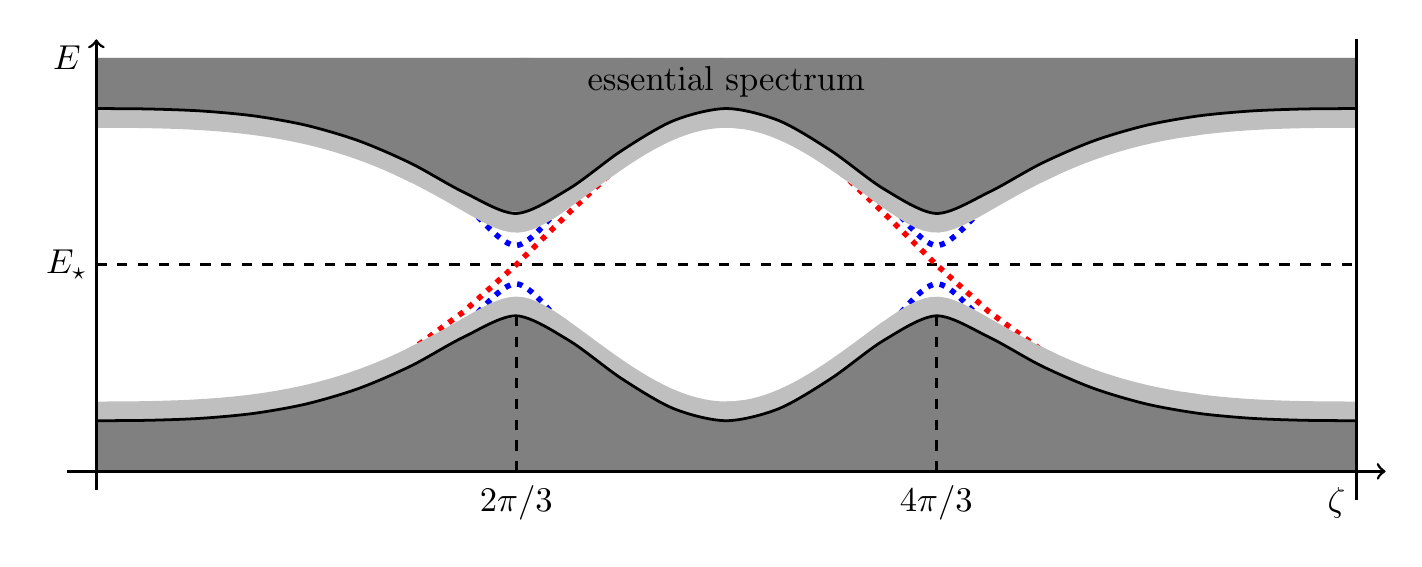}}
\end{figure}
\end{center}

\begin{conj} Assume that $\operatorname{(H1)}$ -- $\operatorname{(H4)}$ hold for both Dirac points $(\xi_\star^A,E_\star)$ and $(\xi_\star^B,E_\star)$. There exists $\delta_0 > 0$ such that for every $\delta \in (0,\delta_0)$, there exists a family $\zeta \in \R \mapsto B_\delta(\zeta)$ such that:
\begin{itemize}
\item $B_\delta(\zeta)$ is a compact operator $H^2[\zeta] \rightarrow L^2[\zeta]$;
\item $B_\delta(\zeta)$ depends continuously on $\zeta$ (with respect to the operator norm on $H^2[\zeta] \rightarrow L^2[\zeta]$) and $B_\delta(\zeta+2\pi) = B_\delta(\zeta)$ for every $\zeta \in \R$;
\item $\PP_\delta[\zeta] + B_\delta(\zeta) : H^2[\zeta] \rightarrow L^2[\zeta]$ has no eigenvalues in the essential spectral gap containing $E_\star$.
\end{itemize}
\end{conj}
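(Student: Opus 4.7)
The plan is to leverage the vanishing spectral flow of Corollary~\ref{cor:4} together with the general principle that zero spectral flow is the only topological obstruction to eliminating eigenvalues from a fixed essential spectral gap by a continuous loop of compact self-adjoint perturbations. The construction proceeds in three steps: trivialize the family of $\zeta$-dependent Hilbert spaces, add a Riesz-projection perturbation that shifts edge eigenvalues out of the gap away from the crossings of the boundary of a slightly shrunken gap, and complete it with finite-rank compensating pieces at those crossings, whose continuous assembly is made possible by Corollary~\ref{cor:4}.

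First, I would trivialize via a unitary $U_\zeta:L^2[\zeta]\to L^2[0]$, $(U_\zeta u)(x)=e^{-i\zeta\varphi(x)}u(x)$, where $\varphi\in C^\infty(\R^2,\R)$ is chosen so that $\varphi(x+v)=\varphi(x)+1$ and $\varphi(x+v')=\varphi(x)$. This intertwines the quasiperiodicity conditions and sends $H^2[\zeta]$ onto $H^2[0]$. The conjugated family $\widetilde\PPP_\delta[\zeta]:=U_\zeta\PPP_\delta[\zeta]U_\zeta^{-1}$ is a smooth, $2\pi$-periodic family of self-adjoint elliptic operators on the fixed space $L^2[0]$ with $\zeta$-independent essential spectrum. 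Producing the conjectured $B_\delta(\zeta)$ is then equivalent to producing a continuous, $2\pi$-periodic, compact family $\widetilde B_\delta(\zeta):H^2[0]\to L^2[0]$ such that $\widetilde\PPP_\delta[\zeta]+\widetilde B_\delta(\zeta)$ has no eigenvalues in $(E_-,E_+)$; one then sets $B_\delta(\zeta):=U_\zeta^{-1}\widetilde B_\delta(\zeta)U_\zeta$.

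Fix $\epsilon>0$ and a rectangular contour $\gamma\subset\C$ encircling $\widehat I:=[E_-+\epsilon,E_+-\epsilon]$, and consider
\begin{equation}
\widetilde B_\delta(\zeta):=C\cdot\Pi_\delta(\zeta)+K_\delta(\zeta),\qquad \Pi_\delta(\zeta):=\frac{1}{2\pi i}\oint_\gamma\big(\lambda-\widetilde\PPP_\delta[\zeta]\big)^{-1}d\lambda,
\end{equation}
with $C>E_+-E_-$. When $\gamma$ meets no eigenvalue of $\widetilde\PPP_\delta[\zeta]$, $\Pi_\delta(\zeta)$ is a smooth finite-rank projection and $C\cdot\Pi_\delta(\zeta)$ shifts every eigenvalue inside $\widehat I$ upward by $C$, out of $(E_-,E_+)$. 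Eigenvalues may nonetheless cross $\gamma$ at finitely many $\zeta\in[0,2\pi]$, forcing jumps in the rank of $\Pi_\delta(\zeta)$. Corollary~\ref{cor:4} rescues the construction: the vanishing spectral flow forces these crossings to come in matched pairs, one eigenvalue entering $\widehat I$ at some $\zeta_1$ and one leaving at some $\zeta_2$. For each such pair I would choose $K_\delta(\zeta)$ to include a rank-one piece $c(\zeta)|\psi(\zeta)\rangle\langle\psi(\zeta)|$ supported on an arc joining $\zeta_1,\zeta_2$, designed so that its spectral contribution continuously transports the relevant eigenvector across $\gamma$ while still producing a spectrum-free gap. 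Elliptic regularity forces $\psi(\zeta)\in H^k[0]$ for every $k$, so the total $\widetilde B_\delta(\zeta)$ is continuous as a family $H^2[0]\to L^2[0]$, is finite-rank (hence compact) at each $\zeta$, and is $2\pi$-periodic by construction.

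The main obstacle is the continuous matching of eigenvector bundles at crossings, particularly in the $O(\delta)$-windows around the Dirac quasimomenta $\zeta_\star^A,\zeta_\star^B$ where $2N+1$ eigenvalues accumulate at $E_\star$ as $\delta\to 0$. Inside these windows, the extension of Corollary~\ref{cor:3} in \S\ref{sec:4.3} provides explicit two-scale expansions of the eigenvectors in terms of the Dirac modes of $\Di(\mu)$, reducing the matching to a finite-dimensional bundle-theoretic problem parameterized by $\mu$. The identity $\var_\star^A=-\var_\star^B$, which is the microscopic origin of the spectral flow cancellation in Corollary~\ref{cor:4}, encodes at the level of eigenvector bundles the triviality of the global monodromy around $\zeta\in[0,2\pi]$, ensuring that the compensating rank-one pieces can be glued into a single continuous, $2\pi$-periodic family. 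Making this gluing uniform in the two regimes $|\zeta-\zeta_\star^{A,B}|\lesssim\delta$ and $|\zeta-\zeta_\star^{A,B}|\gtrsim\delta$---in particular quantifying the continuity of $\psi(\zeta)$ across the transition---is the technical heart of the argument.
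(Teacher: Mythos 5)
This statement is labeled as a \textbf{Conjecture} in the paper: the author does not prove it, so there is no proof of record to compare your proposal against. Your write-up should therefore be judged as an attempted resolution of an open problem, and as such it contains genuine gaps rather than merely stylistic differences.

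The outline (trivialize the Hilbert-space bundle with $U_\zeta u = e^{-i\zeta\blr{k,x}}u$, shift the eigenvalues captured by a Riesz projection out of the gap, and use the vanishing spectral flow of Corollary \ref{cor:4} to repair the discontinuities of that projection) is the natural first strategy, and the trivialization step is fine modulo the standard caveat that $\widetilde\PP_\delta[\zeta+2\pi]=U_{2\pi}\widetilde\PP_\delta[\zeta]U_{2\pi}^{-1}$ rather than $\widetilde\PP_\delta[\zeta]$, so periodicity of $B_\delta$ must be checked against this conjugation. The two serious gaps are elsewhere. First, your construction only clears the compact subinterval $\widehat I=[E_-+\epsilon,E_+-\epsilon]$: eigenvalues of $\PP_\delta[\zeta]$ lying in the collars $(E_-,E_-+\epsilon)$ and $(E_+-\epsilon,E_+)$ are untouched by $C\cdot\Pi_\delta(\zeta)$, and indeed an eigenvalue branch that exits $\widehat I$ upward at some $\zeta_2$ lands back in the gap just below $E_+$ after the projection drops it. The conjecture demands no eigenvalues in the \emph{entire} open gap, and the edge region is precisely where the problem is hardest: eigenvalue branches may accumulate at $E_\pm$ (so the corresponding spectral projection need not be finite rank, and need not be compact from $H^2[\zeta]$ to $L^2[\zeta]$ since the cylinder $\R^2/\Z v$ is unbounded), and the eigenfunctions delocalize as the eigenvalues approach the essential spectrum, so compact perturbations lose their grip exactly where you need them. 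Second, the "compensating rank-one pieces" are asserted, not constructed: the claim that one can choose $c(\zeta)\lvert\psi(\zeta)\rangle\langle\psi(\zeta)\rvert$ on an arc joining two crossings so that the eigenvector is "continuously transported across $\gamma$ while still producing a spectrum-free gap" is essentially a restatement of the conjecture in a neighborhood of the crossings, and vanishing of the \emph{net} signed count does not by itself produce a canonical pairing of crossings nor a continuous eigenvector bundle along the connecting arcs (crossings can interleave, branches can be tangent to $\gamma$, and the number of crossings need not even be finite without an analyticity or transversality argument). Until the collar regions and the interpolation at crossings are handled quantitatively --- which is where the content of the conjecture lies --- the proposal is a program rather than a proof.
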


On a positive note, our approach also applies  to ``magnetic" Schr\"odinger operators
\begin{equations}\label{eq:4j}
\tPP_\delta = P_0 + \delta \cdot \kappa_\delta \cdot \tW, \ \ \ \ \tW = \Aa \cdot D_x + D_x \cdot \Aa, 
\\
\Aa \in C^\infty\big(\R^2,\R^2\big); \ \ \ \ \Aa(x+w) = \Aa(x), \ w \in \Lambda; \ \ \ \ \Aa(-x) = -\Aa(x).
\end{equations} 
The perturbation $\tW$ no longer breaks spatial inversion; instead it breaks time-reversal symmetry (complex conjugation). See \cite{RH,HR,LWZ} for related models. We replace (H3) with 
\begin{enumerate}
\item[(H3')] The non-degeneracy condition $\te_\star \de \big\langle \phi_1,\tW \phi_1 \big \rangle_{L^2_{\xi_\star}} \neq 0$ holds.
\end{enumerate}
When (H1), (H2) and (H3') hold, the operator $\Pp_\delta[\zeta_\star]$ has an essential spectral gap centered at $E_\star$ -- similarly to $\PP_\delta[\zeta_\star]$. If moreover (H4) holds, then we can define the spectral flow of the family $\zeta \mapsto \Pp_\delta[\zeta]$.

\begin{center}
\begin{figure}
\caption{The spectrum of the magnetic-like perturbation $\tPP_\delta$ of $P_0$ for positive $\te_\star$. The topologically protected mode of the Dirac operator induces precisely two edge states energy curves. In contrast with Figure \ref{fig:5}, $\te_\star^A = \te_\star^B$: both red curves moves upwards. The resulting spectral flow is $-2$, indicating topologically protected states.} \label{fig:6}
{\includegraphics{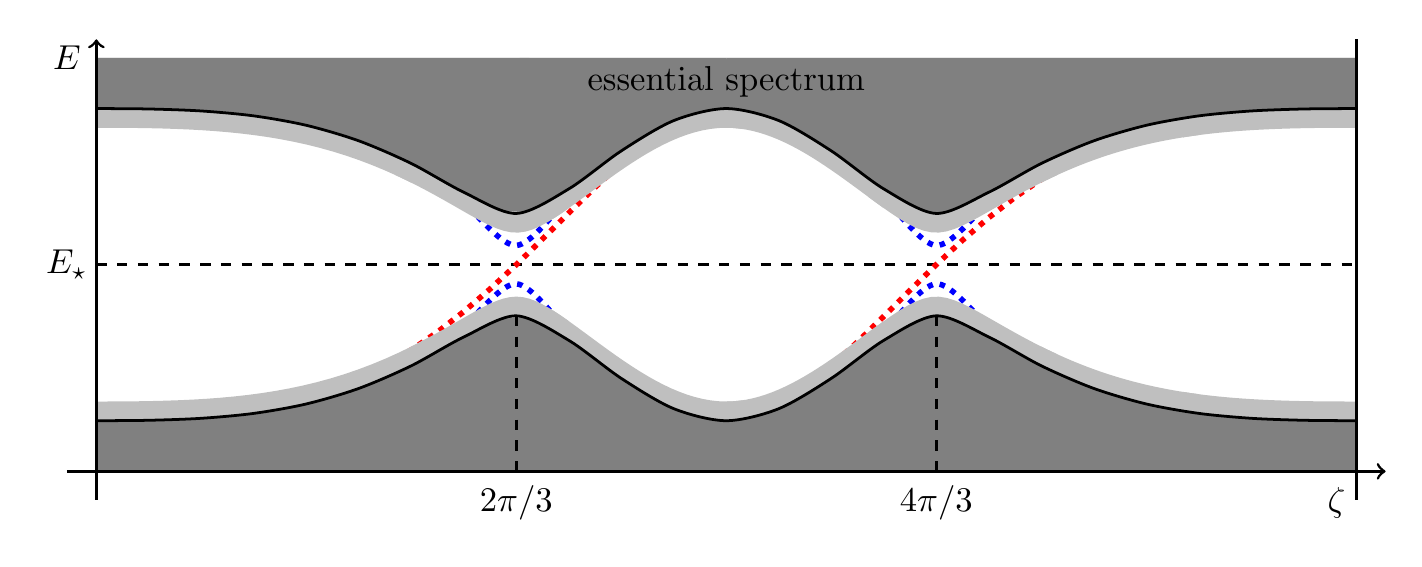}}
\end{figure}
\end{center}

\vspace*{-.8cm}

\begin{cor}\label{cor:5} Assume that $\operatorname{(H1)}$, $\operatorname{(H2)}$, $\operatorname{(H3')}$ and $\operatorname{(H4)}$ hold for both Dirac points $(\xi_\star^A,E_\star)$ and $(\xi_\star^B,E_\star)$. There exists $\delta_0 > 0$ such that for all $\delta \in (0,\delta_0)$, the spectral flow of $\tPP_\delta$ equals $-2 \cdot \sgn(\te_\star)$.
\end{cor}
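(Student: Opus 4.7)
The plan is to transcribe the analysis leading to Corollary \ref{cor:4} into the magnetic setting and then track the single sign that distinguishes the two cases. The quantity to compute is the signed count of $L^2[\zeta]$-eigenvalues of $\tPP_\delta[\zeta]$ crossing the $j_\star$-th essential spectral gap downward as $\zeta$ runs over $[0,2\pi]$.

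\textbf{Step 1.} I would first re-run the scattering arguments of \S\ref{sec:1.5} and \S\ref{sec:4.3} with $W$ replaced by $\tW$. Those arguments are driven by two ingredients: (i) opening of an $L^2_{\xi_\star}$-gap at the Dirac point, which is exactly the content of (H3'); and (ii) the computation of the effective matrix elements $\blr{\phi_i,\tW\phi_j}_{L^2_{\xi_\star}}$. The phase identities \eqref{eq:5a} together with $R$-invariance still force $\blr{\phi_1,\tW\phi_2}_{L^2_{\xi_\star}} = 0$, while the same bookkeeping based on $\phi_2(x) = \ove{\phi_1(-x)}$, combined with the identity $\ove{\tW u} = -\tW\ove{u}$, yields $\blr{\phi_2,\tW\phi_2}_{L^2_{\xi_\star}} = -\te_\star$. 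The effective Dirac operator is therefore algebraically identical to \eqref{eq:5c} with $\var_\star$ replaced by $\te_\star$; in particular its spectrum in $(-|\te_\star|,|\te_\star|)$ consists of an odd number of simple eigenvalues, of which the zero mode is topologically protected and bifurcates linearly under the $\mu$-perturbation of \eqref{eq:4j}-type, in the sense of Figure \ref{fig:7}.

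\textbf{Step 2.} I would then pin down the sign relation $\te_\star^A = \te_\star^B$. Complex conjugation at the two Dirac points gives $\phi_1^B(x) = \phi_1^A(-x)$ up to a phase, so $\te_\star^A$ and $\te_\star^B$ are related by the substitution $y=-x$ inside $\blr{\phi_1^J,\tW\phi_1^J}$. In the parity case, oddness of $W$ produced $\var_\star^A = -\var_\star^B$, which is the source of the cancellation in Corollary \ref{cor:4}. However, $\tW = \Aa\cdot D_x + D_x\cdot\Aa$ with $\Aa$ odd is parity-\emph{even}: the minus sign picked up from $\Aa(-x)$ cancels the minus sign coming from $D_{-x} = -D_x$. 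Hence the same change of variables produces no overall sign and $\te_\star^A = \te_\star^B = \te_\star$. This is the crucial identity that turns the cancellation of Corollary \ref{cor:4} into a doubling.

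\textbf{Step 3.} Under (H4), the $j_\star$-th essential gap of $\tPP_\delta[\zeta]$ is open throughout $[0,2\pi]$, and the analog of Corollary \ref{cor:2} from Step 1 shows that all $L^2[\zeta]$-eigenvalues of $\tPP_\delta[\zeta]$ in that gap concentrate in $O(\delta)$-neighborhoods of $\zeta_\star^A$ and $\zeta_\star^B$. Near each $\zeta_\star^J$, the rescaled problem is governed by a $\mu$-dependent Dirac operator with mass $\te_\star^J$. The only eigenvalue that crosses zero as $\mu$ varies is the topologically protected mode; its crossing is linear in $\mu$ with sign determined by $\te_\star^J$, contributing $-\sgn(\te_\star^J)$ to the spectral flow (upward crossings count as $-1$). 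The non-topological modes bifurcate quadratically from nonzero eigenvalues of the Dirac operator and never return to $E_\star$, so they do not contribute. Summing over the two Dirac points and using $\te_\star^A = \te_\star^B$ from Step 2 yields the spectral flow $-2\sgn(\te_\star)$.

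The main obstacle is the parity/conjugation computation in Step 2. The matrix elements of $\tW$ involve the interplay between $\Aa$ and $D_x$ under both $x\mapsto -x$ and complex conjugation, and a careful tracking is needed to see that the sign flip relative to the parity-breaking case comes entirely from the first-order operator being parity-even rather than parity-odd. Once the identity $\te_\star^A = \te_\star^B$ is established, Step 3 is a direct transcription of the argument sketched for Corollary \ref{cor:4}, with the one cancellation replaced by an addition.
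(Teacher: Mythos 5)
Your proposal is correct and follows essentially the same route as the paper: the magnetic analogs of the matrix-element lemma and of Corollary \ref{cor:2} (these are Lemma \ref{lem:1p} and Corollary \ref{cor:6} in the text), the identity $\te_\star^A=\te_\star^B$ coming from $\tW$ being parity-even (so that $(\II\phi_1^A,\II\phi_2^A)$ is a Dirac eigenbasis at $\xi_\star^B$ and $\te_\star$ is basis-independent), and then the same interval-splitting spectral-flow count as in Corollary \ref{cor:4}, with the cancellation $-\sgn(\var_\star^A)-\sgn(\var_\star^B)=0$ replaced by the doubling $-2\sgn(\te_\star)$. One correction to Step 1: the vanishing of $\blr{\phi_1,\tW\phi_2}_{L^2_{\xi_\star}}$ cannot be deduced from $R$-invariance, since $\Aa$ is only assumed $\Lambda$-periodic and odd, not $R$-equivariant, so $\tW$ need not commute with $\RR$; the paper instead derives it from $\II\tW=\tW\II$, $\ove{\tW u}=-\tW\ove{u}$, $\II\phi_2=\ove{\phi_1}$ and selfadjointness of $\tW$, which is the same mechanism you correctly invoke for the diagonal entries.
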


Corollary \ref{cor:5} shows that $\tPP_\delta$ admits two topologically protected edge states. This corroborates results of Haldane--Raghu \cite{HR,RH}, where two quasimodes are produced via a multiscale approach. They were not proved to be topologically protected there: a statement in the spirit of Corollary \ref{cor:2} is missing. The authors perform a formal computation of the bulk index: they show that it should equal $2$ or $-2$. We will study rigorously the bulk aspects of our problem in a future work.

\subsection{Strategy} Our proof has three essential components.
\begin{itemize}
\item The simplest step consists in deriving Corollary \ref{cor:3} from Theorem \ref{thm:3}, see \S\ref{sec:4.3}. Theorem \ref{thm:3} is used to count the exact number $2N+1$ of eigenvalues in the essential spectral gap (slightly away from the edges). We derive the full expansion of edge states in powers of $\delta$ using (a) the formal multiscale procedure of \cite{FLTW3} to produce $2N+1$, almost orthogonal, arbitrarily accurate quasimodes; (b) a general selfadjoint principle that implies that these quasimodes must all be near genuine eigenvectors. 
\item We derive resolvent estimates for the bulk operators $P_{\pm \delta}[\zeta_\star]$. We first obtain resolvent estimates for the operators $P_{\pm \delta}(\xi) : H^2_\xi \rightarrow L^2_\xi$ in \S\ref{sec:5}. We prove that near $(\xi_\star,E_\star)$, these operators essentially behave like Pauli matrices. In \S\ref{sec:6} we integrate these estimates along the dual edge $\zeta_\star k + \R k'$ and derive the expansion
 \begin{equations}
 \big(P_{\pm \delta}[\zeta_\star]- \lambda \big)^{-1} = \dfrac{1}{\delta} \cdot \matrice{\phi_1 \\ \phi_2}^\top \Pi^* \cdot  \UU_\delta \left( \Di_{\star,\pm} - z \right)^{-1} \UU_\delta^{-1} \cdot \Pi \ove{\matrice{\phi_1 \\ \phi_2}} + \OO_{L^2[\zeta]}\left(\delta^{-1/3}\right).
 \end{equations}
Above,  $\Pi$ and $\UU_\delta$ are the operators introduced in \S\ref{sec:1.5}; and $\Di_{\star,\pm}$ are the formal limits of $\Di_\star$ as $t$ goes to $\pm \infty$.
\item We use a sophisticated version of the Lippmann--Schwinger principle to connect the resolvents of $\PPP_\delta[\zeta_\star]$ and of $P_{\pm\delta}[\zeta_\star]$. This requires to construct a parametrix for $\PPP_\delta[\zeta_\star]$. After algebraic manipulations -- essentially cyclicity arguments -- homogenization effects take place and produce the operator $\Di_\star$. This leads to the resolvent estimate of Theorem \ref{thm:3}.
\end{itemize}

\subsection{Relation to earlier work} The mechanism responsible for the production of edge states is the bifurcation of eigenvalues from the edge of the continuous spectrum. Such problems have a long history: see e.g. \cite{Ta,Sh,Si,DH,FK,Bor,BG1,Bo7,PLAJ,Bo8,HW,Ze} for states generated by defects in periodic backgrounds; and \cite{GW,BorG,DW,DVW,Di,DD,Dr2,Dr4,Dr3,DR} for localized highly oscillatory perturbations.

Fefferman, Lee-Thorp and Weinstein \cite{FLTW3,FLTW4} produced the closest results to our analysis. They were the first to prove existence of edge states for continuous honeycomb lattices, in the small/adiabatic regime $\delta \rightarrow 0$. They built up on their own work \cite{FLTW1,FLTW2} where they proved existence of defect states for dislocated one-dimensional materials. 

Our work improves and extends \cite{FLTW3,FLTW4} in the following way:
\begin{itemize}
\item It connects the resolvents of  $\PP_\delta[\zeta]$ and $\Di(\mu)$.
\item It provides full expansions of edge states in powers of $\delta$.
\item It identifies \textit{all} edge states with energy near Dirac point energies.
\end{itemize}
The third point allows to interpret topologically the results in terms of the spectral flow of $\zeta \mapsto \PPP_\delta[\zeta]$. This is a robust invariant of the system, also called edge index. We conjecture that the modes of $\PP_\delta[\zeta]$ should not be topologically protected: the edge index vanishes. However, for the magnetic operator $\tPP_\delta[\zeta]$ introduced in \eqref{eq:4j}, two such states are topologically protected: they persist under large (suitable) deformations. 

We refer to \cite{GMS,PST,WLW,WW} for the study of similar operators with perturbations that vary adiabatically in all directions; and to \cite{DL1,DL2,CHR1,CHR2,CHR3} for analysis of perturbations small with respect to the inverse scale of variation. The scaling studied here is peculiar: the perturbation varies adiabatically in one direction only. 

Our strategy generalizes the one-dimensional work \cite{DFW}, developed to improve the results of \cite{FLTW1,FLTW2}. The contruction of genuine edge states from quasimodes in \S\ref{sec:4.3} follows the same classical procedure as \cite[\S3.3]{DFW} and is sketched here. We derive the fiberwise resolvent estimates for $P_{\pm \delta}(\xi)$ in \S\ref{sec:12.1}-\ref{sec:5.3} as in \cite[\S4.1-4.2]{DFW}. We continued the work \cite{DFW} in \cite{Dr}. There, we showed that the defect states of \cite{FLTW1,FLTW2} are topologically stable in the following sense. The model embeds naturally in a one-parameter family of \textit{dislocated systems}, related to  \cite{Ol,Ko,HK,DPR,HK2,HK3}. We compute the spectral flow in terms of bulk quantities. We show that it is equal to the bulk index -- the Chern number of a Bloch eigenbundle for the bulk. Hence, \cite{Dr} provides a novel continuous setting where the bulk-edge correspondence holds -- adding to \cite{KS,KS2,Taa,FSF,Ba,Ba2,BR}.

\subsection{Further perspectives} Our results stimulate future lines of research:
\begin{itemize}
\item Armchair-type edges are edges so that the associated dual line $\zeta_\star k + \R k'$ passes through both Dirac momenta $\xi_\star^A$ and $\xi_\star^B$. They correspond to  directions
\begin{equation}
v=a_1v_1+a_2 v_2, \ \ \ \ v_1 \wedge v_2 =1, \ \ \ \ a_1 = a_2 \mod 3,
\end{equation}
see \S\ref{sec:6.1}. The no-fold condition barely fails for such edges: $\PPP_\delta[\zeta_\star]$ still has an essential gap in, say, the sharp contrast regime. See \cite[Corollary 6.3]{FLW6}. We expect our techniques to be robust enough to handle such edges. In particular, a $2 \times 2$ bloc of uncoupled Dirac operators should emerge in the resolvent estimates. 
\item This work may open the way to prove the no-fold conjecture of Fefferman--Lee-Thorp--Weinstein \cite{FLTW3}. It predicts that long-lived \textit{resonant} edge states should appear when the no-fold condition fails. This is supported by the existence of highly accurate localized quasimodes, still produced by the formal multiscale procedure of \cite{FLTW3}.  See  \cite{GeS,SV,TZ,St1,St2,Ga1} for the relation between quasimodes and resonances in other settings.
\item The eigenvalue curve $\zeta \mapsto E_{\delta,0}^\zeta$ of $\PP_\delta[\zeta]$ corresponding to the topologically protected mode of $\Di(\mu)$ intersects $E_\star$ transversely. See the red curves in Figure \ref{fig:5}. This contrasts with the eigenvalue curves $\zeta \mapsto E_{\delta,j}^\zeta$, $j \neq 0$, which exhibit quadratic extrema near $\zeta_\star$; see the blue curves in Figure \ref{fig:5}. This indicates that $L^2$-wave packets constructed from the topologically protected modes of $\Di(\mu)$ should have a longer lifetime. Mathematical and experimental investigations of this phenomenon would be interesting. The techniques could lead to a time-dependent analysis of quasimodes when the no-fold condition fails. See \cite{GeS} for a related investigation in the shape resonance context; and \cite{CMS,AZ1,AZ2,FW2,AS} for related investigations in gap-less settings.
\item In a forthcoming work, we will investigate the relation between the bulk and edge indexes of $\PPP_\delta[\zeta]$ or $\tPP_\delta[\zeta]$, as in \cite{HR}. The bulk-edge correspondence predicts that these should be equal. It is widely unexplored in continuous, asymptotically periodic settings: apart from \cite{BR,Dr}, the only investigations concern the quantum Hall effect \cite{KS,KS2,Taa}. The dicrete setting is better understood \cite{KRS,EGS,GP,ASV,Ba,Sha,Br,GS,GT,ST}. It would also be nice to study it in quantum graph models of graphene -- see \cite{KP,BZ,BHJ,Le} for setting and spectral results.
\item The recent numerical approach \cite{TWL} could be applied to $\tPP_\delta$ as $\delta$ increases away from $0$. Corollary \ref{cor:5} shows implies that two edge states persist as long as the gap remains open. However their qualitative description (Corollary \ref{cor:6}) should progressively break down as $\delta$ increases. It would be interesting to investigate numerically how their shape changes.
\end{itemize}

\subsection*{Notations} Here is a list of notations used in this work:
\begin{itemize}
\item If $z \in \C$, $\bar{z}$ denotes its complex conjugate and $|z|$ its modulus. We will sometimes identify a vector $x = [x_1,x_2]^\top \in \R^2$ with the complex number $x_1 + i x_2$.
\item $\Ss^1 \subset \C$ is the circle $\{z \in \C : |z|=1\}$; 
\item $\Dd(z,r) \subset \C$ denotes the disk centered at $z \in \C$, of radius $r$.  
\item If $E,F \subset \C$, $\dist(E,F)$ denotes the Euclidean distance between $E$ and $F$.
\item $D_x$ is the operator $\frac{1}{i}[\p_{x_1},\p_{x_2}]^\top = \frac{1}{i} \nabla$.
\item $L^2$ denotes the space of square-summable functions  and $H^s$
are the classical Sobolev spaces. 
\item If $\HH$ and $\HH'$ are Hilbert space and $\psi \in \HH$, we write $|\psi|_\HH$ for the norm of $\HH$; if $A : \HH \rightarrow \HH'$ is a bounded operator, the operator norm of $A$ is
\begin{equation}
\| A \|_{\HH \rightarrow \HH'} \de \sup_{|\psi|_\HH=1} |A\psi|_{\HH'}.
\end{equation}
If $\HH = \HH'$, we simply write $\| A \|_{\HH} = \| A \|_{\HH \rightarrow \HH}$.
\item If $\psi_\epsi \in \HH$ -- resp. $A_\epsi : \HH \rightarrow \HH$ is a linear operator --  and $f : \R \setminus \{0\} \rightarrow \R$, we write $\psi_\epsi = O_{\HH}\big(f(\epsi)\big)$ -- resp. $A_\epsi = \OO_{\HH \rightarrow \HH'}\big(f(\epsi)\big)$ -- when there exists $C > 0$ such that $|\psi_\epsi|_\HH \leq Cf(\epsi)$  -- resp. $\|A_\epsi\|_{\HH \rightarrow \HH'} \leq C f(\epsi)$ -- for $\epsi \in (0,1]$. If $\HH = \HH'$, we simply write $A_\epsi = \OO_\HH\big(f(\epsi)\big)$.
\item We denote the spectrum of a (possibly unbounded) operator $A$ on $\HH$ by $\Sigma_\HH(A)$. It splits into an essential part $\Sigma_{\HH,\ess}(A)$ and a discrete part $\Sigma_{\HH,\pp}(A)$.
\item $\Lambda$ is the lattice $\Z v_1\oplus \Z v_2$ -- see \S\ref{sec:1.1}. An edge is a line $\R v \subset \R^2$ with $v = a_1v_1+a_2v_2\in \Lambda$, $a_1, a_2$ relatively prime integers. We associate to $v$ vectors $v', k$ and $k'$ via \eqref{eq:3w}.
\item The space $L^2_\xi$ consists of $\xi$-quasiperiodic functions with respect to $\Lambda$:
\begin{equation}
L^2_\xi \de \left\{ u\in L^2_\loc \big(\R^2,\C\big) : \ u(x+w) = e^{i\blr{\xi,w}} u(x), w \in \Lambda \right\}.
\end{equation}
\item $\ell \in (\R^2)^*$ is the projection of $k$ orthogonally to $k'$:
\begin{equation}
\ell \de k - \dfrac{\blr{k,k'}}{|k'|^2} k'.
\end{equation} 
\item $L^2[\zeta]$ is the space
\begin{equation}
L^2[\zeta] \de \left\{ u \in L^2_\loc(\R^2,\C), \ u(x+v) = e^{i\zeta} u(x), \ \int_{\R^2/\Z v} |u(x)|^2 dx   < \infty\right\}.
\end{equation}
\item $V \in C^\infty(\R^2,\R)$ is a honeycomb potential -- see Definition \ref{def:3}.
\item $W \in C^\infty(\R^2,\R)$ is $\Lambda$-periodic and odd, see \eqref{eq:5d}.
\item $P_\delta $ is the operator $-\Delta + V + \delta W$ on $L^2$; for $\xi \in \R^2$, $P_\delta(\xi)$ is the operator formally equal to $P_\delta$ but acting on $L^2_\xi$. For $\zeta \in \R$, $P_\delta[\zeta]$ is the operator formally equal to $P_\delta$ but acting on $L^2[\zeta]$.
\item $\PP_\delta$ is the operator $-\Delta + V +\delta\cdot \kappa_\delta \cdot W$ on $L^2$, where $\kappa_\delta(x) = \kappa(\delta \blr{k',x})$ and $\kappa$ is a domain-wall function -- see \eqref{eq:3v}. $\PP_\delta[\zeta]$ is the operator formally equal to $\PP_\delta$ but acting on $L^2[\zeta]$.
\item $(\xi_\star,E_\star)$ denotes a Dirac point of $P_0 = -\Delta + V$, associated to a Dirac eigenbasis $(\phi_1,\phi_2)$ -- see Definition \ref{def:1}.
\item $\zeta_\star$ is the real number $\blr{\xi_\star,v}$.
\item $\xi_\star^A, \xi_\star^B, \zeta_\star^A, \zeta_\star^B$ are defined in \eqref{eq:3y} and \eqref{eq:4k}, respectively.
\item $\nu_\star$ is a complex number associated to $(\xi_\star,E_\star)$ and to the Dirac eigenbasis $(\phi_1,\phi_2)$, such that $|\nu_\star| = \nu_F$ -- see \S\ref{sec:3.2}.
\item $\var_\star = \blr{\phi_1, W\phi_1}_{L^2_{\xi_\star}}$ is always assumed to be non-zero; we also define $|\var_\star| = \var_F$.
\item The Pauli matrices are
\begin{equation}\label{eq:1f}
\bsu = \matrice{0 & 1 \\ 1 & 0}, \ \ \bsd = \matrice{0 & -i \\ i & 0}, \ \  \bst = \matrice{1 & 0 \\ 0 & -1}.
\end{equation}
These matrices satisfy $\bsj^2 = \Id$ and $\bsi \bsj = -\bsj \bsi$ for $i \neq j$.
\end{itemize}

\subsection*{Acknowledgements} I would like to thank Michael Weinstein for his in-depth introduction to the subject and for suggesting the project. Support from the Simons Foundation through M. Weinstein's Math+X investigator award $\#$376319 and from NSF DMS-1800086 are gratefully acknowledged.


\section{Honeycomb potentials, Dirac points and edges}

\subsection{Equilateral lattice} We review briefly the definitions of \S\ref{sec:1.1}. The equilateral lattice $\Lambda$ is $\Lambda = \Z v_1 \oplus \Z v_2$ given in canonical coordinates by
\begin{equation}
v_1 = a\matrice{\sqrt{3} \\ 1}, \ \ \ v_2= a\matrice{\sqrt{3} \\ -1},
\end{equation}
where $a>0$ is a constant such that $\Det[v_1,v_2]=1$. 
Let $k_1, k_2 \in (\R^2)^*$ be dual vectors: $\blr{k_i,v_j} = \delta_{ij}$. Identifying $(\R^2)^*$ with $\R^2$ via the scalar product, 
\begin{equation}
[k_1,k_2] \cdot [v_1,v_2] = \Id\ \Rightarrow \ [k_1,k_2] = [v_1,v_2]^{-1} = \dfrac{1}{6a} \matrice{\sqrt{3} & \sqrt{3} \\ 3 & -3}.
\end{equation}
Our definition does not involve a factor $2\pi$ -- in contrast with some other conventions. The fundamental cell $\Ll$ and dual cell $\Ll^*$ are 
\begin{equation}
\Ll \de \big\{ t_1 v_1 + t_2 v_2 : \ t_1, t_2 \in [0,1)  \big\}, \ \ \Ll^* \de \big\{ \tau_1 k_1 + \tau_2 k_2 : \ \tau_1, \tau_2 \in [0,2\pi)  \big\}.
\end{equation}

\subsection{Symmetries}\label{sec:2.2} Recall that the space of $\xi$-quasiperiodic functions is
\begin{equation}
L^2_\xi \de \left\{ u\in L^2_\loc \big(\R^2,\C\big) : \ u(x+w) = e^{i\blr{\xi,w}} u(x), w \in \Lambda \right\}.
\end{equation}
We introduce three operators: $\RR$ (rotation); $\II$ (spatial inversion); and $\CCC$ (complex conjugation). These are given by
\begin{equation}
\RR u(x) = u(Rx), \ \ R \de \dfrac{1}{2} \matrice{ -1 & \sqrt{3} \\ -\sqrt{3} & -1 } ; \ \ \ \ \II u(x) = u(-x); \ \ \ \ \CCC u(x) = \ove{u(x)}.
\end{equation}

We study the action of these operators on the spaces $L^2_\xi$. Note that $Rv_1 = -v_2$ and $Rv_2 = v_1-v_2$. Hence, $R$ leaves $\Lambda$ invariant. If $u \in L^2_\xi$ then
\begin{equations}
(\RR u)(x+v) =   u(Rx+Rv) = e^{i\blr{\xi, Rv}} (\RR u)(x) = e^{i\blr{R^*\xi, v}} (\RR u)(x),
\\
(\II u)(x+v) =   u(-x-v) = e^{-i\blr{\xi, v}} (\II u)(x), \ \ \ \ (\CCC u)(x+v) = \ove{u(x+v)} = e^{-i\blr{\xi, v}} (\CCC u)(x).
\end{equations}
It follows that
\begin{equation}\label{eq:2f}
\RR L^2_\xi = L^2_{R^{-1} \xi}; \ \ \ \ \II L^2_\xi = L^2_{-\xi}; \ \ \ \ \CCC L^2_\xi = L^2_{-\xi}.
\end{equation}

Let $\xi_\star^A$ and $\xi_\star^B$ be given by \eqref{eq:3y}:
\begin{equation}
\xi_\star^A = \dfrac{2\pi}{3} (2k_1+k_2), \ \ \ \ \xi_\star^B = \dfrac{2\pi}{3} (k_1+2k_2).
\end{equation}
We observe that
\begin{equation}
R^{-1} \xi_\star^A = \xi_\star^A+ 2\pi(k_1+k_2), \ \ \ \ R^{-1} \xi_\star^B = \xi_\star^B+2\pi k_1.
\end{equation}
In particular, $R^{-1} \xi_\star = \xi_\star \mod 2\pi\Lambda^*$ when $\xi_\star \in \{\xi_\star^A, \xi_\star^B\}$. Thanks to \eqref{eq:2f}, we see that the space $L^2_{\xi_\star}$ is $\RR$-invariant. Since $\RR^3 = \Id$, we deduce that $\RR : L^2_{\xi_\star} \rightarrow L^2_{\xi_\star}$ has three eigenvalues: $1, \tau, \otau$ with $\tau = e^{2i\pi/3}$. Since $\RR$ is a unitary operator,  $L^2_{\xi_\star}$ admits an orthogonal decomposition
\begin{equations}
L^2_{\xi_\star} = L^2_{\xi_\star,1} \oplus L^2_{\xi_\star,\tau} \oplus L^2_{\xi_\star,\otau},  \ \ \ \  \ \ 
L^2_{\xi_\star,z} \de \text{ker}_{L^2_{\xi_\star}}(\RR-z).
\end{equations}

The operator $\CCC \II$ maps $L^2_{\xi_\star}$ to itself. If $u \in L^2_{\xi_\star,\tau}$ then
\begin{equation}
\RR \big(\CCC \II u \big)(x) = \ove{u(-Rx)} = \ove{\tau \cdot u(-x)} = \otau \cdot \big(\CCC \II u \big)(x).
\end{equation}
Therefore $\CCC \II L^2_{\xi_\star,\tau} = L^2_{\xi_\star,\otau}$.

\subsection{Dirac points}\label{sec:3.2} We recall that $P_0 = -\Delta + V$, where $V$ is a honeycomb potential -- see Definition \ref{def:3}. We denote by 
\begin{equation}\label{eq:5b}
\lambda_{0,1}(\xi) \leq \lambda_{0,2}(\xi) \leq \dots \leq \lambda_{0,j}(\xi) \leq \dots
\end{equation}
the dispersion surfaces of $P_0$, i.e. the $L^2_\xi$-eigenvalues of $P_0(\xi)$. Conical intersections in the band spectrum \eqref{eq:5b} are called Dirac points -- see Definition \ref{def:1}. Fefferman and Weinstein \cite{FW} -- see also  \cite{CV,Gru,BC,Le,KMO,AFL} for related perspectives -- showed that honeycomb Schr\"odinger operators generically admit Dirac points
\begin{equation}
(\xi_\star, E_\star) \in \{ \xi_\star^A, \xi_\star^B \} \times \R, \ \ \ \ \xi_\star^A \de \dfrac{2\pi}{3}(2k_1+k_2), \ \ \ \ \xi_\star^B \de \dfrac{2\pi}{3}(k_1+2k_2).
\end{equation}
The eigenspace $\ker_{L^2_{\xi_\star}}\hspace*{-1.5mm}\big(P_0(\xi_\star) - E_\star\big)$ is spanned by an orthonormal basis $\{\phi_1,\phi_2\}$ with
\begin{equation}
\phi_1 \in L^2_{\xi_\star,\tau}, \ \ \ \ \phi_2 = \ove{\II \phi_1} \in L^2_{\xi_\star,\ove{\tau}}.
\end{equation}
We call $(\phi_1,\phi_2)$ a Dirac eigenbasis. It is unique modulo the $\Ss^1$-action $(\phi_1,\phi_2) \mapsto (\w \phi_1, \ove{\w} \phi_2)$, $\w \in \Ss^1$.

\begin{lem}\label{lem:1i} Let $(\xi_\star,E_\star)$ be a Dirac point of $P_0$ with Dirac eigenbasis $(\phi_1,\phi_2)$. Then
\begin{equation}
\blr{\phi_1, D_x \phi_1}_{L^2_{\xi_\star}} = \blr{\phi_2, D_x \phi_2}_{L^2_{\xi_\star}} = 0.
\end{equation}
In addition, there exists $\nu_\star \in \C$ with $|\nu_\star| = \nu_F$ such that for all $\eta \in \R^2$ (canonically identified with a complex number),
\begin{equation}
2\blr{\phi_1, (\eta \cdot D_x) \phi_2}_{L^2_{\xi_\star}} = \nu_\star \eta, \ \ \ \ 2\blr{\phi_2, (\eta \cdot D_x) \phi_1}_{L^2_{\xi_\star}} = \ove{\nu_\star \eta}.
\end{equation}
\end{lem}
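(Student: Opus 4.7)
The plan is to exploit the rotational symmetry of the Dirac basis: recall $\RR \phi_1 = \tau \phi_1$ and $\RR \phi_2 = \bar\tau \phi_2$ with $\tau = e^{2i\pi/3}$. A direct chain rule computation applied to $(\RR u)(x) = u(Rx)$ gives the commutation identity $D_x \RR = R^\top \RR D_x$. Since $\det R = 1$, $R\Lambda = \Lambda$, and $R^{-1}\xi_\star \equiv \xi_\star$ modulo $2\pi \Lambda^*$, the operator $\RR$ is a unitary endomorphism of $L^2_{\xi_\star}$. Combining these two facts, for any $u, v \in L^2_{\xi_\star}$ one obtains the vector-valued identity
\begin{equation*}
\langle \RR u, D_x \RR v \rangle_{L^2_{\xi_\star}} = R^\top \langle u, D_x v \rangle_{L^2_{\xi_\star}} \in \C^2.
\end{equation*}

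First I would establish the vanishing of the diagonal terms. Taking $u = v = \phi_j$ in the identity above forces $\langle \phi_j, D_x \phi_j \rangle$ to be a fixed vector of $R^\top$. But $R^\top$ is a non-trivial plane rotation, so its $\C$-linear extension to $\C^2$ has spectrum $\{\tau, \bar\tau\}$ and no non-zero fixed vector. Hence $\langle \phi_j, D_x \phi_j \rangle = 0$ for $j=1,2$, which gives the first claim.

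For the off-diagonal term, setting $u = \phi_1, v = \phi_2$ yields
\begin{equation*}
\langle \phi_1, D_x \phi_2 \rangle_{L^2_{\xi_\star}} = \tau^2\, R^\top \langle \phi_1, D_x \phi_2 \rangle_{L^2_{\xi_\star}},
\end{equation*}
so (using $\tau^3=1$) this vector lies in the $\tau$-eigenspace of $R^\top$, which is one-dimensional over $\C$. Contracting with $\eta \in \R^2$ and using the canonical identification $\R^2 \cong \C$, under which $R^\top$ acts as multiplication by $\tau$, the assignment $\eta \mapsto 2\langle \phi_1, (\eta \cdot D_x) \phi_2 \rangle_{L^2_{\xi_\star}}$ is $\C$-linear and hence equals $\nu_\star \eta$ for a unique $\nu_\star \in \C$. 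The conjugate formula follows immediately from the symmetry of $\eta \cdot D_x$ on $L^2_{\xi_\star}$: one has $2\langle \phi_2, (\eta \cdot D_x) \phi_1 \rangle = \overline{2\langle \phi_1, (\eta \cdot D_x) \phi_2 \rangle} = \overline{\nu_\star \eta}$.

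It remains to identify $|\nu_\star|$ with $\nu_F$. Writing $u \in L^2_{\xi_\star+\eta}$ as $u = e^{i\eta \cdot x} v$ with $v \in L^2_{\xi_\star}$ conjugates $P_0(\xi_\star+\eta)$ to the operator $P_0(\xi_\star) + 2 \eta \cdot D_x + |\eta|^2$ on $L^2_{\xi_\star}$. Degenerate Rayleigh--Schr\"odinger perturbation theory at the double eigenvalue $E_\star$ shows that the two bifurcating dispersion surfaces coincide, modulo $O(|\eta|^2)$, with the eigenvalues of the $2 \times 2$ matrix with entries $2 \langle \phi_i, (\eta \cdot D_x) \phi_j \rangle_{L^2_{\xi_\star}}$, which by the previous two paragraphs is $\begin{pmatrix} 0 & \nu_\star \eta \\ \overline{\nu_\star \eta} & 0 \end{pmatrix}$. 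Its eigenvalues are $\pm |\nu_\star|\,|\eta|$, and comparison with Definition \ref{def:1}(iii) yields $|\nu_\star| = \nu_F$. The delicate point in the whole argument is tracking the identification $\R^2 \cong \C$ carefully enough that the $R^\top$-eigenspace analysis produces a $\C$-linear (rather than $\C$-antilinear) function of $\eta$; once this convention is pinned down, the remaining steps reduce to routine unitarity and perturbation-theoretic computations.
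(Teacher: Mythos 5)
Your proof is correct and follows essentially the same route as the paper's (Appendix \ref{app:2}): unitarity of $\RR$ plus the commutation of $D_x$ with $\RR$ forces each diagonal bracket to be a fixed vector of a nontrivial rotation (hence zero) and confines $\blr{\phi_1,D_x\phi_2}$ to a one-dimensional eigenspace of $R$, while $|\nu_\star|=\nu_F$ is obtained by matching the eigenvalues $\pm|\nu_\star|\,|\eta|$ of the effective $2\times 2$ matrix against the conical dispersion of Definition \ref{def:1}(iii) — the paper implements your "degenerate perturbation theory" step concretely as a quasimode construction combined with the selfadjoint quasimode-to-eigenvalue principle of \cite[Lemma 3.1]{DFW}. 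The single point you flag but defer — whether the contraction comes out as $\nu_\star\eta$ or $\nu_\star\overline{\eta}$ — hinges on the exact form of the commutation identity ($D_x\RR=R^\top\RR D_x$ in your version versus $\RR D_x\RR^{-1}=R^{-1}D_x$ in the paper's, which differ by an inverse of $R$), so you should actually compute $\ker_{\C^2}(R^\top-\tau)$ rather than infer it from the action of $R^\top$ on $\R^2\cong\C$; knowing that the eigenvector is proportional to $[1,i]^\top$ or to $[1,-i]^\top$ is precisely what decides $\C$-linearity versus antilinearity.
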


This lemma could be deduced from \cite[Proposition 4.5]{FLTW3}. We wrote a proof in Appendix \ref{app:2}. It relies on some algebraic relations relating $P_0$, $\RR$ and $\II$; and on perturbation theory of eigenvalues.

\subsection{Breaking the symmetry}

We will consider Schr\"odinger operators $P_\delta = -\Delta + V + \delta W$, where:
\begin{equation}
W \in C^\infty\big(\R^2,\R\big); \ \ 
W(x+w) = W(x), \ w \in \Lambda; \ \ W(x) = -W(-x). 
\end{equation}

\begin{lem}\label{lem:1h} Let $(\xi_\star,E_\star)$ be a Dirac point of $P_0$ with Dirac eigenbasis $(\phi_1,\phi_2)$ -- see Definition \ref{def:1}. Then $\blr{\phi_1,W\phi_2}_{L^2_{\xi_\star}} = \blr{\phi_2,W\phi_1}_{L^2_{\xi_\star}} = 0$. Furthermore,
\begin{equation}
\var_\star \de \blr{\phi_1,W\phi_1}_{L^2_{\xi_\star}} = -\blr{\phi_2,W\phi_2}_{L^2_{\xi_\star}}.
\end{equation}
\end{lem}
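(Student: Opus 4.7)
The plan is to exploit the anti-unitary involution $\CCC\II : L^2_{\xi_\star} \to L^2_{\xi_\star}$ introduced in \S\ref{sec:2.2}. By the construction of the Dirac eigenbasis recalled in \S\ref{sec:3.2}, $\phi_2 = \CCC\II\,\phi_1$; since $(\CCC\II)^2 = \Id$, also $\phi_1 = \CCC\II\,\phi_2$. A short computation shows that $\CCC\II$ is an anti-unitary involution on $L^2_{\xi_\star}$, so that $\blr{\CCC\II u,\,\CCC\II v}_{L^2_{\xi_\star}} = \ove{\blr{u,v}_{L^2_{\xi_\star}}}$ for all $u,v \in L^2_{\xi_\star}$.

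The key algebraic input is a commutation-type relation between $\CCC\II$ and the multiplication operator by $W$: because $W$ is real-valued and odd,
\begin{equation*}
(\CCC\II \cdot W \cdot \CCC\II)\,u(x) \;=\; \ove{W(-x)\, \ove{u(x)}} \;=\; W(-x)\, u(x) \;=\; -W(x)\, u(x),
\end{equation*}
so $\CCC\II\, W\, \CCC\II = -W$. An even potential like $V$ would instead commute with $\CCC\II$; oddness flips the sign.

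For the off-diagonal vanishing, I would apply the anti-unitary identity inside $\blr{\phi_1, W\phi_2}_{L^2_{\xi_\star}}$:
\begin{equation*}
\ove{\blr{\phi_1, W\phi_2}_{L^2_{\xi_\star}}} \;=\; \blr{\CCC\II\,\phi_1,\,\CCC\II(W\phi_2)}_{L^2_{\xi_\star}} \;=\; \blr{\phi_2,\,-W\phi_1}_{L^2_{\xi_\star}} \;=\; -\blr{\phi_2, W\phi_1}_{L^2_{\xi_\star}}.
\end{equation*}
Since $W$ is a real, bounded multiplication operator, $\blr{\phi_2, W\phi_1}_{L^2_{\xi_\star}} = \ove{\blr{\phi_1, W\phi_2}_{L^2_{\xi_\star}}}$. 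The two identities together force $\blr{\phi_1, W\phi_2}_{L^2_{\xi_\star}} = 0$; the vanishing of $\blr{\phi_2, W\phi_1}_{L^2_{\xi_\star}}$ is then immediate by conjugation.

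The diagonal identity uses the same mechanism with $\phi_2$ replaced by $\phi_1$ in the rightmost slot: $\ove{\blr{\phi_1, W\phi_1}_{L^2_{\xi_\star}}} = -\blr{\phi_2, W\phi_2}_{L^2_{\xi_\star}}$. Since $\var_\star$ is automatically real (it is the integral of the real function $W|\phi_1|^2$ over $\Ll$), this rearranges to $\var_\star = -\blr{\phi_2, W\phi_2}_{L^2_{\xi_\star}}$. No serious obstacle is expected; the only step that requires care is the bookkeeping behind $\CCC\II\, W\, \CCC\II = -W$, where the complex conjugation and the change of variable $x\mapsto -x$ must be handled simultaneously and the reality and oddness of $W$ are used in tandem.
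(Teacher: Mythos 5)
Your proof is correct and is essentially the paper's argument: the paper combines the isometry $\II$ with the relation $\II\phi_2=\ove{\phi_1}$ and the oddness of $W$, which is exactly the same computation with the complex conjugation applied at the end rather than bundled into the anti-unitary $\CCC\II$ from the start. All the key identities you use ($\phi_2=\CCC\II\phi_1$, $\CCC\II W\CCC\II=-W$, anti-unitarity, and self-adjointness/reality of $W$) check out.
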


See the proofs of \cite[(6.19), (6.20)]{FLTW3} or Appendix \ref{app:2}. These identities rely on $\II$ being an isometry. If $\w \in \Ss^1$, the change $(\phi_1,\phi_2) \mapsto (\w \phi_1, \ove{\w} \phi_2)$ of Dirac eigenbasis leaves $\var_\star$ invariant. 

\subsection{Edges} \label{sec:6.1} Let $a_1$ and $a_2$ be two relatively prime integers and $v = a_1 v_1 + a_2 v_2$. Introduce $v' = b_1 v_1 + b_2 v_2$, where $a_1 b_2 - a_2 b_1 = 1$. The vectors $v$ and $v'$ span $\Lambda$:
\begin{equations}\label{eq:4a}
b_1 v - a_1 v' = (b_1 a_2 - a_1 b_2) v_2 = - v_2, \\
b_2 v - a_2 v = (b_2 a_1 - a_2 b_1) v_1 = v_1.
\end{equations}
Let $k$ and $k'$ be dual vectors. We claim that $k = b_2 k_1 - b_1 k_2$ and $k' = -a_2 k_1 + a_1 k_2$:
\begin{equations}
\blr{k, v} = b_2 a_1 - b_1 a_2 = 1, \ \ \blr{k, v'} = - a_2 a_1 + a_1 a_2 = 0,
\\
\blr{k', v} = b_2 b_1 - b_1 b_2 = 0, \ \ \blr{k', v'} = -a_2 b_1 + a_1 b_2 = 1.
\end{equations}

Let $(\xi_\star^A,E_\star)$ be a Dirac point in the sense of Definition \ref{def:1} and $\R v$ be an edge.
Assume that $\xi_\star^B$ belongs to the dual edge $\zeta_\star^A k + \R k' \mod 2\pi \Lambda^*$. In this case we can write $\xi_\star^B = \zeta_\star^A k + \tau k'$ with $\tau \neq \lr{\xi_\star^A,v'} \mod 2\pi \Z$. Since $\lambda_{0,j_\star}(\xi_\star^B) = E_\star$, the no-fold condition fails when $\xi_\star^B \in \zeta_\star^A k + \R k' \mod 2\pi \Lambda^*$ (see Definition \ref{def:2}). Given the expressions \eqref{eq:3y} of $\xi_\star^A$ and $\xi_\star^B$ and \eqref{eq:3w} of $v'$, this arises precisely when
\begin{equation}
\dfrac{2a_1+a_2}{3} - \dfrac{a_1+2a_2}{3} \in \Z  \ \ \Leftrightarrow \ \ a_2-a_1 \in 3\Z.
\end{equation}
In particular, if the no-fold condition holds then $a_1 - a_2 \neq 0 \mod 3$. This implies that $\{\zeta_\star^A,\zeta_\star^B\} = \{2\pi/3,4\pi/3\} \mod 2\pi \Z$ because of \eqref{eq:4k}.

\section{The characterization of edge states}\label{sec:4}

This work studies the eigenvalues of the operator
\begin{equation}
\PP_\delta[\zeta] = - \Delta + V + \delta \cdot \kappa_\delta \cdot W : L^2[\zeta] \rightarrow L^2[\zeta].
\end{equation}
Above, $\kappa_\delta$ is a domain-wall function -- see \eqref{eq:3v} -- and $L^2[\zeta]$ is the space \eqref{eq:4n}. The operator $\PP_\delta[\zeta]$ is a Schr\"odinger operator that interpolates between $P_{\delta}[\zeta]$ at $-\infty$ and $P_{\delta}[\zeta]$ at $+\infty$. See Figure \ref{fig:12}. In this section we review the multiscale approach of \cite{FLTW3,FLTW4} and we derive Corollary \ref{cor:3} assuming Theorem \ref{thm:3}, in a slightly more general setting.

\begin{center}
\begin{figure}
\caption{$\PP_\delta[\zeta]$ is a Schr\"odinger operator with a typical potential represented above, with the zigzag edge $v_1-v_2$. Each red (resp. blue) circle represents an atomic (e.g. radial) potential. The resulting potential is not periodic with respect to $\Lambda$; rather it is periodic with respect to $\Z v$. 
}\label{fig:12}
{\includegraphics{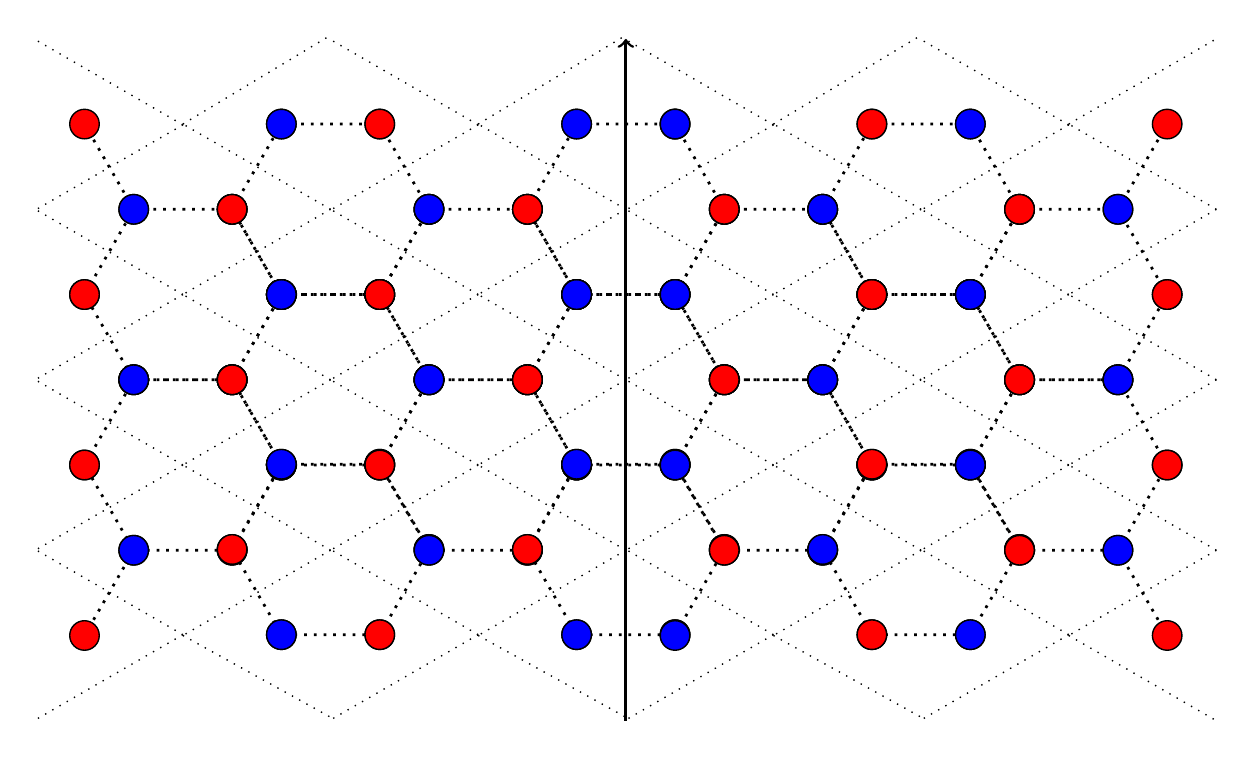}}
\end{figure}
\end{center}

\subsection{The formal multiscale approach}\label{sec:3.1} The eigenvalue problem for $\PP_\delta[\zeta]$ is
\begin{equation}\label{eq:1d}
\systeme{\big(-\Delta + V(x) + \delta \kappa_\delta(x)  W(x) - E_\delta\big) u_\delta = 0, \\
u_\delta(x + v) = e^{i\zeta} u_\delta(x),} \ \ \ \int_{\R^2/\Z v} |u_\delta(x)|^2 dx < \infty.
\end{equation}
The multiscale procedure of Fefferman--Lee-Thorp--Weinstein \cite[\S6]{FLTW3} produces approximate solutions of \eqref{eq:1d}. We review it below.

We first observe that if we write a function $u_\delta \in C^\infty(\R^2,\C)$ as
\begin{equation}\label{eq:1z}
u_\delta(x) = U_\delta(x,\delta \blr{k', x}), \ \ \ \ U_\delta \in C^\infty(\R^2 \times \R, \C),
\end{equation}
then $u_\delta$ solves \eqref{eq:1d} if and only if $U_\delta$ solves
\begin{equation}\label{eq:1t}
\systeme{ \big((D_x+\delta k' D_t)^2 + V(x) + \delta \kappa(t)W(x) - E_\delta\big) U_\delta = 0, \\ U_\delta(x+v,t) = e^{i\zeta} U_\delta(x,t), } \ \ \ \ \int_{\R^2/\Z v} \big| U_\delta(x,\delta\blr{k',x}) \big|^2 dx < \infty.
\end{equation}

We now produce approximate solutions to the system \eqref{eq:1t} when $\zeta$ is near $\zeta_\star = \blr{\xi_\star, v}$. We fix $(\xi_\star,E_\star)$ a Dirac point of $P_0$ and we write $\zeta = \zeta_\star + \mu \delta$, $\zeta_\star = \lr{\xi_\star,v}$. We make an ansatz for $U_\delta$ and $E_\delta$:
\begin{equation}\label{eq:5f}
U_\delta(x,t) = e^{i\mu\delta \blr{\ell,x} } \cdot \left(\sum_{j=1, 2} \az_j (t) \cdot \phi_j(x) + \delta \cdot V_\delta(x,t) \right), \ \ \ \ E_\delta = E_\star + \var\delta  + O\left(\delta^2\right)
\end{equation} 
where: 
\begin{itemize}
\item $(\phi_1,\phi_2)$ is a Dirac eigenbasis for $(\xi_\star,E_\star)$ -- see Definition \ref{def:1};
\item $\az_1, \az_2$ are smooth, exponentially decaying functions on $\R$, to be specified below;
\item $V_\delta \in X$ -- the space defined in \eqref{eq:5h}.
\item $\ell = k - \frac{\blr{k',k}}{|k'|^2} k'$ is the projection of $k$ to the orthogonal of $\R k'$.
\item $\var \in \R$ is a real number that will be specified below.
\end{itemize}
Since $\phi_1, \ \phi_2  \in L^2_{\xi_\star}$, $V_\delta \in X$ and $\az_1, \az_2 \in L^2(\R)$, the ansatz \eqref{eq:5f} implies:
\begin{equation}
U_\delta(x+v,t) = e^{i\zeta} U_\delta(x,t), \ \ \ \ \int_{\R^2/\Z v} \big|U_\delta(x,\delta\blr{k',x})\big|^2 dx < \infty.
\end{equation}
In particular the boundary and decay conditions of \eqref{eq:1t} hold under \eqref{eq:5f}. 

The eigenvalue problem \eqref{eq:1t}  becomes a hierarchy of equations, obtained by identifying terms of order $1, \delta, \delta^2, \dots$. Since $(P_0-E_\star) \phi_j = 0$, the equation for the terms of order $1$ is automatically satisfied. The equation for the terms of order $\delta$ is
\begin{equations}\label{eq:1w}
e^{i\mu\delta \blr{\ell, x}} (P_0-E_\star) V_\delta(x,t)
 + e^{i\mu\delta \blr{\ell, x}} \big(2(k'\cdot D_x) D_t + \kappa(t)W(x)-\var\big) \hspace*{-.7mm}  \sum_{j=1,2} \hspace*{-.7mm} \az_j (t) \phi_j(x)
\\ + 2\mu e^{i \mu \delta \blr{\ell, x}} (\ell \cdot D_x)  \sum_{j=1,2} \az_j (t) \phi_j(x) = 0.
\end{equations}
Note that for every $t \in \R$, $(P_0 - E_\star) V_\delta(\cdot,t)$ is orthogonal to $\phi_1$ and $\phi_2$. Therefore, for this system to have a solution, we must have for every $t \in \R$ and $k=1,2$, 
\begin{equation}\label{eq:1v}
\Big \langle \phi_k, \big(2(k'\cdot D_x) D_t + 2\mu (\ell \cdot D_x) \kappa(t)W-\var\big)  \sum_{j=1,2} \az_j (t) \cdot \phi_j\Big \rangle_{L^2_{\xi_\star}} = 0.
\end{equation}

The scalar products $\blr{ \phi_j, (k'\cdot D_x)\phi_k}_{L^2_{\xi_\star}}$, $\blr{ \phi_j, (\ell\cdot D_x)\phi_k}_{L^2_{\xi_\star}}$ and $\blr{ \phi_j, W\phi_k}_{L^2_{\xi_\star}}$ appear in the solvability condition \eqref{eq:1v}. They were computed in Lemma \ref{lem:1i} and \ref{lem:1h}. Using these formula, \eqref{eq:1v} simplifies to
\begin{equation}
\left(\Di(\mu) -\var \right) \matrice{\az_1 \\ \az_2} = 0, \ \ \ \ \Di(\mu) \de \matrice{0 & \nu_\star k' \\ \ove{\nu_\star k'} & 0} D_t + \mu \matrice{0 & \nu_\star \ell \\ \ove{\nu_\star \ell} & 0} + \var_\star \matrice{1 & 0 \\ 0 & -1} \kappa.
\end{equation}
This system has exponentially decaying solutions $[\az_1,\az_2]^\top$ if and only if $\var$ is an eigenvalue of $\Di(\mu)$. Under this condition, \eqref{eq:1w} has a solution $V_\delta$. In other words, this constructs a function $U_\delta$ such that \eqref{eq:1t} is satisfied modulo $O_X(\delta^2)$.

We can iterate this procedure to arbitrarily high orders in $\delta$. It produces a function $U_\delta$ such that \eqref{eq:1t} is satisfied modulo $O_X(\delta^M)$, for any $M$. Identifying $U_\delta$ with $u_\delta$ according to \eqref{eq:1z}, this procedure produces for any $M$ and any eigenvalue $\var$ of $\Di(\mu)$ a function $u_{\delta,M}$ that solves
\begin{equation}
\big( \PP_\delta[\zeta]-E_\delta \big) u_{\delta,M} = O_X\big(\delta^M\big), \ \ \ \ E_\delta = E_\star +\delta \var + O\big(\delta^2\big).
\end{equation}
This is an approximate solution to the eigenvalue problem \eqref{eq:1d}.

It is natural to ask whether these approximate solutions are close to eigenvectors. The work \cite{FLTW3} shows that this holds at first order in $\delta$. Below we state results that imply that this holds at \textit{any} order in $\delta$. This dramatically refines the main result of \cite{FLTW3}. Our approach relies on resolvent estimates rather than by-hand construction of eigenvectors. It comes with further improvements of \cite{FLTW3}:
\begin{itemize}
\item The precise counting of  eigenvalues of $\PP_\delta[\zeta]$;
\item An estimate that connects the resolvents of $\PP_\delta[\zeta]$ and $\Di(\mu)$.
\end{itemize}
These results are stated in \S\ref{sec:4.3} and first require a spectral analysis of $\Di(\mu)$.

\subsection{The Dirac operator $\Di(\mu)$}\label{sec:4.2}
The Dirac operator
\begin{equation}
\Di(\mu) = \matrice{0 & \nu_\star k' \\ \ove{\nu_\star k'} & 0} D_t + \mu \matrice{0 & \nu_\star \ell \\ \ove{\nu_\star \ell} & 0} + \var_\star \matrice{1 & 0 \\ 0 & -1} \kappa
\end{equation}
emerges in the multiscale analysis of \cite{FLTW3}. We saw that its eigenvalues are particularly relevant in the construction of approximate eigenvectors of $\PP_\delta[\zeta]$, $\zeta = \zeta_\star + \delta \mu$. In this section we relate the spectra of $\Di(\mu)$ and $\Di_\star = \Di(0)$.

\begin{lem}\label{lem:1a} The essential and discrete spectra of $\Di_\star$ and $\Di(\mu)$ are related through:
\begin{equations}
\Sigma_{L^2,\ess}\big(\Di(\mu)\big) = \R \setminus \Big[-\sqrt{\var_F^2 + \mu^2 \cdot \nu_F^2 |\ell|^2 }, \sqrt{\var_F^2 + \mu^2 \cdot \nu_F^2 |\ell|^2} \Big], 
\\
\Sigma_{L^2,\pp}\big(\Di(\mu)\big) = \big\{ \mu \cdot \nu_F |\ell| \cdot \sgn(\var_\star), \ \ \pm \sqrt{\var_j^2 + \mu^2 \cdot \nu_F^2 |\ell|^2 } \ \text{ with } 0 \neq \var_j \in \Sigma_{L^2,\pp}\big(\Di_\star\big) \big\}.
\end{equations}
All the eigenvalues of $\Di_\star$ and $\Di(\mu)$ are simple.
\end{lem}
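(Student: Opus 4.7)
The strategy is to exploit a Clifford-like algebra among the three matrices
\begin{equation*}
A \de \matrice{0 & \nu_\star k' \\ \ove{\nu_\star k'} & 0}, \ \ B \de \matrice{0 & \nu_\star \ell \\ \ove{\nu_\star \ell} & 0}, \ \ C \de \matrice{1 & 0 \\ 0 & -1},
\end{equation*}
in terms of which $\Di(\mu) = A D_t + \mu B + \var_\star C \kappa$ and $\Di_\star = AD_t + \var_\star C \kappa$. The first step is to verify by direct matrix multiplication that $A^2 = \nu_F^2|k'|^2 I$, $B^2 = \nu_F^2 |\ell|^2 I$, $C^2 = I$, and that $A,B,C$ pairwise anticommute. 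The only nontrivial anticommutation $\{A,B\}=0$ reduces to $\Re(k'\ove{\ell}) = \blr{k',\ell} = 0$, which holds by the very definition of $\ell$ as the projection of $k$ onto $(\R k')^\perp$. Expanding $\Di(\mu)^2$ and using $[D_t,\kappa] = -i\kappa'$, all cross terms either anticommute-cancel or telescope, leaving the key squaring identity
\begin{equation*}
\Di(\mu)^2 = \Di_\star^2 + \mu^2 \nu_F^2 |\ell|^2 \cdot I.
\end{equation*}

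The essential spectrum then follows from the spectral theorem applied to $\Di(\mu)^2$. Since $\Di_\star$ has essential spectrum $(-\infty,-\var_F]\cup[\var_F,\infty)$, squaring gives $\Sigma_\ess(\Di_\star^2) = [\var_F^2,\infty)$, so $\Sigma_\ess(\Di(\mu)^2) = [r_\mu^2,\infty)$ with $r_\mu \de \sqrt{\var_F^2 + \mu^2\nu_F^2|\ell|^2}$; hence $\Sigma_\ess(\Di(\mu)) \subseteq \R\setminus(-r_\mu,r_\mu)$. For the reverse inclusion I would construct Weyl sequences: outside $[-L,L]$ one has $\kappa\equiv\pm 1$, so $\Di(\mu)$ equals the constant-coefficient operators $\Di_\pm(\mu) \de AD_t + \mu B\pm\var_\star C$, which Fourier-diagonalize fiberwise with spectrum exactly $\R\setminus(-r_\mu,r_\mu)$. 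Translating approximate eigenfunctions of $\Di_\pm(\mu)$ to large $\pm t$ and truncating produces a Weyl sequence for $\Di(\mu)$ at every such value.

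For the discrete spectrum I rely on the anticommutation $\{\Di_\star,B\} = 0$, which follows from $\{A,B\}=\{B,C\}=0$ together with $B$ being $t$-independent. Hence $B$ exchanges the kernels $\ker_{L^2}(\Di_\star\mp\var_j)$. For $j\neq 0$ each kernel is one-dimensional; writing $\psi_{\pm j}$ for unit generators and adjusting phases, $B\psi_{\pm j} = \nu_F|\ell|\psi_{\mp j}$ (the modulus coming from $B^2 = \nu_F^2|\ell|^2$). Restricting $\Di(\mu) = \Di_\star + \mu B$ to the 2D invariant subspace $\spane\{\psi_j,\psi_{-j}\}$ yields the matrix $\matrice{\var_j & \mu\nu_F|\ell| \\ \mu\nu_F|\ell| & -\var_j}$ with simple eigenvalues $\pm\sqrt{\var_j^2+\mu^2\nu_F^2|\ell|^2}$. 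For the zero mode, $\ker_{L^2}(\Di_\star)$ is one-dimensional and $B$-invariant, so $B\psi_0 = \az_0\psi_0$ with $|\az_0|=\nu_F|\ell|$, producing the $\Di(\mu)$-eigenvalue $\mu\az_0$. A multiplicity count on $\Di(\mu)^2$ — whose point spectrum, by the squaring identity, is $\{\mu^2\nu_F^2|\ell|^2\}\cup\{\var_j^2+\mu^2\nu_F^2|\ell|^2\}_{j=1}^N$ with multiplicities $1$ and $2$ respectively — confirms that these $2N+1$ eigenvalues exhaust $\Sigma_\pp(\Di(\mu))$ and are each simple.

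The main non-algebraic obstacle is the sign of $\az_0$, which the squaring identity cannot resolve. The plan is to solve $\Di_\star\psi_0 = 0$ explicitly. Writing $\nu_\star k' = |\nu_\star k'|e^{i\phi}$ and gauging $\psi_0 = (h_1, e^{-i\phi} h_2)^\top$ decouples the system into $(h_1 \pm i h_2)' = \pm(\var_\star\kappa/|\nu_\star k'|)(h_1 \pm i h_2)$. Since $\kappa(t) \to \pm 1$ at $\pm\infty$, the primitive $K(t) \de \int_0^t\kappa(s)\,ds$ tends to $+\infty$ at both ends, so exactly one of $h_1\pm ih_2$ is square-integrable, the choice being dictated by $\sgn(\var_\star)$. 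Inserting the resulting $\psi_0$ into $B\psi_0$ and using $\ell\ove{k'}\in i\R$ (once more from $\ell\perp k'$) produces $\az_0 = \nu_F|\ell|\sgn(\var_\star)$, completing the formula for $\Sigma_{L^2,\pp}(\Di(\mu))$.
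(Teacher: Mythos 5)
Your proposal is correct in outline and, at one step, cleaner than the paper's own argument. The algebraic skeleton is identical: the paper also introduces three involutive, pairwise anticommuting matrices (normalized versions of your $A,B,C$), checks the one nontrivial anticommutation via $\Re(k'\ove{\ell})=\blr{k',\ell}=0$, reads off the essential spectrum from the constant-coefficient asymptotic operators $\Di_\pm(\mu)$, solves the zero mode of $\Di_\star$ explicitly as $u_0\exp\big(-\tfrac{\var_F}{\nu_F|k'|}\int_0^t\kappa\big)$, and reduces the bifurcation of $\pm\var_j$ to a $2\times 2$ matrix on $\spane\{\psi_j,\psi_{-j}\}$ using $\{\Di_\star,B\}=0$. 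Where you genuinely diverge is the exhaustion step: the paper proves that every eigenvalue of $\Di(\mu)$ arises from one of $\Di_\star$ by a case analysis (its Steps 5--6, distinguishing whether $f$ and $\bmd f$ are colinear and whether $E=\mu\nu_F|\ell|\sgn(\var_\star)$), whereas your multiplicity count on $\Di(\mu)^2=\Di_\star^2+\mu^2\nu_F^2|\ell|^2$ replaces that with short bookkeeping. That is a real simplification; note only that it consumes the simplicity of the eigenvalues of $\Di_\star$ as an input, which you assert but do not justify — the paper cites \cite[Appendix C]{DFW}, and for a first-order $2\times2$ system on $\R$ it also follows from the fact that each eigenvalue equation admits at most one solution decaying at $+\infty$. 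Say so explicitly.

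The one genuine gap is precisely the point you flag as the main obstacle: the sign of $\az_0$. The fact that $\ell\ove{k'}\in i\R$ determines $\az_0$ only up to sign — it yields $\ell\ove{k'}=\pm i|\ell||k'|$, and replacing $\ell$ by $-\ell$ preserves $\ell\perp k'$ while flipping your answer — so your stated justification cannot by itself produce $\az_0=\nu_F|\ell|\sgn(\var_\star)$ rather than its negative. What pins the sign down is the orientation of the dual frame:
\begin{equation}
\Im\big(k'\ove{\ell}\big)=\Det[\ell,k']=\Det[k,k']=(a_1b_2-a_2b_1)\cdot\Det[k_1,k_2]=1>0,
\end{equation}
which is exactly the identity the paper uses to conclude $\bmd u_0=\sgn(\var_\star)u_0$. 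Your explicit computation of $B\psi_0$ from the gauged zero mode will go through, but you must insert this determinant identity (not merely $\Re(\ell\ove{k'})=0$) at the final step.
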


The proof of Lemma \ref{lem:1a} relies on a super-symmetry: there exists a $2 \times 2$ matrix $\bmd$ such that $\bmd^2 = \Id$ and $\bmd \Di_\star = -\bmd\Di_\star$. We postpone it to Appendix \ref{app:1}. We also mention that $\Di_\star$ may have more than one eigenvalues -- see \cite{LWW}. For a general perspective for applications of super-symmetries in spectral theory, see \cite[\S6-12]{CFK}.

\subsection{Parallel quasimomentum near $\zeta_\star$}\label{sec:4.3} We are now ready to state the main result of our work. Recall that the assumptions $\operatorname{(H1)}$ -- $\operatorname{(H3)}$ were introduced in \S\ref{sec:1.5}; and that $\Pi, \Pi^*$ and $\UU_\delta$ are defined by
\begin{equations}
\Pi : L^2\big(\R^2/\Z v, \C^2\big) \rightarrow L^2\big(\R,\C^2\big), \ \ \ \ \big(\Pi f\big)(t) \de \int_0^1 f(sv + tv') ds; 
\\
\Pi^* : L^2\big(\R, \C^2\big) \rightarrow L^2\big(\R^2/\Z v,\C^2\big), \ \ \ \ \big(\Pi^* g\big)(x) \de g\big(\blr{k',x}\big);
\\
\UU_\delta : L^2\big(\R,\C^2\big) \rightarrow L^2\big(\R,\C^2\big), \ \ \ \ \big(\UU_\delta f\big)(t) \de f(\delta t).
\end{equations}

\begin{theorem}\label{thm:2} Assume that the assumptions $\operatorname{(H1)}$ -- $\operatorname{(H3)}$ hold.  Fix $\tmu > 0$ and $\epsilon > 0$. There exists $\delta_0 > 0$ such that if
\begin{equations}
\mu \in (-\tmu,\tmu), \  \delta \in (0,\delta_0), \  z \in \Dd\Big(0,\sqrt{\var_F^2 + \mu^2\cdot \nu_F^2 | \ell|^2} -\epsilon \Big), \  
 \dist\Big( \Sigma_{L^2}\big(\Di(\mu) \big), z \Big) \geq \epsilon, 
\end{equations}
\begin{equations}\label{eq:3r}
 \zeta = \zeta_\star +  \delta\mu, \ \ \lambda = E_\star + \delta z 
\end{equations}
then $\PP_\delta[\zeta] - \lambda$ is invertible and its resolvent $\big( \PP_\delta[\zeta] - \lambda \big)^{-1}$ equals
\begin{equation}
 \dfrac{1}{\delta} \cdot  \matrice{ \phi_1 \\ \phi_2}^\top  e^{-i  \mu \delta \blr{\ell,x} } \cdot \Pi^*   \UU_\delta \cdot \big( \Di(\mu)-z \big)^{-1}  \cdot\UU_\delta^{-1}  \Pi \cdot e^{i  \mu \delta \blr{\ell,x} } \ove{\matrice{\phi_1 \\ \phi_2}} + \OO_{L^2[\zeta]}\left(\delta^{-1/3}\right).
\end{equation}
\end{theorem}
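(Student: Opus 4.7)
The strategy is to reduce the quasimomentum $\zeta = \zeta_\star + \delta\mu$ problem to the $\zeta = \zeta_\star$ case addressed by Theorem~\ref{thm:3}, via a gauge transformation that trades the momentum shift for a first-order differential perturbation of size $\delta$. Because $\blr{\ell,v} = \blr{k,v} = 1$, the multiplication operator $M_\delta : u \mapsto e^{i\mu\delta\blr{\ell,x}} u$ is a unitary isomorphism from $L^2[\zeta_\star]$ onto $L^2[\zeta]$. Since $M_\delta^{-1} D_x M_\delta = D_x + \mu\delta\ell$, conjugation gives
\[
M_\delta^{-1}\,\PP_\delta[\zeta]\,M_\delta \;=\; \PP_\delta[\zeta_\star] + 2\mu\delta\,\ell\cdot D_x + \mu^2\delta^2|\ell|^2 \;=:\; \widetilde{\PP}_\delta[\zeta_\star]
\]
on $L^2[\zeta_\star]$. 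Thus Theorem~\ref{thm:2} becomes equivalent to an analogous resolvent expansion for $\widetilde{\PP}_\delta[\zeta_\star]$, with the exponential gauges removed and $\Di_\star$ replaced by $\Di(\mu)$.

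The choice of $\Di(\mu)$ is dictated by the multiscale analysis. Rerunning \S\ref{sec:3.1} for $\widetilde{\PP}_\delta[\zeta_\star]$ introduces an extra term $2\mu\,\ell\cdot D_x$ in the solvability condition at order $\delta$, whose action on the Dirac subspace is $\mu[0,\nu_\star\ell;\overline{\nu_\star\ell},0]$ by Lemma~\ref{lem:1i}; this is precisely the $\mu$-dependent off-diagonal entry of $\Di(\mu)$. The scalar $\mu^2\delta^2|\ell|^2$ is of order $\delta^2$ and affects only subleading corrections. It then remains to execute, for $\widetilde{\PP}_\delta[\zeta_\star]$, the three-part strategy outlined in the introduction: redo the fiberwise normal-form analysis of \S\ref{sec:5} for $P_{\pm\delta}(\xi) + 2\mu\delta\,\ell\cdot D_x + \mu^2\delta^2|\ell|^2$ (a perturbation of size $\delta$, uniform in $\mu$, so the Pauli-block reduction still applies, with the new matrix entry supplied by Lemma~\ref{lem:1i}); integrate along $\zeta_\star k + \R k'$ as in \S\ref{sec:6} to obtain bulk resolvent expansions with $\widetilde{\Di}_{\star,\pm}(\mu)$, the $t \to \pm\infty$ limits of $\Di(\mu)$, in place of $\Di_{\star,\pm}$; and glue the two bulk resolvents through a Lippmann--Schwinger parametrix, so that cyclicity and homogenization produce $\Di(\mu)$ in the leading term. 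Undoing the gauge transformation then yields the formula of Theorem~\ref{thm:2}.

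The principal obstacle is uniformity in $\mu$ of the $\OO(\delta^{-1/3})$ remainder. Two ingredients secure it. First, Lemma~\ref{lem:1a} shows that the essential spectral gap of $\Di(\mu)$ has width at least $2\var_F$, independently of $\mu$; consequently the bulk spectral gap used in the normal-form reduction remains open uniformly in $\mu \in (-\tmu,\tmu)$ and small $\delta$. Second, the first-order perturbation $2\mu\delta\,\ell\cdot D_x$ is dominated at high frequencies by the principal part $-\Delta$ via standard elliptic estimates, and these estimates depend on $\mu$ only through the bound $|\mu| \leq \tmu$. Once these uniformities are in place, no new analytic difficulty arises beyond those treated for $\mu = 0$.
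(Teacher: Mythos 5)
Your gauge-transformation reduction is sound: since $\blr{\ell,v}=1$, multiplication by $e^{i\mu\delta\blr{\ell,x}}$ does map $L^2[\zeta_\star]$ unitarily onto $L^2[\zeta]$, and the conjugated operator $\PP_\delta[\zeta_\star]+2\mu\delta\,\ell\cdot D_x+\mu^2\delta^2|\ell|^2$ is exactly what the paper analyzes, only written in the other gauge (the paper keeps $\zeta=\zeta_\star+\delta\mu$ and fibers over the shifted dual line $\zeta k+\R k'$, so that $\xi-\xi_\star=\delta(\tau k'+\mu\ell)$ after rescaling; the two bookkeepings produce the same effective $2\times 2$ matrix, and Lemma \ref{lem:1i} supplies the same off-diagonal entry $\mu\nu_\star\ell$ either way). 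The remaining plan --- fiberwise Pauli reduction, integration along the dual edge, Lippmann--Schwinger parametrix, cyclicity and homogenization --- is the paper's own three-part strategy, so up to this reorganization you are following essentially the same route.

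There is, however, one genuine gap in your endgame. The cyclicity/Schur-complement step does not invert $\Di(\mu)-z$ directly: it inverts $\Id+DED$ where $D^{-2}+E=\Di(\mu)^2-z^2$ (this is how the homogenized potential $\UUU^0$ recombines with $R_0(\mu,z)$). Consequently the argument, run as you describe it, yields the resolvent expansion only under the hypothesis $\dist\big(\Sigma_{L^2}(\Di(\mu)^2),z^2\big)\geq\epsilon_1^2$, i.e. for $z$ at distance $\gtrsim\epsilon$ from \emph{both} $\Sigma(\Di(\mu))$ and $-\Sigma(\Di(\mu))$. For $\mu=0$ this is harmless because $\Sigma(\Di_\star)$ is symmetric about $0$, but for $\mu\neq 0$ --- the entire point of Theorem \ref{thm:2} --- the protected eigenvalue sits at $\mu\nu_F|\ell|\sgn(\var_\star)$ and has no mirror image, so there is a disk around $z=-\mu\nu_F|\ell|\sgn(\var_\star)$ where the theorem's hypothesis holds but your argument gives nothing. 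This region cannot be discarded: the eigenvalue count in Corollary \ref{cor:2} (hence the spectral-flow computation) needs the resolvent bound throughout the gap. The paper closes this hole in Steps 3--4 of its proof: it establishes the expansion on the boundary circle $\p\Dd(\var,\epsilon/3)$ with $\var=-\mu\nu_F|\ell|\sgn(\var_\star)$, integrates to see that the spectral projector of the selfadjoint operator $\PP_\delta[\zeta]$ over the corresponding disk is $\OO(\delta^{2/3})$ and hence zero, and then propagates the estimate into the disk by the maximum principle applied to the (now holomorphic) difference of the two sides. You need to add this step, or strengthen your hypothesis to one on $\Di(\mu)^2$, which would weaken the theorem.
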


It suffices to take $\mu=0$ in Theorem  \ref{thm:2} to derive Theorem \ref{thm:3}. 

\begin{cor}\label{cor:2} Assume $\operatorname{(H1)}$ -- $\operatorname{(H3)}$ hold and fix $\tvar \in (\var_N,\var_F)$ and $\tmu > 0$. There exists $\delta_0 > 0$ such that for 
\begin{equation}
\delta \in (0,\delta_0), \ \  \mu \in (- \tmu,\tmu), \ \ \zeta = \zeta_\star + \delta \mu,
\end{equation}
the operator $\PP_\delta[\zeta]$ has exactly $2N+1$ eigenvalues $\{E_{\delta,j}^\zeta\}_{j \in [-N,N]}$ in 
\begin{equation}
\left[E_\star - \delta \sqrt{\tvar^2 + \mu^2 \cdot \nu_F^2 | \ell|^2 }, \ E_\star + \delta\sqrt{\tvar^2 +  \mu^2\cdot \nu_F^2 | \ell|^2} \ \right].
\end{equation}

\noindent These eigenvalues are simple. Furthermore, for each $j \in [-N,N]$, the eigenpairs $(E_{\delta,j}^\zeta,u_{\delta,j}^\zeta)$ admit full expansions in powers of $\delta$:
\begin{equations}
E_{\delta,j}^\zeta = E_\star + \var_j^\mu \cdot \delta + a_2^\mu \cdot \delta^2 + \dots + a_M^\mu \cdot \delta^M + O\left( \delta^{M+1} \right), 
\\
u_{\delta,j}^\zeta(x) = e^{i(\zeta-\zeta_\star) \blr{\ell,x}} \Big( f_0^\mu\big(x,\delta \blr{k',x} \hspace{-.5mm} \big) + \dots + \delta^M \cdot  f_M\big(x,\delta \blr{k',x}\big) \hspace{-.5mm} \Big) + o_{H^k}\left( \delta^M \right).
\end{equations}
In the above expansions:
\begin{itemize}
\item $M$ and $k$ are any integer; $H^k$ is the $k$-th order Sobolev space.
\item $\var_j^\mu$ is the $j$-th eigenvalue of $\Di(\mu)$, described in Lemma \ref{lem:1a}.
\item The terms $a_m^\mu \in \R$, $f_m^\mu \in X$ are recursively constructed via the multiscale analysis of \cite{FLTW3} -- see \S\ref{sec:3.1}.
\item The leading order term $f_0^\mu$ satisfies
\begin{equation}
f_0^\mu(x,t) = \az_1^\mu(t) \phi_1(x) + \az_2^\mu(t) \phi_2(x), \ \ \ \ \left(\Di(\mu) - \var_j^\mu\right) \matrice{\az_1^\mu \\ \az_2^\mu} = 0.
\end{equation}
\end{itemize}
\end{cor}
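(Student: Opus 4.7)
The plan is to derive the corollary from Theorem~\ref{thm:2}, combined with the formal multiscale construction recalled in~\S\ref{sec:3.1}. The strategy decomposes into two largely independent pieces: the resolvent estimate yields an \emph{upper bound} on the number of eigenvalues together with their approximate positions, while the multiscale procedure produces enough quasimodes to furnish the matching \emph{lower bound} and the full asymptotic expansions.

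\textbf{Step 1 (counting via Riesz projections).} Fix $\epsilon>0$ smaller than half the minimal spacing between the eigenvalues $\var_j^\mu$ of $\Di(\mu)$, uniformly in $\mu\in[-\tmu,\tmu]$ (such a uniform spacing exists by the explicit formulas of Lemma~\ref{lem:1a}, since for each $j$ the map $\mu\mapsto\var_j^\mu$ is continuous and the eigenvalues remain simple). Theorem~\ref{thm:2} shows that $\PP_\delta[\zeta]-\lambda$ is invertible on the complement of $\bigcup_j \Dd(E_\star+\delta\var_j^\mu,\epsilon\delta)$ inside the target interval. For each $j\in[-N,N]$ I would form the Riesz projection
\begin{equation}
\Pi_{\delta,j} \de \frac{1}{2\pi i}\oint_{|\lambda-(E_\star+\delta\var_j^\mu)|=\epsilon\delta}\big(\PP_\delta[\zeta]-\lambda\big)^{-1}d\lambda.
\end{equation}
The change of variable $\lambda=E_\star+\delta z$ absorbs the $1/\delta$ prefactor in \eqref{eq:3g}, so the leading part of $\Pi_{\delta,j}$ is, up to conjugation by $\Pi^*\UU_\delta$, the rank-one spectral projector of $\Di(\mu)$ at $\var_j^\mu$ (simple by Lemma~\ref{lem:1a}); the remainder contributes $\OO_{L^2[\zeta]}(\delta^{2/3})$. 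Upper semicontinuity of the rank then gives $\operatorname{rk}(\Pi_{\delta,j})\leq 1$, and summing over $j$ produces at most $2N+1$ eigenvalues in the target interval.

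\textbf{Step 2 (quasimodes and matching lower bound).} For every $M\in\N$ and every $j\in[-N,N]$, the procedure of \S\ref{sec:3.1}, iterated to order $M$, yields an approximate eigenpair $(E_{\delta,j,M}^\zeta,u_{\delta,j,M}^\zeta)$ with
\begin{equation}
\big(\PP_\delta[\zeta]-E_{\delta,j,M}^\zeta\big)u_{\delta,j,M}^\zeta = O_{L^2[\zeta]}\big(\delta^{M}\big),\qquad \big|u_{\delta,j,M}^\zeta\big|_{L^2[\zeta]} \geq c>0,
\end{equation}
and $E_{\delta,j,M}^\zeta = E_\star + \var_j^\mu \delta + a_2^\mu\delta^2 + \dots + a_M^\mu\delta^M$. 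Self-adjointness of $\PP_\delta[\zeta]$ yields $\dist\!\big(E_{\delta,j,M}^\zeta,\Sigma_{L^2[\zeta]}(\PP_\delta[\zeta])\big)=O(\delta^{M})$. The quasimodes associated to distinct $j$'s have leading terms $[\az_1^\mu,\az_2^\mu]^\top$ that are eigenvectors of the self-adjoint operator $\Di(\mu)$ for distinct eigenvalues, so they are orthogonal in $L^2(\R,\C^2)$; after the rescaling by $\UU_\delta$ and pull-back through $\Pi^*$, they form an almost-orthogonal family in $L^2[\zeta]$ (pairwise inner products $O(\delta^{1/2})$). This forbids two quasimodes from clustering at a single eigenvalue for $\delta$ small, thus delivering at least $2N+1$ distinct eigenvalues. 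Combined with Step~1, this forces exactly $2N+1$ eigenvalues, each simple.

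\textbf{Step 3 (expansions and regularity).} Once the count is exact, each disk $\Dd(E_\star+\delta\var_j^\mu,\epsilon\delta)$ contains a unique eigenvalue $E_{\delta,j}^\zeta$, and its distance to the $M$-th order quasimode is $O(\delta^M)$ for arbitrary $M$, yielding the claimed energy expansion. To obtain the $H^k$ eigenvector expansion, I would project $u_{\delta,j,M}^\zeta$ onto $\operatorname{Ran}(\Pi_{\delta,j})$, controlling the orthogonal component by the resolvent bound of Theorem~\ref{thm:2} on the contour, which gives $\big|u_{\delta,j}^\zeta - u_{\delta,j,M}^\zeta\big|_{L^2[\zeta]} = o(\delta^{M})$ after suitable normalization; the upgrade to $H^k$ then follows by standard elliptic bootstrap on the equation $(\PP_\delta[\zeta]-E_{\delta,j}^\zeta)(u_{\delta,j}^\zeta-u_{\delta,j,M}^\zeta) = O_{L^2[\zeta]}(\delta^M)$, using the smoothness of $V$, $W$ and $\kappa$.

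The main obstacle I foresee is in Step~1: the remainder $\OO_{L^2[\zeta]}(\delta^{-1/3})$ in Theorem~\ref{thm:2}, integrated along a contour of length $\sim\delta$, produces a correction $\OO(\delta^{2/3})$ to $\Pi_{\delta,j}$, which is small but must be shown to be uniform in $\mu\in[-\tmu,\tmu]$ and in the label $j$. Controlling this uniformity — which requires a uniform lower bound on the spectral gap between consecutive $\var_j^\mu$'s, and uniform bounds on the spectral projectors of $\Di(\mu)$ — is the delicate part; the rest is a routine combination of self-adjoint spectral theory and elliptic regularity.
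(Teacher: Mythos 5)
Your proposal is correct and follows essentially the same route as the paper: Theorem \ref{thm:2} is used to form Riesz projections on disks $\Dd(E_\star+\delta\var_j^\mu,\epsilon\delta)$, whose leading term is the conjugated rank-one spectral projector of $\Di(\mu)$ up to $\OO_{L^2[\zeta]}(\delta^{2/3})$, and the expansions come from the multiscale quasimodes combined with the selfadjoint variational lemma of \cite[Lemma 3.1]{DFW}. The only (harmless) deviation is that the paper extracts the exact count directly from the projector being nonzero and of rank at most one, whereas you get the upper bound from the projector and the lower bound from the almost-orthogonal quasimode family; the uniformity in $\mu$ you flag is immediate from the explicit eigenvalue formulas of Lemma \ref{lem:1a} on the compact set $[-\tmu,\tmu]$.
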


\begin{proof}[Proof of Corollary \ref{cor:2} assuming Theorem \ref{thm:2}] In order to locate eigenvalues of $\PP_\delta[\zeta]$, it suffices to integrate the resolvent on contours enclosing regions where Theorem \ref{thm:2} does not apply.

Let $\var_j$ be an eigenvalue of $\Di(\mu)$ and $\epsilon > 0$ so that $\Di(\mu)$ has no other eigenvalues in $\Dd(\var_j,\epsilon)$. We compute the residue
\begin{equations}\label{eq:3i}
\dfrac{1}{2\pi i} \oint_{\p \Dd(E_\star + \delta\var_j,\epsilon\delta)} \big(\lambda - \PP_\delta(\zeta)\big)^{-1} d\lambda.
\end{equations}
This the projector on the spectrum of $\PP_\delta(\zeta)$ that is enclosed by $\p \Dd(E_\star + \delta\var_j,\epsilon\delta)$. Because of Theorem \ref{thm:2} and of the relation $\lambda = E_\star + \delta z$, $d\lambda = \delta dz$, \eqref{eq:3i} equals
\begin{equations}
\matrice{ \phi_1 \\ \phi_2}^\top  e^{-i  \mu \delta \blr{\ell,x} }  \cdot \Pi^*   \UU_\delta \cdot \dfrac{1}{2\pi i} \oint_{\p\Dd(\var_j,\epsilon)} \big( z- \Di(\mu) \big)^{-1} dz \cdot \UU_\delta^{-1}  \Pi \cdot e^{i  \mu \delta \blr{\ell,x} } \ove{\matrice{\phi_1 \\ \phi_2}} + \OO_{L^2[\zeta]}\left(\delta^{2/3}\right).
\end{equations}
The residue 
\begin{equation}
\dfrac{1}{2\pi i} \oint_{\p\Dd(\var_j,\epsilon)} \big( z- \Di(\mu) \big)^{-1} dz
\end{equation}
is a rank-one projector on $\ker_{L^2}(\Di-\var_j)$. We write it $\az^\zeta \otimes \az^\zeta$, where $|\az^\zeta|_{L^2} = 1$. We deduce that the residue \eqref{eq:3i} equals
\begin{equations}
\matrice{ \phi_1 \\ \phi_2}^\top  e^{-i  \mu \delta \blr{\ell,x} }  \cdot \Pi^*   \UU_\delta \cdot \az \otimes \az \cdot \UU_\delta^{-1}  \Pi \cdot e^{i  \mu \delta \blr{\ell,x} } \ove{\matrice{\phi_1 \\ \phi_2}} + \OO_{L^2[\zeta]}\left(\delta^{2/3}\right)
\\
= v_0^\zeta \otimes v_0^\zeta + \OO_{L^2[\zeta]}\left(\delta^{2/3}\right), \ \ \ \ v_0^\zeta \de \delta^{1/2} \matrice{ \phi_1 \\ \phi_2}^\top  e^{-i  \mu \delta \blr{\ell,x} }  \Pi^*   \UU_\delta \cdot \az.
\end{equations}
Above we used that $(\UU_\delta^{-1})^* = \delta \cdot \UU_\delta$.

We deduce that \eqref{eq:3i} is a projector that takes the form $v_0^\zeta \otimes v_0^\zeta + \OO_{L^2[\zeta]}(\delta^{2/3})$, where $|v_0^\zeta|_{L^2[\zeta]} = 1$. In particular, it is non-zero. Moreover, it has rank at most one. Indeed, normalized vectors in its range must be of the form $v_0^\zeta + O_{L^2[\zeta]}(\delta^{2/3})$, therefore two of them cannot be orthogonal for $\delta$ sufficiently small. We deduce that \eqref{eq:3i} has rank exactly one: $\PP_\delta[\zeta]$ has exactly one eigenvalue in $\Dd(E_\star + \delta\var_j,\epsilon\delta)$.

The rest of the proof is identical to \cite[Proof of Corollary 1]{DFW}. It relies on:
\begin{itemize}
\item The fact that $\PP_\delta[\zeta]$ has exactly one eigenvalue in the disk enclosed by $\p \Dd(E_\star + \delta\var_j,\epsilon\delta)$ -- proved just above;
\item A general variational argument that shows that an approximate eigenpair $(\psi,E)$ for a selfadjoint problem that has only one eigenvalue near $E$ must be close to a genuine eigenpair -- see \cite[Lemma 3.1]{DFW}.
\item The construction of arbitrarily accurate approximate eigenpairs thanks to the multiscale procedure of \cite{FLTW3} -- see \S\ref{sec:3.1}.
\end{itemize}
We refer to \cite[Proof of Corollary 1]{DFW} for details.
\end{proof}

Most of the rest of the paper is devoted to the proof of Theorem \ref{thm:2}.


\section{The Bloch resolvent}\label{sec:5}
 
Recall that $V$ is a honeycomb potential -- see Definition \ref{def:3} --  and that $W \in C^\infty(\R^2,\R)$ is odd and $\Lambda$-periodic. In this section we study the resolvent of $P_\delta(\xi)$, the operator formally equal to $P_\delta = -\Delta+V+\delta W$  but acting on quasiperiodic spaces $L^2_\xi$.

Under the no-fold condition, we prove in Lemma \ref{lem:1x} that $(P_\delta(\xi)-z)^{-1}$ is subdominant away from the Dirac quasimomenta $\xi_\star$. The situation is more subtle near $\xi_\star$. In Lemma \ref{lem:1f} we show that when the non-degeneracy assumption \eqref{eq:3h} holds and $(\xi,\lambda)$ is near a Dirac point $(\xi_\star,E_\star)$, $(P_\delta(\xi)-\lambda)^{-1}$ behaves like the resolvent of a rank-two operator.

\subsection{Resolvent away from Dirac momenta}\label{sec:12.1}

We recall that $\Ll$ is the fundamental cell associated to the generators $v_1$ and $v_2$, see \eqref{eq:4c}. Given $\xi \in \R^2$, we define $\rho(\xi)$ as
\begin{equation}
\rho(\xi) \de \dist \big( \xi + 2\pi \Lambda^*, \ \zeta_\star k + \R k'\big).
\end{equation}

\begin{lem}\label{lem:1x} Assume that the assumptions $\operatorname{(H1)}$ and $\operatorname{(H2)}$ hold. Let $c > 0$. There exist $\delta_0, \ \epsi_0 > 0$ such that if 
\begin{equation}\label{eq:3o}
\delta \in (0,\delta_0), \ \ \xi \in \Ll^*, \ \ \rho(\xi) \leq \epsi_0,\ \ \left|\xi - \xi_\star \right| \geq \delta^{1/3}, \ \ \lambda \in \Dd\left( E_\star, c\delta \right)
\end{equation}
then $P_\delta(\xi)-\lambda$ is invertible and 
\begin{equation}
\left\| (P_\delta(\xi) - \lambda)^{-1} \right\|_{L^2_\xi \rightarrow H^2_\xi} = O\left(\delta^{-1/3}\right).
\end{equation}
\end{lem}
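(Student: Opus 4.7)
The strategy is to first establish the resolvent bound for the unperturbed operator $P_0(\xi) - \lambda$, then absorb the perturbation $\delta W$ via a Neumann series. The conceptual input is the no-fold condition $\operatorname{(H2)}$, which restricts which bands can approach $E_\star$ when $\xi$ is near the dual edge.

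First I would use $\operatorname{(H2)}$ together with continuity of the dispersion surfaces $\xi \mapsto \lambda_{0,j}(\xi)$ to produce constants $\epsi_0, c_0, r_0 > 0$ with the following property: for every $\xi \in \Ll^*$ with $\rho(\xi) \leq \epsi_0$, one has $|\lambda_{0,j}(\xi) - E_\star| \geq c_0$ whenever $j \notin \{j_\star, j_\star+1\}$; and the same lower bound holds for $j \in \{j_\star, j_\star + 1\}$ once moreover $|\xi - \xi_\star| \geq r_0$. This is a compactness/continuity argument on $\Ll^*$: were the conclusion to fail, one could extract a sequence whose limit violates $\operatorname{(H2)}$. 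This step is where the no-fold assumption is really used.

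Next, for $\xi$ in the remaining regime $\delta^{1/3} \leq |\xi - \xi_\star| \leq r_0$, the conical expansion from Definition \ref{def:1} gives
\begin{equation*}
|\lambda_{0,j_\star}(\xi) - E_\star|,\ |\lambda_{0,j_\star+1}(\xi) - E_\star| \ \geq\ \tfrac{\nu_F}{2} |\xi - \xi_\star| \ \geq\ \tfrac{\nu_F}{2} \delta^{1/3},
\end{equation*}
after shrinking $r_0$ to absorb the $O(|\xi - \xi_\star|^2)$ correction. Combining both regimes with $\lambda \in \Dd(E_\star, c\delta)$ and $\delta$ small (so that $c\delta = o(\delta^{1/3})$), I obtain $\dist\big(\lambda, \Sigma_{L^2_\xi}(P_0(\xi))\big) \geq c' \delta^{1/3}$, so the spectral theorem yields $\|(P_0(\xi) - \lambda)^{-1}\|_{L^2_\xi} = O(\delta^{-1/3})$. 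The $L^2_\xi \to H^2_\xi$ improvement follows from the standard elliptic a priori estimate $\|u\|_{H^2_\xi} \leq C\big(\|P_0(\xi) u\|_{L^2_\xi} + \|u\|_{L^2_\xi}\big)$, uniform in $\xi \in \Ll^*$ because $V$ is smooth and bounded.

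Finally I would factor $P_\delta(\xi) - \lambda = (P_0(\xi) - \lambda)\bigl(\Id + \delta (P_0(\xi) - \lambda)^{-1} W\bigr)$. Since $\big\| \delta (P_0(\xi) - \lambda)^{-1} W \big\|_{L^2_\xi} = O(\delta \cdot \delta^{-1/3}) = O(\delta^{2/3})$, the second factor is invertible via Neumann series for $\delta$ small, with inverse of $L^2_\xi$-norm $O(1)$; composition with the $L^2_\xi \to H^2_\xi$ bound on $(P_0(\xi) - \lambda)^{-1}$ yields the claim. The main obstacle I anticipate is ensuring uniformity of the spectral gap in the first step: the strip $\{\rho(\xi) \leq \epsi_0\} \cap \Ll^*$ is noncompact along the dual edge direction, but working modulo $2\pi\Lambda^*$ reduces the question to a compact set on which the continuity argument closes up.
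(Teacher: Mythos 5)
Your proposal is correct and follows essentially the same route as the paper: a compactness/contradiction argument based on the no-fold condition to get a uniform spectral gap away from $\xi_\star$, the conical expansion of Definition \ref{def:1} in the transitional regime $\delta^{1/3}\leq|\xi-\xi_\star|\leq r_0$, the spectral theorem plus the elliptic a priori estimate for the $L^2_\xi\to H^2_\xi$ bound on $(P_0(\xi)-\lambda)^{-1}$, and a Neumann series to absorb $\delta W$. The only (immaterial) difference is bookkeeping: the paper packages both regimes into a single lower bound $|\lambda_{0,j_\star}(\xi)-E_\star|\geq 2\epsi_0|\xi-\xi_\star|$ (using an intermediate value argument to fix the sign and the monotone labeling of bands to dispose of $j\notin\{j_\star,j_\star+1\}$), whereas you split at a fixed radius $r_0$ and treat the remaining bands by a separate uniform gap.
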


\vspace*{-.8cm}

\begin{center}
\begin{figure}
\caption{If $v = v_1-v_2$ is the zigzag edge then $k' = k_1+k_2$. A $\epsi_0$-neighborhood of the dual line $\zeta_\star k + \R k'$ is represented above as the blue strip. Lemma \ref{lem:1x} applies to quasimomenta in the area enclosed in black. This domain of validity extends by periodicity to the whole blue strips away from $\xi_\star \mod 2\pi \Lambda^*$. }
{\includegraphics{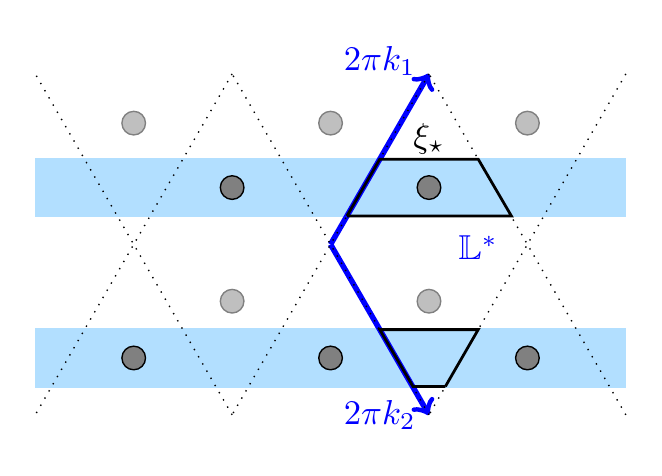}}
\end{figure}
\end{center}

\begin{proof} 1. We first show that there exists $\epsi_0 > 0$ such that
\begin{equation}\label{eq:5k}
\xi \in \Ll^* \setminus\{\xi_\star\}, \ \ \rho(\xi) \leq \epsi_0, \ \  \Rightarrow \ \ \lambda_{0,j_\star}(\xi) < E_\star - 2\epsi_0 \cdot |\xi-\xi_\star|.
\end{equation}
Indeed, if this does not hold then we can find $\xi_n$ such that 
\begin{equation}
\xi_n \in \Ll^* \setminus\{\xi_\star\}, \ \ \rho(\xi_n) \leq \dfrac{1}{n}, \ \  \lambda_{0,j_\star}(\xi_n) \geq E_\star - \dfrac{2}{n} \cdot |\xi-\xi_\star|.
\end{equation}
Since $\xi_n \in \Ll^*$, $\xi_n$ is bounded.
There exists a subsequence $\xi_{\vp(n)}$ of $\xi_n$ that converges to an element $\xi_\infty$ in the closure of $\Ll^*$, with $\rho(\xi_\infty) = 0$. Because $\lambda_{0,j_\star}$ is continuous, we have $\lambda_{0,j_\star}(\xi_\infty) \geq E_\star$. Since $\rho(\xi_\infty) = 0$, there exist $\eta \in \Lambda^*$ and $\tau_0 \in \R$ such that 
\begin{equation}
\xi_\infty + 2\pi \eta = \zeta_\star k+\tau_0 k'. 
\end{equation}
We look at the function $\vp(\tau) \de \lambda_{0,j_\star}(\zeta_\star k + \tau k')$. It is $2\pi$-periodic and it equals $E_\star$ precisely when $\tau = \lr{\xi_\star,v'} \mod 2\pi$ because of (H2). Moreover,
\begin{equation}
\vp(\lr{\xi_\star,v'} + \epsilon) = E_\star - \nu_F|\epsilon k'| + O\left(\epsilon^2\right).
\end{equation}
Therefore, the intermediate value theorem shows that $\vp(\tau) < E_\star$ unless $\tau = \lr{\xi_\star,v'} \mod 2\pi$. We deduce that $\tau_0 = \lr{\xi_\star,v'} \mod 2\pi$. Hence $\xi_\infty = \xi_\star \mod 2\pi \Lambda^*$. Since $\xi_\infty$ is in the closure of $\Ll^*$, $\xi_\infty = \xi_\star$. Since it also belongs to $\zeta_\star k + \R k'$, we have  $\xi_\infty = \xi_\star$. Since $\xi_\star$ is a Dirac point, we deduce
\begin{equation}
E_\star - \dfrac{2}{\vp(n)} \cdot |\xi_{\vp(n)}-\xi_\star| \leq \lambda_{0,j_\star}(\xi_{\vp(n)})  \leq E_\star - \nu_F \cdot |\xi_{\vp(n)}-\xi_\star| + O(\xi_{\vp(n)}-\xi_\star)^2.
\end{equation}
This cannot hold for large $n$, unless $\xi_{\vp(n)} = \xi_\star$, which is excluded. We deduce that \eqref{eq:5k} holds. A similar argument implies that
\begin{equation}\label{eq:5l}
\xi \in \Ll^* \setminus \{\xi_\star\}, \ \ \rho(\xi) \leq \epsi_0 \ \  \Rightarrow \ \ \lambda_{0,j_\star+1}(\xi) > E_\star + 2\epsi_0 \cdot |\xi-\xi_\star|.
\end{equation}

2. From \eqref{eq:5k} and \eqref{eq:5l}, we deduce that for $\delta > 0$,
\begin{equations}
\xi \in \Ll^*, \ \ \rho(\xi)  \leq \epsi_0, \ \ |\xi - \xi_\star| \geq \delta^{1/3} \ \Rightarrow \ \systeme{ \lambda_{0,j_\star}(\xi) < E_\star - 2\epsi_0 \delta^{1/3}, \\
\lambda_{0,j_\star+1}(\xi) > E_\star + 2\epsi_0 \delta^{1/3}.}
\end{equations}
In particular, if $c > 0$ is given and $\lambda \in \Dd(E_\star,c\delta)$ then
\begin{equation}
\xi \in \Ll^*, \ \ \rho(\xi) \leq \epsi_0, \ \ |\xi - \xi_\star| \geq \delta^{1/3} \ \Rightarrow \ \systeme{ \ \Re\big(\lambda_{0,j_\star}(\xi)-\lambda\big) \  < c\delta - 2\epsi_0 \delta^{1/3}, \\
\Re\big(\lambda_{0,j_\star+1}(\xi)-\lambda\big)  > 2\epsi_0 \delta^{1/3}-c\delta.}
\end{equation}
In particular, when $\delta_0$ is sufficiently small, $\delta \in (0,\delta_0)$ and $\lambda \in \Dd(E_\star,c\delta)$,
\begin{equation}
\xi \in \Ll^*, \ \ \rho(\xi) \leq  \epsi_0, \ \ |\xi - \xi_\star| \geq \delta^{1/3} \ \Rightarrow \ \systeme{ \ \ \  \Re\big(\lambda_{0,j_\star}(\xi)-\lambda\big) \   < - \epsi_0 \delta^{1/3}, \\
\Re\big(\lambda_{0,j_\star+1}(\xi)-\lambda\big) > \epsi_0 \delta^{1/3}.}
\end{equation}
Since the dispersion surfaces are labeled in increasing order, we deduce that if \eqref{eq:3o} is satisfied then
\begin{equation}
\dist\big(\Sigma_{L^2_\xi}\big(P_0(\xi)\big),\lambda\big) \geq \epsi_0 \delta^{1/3}, \ \ \ \
\big(P_0(\xi) - \lambda\big)^{-1} = \OO_{L^2_\xi}\left(\delta^{-1/3}\right).
\end{equation}
We derived the estimate on $(P_0(\xi) - \lambda)^{-1}$ using the spectral theorem.

2. Assume that \eqref{eq:3o} holds. Thanks to Step 1, $P_0(\xi) - \lambda$ is invertible and
\begin{equation}
P_\delta(\xi) - \lambda = P_0(\xi) -\lambda + \delta W = (P_0(\xi) -\lambda) \cdot \left(\Id + \big(P_0(\xi) -\lambda\big)^{-1} \delta W \right).
\end{equation}
The second term equals $\Id + \OO_{L^2_\xi}(\delta^{2/3})$. In particular it is invertible by a Neumann series for $\delta$ sufficiently small, with uniformly bounded inverse. We deduce that $P_\delta(\xi) - \lambda$ is invertible with inverse $\OO_{L^2_\xi}\left(\delta^{-1/3}\right)$.

3. To conclude we must show that the inverse of $P_\delta(\xi) - \lambda$ is $\OO_{L^2_\xi \rightarrow H^2_\xi}(\delta^{-2/3})$. This is a standard consequence of the elliptic estimate: using $\delta = O(1)$, $\lambda = O(1)$, we see that for any $f \in H^2_\xi$,
\begin{equation}
|f|_{H^2_\xi} \leq |f|_{L^2_\xi} + |\Delta f|_{H^2_\xi} \leq C|f|_{L^2_\xi} + |(P_\delta(\xi)-\lambda) f|_{H^2_\xi}.
\end{equation}
We apply this inequality to $f = (P_\delta(\xi)-\lambda)^{-1} u$ to deduce that
\begin{equation}
\left\| (P_\delta(\xi)-\lambda)^{-1} \right\|_{L^2_\xi \rightarrow H^2_\xi} \leq C \left\|(P_\delta(\xi)-\lambda)^{-1}\right\|_{L^2_\xi} + 1.
\end{equation}
In particular, the estimate $\OO_{L^2_\xi}(\delta^{-2/3})$ proved in Step 2 improves automatically to a bound $\OO_{L^2_\xi \rightarrow H^2_\xi}\left(\delta^{-1/3}\right)$. This completes the proof.
\end{proof}

\subsection{Resolvent near Dirac momenta}\label{sec:5.3}
 
Fix a Dirac point $(\xi_\star,E_\star)$ of $P_0(\xi)$ and assume that $\var_\star$ -- defined in \eqref{eq:3h} -- is non-zero. Identify $\xi-\xi_\star \in \R^2$ with the corresponding complex number and introduce
 $M_\delta(\xi)$ the $2 \times 2$ matrix
\begin{equation}
M_\delta(\xi) \de \matrice{E_\star + \delta \var_\star &\nu_\star \cdot (\xi-\xi_\star)  \\  \ove{\nu_\star \cdot (\xi-\xi_\star)}& E_\star-\delta\var_\star}.
\end{equation}

\begin{lem}\label{lem:1f} Let $\te \in (0,1)$. If
\begin{equation}\label{eq:1a}
\delta > 0, \ \ \xi \in \R^2, \ \ \var_F \de |\var_\star| \neq 0, \ \ \lambda \in \Dd\left( E_\star,\te \sqrt{\var_F^2 \cdot \delta^2 + \nu_F^2 \cdot |\xi-\xi_\star|^2} \right)
\end{equation}
then the matrix $M_\delta(\xi)-\lambda$ is invertible and
\begin{equation}
\left\| \big(M_\delta(\xi) - \lambda\big)^{-1} \right\|_{\C^2} = O\left( (\delta+|\xi-\xi_\star|)^{-1} \right).
\end{equation}
\end{lem}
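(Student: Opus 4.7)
The plan is to exploit the fact that $M_\delta(\xi)-\lambda$ is a $2 \times 2$ matrix, so its invertibility and the norm of its inverse can be read off from the determinant and the cofactor formula. The key observation is that the determinant has a particularly clean form: expanding along either row,
\begin{equation}
\det\bigl(M_\delta(\xi)-\lambda\bigr) = (E_\star+\delta\var_\star-\lambda)(E_\star-\delta\var_\star-\lambda) - |\nu_\star|^2 |\xi-\xi_\star|^2 = (E_\star-\lambda)^2 - r^2,
\end{equation}
where I set $r^2 \de \var_F^2\delta^2 + \nu_F^2|\xi-\xi_\star|^2$ (using $\var_F=|\var_\star|$ and $\nu_F=|\nu_\star|$). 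Hence the eigenvalues of $M_\delta(\xi)$ are $E_\star \pm r$, which is consistent with the picture that $M_\delta(\xi)$ is a Dirac-point model operator.

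The first key step is to bound $|\det(M_\delta(\xi)-\lambda)|$ from below. Since the hypothesis \eqref{eq:1a} gives $|E_\star-\lambda|\leq \te r$ with $\te<1$, and since $r^2 \geq 0$ is real nonnegative while $(E_\star-\lambda)^2$ has modulus at most $\te^2 r^2$, the reverse triangle inequality yields
\begin{equation}
\bigl|\det\bigl(M_\delta(\xi)-\lambda\bigr)\bigr| \geq r^2 - \te^2 r^2 = (1-\te^2)\bigl(\var_F^2\delta^2+\nu_F^2|\xi-\xi_\star|^2\bigr).
\end{equation}
Combining $a^2+b^2 \geq \tfrac{1}{2}(a+b)^2$ with $\var_F,\nu_F>0$ (the Dirac point assumption (H1) guarantees $\nu_F\neq 0$; $\var_F\neq 0$ is assumed in (H3) via \eqref{eq:3h}), this gives
\begin{equation}
\bigl|\det\bigl(M_\delta(\xi)-\lambda\bigr)\bigr| \geq c\bigl(\delta+|\xi-\xi_\star|\bigr)^2
\end{equation}
for some constant $c=c(\te,\var_F,\nu_F)>0$. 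In particular $M_\delta(\xi)-\lambda$ is invertible.

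The second step is the upper bound on the inverse. Each entry of $M_\delta(\xi)-\lambda$ is bounded in modulus by $|E_\star-\lambda|+\var_F\delta+\nu_F|\xi-\xi_\star|$, which is itself $O(\delta+|\xi-\xi_\star|)$ since $|E_\star-\lambda|\leq \te r \leq \te(\var_F+\nu_F)(\delta+|\xi-\xi_\star|)$. For a $2\times 2$ matrix, each entry of the inverse equals a cofactor divided by the determinant, so
\begin{equation}
\bigl\|\bigl(M_\delta(\xi)-\lambda\bigr)^{-1}\bigr\|_{\C^2} = O\!\left(\dfrac{\delta+|\xi-\xi_\star|}{(\delta+|\xi-\xi_\star|)^2}\right) = O\!\left(\bigl(\delta+|\xi-\xi_\star|\bigr)^{-1}\right),
\end{equation}
which is the desired estimate. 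There is no real obstacle here — the entire argument is a short algebraic computation, the only subtlety being the careful use of the reverse triangle inequality between the real nonnegative quantity $r^2$ and the complex number $(E_\star-\lambda)^2$ to extract the factor $1-\te^2$; this is precisely what forces the strict inequality $\te<1$ in the hypothesis.
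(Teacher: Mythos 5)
Your proof is correct and follows essentially the same approach as the paper: both arguments rest on the observation that the spectrum of $M_\delta(\xi)$ is $E_\star \pm r$ with $r = \sqrt{\var_F^2\delta^2 + \nu_F^2|\xi-\xi_\star|^2}$, so the hypothesis $|\lambda - E_\star| \leq \te r$ with $\te < 1$ keeps $\lambda$ a definite fraction away from the eigenvalues, and $r \gtrsim \delta + |\xi-\xi_\star|$. The only (cosmetic) difference is in the last step: the paper uses that $M_\delta(\xi)$ is Hermitian, so the resolvent norm is exactly the reciprocal of the distance to the spectrum, whereas you bound the determinant below by $(1-\te^2)r^2$ and the adjugate entries above by $O(\delta+|\xi-\xi_\star|)$ — both give the stated estimate.
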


\begin{proof} The matrix $M_\delta(\xi)$ is hermitian. It has eigenvalues 
\begin{equation}
\mu_\delta^\pm (\xi) \de E_\star \pm \sqrt{\var_F^2 \cdot \delta^2 + \nu_F^2 \cdot |\xi-\xi_\star|^2}.
\end{equation}
If \eqref{eq:1a} holds then the eigenvalues $\mu_\delta^\pm (\xi)-\lambda$ of $M_\delta(\xi) - \lambda$ satisfy
\begin{equation}
|\mu_\delta^\pm (\xi)-\lambda| \geq (1-\te) \sqrt{\var_F^2 \cdot \delta^2 + \nu_F^2 \cdot |\xi-\xi_\star|^2} \geq \dfrac{1-\te}{\sqrt{2}} \cdot \big( \nu_F \cdot |\xi-\xi_\star| + \var_F  \cdot \delta \big).
\end{equation}
By the spectral theorem, we deduce that $(M_\delta(\xi)-\lambda)^{-1}$ exists and has operator-norm bounded by $O\left((|\xi-\xi_\star|+\delta)^{-1}\right)$. This completes the proof.
\end{proof}

Introduce the operator
\begin{equation}\label{eq:1r}
\Pi_0(\xi) : L^2_\xi \rightarrow \C^2, \ \ \Pi_0(\xi)  u \de \matrice{ \blr{e^{i\blr{\xi-\xi_\star,x}} \phi_1,   u}_{L^2_\xi} \\ \blr{e^{i\blr{\xi-\xi_\star,x}} \phi_2,  u }_{L^2_\xi}}.
\end{equation}

\begin{lem}\label{lem:1g} Assume that the assumptions $\operatorname{(H1)}$ and $\operatorname{(H3)}$ hold. 
Let $\te \in (0,1)$. There exists $\delta_0 > 0$ such that if
\begin{equation}\label{eq:3e}
\delta \in (0,\delta_0), \ \ |\xi - \xi_\star| \leq \delta^{1/3}, \ \ \lambda \in \Dd\left( E_\star,\te \sqrt{ \var_F^2 \cdot \delta^2+ \nu_F^2 \cdot |\xi-\xi_\star|^2} \right)
\end{equation}
then $P_\delta(\xi)-\lambda$ is invertible and 
\begin{equation}
 \big( P_\delta(\xi) - \lambda \big)^{-1} = \Pi_0(\xi)^* \cdot \big( M_\delta(\xi)-\lambda \big)^{-1} \cdot \Pi_0(\xi) + \OO_{L^2_\xi \rightarrow H^2_\xi}(1).
\end{equation}
\end{lem}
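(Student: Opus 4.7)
My strategy is a Grushin (Schur complement) reduction onto the rank-two subspace $\spane(\phi_1,\phi_2)$, in the spirit of the one-dimensional analog in \cite{DFW}. First, I would eliminate the $\xi$-dependence of $\Pi_0(\xi)$ by conjugation: with $\eta := \xi - \xi_\star$, the modulation $U_\eta : u \mapsto e^{i\lr{\eta,x}} u$ is a unitary isomorphism $L^2_{\xi_\star} \to L^2_{\xi}$, and a direct expansion of $-\Delta$ under modulation yields
\begin{equation*}
\tilde{P}_\delta(\xi) := U_\eta^{-1} P_\delta(\xi) U_\eta = P_\delta(\xi_\star) + 2(\xi-\xi_\star) \cdot D_x + |\xi-\xi_\star|^2
\end{equation*}
as an operator on $L^2_{\xi_\star}$, while $\Pi_0(\xi) = \tilde{\Pi} \, U_\eta^{-1}$ with $\tilde{\Pi} u := (\lr{\phi_1,u}_{L^2_{\xi_\star}},\, \lr{\phi_2,u}_{L^2_{\xi_\star}})^\top$ satisfying $\tilde{\Pi}\tilde{\Pi}^* = \Id_{\C^2}$. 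It therefore suffices to establish the identity for $(\tilde{P}_\delta(\xi) - \lambda)^{-1}$ on $L^2_{\xi_\star}$ and conjugate back by $U_\eta$.

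I would then consider the Grushin block
\begin{equation*}
\mathcal{A} := \matrice{\tilde{P}_\delta(\xi) - \lambda & \tilde{\Pi}^* \\ \tilde{\Pi} & 0} : H^2_{\xi_\star} \times \C^2 \to L^2_{\xi_\star} \times \C^2.
\end{equation*}
At $(\delta,\xi,\lambda) = (0,\xi_\star,E_\star)$, the equality $\operatorname{range}(\tilde{\Pi}^*) = \ker(P_0(\xi_\star) - E_\star)$ together with ellipticity of $P_0(\xi_\star)$ makes $\mathcal{A}$ invertible with explicit inverse involving the reduced resolvent of $P_0(\xi_\star)$ at $E_\star$. A Neumann argument preserves invertibility throughout \eqref{eq:3e}, yielding $\mathcal{A}^{-1} = \matrice{E & E_+ \\ E_- & E_{-+}}$, with
\begin{equation*}
E_{-+} = -\tilde{\Pi} Q \tilde{\Pi}^* + \tilde{\Pi} Q \hat{R} \Pi_\perp Q \tilde{\Pi}^*,
\end{equation*}
where $Q := \tilde{P}_\delta(\xi) - \lambda$, $\Pi_\perp := \Id - \tilde{\Pi}^*\tilde{\Pi}$, and $\hat{R} := (\Pi_\perp Q \Pi_\perp)^{-1}$ is $\OO_{L^2_{\xi_\star} \to H^2_{\xi_\star}}(1)$. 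The adjoint identity $\tilde{\Pi}(P_0(\xi_\star)-E_\star) = 0$ reduces $\tilde{\Pi} Q$ to its small perturbative piece $\tilde{\Pi}(E_\star - \lambda + \delta W + 2(\xi-\xi_\star)\cdot D_x + |\xi-\xi_\star|^2)$; evaluating the resulting $2\times 2$ matrix of inner products via Lemmas \ref{lem:1i} and \ref{lem:1h} gives
\begin{equation*}
-\tilde{\Pi} Q \tilde{\Pi}^* = (\lambda - M_\delta(\xi)) - |\xi-\xi_\star|^2 \Id,
\end{equation*}
while the quadratic term is $\OO_{\C^2}((\delta + |\xi-\xi_\star|)^2)$ since each flanking $Q$ has been reduced to its small piece. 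Hence $E_{-+} = (\lambda - M_\delta(\xi)) + \OO_{\C^2}((\delta+|\xi-\xi_\star|)^2)$.

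Finally, Lemma \ref{lem:1f} yields $\|(\lambda - M_\delta(\xi))^{-1}\|_{\C^2} = O((\delta+|\xi-\xi_\star|)^{-1})$, so the above correction is small by a factor $\delta+|\xi-\xi_\star|$ relative to the main term, and a Neumann series produces $E_{-+}^{-1} = -(M_\delta(\xi)-\lambda)^{-1} + \OO_{\C^2}(1)$. Analogous computations give $E_+ = \tilde{\Pi}^* + \OO_{\C^2 \to H^2_{\xi_\star}}(\delta+|\xi-\xi_\star|)$, $E_- = \tilde{\Pi} + \OO_{L^2_{\xi_\star}\to\C^2}(\delta+|\xi-\xi_\star|)$ and $E = \hat{R}\Pi_\perp = \OO_{L^2_{\xi_\star} \to H^2_{\xi_\star}}(1)$. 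Inserting these into the Schur formula $(\tilde{P}_\delta(\xi)-\lambda)^{-1} = E - E_+ E_{-+}^{-1} E_-$ produces
\begin{equation*}
(\tilde{P}_\delta(\xi)-\lambda)^{-1} = \tilde{\Pi}^* (M_\delta(\xi) - \lambda)^{-1} \tilde{\Pi} + \OO_{L^2_{\xi_\star} \to H^2_{\xi_\star}}(1),
\end{equation*}
which conjugates back to the statement of Lemma \ref{lem:1g}. The main obstacle will be operator-norm bookkeeping: the leading resolvent term has size $(\delta+|\xi-\xi_\star|)^{-1}$, potentially as large as $\delta^{-1}$, so every correction of size $\delta+|\xi-\xi_\star|$ in $E_\pm$ or $\OO(1)$ in $E_{-+}^{-1}$ must combine with the large factor $E_{-+}^{-1}$ to yield exactly $\OO(1)$; the saving cancellation is of the form $|\xi-\xi_\star|^2 / (\delta+|\xi-\xi_\star|)^2 \leq 1$.
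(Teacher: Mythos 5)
Your proposal is correct and follows essentially the same route as the paper: a rank-two Schur complement (Grushin) reduction onto the near-kernel, with the same key inputs — off-diagonal blocks of size $O(\delta+|\xi-\xi_\star|)$, an $\OO(1)$ inverse on the orthogonal complement, the identification of the $2\times 2$ block with $M_\delta(\xi)-\lambda$ up to $O(|\xi-\xi_\star|^2)$ via Lemmas \ref{lem:1i} and \ref{lem:1h}, and Lemma \ref{lem:1f} to close the bookkeeping. The only cosmetic differences are your conjugation by the modulation $e^{i\lr{\xi-\xi_\star,x}}$ (the paper instead works with the $\xi$-dependent subspace $\VVV(\xi)$) and your Neumann-series justification of the invertibility of $\Pi_\perp Q \Pi_\perp$ (the paper builds a parametrix from the spectral subspace $\WWW(\xi)$ of $P_0(\xi)$, which amounts to the same uniform estimate).
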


\begin{proof} 1. Introduce the $\xi$-dependent family of vector spaces
\begin{equation}
\VVV(\xi) = \C \cdot e^{i\blr{\xi-\xi_\star,x}} \phi_1 \oplus \C \cdot e^{i\blr{\xi-\xi_\star,x}} \phi_2.
\end{equation}
We split $L^2_\xi$ as $\VVV(\xi) \oplus \VVV(\xi)^\perp$. With respect to this decomposition, we write $P_\delta(\xi)$ as a block-by-block operator:
\begin{equation}\label{eq:5p}
P_\delta(\xi) - \lambda = \matrice{A_\delta(\xi)-\lambda & B_\delta(\xi) \\ C_\delta(\xi) & D_\delta(\xi)-\lambda}.
\end{equation}
We use below $\lr{\cdot, \cdot}$ to denote the $L^2_\xi$-scalar product.

2. We show that
\begin{equation}\label{eq:3b}
 B_\delta(\xi)  = \OO_{\VVV(\xi)^\perp \rightarrow \VVV(\xi)}\big(\delta+|\xi-\xi_\star|\big), \ \ \ \ C_\delta(\xi) = \OO_{\VVV(\xi) \rightarrow \VVV(\xi)^\perp}\big(\delta+|\xi-\xi_\star|\big).
\end{equation}
Note that $C_\delta(\xi) = B_\delta(\xi)^*$ hence we just have to estimate $B_\delta(\xi)$, i.e. show that
\begin{equation}\label{eq:3a}
u \in \VVV(\xi)^\perp, \ |u|_{L^2_\xi} =1 \ \  \Rightarrow \ \ \blr{e^{i\blr{\xi-\xi_\star,x}} \phi_j, P_\delta(\xi) u} = O\big(\delta+ |\xi-\xi_\star|\big)
\end{equation}
where the implicit constant does not depend on $u$.
We have
\begin{equations}
\blr{e^{i\blr{\xi-\xi_\star,x}} \phi_j, P_\delta(\xi) u} = \blr{P_\delta(\xi)  \cdot e^{i\blr{\xi-\xi_\star,x}} \phi_j,  u} 
= \blr{(-\Delta+V+\delta W)\cdot e^{i\blr{\xi-\xi_\star,x}} \phi_j,  u}
\\
= \blr{e^{i\blr{\xi-\xi_\star,x}} (-\Delta+V)\phi_j, u} + \blr{\left[-\Delta, e^{i\blr{\xi-\xi_\star,x}}\right] \phi_j,  u} + \delta \blr{ W e^{i\blr{\xi-\xi_\star,x}} \phi_j,  u}  
\\
= (E_\star+|\xi-\xi_\star|^2) \blr{e^{i\blr{\xi-\xi_\star,x}} \phi_j, u} + 2\blr{e^{i\blr{\xi-\xi_\star,x}} (\xi-\xi_\star) \cdot D_x \phi_j,  u} +  \delta \blr{ W e^{i\blr{\xi-\xi_\star,x}} \phi_j,  u}.
\end{equations}
The first bracket vanishes because $u \in \VVV(\xi)^\perp$. The second and third brackets are $O(\xi-\xi_\star)$ and $O(\delta)$, respectively -- and this holds uniformly in $u$ with $|u|_{L^2_\xi} = 1$. This gives \eqref{eq:3a}, itself implying \eqref{eq:3b}.

3. Here we prove that if \eqref{eq:3e} is satisfied then 
\begin{equations}
D_\delta(\xi) - \lambda : \VVV(\xi)^\perp \cap H^2_\xi \rightarrow \VVV(\xi)^\perp \cap L^2_\xi \ \ \text{ is invertible and } \\
\big( D_\delta(\xi) - \lambda \big)^{-1}  = \OO_{\VVV(\xi)^\perp}(1).
\end{equations}
It suffices to construct an operator $E_\delta(\xi,\lambda) : \VVV(\xi)^\perp \rightarrow \VVV(\xi)^\perp$ such that
\begin{equation}\label{eq:2l}
E_\delta(\xi,\lambda) = \OO_{\VVV(\xi)^\perp}(1), \ \ \ \ E_\delta(\xi,\lambda) \cdot \big(D_\delta(\xi) - \lambda\big) = \Id_{\VVV(\xi)^\perp} + \OO_{\VVV(\xi)^\perp}\big(\delta+|\xi-\xi_\star|\big).
\end{equation}

The space $\VVV(\xi_\star) = \ker_{L^2_{\xi_\star}}\hspace*{-1.5mm}\big(P_0(\xi_\star) - E_\star\big)$ has dimension $2$; $P_0(\xi)$ depends smoothly on $\xi$ in the sense that $e^{-i\xi x} \cdot P_0(\xi) \cdot e^{i\xi x}$ forms a smooth family of operators $H^2_0 \rightarrow L^2_0$. Therefore, there exist $\eta > 0$ and $\epsi > 0$ such that
\begin{equation}\label{eq:5m}
|\xi-\xi_\star| \leq \epsi \ \Rightarrow \ P_0(\xi) \text{ has precisely two eigenvalues in } [E_\star-\eta,E_\star+\eta].
\end{equation}
See \cite[\S VII.1.3, Theorem 1.8]{Ka}. 
Let $\WWW(\xi)$ be the vector space spanned by the two eigenvectors of $P_0(\xi)$ with energy in $[E_\star-\eta,E_\star+\eta]$. Let $Q_0(\xi)$ be the operator formally equal to $P_0(\xi)$ but acting on $\WWW(\xi)^\perp$. From \eqref{eq:5m}, for $|\xi-\xi_\star| \leq \epsi$, the spectrum of $Q_0(\xi)$ consists of the eigenvalues of $P_0(\xi)$ outside $[E_\star-\eta,E_\star+\eta]$. The spectral theorem implies that if $\delta_0$ is small enough, under \eqref{eq:3e},
\begin{equations}\label{eq:2m}
Q_0(\xi)-\lambda : \ \WW(\xi)^\perp \cap H^2_\xi \rightarrow \WW(\xi)^\perp \cap L^2_\xi \text{ is invertible and } \\
 \big( Q_0(\xi) - \lambda \big)^{-1} = \OO_{\WWW(\xi)^\perp}(1).
\end{equations}

 Let $J(\xi) : \VVV(\xi)^\perp \rightarrow \WWW(\xi)^\perp$ obtained by orthogonally projecting an element $u \in \VVV(\xi)^\perp \subset L^2_\xi$ to $\WWW(\xi)^\perp$. We set
\begin{equation}
E_\delta(\xi,\lambda) \de J(\xi)^* \cdot \big( Q_0(\xi) - \lambda \big)^{-1} \cdot J(\xi) \ : \ \VVV(\xi)^\perp \rightarrow \VVV(\xi)^\perp.
\end{equation}
The first estimate of \eqref{eq:2l} is satisfied because of \eqref{eq:2m}. We want to check the second estimate. Observe that
\begin{equations}\label{eq:2k}
E_\delta(\xi,\lambda) \cdot \big(D_\delta(\xi) - \lambda\big)  
= 
E_\delta(\xi,\lambda) \cdot \pi_{\VVV(\xi)^\perp} \big( P_0(\xi) - \lambda + \delta W \big)
\\
=
J(\xi)^* \cdot \big( Q_0(\xi)-\lambda \big)^{-1} \cdot J(\xi) \cdot \pi_{\VVV(\xi)^\perp} \big( P_0(\xi) - \lambda \big) + \OO_{\VVV(\xi)}(\delta).
\end{equations}
Above, $\pi_{\VVV(\xi)^\perp} : L^2_\xi \rightarrow L^2_\xi$ is the orthogonal projection from $L^2_\xi$ to $\VVV(\xi)$, also seen as  an operator $L^2_\xi \mapsto \VVV(\xi)^\perp$. We introduce similarly $\pi_{\WW(\xi)^\perp}$.  Then
\begin{equations}\label{eq:5n}
J(\xi) \cdot \pi_{\VVV(\xi)^\perp}  =\pi_{\WWW(\xi)^\perp} \cdot \left( \Id - \pi_{\VVV(\xi)} \right)  = \pi_{\WWW(\xi)^\perp} - \left( \Id - \pi_{\WWW(\xi)} \right) \cdot \pi_{\VVV(\xi)} 
\\
= \pi_{\WWW(\xi)^\perp}  - \left(\pi_{\VVV(\xi)} - \pi_{\WWW(\xi)} \right) \cdot \pi_{\VVV(\xi)}.
\end{equations}
The individual eigenvectors associated to the eigenvalues of $P_0(\xi)$ in $[E_\star-\eta,E_\star+\eta]$ do not depend smoothly on $\xi$ but the projector $\pi_{\WWW(\xi)}$ depends smoothly on $\xi$ -- see \cite[\S VII1.3, Theorem 1.7]{Ka}.  Since $\VVV(\xi_\star) = \WWW(\xi_\star)$, this implies $\pi_{\VVV(\xi)} - \pi_{\WWW(\xi)} = \OO_{L^2_\xi}(\xi-\xi_\star)$. We deduce that
\begin{equation}\label{eq:2j}
J(\xi) \cdot \pi_{\VVV(\xi)^\perp}  = \pi_{\WWW(\xi)^\perp} + \OO_{\WWW(\xi)^\perp}(\xi-\xi_\star).
\end{equation}

We combine \eqref{eq:2k} and \eqref{eq:2j} to obtain
\begin{equations}
E_\delta(\xi,\lambda) \cdot \big(D_\delta(\xi) - \lambda\big) = J(\xi)^* \cdot \big( Q_0(\xi)-\lambda \big)^{-1} \cdot \pi_{\WWW(\xi)^\perp} (P_0(\xi) - \lambda)  + \OO_{L^2_\xi}(\delta)
\\
= J(\xi)^* \pi_{\WWW(\xi)^\perp}  + \OO_{\VVV(\xi)^\perp}\big(\delta+|\xi-\xi_\star|\big).
\end{equations}
The operator $J(\xi)^*$ takes an element in $\WWW(\xi)^\perp$ and project it to $\VVV(\xi)^\perp$. By the same argument as \eqref{eq:5n} and \eqref{eq:2j} (inverting $\VVV(\xi)$ and $\WWW(\xi)$), 
\begin{equation}
J(\xi)^* \pi_{\WWW(\xi)^\perp} = \pi_{\VVV(\xi)^\perp} + O_{\VVV(\xi)^\perp}(\xi-\xi_\star).
\end{equation}
We conclude that the second estimate of \eqref{eq:2l} is satisfied. It follows that $D_\delta(\xi)-\lambda : \VVV(\xi)^\perp \rightarrow \VVV(\xi)^\perp$ is invertible under \eqref{eq:3e}.

4. We now study $A_\delta(\xi)-\lambda$. This operator acts on the two-dimensional space $\VVV(\xi)$; its matrix in the basis $\{ e^{i\blr{\xi-\xi_\star,x}} \phi_1, e^{i\blr{\xi-\xi_\star,x}} \phi_2 \}$ is
\begin{equation}\label{eq:3c}
\matrice{ \blr{ e^{i\blr{\xi-\xi_\star,x}} \phi_1, (P_\delta(\xi)-\lambda)e^{i\blr{\xi-\xi_\star,x}} \phi_1 } &  \blr{ e^{i\blr{\xi-\xi_\star,x}} \phi_1, (P_\delta(\xi)-\lambda)e^{i\blr{\xi-\xi_\star,x}} \phi_2 } \\
\blr{ e^{i\blr{\xi-\xi_\star,x}} \phi_2 (P_\delta(\xi)-\lambda)e^{i\blr{\xi-\xi_\star,x}} \phi_1 } & \blr{ e^{i\blr{\xi-\xi_\star,x}} \phi_2, (P_\delta(\xi)-\lambda)e^{i\blr{\xi-\xi_\star,x}} \phi_2 }}.
\end{equation}
We observe that
\begin{equations}
e^{-i\blr{\xi-\xi_\star,x}} (P_\delta(\xi)-\lambda) e^{i\blr{\xi-\xi_\star,x}} = P_\delta(\xi_\star) + [e^{-i\blr{\xi-\xi_\star,x}},-\Delta] e^{i\blr{\xi-\xi_\star,x}}
\\
= P_\delta(\xi_\star) -\lambda + [\Delta, e^{-i\blr{\xi-\xi_\star,x}}] e^{i\blr{\xi-\xi_\star,x}} 
= P_\delta(\xi_\star) -\lambda  +  2 ((\xi-\xi_\star) \cdot D_x) - |\xi-\xi_\star|^2.
\end{equations}
Therefore the matrix elements in \eqref{eq:3c} are given by
\begin{equations}
\blr{\phi_j, \big( P_\delta(\xi)-\lambda\big) \phi_k} = \blr{\phi_j, \left( P_\delta(\xi_\star)  +  2  (\xi-\xi_\star) \cdot D_x - \lambda  - |\xi-\xi_\star|^2 \right) \phi_k}
\\
= \big( E_\star - |\xi-\xi_\star|^2 \big) \delta_{jk} + \blr{\phi_j, \big( \delta W  +  2 (\xi-\xi_\star) \cdot D_x\big) \phi_k}.
\end{equations}
We deduce from Lemma \ref{lem:1i} and \ref{lem:1h} that the matrix \eqref{eq:3c} is equal to $M_\delta(\xi) + O(\xi-\xi_\star)^2$. Using a Neumann series argument based on \eqref{eq:3c}, when \eqref{eq:3e} holds, $A_\delta(\xi)-\lambda$ is invertible; and
\begin{equation}\label{eq:4q}
\big( A_\delta(\xi)-\lambda \big)^{-1} =  \Pi_0(\xi)^* \cdot \big( M_\delta(\xi)-\lambda \big)^{-1} \cdot \Pi_0(\xi) + \OO_{\VVV(\xi)}\left( \dfrac{|\xi-\xi_\star|^2}{\delta^2 + |\xi-\xi_\star|^2} \right).
\end{equation}
Because of Lemma \ref{lem:1f}, we also observe that
\begin{equation}\label{eq:4r}
\big( A_\delta(\xi)-\lambda \big)^{-1}  = \OO_{\VVV(\xi)} \left((\delta+|\xi-\xi_\star|)^{-1} \right).
\end{equation}

5. Schur's lemma allows to invert block-by-block operators of the form \eqref{eq:5p} under certain conditions on the blocks; see \cite[Lemma 4.1]{DFW} for the version needed here. We need to verify that:
\begin{equations}\label{eq:4o}
A_\delta(\xi) - \lambda : \ \VVV(\xi) \rightarrow \VVV(\xi) \ \text{ is invertible; } \\
D_\delta(\xi) - \lambda - C_\delta(\xi) \cdot \big( A_\delta(\xi)-\lambda \big)^{-1} \cdot B_\delta(\xi) : \VVV(\xi)^\perp \rightarrow \VVV(\xi)^\perp \ \text{ is invertible}.
\end{equations}
The first statement holds because of Step 4. Regarding the second statement, we observe that because of \eqref{eq:3b} and \eqref{eq:4r},
\begin{equation}
C_\delta(\xi) \cdot \big( A_\delta(\xi)-\lambda \big)^{-1} \cdot B_\delta(\xi) = \OO_{\VVV(\xi)^\perp}\big(\delta + |\xi-\xi_\star| \big) = \OO_{\VVV(\xi)^\perp}\big(\delta^{1/3}\big).
\end{equation}
Because of Step 3, $D_\delta(\xi) - \lambda$ is invertible and its inverse is $\OO_{\VVV(\xi)^\perp}(1)$. Therefore a Neumann series argument shows that the second statement in \eqref{eq:4o} holds. It also shows that the inverse is $\OO_{\VVV(\xi)^\perp}(1)$.

We apply Schur's lemma -- see \cite[Lemma 4.1]{DFW}. From \eqref{eq:5p}, we obtain that $P_\delta(\xi) - \lambda : H^2_\xi \rightarrow L^2_\xi$ is invertible when \eqref{eq:3e} holds; and moreover
\begin{equation}
\big(P_\delta(\xi) - \lambda\big)^{-1} = \matrice{(A_\delta(\xi)-\lambda)^{-1} & 0 \\ 0 & 0} + \OO_{L^2_\xi}(1).
\end{equation}
Using \eqref{eq:4q} and the projector \eqref{eq:1r}, we deduce that
\begin{equation}\label{eq:5q}
\big( P_\delta(\xi) - \lambda \big)^{-1} = \Pi_0(\xi)^* \cdot \big( M_\delta(\xi)-\lambda \big)^{-1} \cdot \Pi_0(\xi) + \OO_{L^2_\xi}(1).
\end{equation}
The error term in \eqref{eq:5q} improves automatically to $\OO_{L^2_\xi \rightarrow H^2_\xi}(1)$ because of elliptic regularity -- see the argument at the end of the proof of Lemma \ref{lem:1x}. 
\end{proof}

\section{The bulk resolvent along the edge}\label{sec:6}

Let $v \in \Lambda$ be the direction of an edge. We define accordingly $v', \ k, \ k'$ and $\ell$ -- see \S\ref{sec:6.1}.
For $\zeta \in \R$, we set
\begin{equation}
L^2[\zeta] \de \left\{ u \in L^2_\loc \big( \R^2,\C \big), \ u(x+v) = e^{i\zeta} u(x), \ \int_{\R^2/\Z v} |u(x)|^2 dx   < \infty\right\}.
\end{equation}
Let $P_\delta[\zeta]$ be the operator formally equal to  $P_\delta$ acts but acting on $L^2[\zeta]$. We are interested in the resolvent of $P_\delta[\zeta]$ for $\delta$ small and $\zeta$ near $\zeta_\star = \blr{\xi_\star, v}$. We recall:
\begin{equations}\label{eq:5r}
\Pi : L^2\big(\R^2/\Z v, \C^2\big) \rightarrow L^2\big(\R,\C^2\big), \ \ \ \ \big(\Pi f\big)(t) \de \int_0^1 f(sv + tv') ds; 
\\
\Pi^* : L^2\big(\R, \C^2\big) \rightarrow L^2\big(\R^2/\Z v,\C^2\big), \ \ \ \ \big(\Pi^* g\big)(x) \de g\big(\blr{k',x}\big);
\\
\UU_\delta : L^2\big(\R,\C^2\big) \rightarrow L^2\big(\R,\C^2\big), \ \ \ \ \big(\UU_\delta f\big)(t) \de f(\delta t).
\end{equations}
Let $\Di_\pm(\mu) : H^1(\R,\C^2) \rightarrow L^2(\R,\C^2)$ be the formal limits of $\Di(\mu)$ as $t \rightarrow \pm \infty$: 
\begin{equation}
\Di_\pm(\mu) \de \matrice{\var_\star & \nu_\star k'   \\ \ove{\nu_\star k' } & -\var_\star}D_t + \mu \matrice{0 & \nu_\star \ell  \\ \ove{\nu_\star \ell} & 0 } \pm \matrice{\var_\star & 0 \\ 0 & -\var_\star}.
\end{equation}
The main result of this section relates the resolvent of $P_{\pm \delta}[\zeta]$ at $E_\star+\delta z$ to that of $\Di_\pm(\mu)$ at $z$ for small enough $\delta$.  The assumptions $\operatorname{(H1)}$ -- $\operatorname{(H3)}$ were defined in \S\ref{sec:1.5}.
 
 \begin{theorem}\label{thm:1} Assume that the assumptions $\operatorname{(H1)}$ -- $\operatorname{(H3)}$ hold and fix $\tmu > 0$ and $\te \in (0,1)$. There exists $\delta_0 > 0$ such that if
 \begin{equations}
 \delta \in (0,\delta_0), \ \ \mu \in (-\tmu,\tmu), \ \ z \in \Dd\left( 0, \te \sqrt{ \var_F^2 + \mu^2 \cdot \nu_F^2 | \ell|^2 } \right), \\
 \zeta = \zeta_\star + \delta \mu, \ \  \ \ \lambda = E_\star + \delta z
 \end{equations}
 then the operators $P_{\pm\delta}[\zeta]- \lambda : H^2[\zeta] \rightarrow L^2[\zeta]$ are invertible. Furthermore, 
 \begin{equations}
 \left.\big(P_{\pm\delta}[\zeta]- \lambda \big)^{-1} = S_{\pm\delta}(\mu,z) + \OO_{L^2[\zeta]}\left(\delta^{-1/3}\right) \right., \\ (k' \cdot D_x) \big(P_{\pm\delta}[\zeta]- \lambda \big)^{-1} = S_{\pm\delta}^D(\mu,z) + \OO_{L^2[\zeta]}\left(\delta^{-1/3}\right),
\\
\text{where:} \ \ \ \  S_{\pm\delta}(\mu,z) \de \dfrac{1}{\delta} \cdot \matrice{\phi_1 \\ \phi_2}^\top e^{i  \mu \delta \blr{\ell,x} } \Pi^* \cdot  \UU_\delta \big( \Di_\pm(\mu) - z \big)^{-1} \UU_\delta^{-1} \cdot \Pi e^{-i  \mu \delta \blr{\ell,x} } \ove{\matrice{\phi_1 \\ \phi_2}},
 \\
  S_{\pm\delta}^D(\mu,z) \de \dfrac{1}{\delta} \cdot \matrice{(k' \cdot D_x)\phi_1 \\ (k' \cdot D_x)\phi_2}^\top e^{i  \mu \delta \blr{\ell,x} } \Pi^* \cdot  \UU_\delta \big( \Di_\pm(\mu) - z \big)^{-1} \UU_\delta^{-1} \cdot \Pi e^{-i  \mu \delta \blr{\ell,x} } \ove{\matrice{\phi_1 \\ \phi_2}}.
 \end{equations}
 \end{theorem}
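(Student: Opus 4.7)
\medskip

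\noindent\textbf{Proof proposal.} The strategy is to Floquet--Bloch decompose $P_{\pm\delta}[\zeta]$ along the dual edge and apply fiberwise the resolvent expansions of \S\ref{sec:5}. Since $P_{\pm\delta}$ is $\Lambda$-periodic, writing $\xi = \zeta k + \tau k'$ we obtain
\begin{equation}
L^2[\zeta] \simeq \frac{1}{2\pi} \int^\oplus_{\tau \in [0,2\pi)} L^2_{\zeta k + \tau k'}\, d\tau, \qquad \big(P_{\pm\delta}[\zeta] - \lambda\big)^{-1} \simeq \frac{1}{2\pi} \int^\oplus \big(P_{\pm\delta}(\xi) - \lambda\big)^{-1} d\tau.
\end{equation}
I then cut the dual line into three zones and estimate each separately: a far zone $|\xi - \xi_\star| \geq \epsilon_0$; an intermediate zone $\delta^{1/3} \leq |\xi - \xi_\star| < \epsilon_0$; and a near zone $|\xi - \xi_\star| < \delta^{1/3}$ (all modulo $2\pi\Lambda^*$).

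In the far zone, the no-fold condition (H2) combined with continuity of the dispersion surfaces puts $\lambda = E_\star + \delta z$ at distance $\gtrsim 1$ from $\Sigma_{L^2_\xi}(P_0(\xi))$, giving an $\OO(1)$ resolvent bound. In the intermediate zone Lemma \ref{lem:1x} gives an $\OO_{L^2_\xi \to H^2_\xi}(\delta^{-1/3})$ bound. Because direct integrals estimate operator norms by the fiberwise supremum, the total contribution from these two zones to $(P_{\pm\delta}[\zeta]-\lambda)^{-1}$ is $\OO_{L^2[\zeta]}(\delta^{-1/3})$, as required by the error term in the statement.

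In the near zone I invoke Lemma \ref{lem:1g}, which replaces $(P_{\pm\delta}(\xi) - \lambda)^{-1}$ by $\Pi_0(\xi)^*(M_{\pm\delta}(\xi)-\lambda)^{-1}\Pi_0(\xi)$ up to $\OO_{L^2_\xi\to H^2_\xi}(1)$. The key observation is the rescaling: with $\zeta = \zeta_\star + \delta\mu$, I parametrize the fiber variable by $\tau = \lr{\xi_\star,v'} - \delta\mu\,\lr{k,k'}/|k'|^2 + \delta s$, so that
\begin{equation}
\xi - \xi_\star = \delta\mu\,\ell + \delta s\,k', \qquad M_{\pm\delta}(\xi) - \lambda \;=\; \delta\bigl(N_\pm(s,\mu) - z\bigr),
\end{equation}
where $N_\pm(s,\mu) = \matrice{\pm\var_\star & \nu_\star(sk'+\mu\ell) \\ \ove{\nu_\star(sk'+\mu\ell)} & \mp\var_\star}$ is precisely the Fourier symbol of $\Di_\pm(\mu)$. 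The factor $\delta\, ds$ is the Jacobian of $\tau \mapsto s$, and it cancels the $\delta^{-1}$ from inverting $M_{\pm\delta}(\xi) - \lambda$, leaving the overall $1/\delta$ prefactor in $S_{\pm\delta}(\mu,z)$. Inverse-Fourier-transforming in $s$ on $\R$ produces $(\Di_\pm(\mu)-z)^{-1}$; the scaling $s \leftrightarrow \delta^{-1}\tau$ is implemented by $\UU_\delta$ and $\UU_\delta^{-1}$.

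The remaining task is to recognize the fiberwise projector $\Pi_0(\xi) = \matrice{\lr{e^{i\lr{\xi-\xi_\star,x}}\phi_1,\cdot} \\ \lr{e^{i\lr{\xi-\xi_\star,x}}\phi_2,\cdot}}$ as the direct-integral realization of $\matrice{\phi_1\\\phi_2}^\top e^{i\mu\delta\lr{\ell,x}}\Pi^*\UU_\delta$ (and its adjoint). This is where the exponential prefactors $e^{\pm i\mu\delta\lr{\ell,x}}$ enter: the shift $\xi \mapsto \xi - \xi_\star - \delta\mu\,\ell$ in momentum corresponds to multiplication by $e^{-i\delta\mu\lr{\ell,x}}$ in position. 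The operator $\Pi$ then encodes the integration against the $v$-direction plane waves that builds a function of $t = \lr{k',x}$ out of a function on $\R^2/\Z v$, while $\Pi^*$ lifts back. The second identity involving $(k'\cdot D_x)(P_{\pm\delta}[\zeta]-\lambda)^{-1}$ follows by applying $(k'\cdot D_x)$ to the already established expansion: the derivative lands on $e^{i\lr{\xi-\xi_\star,x}}\phi_j$ and, up to an $\OO(\delta+|\xi-\xi_\star|)$-error absorbable into the remainder, produces the factor $(k'\cdot D_x)\phi_j$ appearing in $S_{\pm\delta}^D$.

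The hard part will be executing the change of variables cleanly, in particular tracking how the integral $\int d\tau$ over the Brillouin interval becomes an integral $\int ds$ over all of $\R$, using that the integrand of the near-zone contribution decays in $|s|$ thanks to $(N_\pm(s,\mu)-z)^{-1} = \OO(|s|^{-1})$ for large $|s|$; the tail $|\tau - \lr{\xi_\star,v'}| \gtrsim \epsilon_0$ must be shown to contribute to the remainder both when added back to complete the Fourier inversion and when removed from the original integral. Equally delicate is the control of the error from Lemma \ref{lem:1g} in the Sobolev norm needed for the $(k'\cdot D_x)$ estimate, which requires that the $\OO_{L^2_\xi \to H^2_\xi}(1)$ bound pass through the direct integral uniformly in $\xi$ on the near zone.
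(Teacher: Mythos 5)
Your proposal is correct and follows essentially the same route as the paper: Floquet--Bloch fiber decomposition along $\zeta k+\R k'$, Lemma \ref{lem:1x} to dispose of the fibers with $|\xi-\xi_\star|\ge\delta^{1/3}$, Lemma \ref{lem:1g} plus the rescaling $\tau=\tau_\star+\delta s$ in the near zone to exhibit the Dirac symbol $N_\pm(s,\mu)$, the kernel identity turning $\Pi_0(\xi)$ into $\Pi^*\UU_\delta$ with the $e^{\pm i\mu\delta\lr{\ell,x}}$ conjugation, and the symbol decay $(N_\pm(s,\mu)-z)^{-1}=\OO(|s|^{-1})$ to absorb the tail $|s|\ge\az_\delta\sim\delta^{-2/3}$ into the $\OO(\delta^{-1/3})$ remainder. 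The two points you flag as delicate (completing the Brillouin interval to $\R$, and the $H^2_\xi$ control needed for the $(k'\cdot D_x)$ identity) are exactly the ones the paper handles via Lemmas \ref{lem:1j} and \ref{lem:1k}.
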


\subsection{Strategy} We first observe that it suffices to prove Theorem \ref{thm:1} for $P_\delta[\zeta]$. Indeed, to go from $P_\delta[\zeta]$ to $P_{-\delta}[\zeta]$ we simply replace $W$ with $-W$. The only parameter to change is $\var_\star$, which becomes $-\var_\star$. This simply transforms $\Di_+(\mu)$ to $\Di_-(\mu)$.

To prove Theorem \ref{thm:1}, we decompose $P_\delta[\zeta]$ fiberwise using the operators $P_\delta(\xi)$ (formally equal to $P_\delta$ acts but acting on $L^2_\xi$). Specifically:
\begin{equation}
P_\delta[\zeta] = \dfrac{1}{2\pi} \int_{\R/(2\pi \Z)}^\oplus P_\delta(\zeta k + \tau k') \cdot d\tau = \dfrac{1}{2\pi} \int_{[0,2\pi]}^\oplus P_\delta(\zeta k + \tau k') \cdot d\tau.
\end{equation}
When $P_\delta[\zeta]-\lambda$ is invertible, we are interested in the resolvent
\begin{equation}\label{eq:3z}
\big(P_\delta[\zeta]-\lambda\big)^{-1} = \dfrac{1}{2\pi} \int_{[0,2\pi]}^\oplus \big(P_\delta(\zeta k + \tau k')-\lambda \big)^{-1} d\tau.
\end{equation}
The fiber resolvents $\big(P_\delta(\zeta k + \tau k')-\lambda \big)^{-1}$ were studied in \S\ref{sec:5}. We first show that if $\zeta k + \tau k'$ satisfies $\rho(\zeta k + \tau k') \geq \delta^{1/3}$ then this quasimomentum does not contribute significantly to the resolvent $\big(P_\delta[\zeta]-\lambda\big)^{-1}$.

Then we study the contributions from quasimomenta $\zeta k + \tau k'$ at distance at most $\delta^{1/3}$ from $\xi_\star$. The Dirac operator $\Di_+(\mu)$ emerges from a rescaled direct integration of the dominant rank-two matrix exhibited in Lemma \ref{lem:1g}.

\subsection{Reduction to $\zeta k + \tau k'$ near $\xi_\star$} We start the proof of Theorem \ref{thm:1}. Below $\te \in (0,1)$ and $\tmu > 0$ are fixed numbers. Let $n$ be the integer such that
\begin{equation}
\lr{\xi_\star,v'} \in [2\pi n, 2\pi n + 2\pi).
\end{equation}
Write $\xi = \zeta k + \tau k'$, $\tau \in [2\pi n, 2\pi n + 2\pi)$ and introduce
\begin{equation}
I \de \left\{ \tau \in [2\pi n, 2\pi n + 2\pi) : |\xi-\xi_\star| \leq \delta^{1/3} \right\}, \ \ \ \ I^c \de  [2\pi n, 2\pi n + 2\pi) \setminus I.
\end{equation}
Observe that $\rho(\xi) =  \delta |\mu \ell|$. In particular for $\delta$ small enough $\rho(\xi)$ is smaller than the threshold $\epsi_0$ given by Lemma \ref{lem:1x}. That lemma yields
\begin{equations}
(P_\delta[\zeta] - \lambda)^{-1}   = \dfrac{1}{2\pi} \int_{\tau \in I}^\oplus \big(P_\delta(\zeta k + \tau k')-\lambda\big)^{-1} d\tau +\dfrac{1}{2\pi} \int_{\tau \in I^c}^\oplus \big(P_\delta(\zeta k + \tau k')-\lambda\big)^{-1} d\tau 
\end{equations}
\begin{equations}\label{eq:1o}
= \dfrac{1}{2\pi} \int_{\tau \in I}^\oplus \big(P_\delta(\zeta k + \tau k')-\lambda\big)^{-1} d\tau +  \OO_{L^2[\zeta] \rightarrow H^2[\zeta]}\left(\delta^{-1/3}\right).
\end{equations}

We would like to apply Lemma \ref{lem:1g} to the leading term of \eqref{eq:1o}. We check the assumptions: we must verify that when $\lamdba$ belongs to the range allowed in Theorem \ref{thm:1}, $\lambda$ belongs to the range required by Lemma \ref{lem:1g}. This is equivalent to
\begin{equation}\label{eq:1k}
\Dd\left(E_\star, \te \delta \sqrt{\var_F^2 + \mu^2 \cdot \nu_F^2 | \ell|^2 }\right) \subset \Dd\left(E_\star, \te  \sqrt{\var_F^2 \delta^2 + \nu_F^2 \cdot |\xi-\xi_\star|^2 }\right).
\end{equation}
To check that \eqref{eq:1k} holds,
we observe that
\begin{equations}\label{eq:1p}
|\xi-\xi_\star|^2 = |k'|^2 (\tau-\tau_\star)^2 + \mu^2 \delta^2 |\ell|^2 , \\
\tau_\star \de \blr{\xi_\star, v'} - \mu \delta \dfrac{ \blr{k,k'}}{|k'|^2}, \ \ \ \ \ell \de k-\dfrac{ \blr{k,k'}}{|k'|^2} k'.
\end{equations}
This implies
\begin{equation}
\te \delta \sqrt{\var_F^2 + \mu^2 \cdot \nu_F^2 | \ell|^2 } = \te \sqrt{\var_F^2 \delta^2 + \mu^2  \nu_F^2 \cdot  |\ell|^2 \delta^2 } \leq  \te \sqrt{\var_F^2 \delta^2 + \nu_F^2 \cdot |\xi-\xi_\star|^2 }.
\end{equation}
Therefore we can apply Lemma \ref{lem:1g} to the leading term of \eqref{eq:1o}. It shows that 
\begin{equations}\label{eq:2s}
P_\delta[\zeta] = T_\delta[\zeta] + \OO_{L^2[\zeta] \rightarrow H^2[\zeta]}\left(\delta^{-1/3}\right), 
\\
T_\delta[\zeta] \de \dfrac{1}{2\pi } \int_{\tau \in I}^\oplus \Pi_0(\zeta k + \tau k')^* \cdot \big(M_\delta(\zeta k + \delta \tau k') - \lambda \big)^{-1} \cdot \Pi_0(\zeta k + \tau k') d\tau.
\end{equations}

Because of \eqref{eq:1p}, $\tau_\star = \lr{\xi_\star,v'} + O(\delta)$. From \S\ref{sec:6.1} and the definition of $n$, $\lr{\xi_\star,v'} \in \{2\pi n+ 2\pi/3,2\pi n+ 4\pi/3\}$. Hence $\tau_\star$ is in the interior of $I$ for $\delta$ sufficiently small. It follows that $I$ is an interval centered at $\tau_\star$: 
\begin{equation}\label{eq:1q}
I = \big[\tau_\star-\delta \cdot \az_\delta,  \tau_\star+\delta \cdot \az_\delta\big], \ \ \ \ \az_\delta \de \dfrac{\sqrt{\delta^{2/3} - \mu^2 \cdot \nu_F^2 | \ell|^2 \delta^2}}{|k'|\delta } = \dfrac{\delta^{-2/3}}{|k'|} + O\left(\delta^{2/3}\right).
\end{equation}
We make the substitution $\tau \mapsto \tau_\star + \delta \tau$. The vector $\zeta k + \tau k'$ becomes $\zeta k + (\tau_\star + \delta \tau)k' = \xi_\star + \delta (\tau k'+\mu \ell)$; the interval $I$ becomes $[-\az_\delta, \az_\delta]$; $d\tau$ becomes $\delta d\tau$ and 
\begin{equations}
M_\delta\big(\zeta k + \delta (\tau k'+\mu \ell) \big) = E_\star+ \delta \matrice{\var_\star & \nu_\star   (\tau k'+\mu \ell) \\ \ove{\nu_\star  (\tau k'+\mu \ell)} & -\var_\star}, 
\\
\Big(  M_\delta\big(\zeta k + \delta (\tau k'+\mu \ell) \big) - \lambda \Big)^{-1} = \dfrac{1}{\delta} \matrice{\var_\star -z & \nu_\star   (\tau k'+\mu \ell) \\ \ove{\nu_\star  (\tau k'+\mu \ell)} & -\var_\star-z}^{-1}, \ \ \ \ z \de \dfrac{\lambda-E_\star}{\delta}.
\end{equations}
We deduce that $T_\delta[\zeta]$ equals
\begin{equations}
\dfrac{1}{2\pi } \int_{|\tau| < \az_\delta}^\oplus \Pi_0\big(\xi_\star + \delta (\tau k'+\mu \ell) \big)^* \cdot \matrice{\var_\star -z & \nu_\star   (\tau k'+\mu \ell) \\ \ove{\nu_\star  (\tau k'+\mu \ell)} & -\var_\star-z}^{-1} \cdot \Pi_0\big(\xi_\star + \delta (\tau k'+\mu \ell) \big) \cdot d\tau.
\end{equations}

Thanks to the definition \eqref{eq:1q} of $\Pi_0$,
 \begin{equation}
 \Pi_0\big(\xi_\star + \delta (\tau k'+\mu \ell)\big)u = \matrice{\blr{e^{i\delta  \blr{\tau k'+\mu \ell,x}}\phi_1,u} \\ \blr{e^{i\delta \blr{\tau k'+\mu \ell, x}}\phi_2,u}}.
\end{equation}
We conclude that the operator $T_\delta[\zeta]$ has kernel
\begin{equation}\label{eq:3t}
\dfrac{1}{2\pi } \matrice{\phi_1(x) \\ \phi_2(x)}^\top \cdot \int_{|\tau| \leq \az_\delta} \matrice{\var_\star -z & \nu_\star   (\tau k'+\mu \ell) \\ \ove{\nu_\star  (\tau k'+\mu \ell)} & -\var_\star-z}^{-1} e^{i\delta \blr{\tau k'+\mu \ell,x-y}} d\tau \cdot  \ove{\matrice{\phi_1(y) \\ \phi_2(y)}}.
\end{equation}

\subsection{Kernel identities and proof of Theorem \ref{thm:1}} Recall that $\Pi, \ \Pi^*$ and $\UU_\delta$ are defined in \eqref{eq:5r}. 

\begin{lem}\label{lem:1j} There exists $C > 0$ such that for every $\delta \in (0,1)$, the following holds. Let $\Psi \in L^\infty\big(\R, M_2(\C)\big)$, possibly depending on $\delta$, and $A_\Psi$ be the operator with kernel
\begin{equation}
(x,y) \mapsto 
\matrice{\phi_1(x) \\ \phi_2(x)}^\top \cdot \dfrac{1}{2\pi}\int_\R \Psi(\tau) e^{i\delta \blr{\tau k'+\mu \ell,x-y}} d\tau \cdot  \ove{\matrice{\phi_1(y) \\ \phi_2(y)}}.
\end{equation}
Then $A_\Psi$ is bounded on $L^2[\zeta]$ with $\| A_\Psi \|_{L^2[\zeta]} \leq C\delta^{-1}|\Psi|_\infty$; and
\begin{equation}\label{eq:2q}
A_\Psi = \dfrac{1}{\delta} \cdot \matrice{\phi_1 \\ \phi_2}^\top e^{i  \mu \delta \blr{\ell,x} } \Pi^* \cdot  \UU_\delta \Psi(D_t) \UU_\delta^{-1} \cdot \Pi e^{-i  \mu \delta \blr{\ell,x} } \ove{\matrice{\phi_1 \\ \phi_2}}.
\end{equation}
If in addition $\tau \cdot \Psi \in L^\infty\big(\R,M_2(\C)\big)$
then $(k' \cdot D_x) A_\Psi$ is bounded on $L^2[\zeta]$ with 
\begin{equations}
\| (k' \cdot D_x)  A_\Psi \|_{L^2[\zeta]} \leq C \delta^{-1} |\Psi|_\infty + C |\tau \cdot \Psi|_\infty; \ \ \ \ and: \ \ \ \ (k' \cdot D_x) A_\Psi = 
\\
\dfrac{1}{\delta} \cdot \matrice{(k' \cdot D_x) \phi_1 \\ (k' \cdot D_x) \phi_2}^\top e^{i  \mu \delta \blr{\ell,x} } \Pi^* \cdot  \UU_\delta \Psi(D_t) \UU_\delta^{-1} \cdot \Pi e^{-i  \mu \delta \blr{\ell,x} } \ove{\matrice{\phi_1 \\ \phi_2}} + \OO_{L^2[\zeta]}\big(|\blr{\tau} \cdot \psi|_\infty \big).
\end{equations}
\end{lem}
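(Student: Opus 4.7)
The plan is to first establish the kernel identity \eqref{eq:2q} by a direct computation, then deduce the norm bound by decomposing $A_\Psi$ into elementary operators whose $L^2$-norms are controlled individually, and finally handle the differentiated statement via Leibniz applied to the kernel.

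For the identity, the key intermediate step is the rescaling formula $\UU_\delta \Psi(D_t) \UU_\delta^{-1} = \Psi(\delta^{-1}D_t)$, whose integral kernel equals $\frac{\delta}{2\pi}\int_\R \Psi(\tau) e^{i\delta\tau(t-s)}\,d\tau$ by Plancherel. Since $(v,v')$ and $(k,k')$ are dual bases with $\Det[v,v']=1$ (see \S\ref{sec:6.1}), the change of variable $y = uv + tv'$ on $\R^2/\Z v$ is volume-preserving, giving
\begin{equation*}
(\Pi^* K \Pi f)(x) = \int_{\R^2/\Z v} K(\blr{k',x}, \blr{k',y}) f(y)\, dy
\end{equation*}
for any integral operator $K$ on $L^2(\R,\C^2)$. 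Substituting, sandwiching by the $\phi_j$'s and the phase $e^{i\mu\delta\blr{\ell,x-y}}$, and absorbing the $\delta$-prefactor from the rescaling kernel against the $1/\delta$ on the right of \eqref{eq:2q}, reproduces exactly the defining kernel of $A_\Psi$. It remains to check that the multiplication operator $f \mapsto e^{-i\mu\delta\blr{\ell,y}}\ove{\matrice{\phi_1(y) \\ \phi_2(y)}}f(y)$ sends $L^2[\zeta]$ into $L^2(\R^2/\Z v, \C^2)$ when $\zeta = \zeta_\star + \mu\delta$; this is a short computation using $\blr{\ell,v}=\blr{k,v}=1$ (and $\blr{k',v}=0$), which cancels the factor $e^{i\mu\delta}$ inherited from the quasiperiodicity of $\ove{\phi_j}$.

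The norm bound then follows by multiplying the norms of each elementary factor in the product representation: $e^{\pm i\mu\delta\blr{\ell,x}}$ act unitarily; multiplication by $\ove{\matrice{\phi_1 \\ \phi_2}}$ from $L^2[\zeta]$ to $L^2(\R^2/\Z v, \C^2)$ is bounded by $|\phi_1|_\infty + |\phi_2|_\infty$; $\|\Pi\|\leq 1$ by Cauchy--Schwarz on the $u$-averaging, $\|\Pi^*\| \leq 1$ by Fubini; $\|\Psi(D_t)\|_{L^2} = |\Psi|_\infty$ by Plancherel; and $\|\UU_\delta\|\cdot\|\UU_\delta^{-1}\| = \delta^{-1/2}\cdot\delta^{1/2} = 1$. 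Combined with the $1/\delta$ prefactor, this yields $\|A_\Psi\|_{L^2[\zeta]} \leq C\delta^{-1}|\Psi|_\infty$. For the differentiated statement, Leibniz applied to the kernel of $A_\Psi$ splits $(k' \cdot D_x) A_\Psi$ into two pieces: either the derivative falls on $\matrice{\phi_1(x) \\ \phi_2(x)}^\top$, reproducing an operator of the same type as $A_\Psi$ but with $\phi_j$ replaced by $(k' \cdot D_x)\phi_j$ (the claimed leading term), or on the exponential $e^{i\delta\blr{\tau k'+\mu\ell,x}}$, producing a scalar factor $\delta(\tau|k'|^2 + \mu\blr{\ell,k'}) = \delta\tau|k'|^2$ inside the $\tau$-integrand --- the crucial cancellation $\blr{\ell,k'}=0$ comes directly from the definition $\ell = k - \frac{\blr{k,k'}}{|k'|^2}k'$. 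The remainder is therefore $\delta|k'|^2 \cdot A_{\tau\Psi}$, whose norm is $\leq C|\tau\Psi|_\infty$ by the first bound. The principal bookkeeping challenge is tracking Jacobians in the change of variables $y = uv + tv'$ and accounting consistently for the factor $\delta$ from the rescaling; otherwise the argument is a direct computation.
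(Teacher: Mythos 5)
Your proposal is correct and follows essentially the same route as the paper: the kernel of $\delta^{-1}\Pi^*\,\UU_\delta\Psi(D_t)\UU_\delta^{-1}\,\Pi$ is computed via the volume-preserving change of variables $y=sv+tv'$ (the paper phrases this through Dirac-mass kernels for $\Pi$ and $\Pi^*$, but it is the same calculation), the norm bound is obtained by multiplying the norms of the elementary factors with the $\delta^{\mp 1/2}$ bounds on $\UU_\delta^{\pm 1}$ cancelling, and the differentiated statement follows from Leibniz with the cancellation $\blr{\ell,k'}=0$ turning the second term into $\delta|k'|^2 A_{\tau\Psi}$. Your additional check that $e^{-i\mu\delta\blr{\ell,\cdot}}\ove{\phi_j}f$ is genuinely $\Z v$-periodic when $f\in L^2[\zeta]$, $\zeta=\zeta_\star+\mu\delta$, is a point the paper leaves implicit and is a welcome clarification.
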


\begin{proof} 1. We first note that the operator $\delta^{-1} \cdot \UU_\delta \Psi(D_t) \UU_\delta^{-1}$ has kernel
\begin{equation}\label{eq:2p}
(t,t') \in \R \times \R^2 \mapsto \dfrac{1}{2\pi} \int_{\R} e^{i \delta \tau (t-t')} \Psi(\tau) \cdot d\tau.
\end{equation}
Let $\delta_0$ denote the Dirac mass. We claim that the operator $\Pi$ has kernel:
\begin{equation}\label{eq:2o}
(t',y) \in \R \times \R^2/(\Z v) \mapsto \delta_0\big( \lr{k', y} - t' \big).
\end{equation}
Fix $f \in C_0^\infty(\R^2/\Z v, \C^2)$. The integral
\begin{equation}
\int_{\R^2/\Z v} \delta_0\big( \lr{k', y}-t' \big)  f(y) dy
\end{equation}
is well-defined. We perform the substitution $y \mapsto (\blr{k,y}, \blr{k',y})$; the inverse substitution is $(s,t) \mapsto sv + tv'$; the Jacobian determinant is $dy = \Det[v,v'] \cdot dsdt$. Since $v, v'$ are related to $v_1, v_2$ by \eqref{eq:4a} and $\Det[v_1,v_2] = 1$ because of \eqref{eq:4b}, $\Det[v,v'] = 1$. The above integral becomes 
\begin{equations}
\int_{\R^2/\Z e_1} \delta_0(t - t')   f(sv + tv') ds dt
=
\int_{\R/\Z}  f(sv + t'v') ds.
\end{equations}
We recover the formula \eqref{eq:5r} for $\Pi f$. From \eqref{eq:2o}, we deduce that the kernel of $\Pi^*$ is
\begin{equation}\label{eq:2n}
(x,t) \in \R^2/ \Z v \times \R \mapsto \delta_0\big( \lr{k',x} - t \big).
\end{equation}

To obtain \eqref{eq:2p}, we compose the kernels \eqref{eq:2o}, \eqref{eq:2p} and \eqref{eq:2n}. This forces $t$ to be $\lr{k',x}$ and $t'$ to be $\lr{k',y}$. Hence the operator $\delta^{-1} \cdot \Pi^* \cdot  \UU_\delta \Psi(D_t) \UU_\delta^{-1} \cdot \Pi$ has kernel
\begin{equation}
(x,y) \mapsto \dfrac{1}{2\pi} \int_{\R} e^{i \delta \tau  \blr{k', x-y}}  \Psi(\tau) \cdot d\tau.
\end{equation}
This implies \eqref{eq:2q}. 

2. We prove the $L^2[\zeta]$-bound. The operator $\Pi$ maps $L^2(\R^2/\Z v,\C)$ to $L^2(\R,\C^2)$, independently of $\delta$. Its adjoint maps $L^2(\R,\C^2)$ to $L^2(\R^2/ \Z v,\C)$, independently of $\delta$. The dilations $\UU_\delta$ and $\UU_\delta^{-1}$ map $L^2(\R, \C^2)$ to itself, with bounds $\delta^{-1/2}$ and $\delta^{1/2}$, respectively. The operator $\Psi(D_t)$ is a Fourier multiplier, hence it maps  $L^2(\R, \C^2)$ to itself, with bound $|\Psi|_\infty$. Combining all these bounds together we get
\begin{equation}
\| A_\Psi \|_{L^2[\zeta]} \leq C \delta^{-1} |\Psi|_\infty.
\end{equation}

3. We observe that the operator $(k' \cdot D_x) A_\Psi$ has kernel
\begin{equations}
\matrice{(k' \cdot D_x) \phi_1(x) \\ (k' \cdot D_x) \phi_2(x)}^\top \cdot \dfrac{1}{2\pi}\int_\R \Psi(\tau) e^{i\delta \blr{\tau k'+\mu \ell,x-y}} d\tau \cdot  \ove{\matrice{\phi_1(y) \\ \phi_2(y)}}
\\
+ 
\matrice{\phi_1(x) \\ \phi_2(x)}^\top \cdot \dfrac{1}{2\pi}\int_\R \Psi(\tau) \cdot  \tau  \delta |k'|^2 e^{it\delta \blr{\tau k'+\mu \ell,x-y}} d\tau \cdot  \ove{\matrice{\phi_1(y) \\ \phi_2(y)}}.
\end{equations}
Above, we used that $\ell \cdot k' = 0$. These two terms are kernels of operators studied in Steps 1 and 2. The first one has $L^2[\zeta]$-operator norm controlled by $C\delta^{-1} |\Psi|_\infty$ and the second one by $C |\tau \cdot \Psi|_\infty$. This concludes the proof.
\end{proof}

\begin{lem}\label{lem:1k} Let $\tvar \in (0,\var_F)$. There exists $C > 0$ such that for any $z \in \Dd(0,\tvar)$, the following holds. Let $\Psi_0 : \R \rightarrow M_2(\C)$ be given by
\begin{equation}\label{eq:2r}
\Psi_0(\tau) \de \matrice{\var_\star -z & \nu_\star   (\tau k'+\mu \ell) \\ \ove{\nu_\star  (\tau k'+\mu \ell)} & -\var_\star -z}^{-1}.
 \end{equation}
Then $\tau \cdot \Psi_0 \in L^\infty\big(\R,M_2(\C)\big)$ and for every $a \geq 0$,
\begin{equation}\label{eq:2t}
\sup_{|\tau| \geq a} \big\|\Psi_0(\tau) \big|_{\C^2} \leq C a^{-1}, \ \ \ \ \sup_{|\tau| \geq a} \big\|\tau \Psi_0(\tau) \big\|_{\C^2} \leq C.
\end{equation}
\end{lem}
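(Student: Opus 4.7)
The plan is to prove both estimates by direct computation of the $2 \times 2$ inverse $\Psi_0(\tau) = M(\tau)^{-1}$, where $M(\tau)$ denotes the matrix on the right-hand side of \eqref{eq:2r}. The key preliminary observation is that $\var_\star$ is real: since $W$ is real-valued and $\var_\star = \int_\Ll W|\phi_1|^2\,dx$, one has $\var_\star^2 = \var_F^2$. Combined with $|\nu_\star|^2 = \nu_F^2$ and the orthogonality $\blr{k',\ell}=0$ (which yields $|\tau k' + \mu\ell|^2 = \tau^2|k'|^2 + \mu^2|\ell|^2$), this produces the clean expression
\begin{equation}
\det M(\tau) = z^2 - \var_F^2 - \nu_F^2 \tau^2 |k'|^2 - \nu_F^2 \mu^2 |\ell|^2.
\end{equation}

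Next, I would derive a uniform lower bound of the form $|\det M(\tau)| \geq c\,(1 + \tau^2)$ for some $c > 0$ depending only on $\var_F - \tvar$, $\nu_F$ and $|k'|$. Since $z \in \Dd(0, \tvar)$, one has $|z^2| = |z|^2 \leq \tvar^2 < \var_F^2$, so the positive real quantity $\var_F^2 + \nu_F^2 \tau^2 |k'|^2$ exceeds $|z^2|$ by at least $\var_F^2 - \tvar^2 > 0$; the triangle inequality then gives $|\det M(\tau)| \geq (\var_F^2 - \tvar^2) + \nu_F^2 \tau^2 |k'|^2 \geq c(1+\tau^2)$.

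Third, I would apply the explicit cofactor formula for $2 \times 2$ inverses:
\begin{equation}
\Psi_0(\tau) = \frac{1}{\det M(\tau)} \matrice{-\var_\star - z & -\nu_\star(\tau k' + \mu\ell) \\ -\ove{\nu_\star(\tau k' + \mu\ell)} & \var_\star - z}.
\end{equation}
Each entry of the cofactor matrix is bounded in modulus by $C(1 + |\tau|)$ for a constant $C$ independent of $\tau$ and $z \in \Dd(0,\tvar)$. Combining with the determinant bound yields the pointwise estimate $\|\Psi_0(\tau)\|_{\C^2} \leq C'(1 + |\tau|)^{-1}$.

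Both conclusions of the lemma would then follow immediately. The first estimate in \eqref{eq:2t} reads $\sup_{|\tau| \geq a} \|\Psi_0(\tau)\|_{\C^2} \leq C'/(1 + a) \leq C'/a$ (vacuous at $a = 0$); the second reads $\|\tau \Psi_0(\tau)\|_{\C^2} \leq |\tau| \cdot C'/(1 + |\tau|) \leq C'$ uniformly in $\tau$, which in particular gives the claimed $L^\infty$ bound and supremum estimate. I do not anticipate any real obstacle here: the statement is an elementary finite-dimensional calculation, and the only mildly delicate point is exploiting the reality of $\var_\star$ to separate the positive quantity $\var_F^2$ cleanly from $z^2$ in the lower bound on $|\det M(\tau)|$.
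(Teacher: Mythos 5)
Your proof is correct. The paper disposes of this lemma in one line by observing that $\Psi_0$ is the inverse of an invertible first-order matrix-valued symbol and therefore lies in the symbol class $S^{-1}$, which immediately gives the decay $\|\Psi_0(\tau)\|\leq C\lr{\tau}^{-1}$ and the boundedness of $\tau\Psi_0(\tau)$. What you have written is precisely the elementary unpacking of that remark: the determinant computation
\begin{equation}
\det M(\tau) = z^2 - \var_F^2 - \nu_F^2\tau^2|k'|^2 - \nu_F^2\mu^2|\ell|^2,
\end{equation}
valid because $\var_\star\in\R$ and $\blr{k',\ell}=0$, together with the lower bound $|\det M(\tau)|\geq (\var_F^2-\tvar^2)+\nu_F^2|k'|^2\tau^2$, is exactly the content of the claim that the symbol is invertible \emph{uniformly} for $z\in\Dd(0,\tvar)$ and bounded $\mu$ — the point the paper's appeal to symbol calculus glosses over. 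The cofactor formula then gives entries of size $O(1+|\tau|)$ against a denominator of size $c(1+\tau^2)$, and both estimates in \eqref{eq:2t} follow as you say. Your version has the advantage of making the dependence of $C$ on $\var_F-\tvar$, $\nu_F$, $|k'|$ and the range of $\mu$ completely explicit; the paper's has the advantage of brevity. No gaps.
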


To prove Lemma \ref{lem:1k}, it suffice to observe that $\Psi_0$ is an invertible, first-order matrix-valued symbol. Therefore its inverse belongs to $S^{-1}$. This yields the bounds \eqref{eq:2t}. 
Let  $\Di_+(\mu) : H^1(\R,\C^2) \rightarrow L^2(\R,\C^2)$ be the Dirac operator defined by
\begin{equation}\label{eq:1j}
\Di_+(\zeta) \de \matrice{\var_\star & \nu_\star k'   \\ \ove{\nu_\star k' } & -\var_\star}D_t + \mu \matrice{0 & \nu_\star \ell  \\ \ove{\nu_\star \ell} & 0 } + \matrice{\var_\star & 0 \\ 0 & -\var_\star}.
\end{equation}
We are now ready to prove Theorem \ref{thm:1}.

\begin{proof}[Proof of Theorem \ref{thm:1}] 
1. Because of \eqref{eq:2s}, it suffices to prove Theorem \ref{thm:1} when $P_\delta[\zeta]$ is replaced by $T_\delta[\zeta]$. 
 We first apply Lemma \ref{lem:1j} with $\Psi_0$ given by \eqref{eq:2r}. It shows that
\begin{equations}
A_{\Psi_0} \de \dfrac{1}{\delta} \cdot \matrice{\phi_1 \\ \phi_2}^\top e^{i  \mu \delta \blr{\ell,x} } \Pi^* \cdot  \UU_\delta \big( \Di_+(\mu) - z \big)^{-1} \UU_\delta^{-1} \cdot \Pi e^{-i  \mu \delta \blr{\ell,x} } \ove{\matrice{\phi_1 \\ \phi_2}} \ \ \ \ \text{ has kernel: } 
\\
(x,y) \mapsto \matrice{\phi_1(x) \\ \phi_2(x)}^\top \cdot\dfrac{1}{2\pi} \int_\R \Psi_0(\tau) e^{i\delta \blr{\tau k'+\mu \ell,x-y}} d\tau \cdot  \ove{\matrice{\phi_1(y) \\ \phi_2(y)}}.
\end{equations}

2. We now apply Lemma \ref{lem:1j} with $\Psi_1(\tau) \de \1_{\R\setminus [-\az_\delta,\az_\delta]}(\tau) \cdot \Psi_0(\tau)$ (recall that $\az_\delta = |k'|^{-1}\delta^{-2/3} + O(\delta^{2/3})$ was defined in \eqref{eq:1q}). It shows that $A_{\Psi_1}$ has kernel
\begin{equation}
\matrice{\phi_1(x) \\ \phi_2(x)}^\top \cdot \dfrac{1}{2\pi} \int_{|\tau| \geq \az_\delta} \Psi_0(\tau) e^{i\delta \blr{\tau k'+\mu \ell,x-y}} d\tau \cdot  \ove{\matrice{\phi_1(y) \\ \phi_2(y)}}.
\end{equation}
Thanks to the bounds of Lemma \ref{lem:1k}, $A_{\Psi_1} = \OO_{L^2[\zeta]}(\delta^{-1} \az_\delta^{-1}) = \OO_{L^2[\zeta]}(\delta^{-1/3})$.

3. When we subtract the kernel of $A_{\Psi_1}$ to the kernel of $A_{\Psi_0}$, we get the kernel of $T_\delta[\zeta]$, see \eqref{eq:3t}. This shows that $T_\delta[\zeta] = A_{\Psi_0} - A_{\Psi_1}$. Hence $T_\delta[\zeta] = $
 \begin{equation}
\dfrac{1}{\delta} \cdot \matrice{\phi_1 \\ \phi_2}^\top e^{i  \mu \delta \blr{\ell,x} } \Pi^* \cdot  \UU_\delta \big( \Di_+(\mu) - z \big)^{-1} \UU_\delta^{-1} \cdot \Pi e^{-i  \mu \delta \blr{\ell,x} } \ove{\matrice{\phi_1 \\ \phi_2}} + \OO_{L^2[\zeta]}\left(\delta^{-1/3}\right).
\end{equation}

4. Lemma \ref{lem:1j} and the bounds of Lemma \ref{lem:1k} imply that $(k' \cdot D_x) A_{\Psi_0}$ is equal to 
\begin{equation}
\dfrac{1}{\delta} \cdot \matrice{(k' \cdot D_x) \phi_1 \\ (k' \cdot D_x) \phi_2}^\top e^{i  \mu \delta \blr{\ell,x} } \Pi^* \cdot  \UU_\delta \big( \Di_+(\mu) - z \big)^{-1} \UU_\delta^{-1} \cdot \Pi e^{-i  \mu \delta \blr{\ell,x} }\ove{\matrice{\phi_1 \\ \phi_2}} + \OO_{L^2[\zeta]}(1).
\end{equation}
It also imply that $(k' \cdot D_x) A_{\Psi_1} = \OO_{L^2[\zeta]}(\delta^{-1/3})$. We conclude that $(k' \cdot D_x) T_\delta[\zeta]$ equals
\begin{equation}
\dfrac{1}{ \delta} \cdot \matrice{(k' \cdot D_x) \phi_1 \\ (k' \cdot D_x) \phi_2}^\top e^{i  \mu \delta \blr{\ell,x} } \Pi^* \cdot  \UU_\delta \big( \Di_+(\mu) - z \big)^{-1} \UU_\delta^{-1} \cdot \Pi e^{-i  \mu \delta \blr{\ell,x} }\ove{\matrice{\phi_1 \\ \phi_2}} + \OO_{L^2[\zeta]}\left(\delta^{-1/3}\right).
\end{equation}
This completes the proof of the theorem. \end{proof}

\section{The resolvent of the edge operator} Recall that $\kappa$ is a domain wall function -- see \eqref{eq:3v} -- and introduce the operator
\begin{equation}
\PP_\delta = -\Delta + V + \delta \cdot  \kappa_\delta \cdot  W, \ \ \ \ \kappa_\delta(x) = \kappa(\delta\blr{k',x}).
\end{equation}
We denote by $\PP_\delta[\zeta]$ the operator formally equal to $\PP_\delta$ but acting on $L^2[\zeta]$.
In this section we prove Theorem \ref{thm:2}: we connect the resolvent of $\PP_\delta[\zeta]$ to that of the Dirac operator $\Di(\mu)$ emerging from the multiscale analysis of \cite{FLTW3}:
\begin{equation}
\Di(\mu) =  \matrice{0 & \nu_\star k' \\ \ove{\nu_\star k'} & 0} D_t + \mu \matrice{0 & \nu_\star \ell \\ \ove{\nu_\star \ell} & 0} + \var_\star \matrice{1 & 0 \\ 0 & -1} \kappa.
\end{equation}
The strategy is as follows. We first prove a formula for $(\PP_\delta[\zeta]-\lambda)^{-1}$ in terms of the asymptotic operators $(P_{\pm \delta}[\zeta]-\lambda)^{-1}$. We then apply Theorem \ref{thm:1} to exhibit the leading order term in this formula.

We use a cyclicity argument to simplify this leading order term. An averaging effect emerges as the driving phenomena connecting $\PP_\delta[\zeta]$ to $\Di(\mu)$. This yields Theorem \ref{thm:2}.

\subsection{Parametrix}\label{sec:6.0} We first construct a parametrix for $\PP_\delta[\zeta]-\lambda$. Introduce
\begin{equation}\label{eq:2e}
\QQQ_\delta(\zeta,\lambda) \de  \sum_\pm \chi_{\pm,\delta} \cdot (P_{\pm \delta}[\zeta]-\lambda)^{-1}, \ \ \ \ \chi_\pm \de \dfrac{1 \pm \kappa}{2}.
\end{equation}
This operator is well defined -- and depends holomorphically in $\lambda$ -- as long as $\lambda \notin \Sigma_{L^2[\zeta]}(P_\delta[\zeta])$. Formally speaking, it behaves asymptotically like $(\PP_\delta[\zeta]-\lambda)^{-1}$.

A calculation similar to \cite[\S5.2]{DFW} yields
\begin{equations}
\big( \PP_\delta[\zeta]-\lambda \big) \QQQ_\delta(\zeta,\lambda) =  \Id + \KKK_\delta(\zeta,\lambda), \ \ \ \ \text{where } \ \KKK_\delta(\zeta,\lambda) =
\\
 \dfrac{\delta}{2} \left(2(D_t\kappa)_\delta \cdot (k' \cdot D_x) + \delta |k'|^2 (D_t^2\kappa)_\delta + (\kappa_\delta^2-1) W \right)\left((P_\delta[\zeta]-\lambda)^{-1} - (P_{-\delta}[\zeta]-\lambda)^{-1}\right).
\end{equations}
This identity shows that if $\Id + \KKK_\delta(\zeta,\lambda)$ is invertible then $\PP_\delta[\zeta] - \lambda$ is invertible. When this holds, $(\PP_\delta[\zeta] - \lambda)^{-1}$ has an expression in terms of $\QQQ_\delta(\zeta,\lambda)$ and $\KKK_\delta(\zeta,\lambda)$:
\begin{equation}
\big( \PP_\delta[\zeta] - \lambda \big)^{-1} = \QQQ_\delta(\zeta,\lambda) \cdot \big( \Id + \KKK_\delta(\zeta,\lambda) \big)^{-1}.
\end{equation}

The operators $\QQQ_\delta(\zeta,\lambda)$ and $\KKK_\delta(\zeta,\lambda)$ have expressions in terms of $(P_{\pm \delta}[\zeta]-\lambda)^{-1}$. An application of Theorem \ref{thm:1} estimates $\QQQ_\delta(\zeta,\lambda)$ and $\KKK_\delta(\zeta,\lambda)$, assuming
\begin{equations}\label{eq:3l}
\delta \in (0,\delta_0), \ \ 
\mu \in (-\tmu,\tmu), \ \ z \in \Dd\Big(0,\sqrt{\var_F^2 +  \mu^2\cdot \nu_F^2 |\ell|^2} \Big),\\
\lambda = E_\star + \delta z, \ \ \zeta = \zeta_\star + \delta \mu.
\end{equations}
We introduce the operator
\begin{equations}
R_0(\mu,z) : L^2\big(\R,\C^2\big) \rightarrow H^2\big(\R,\C^2\big),
\\
R_0(\mu,z)
\de \left( \Di_+(\mu)^2 - z^2 \right)^{-1} = \left( \nu_F^2|k'|^2 D_t^2 + \mu^2| \nu_\star\ell|^2 + \var_F^2 - z^2 \right)^{-1}.
\end{equations}
It is well defined when $z$ is away from the spectrum of $\Di_\pm(\mu)$ -- in particular when
\begin{equation}
z \in \Dd\Big(0,\sqrt{\var_F^2 +  \mu^2\cdot \nu_F^2 |\ell|^2} \Big).
\end{equation}

\begin{lem}\label{lem:1d} Let $\tmu > 0$, $\te \in (0,1)$. There exists $\delta_0 > 0$ such that under the assumptions of Theorem \ref{thm:1},  $\KKK_\delta(\zeta,\lambda)$ and $\QQQ_\delta(\zeta,\lambda)$ admit the expansions
\begin{equations}
\KKK_\delta(\zeta,\lambda) = \KK_\delta(\mu,z) + \OO_{L^2[\zeta]}\left(\delta^{2/3}\right), \ \ \ \  \QQQ_\delta(\zeta,\lambda) = \QQ_\delta(\mu,z) + \OO_{L^2[\zeta]}\left(\delta^{-1/3}\right).
\end{equations}
Above, $\KK_\delta(\mu,z)$ is equal to
\begin{equations}
\var_\star \left(2(D_t\kappa)_\delta \matrice{ k'\cdot D_x \phi_1 \\ k'\cdot D_x \phi_2}^\top + (\kappa_\delta^2-1) W \matrice{ \phi_1 \\ \phi_2}^\top \right) 
  e^{-i  \mu \delta \blr{\ell,x} }  \Pi^*  
 \UU_\delta 
  \bst \cdot R_0(\mu,z) \cdot \UU_\delta^{-1} \Pi  e^{i  \mu \delta \blr{\ell,x} } \ove{\matrice{\phi_1 \\ \phi_2}}
  \\
  \text{and} \ \ \ \ 
\QQ_\delta(\mu,z) \de \dfrac{1}{\delta } \cdot  \matrice{ \phi_1 \\ \phi_2}^\top  e^{-i  \mu \delta \blr{\ell,x} }  \Pi^*  \UU_\delta \cdot (\Di(\mu)+z) \cdot R_0(\mu,z) \cdot \UU_\delta^{-1} \Pi  e^{i  \mu \delta \blr{\ell,x} } \ove{\matrice{\phi_1 \\ \phi_2}}. \ \ \ \ \ \ \ \ \ 
\end{equations}
\end{lem}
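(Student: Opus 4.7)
The plan is to apply Theorem \ref{thm:1} directly to each resolvent appearing in the definitions of $\QQQ_\delta(\zeta,\lambda)$ and $\KKK_\delta(\zeta,\lambda)$, then to reorganize the leading-order contributions via a cyclicity argument combined with a super-symmetric manipulation of the asymptotic Dirac operators $\Di_\pm(\mu)$.

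First I would treat $\QQQ_\delta(\zeta,\lambda)$. Since $|\chi_{\pm,\delta}|_\infty \leq 1$ uniformly in $\delta$, Theorem \ref{thm:1} gives
\begin{equation}
\QQQ_\delta(\zeta,\lambda) = \chi_{+,\delta}\,S_{+\delta}(\mu,z) + \chi_{-,\delta}\,S_{-\delta}(\mu,z) + \OO_{L^2[\zeta]}(\delta^{-1/3}).
\end{equation}
The key observation is the commutation $\chi_{\pm,\delta}\,\Pi^*\UU_\delta = \Pi^*\UU_\delta\,\chi_\pm(t)$, which follows directly from the kernel \eqref{eq:2n} of $\Pi^*$ and from the definition of $\UU_\delta$; this moves $\chi_\pm(t)$ next to the fiber resolvents. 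Next I would factor $(\Di_\pm(\mu)-z)^{-1} = (\Di_\pm(\mu)+z)\,R_0(\mu,z)$: the anti-commutation identities of the Pauli matrices $\bsu, \bsd, \bst$, combined with the orthogonality $k'\cdot \ell = 0$, force
\begin{equation}
\Di_\pm(\mu)^2 = \big(\nu_F^2|k'|^2 D_t^2 + \mu^2 \nu_F^2 |\ell|^2 + \var_F^2\big)\,\Id,
\end{equation}
independently of the $\pm$ sign, so $R_0(\mu,z)$ is common to both branches. Using $\chi_+ + \chi_- = 1$, $\chi_+ - \chi_- = \kappa$, and $\Di_\pm(\mu) = \Di(\mu)|_{\kappa \equiv \pm 1}$, one combines
\begin{equation}
\chi_+\big(\Di_+(\mu)+z\big) + \chi_-\big(\Di_-(\mu)+z\big) = \Di(\mu)+z,
\end{equation}
which delivers precisely $\QQ_\delta(\mu,z)$.

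For $\KKK_\delta(\zeta,\lambda)$ the same factorization yields the crucial identity
\begin{equation}
(\Di_+(\mu)-z)^{-1} - (\Di_-(\mu)-z)^{-1} = 2\var_\star\, \bst\, R_0(\mu,z),
\end{equation}
since $R_0$ is common and $\Di_+(\mu) - \Di_-(\mu) = 2\var_\star\bst$. I would then apply both parts of Theorem \ref{thm:1} (the plain resolvent and the $(k'\cdot D_x)$-weighted one) to write
\begin{equation}
(P_\delta[\zeta]-\lambda)^{-1} - (P_{-\delta}[\zeta]-\lambda)^{-1} = S_{+\delta}(\mu,z) - S_{-\delta}(\mu,z) + \OO_{L^2[\zeta]}(\delta^{-1/3}),
\end{equation}
and analogously with $S^D_\pm$ for the $(k'\cdot D_x)$-weighted version. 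The prefactor in \S\ref{sec:6.0} splits into a piece of size $\delta$ (carrying $(D_t\kappa)_\delta(k'\cdot D_x)$ and $(\kappa_\delta^2-1)W$) and a piece of size $\delta^2$ (carrying $(D_t^2\kappa)_\delta$). The latter, multiplied against the $O(\delta^{-1})$-leading resolvent, is $O(\delta)$ and hence subleading; the former, multiplied against the $\OO(\delta^{-1/3})$-remainders, produces $\OO(\delta^{2/3})$. Collecting the leading contributions yields $\KK_\delta(\mu,z)$.

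The main technical obstacle will be bookkeeping through the algebra cleanly: I need to ensure that $(k'\cdot D_x)\phi_j$-entries enter $\KK_\delta$ exactly where predicted, which requires applying $(k'\cdot D_x)$ to $(P_{\pm\delta}[\zeta]-\lambda)^{-1}$ before the asymptotic substitution rather than to $S_{\pm\delta}(\mu,z)$ directly -- this is precisely what the second estimate of Theorem \ref{thm:1} is designed to handle. A secondary concern is to verify that the conjugations by $e^{\pm i \mu \delta \lr{\ell,x}}$ commute with the cutoffs $\chi_{\pm,\delta}$ so that they can be kept outside the cyclicity calculation throughout; since both are multiplication operators in $x$, this is automatic.
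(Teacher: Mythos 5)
Your proposal is correct and follows essentially the same route as the paper's proof in Appendix \ref{app:3}: apply both parts of Theorem \ref{thm:1}, use the common factorization $(\Di_\pm(\mu)-z)^{-1}=(\Di_\pm(\mu)+z)R_0(\mu,z)$ coming from $\Di_+(\mu)^2=\Di_-(\mu)^2$, commute the cutoffs through $\Pi^*\UU_\delta$ via $\kappa_\delta\,\Pi^*\UU_\delta=\Pi^*\UU_\delta\,\kappa$, and recombine to produce $\Di(\mu)+z$ and $2\var_\star\bst R_0$. Your presentation with $\chi_\pm$ is algebraically identical to the paper's sum/difference decomposition $S_\delta\pm S_{-\delta}$, and your error bookkeeping (the $(D_t^2\kappa)_\delta$ term being $O(\delta)$, the remainders giving $\OO_{L^2[\zeta]}(\delta^{2/3})$) matches.
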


The proof is a calculation using the relation between $\QQQ_\delta(\zeta,\lambda)$ and  $\KKK_\delta(\zeta,\lambda)$ with the edge resolvents $(P_{\pm \delta}[\zeta] -\lambda)^{-1}$; and the expansions of these resolvents provided by Theorem \ref{thm:1}. We deferred it to Appendix \ref{app:3}.

\subsection{Weak convergence} We  are interested in the eigenvalues of $\PP_\delta[\zeta]$. We previously studied eigenvalue problems in seemingly different situations \cite{Dr2,Dr4,Dr3} as well as in a one-dimensional analog \cite{DFW}. The proofs of these results rely on a \textit{cyclicity} principle: if $A$ and $B$ are two matrices then the non-zero eigenvalues of $AB$ and $BA$ are equal (together with their multiplicity).

Although the leading order terms $\KK_\delta(\mu,z)$ and $\QQ_\delta(\mu,z)$ have complicated expressions, they exhibit a structure favorable to apply the cyclicity principle. This will provide a simple formula for the product
\begin{equation}
\QQ_\delta(\mu,z) \cdot \big(\Id + \KK_\delta(\mu,z)\big)^{-1}
\end{equation}
and complete the proof of Theorem \ref{thm:2}. 

A preliminary step is the computation of a weak limit that arises when permuting factors in $\KK_\delta(\mu,z)$: the operator $L^2(\R, \C^2) \rightarrow L^2(\R, \C^2)$ given by
\begin{equation}
\var_\star \UU_\delta^{-1} \Pi  e^{i  \mu \delta \blr{\ell,x} } \ove{\matrice{\phi_1 \\ \phi_2}} \cdot \left(2(D_t\kappa)_\delta \cdot  \matrice{ k'\cdot D_x \phi_1 \\ k'\cdot D_x \phi_2}^\top + (\kappa_\delta^2-1) W \matrice{ \phi_1 \\ \phi_2}^\top \right) 
 \cdot  e^{-i  \mu \delta \blr{\ell,x} }  \Pi^* \UU_\delta \bst
\end{equation}
\begin{equation}\label{eq:1b} 
= \var_\star \UU_\delta^{-1} \cdot \Pi  \ove{\matrice{\phi_1 \\ \phi_2}} \left(2(D_t\kappa)_\delta \cdot  \matrice{ k'\cdot D_x \phi_1 \\ k'\cdot D_x \phi_2}^\top + (\kappa_\delta^2-1) W \matrice{ \phi_1 \\ \phi_2}^\top \right) \bst
 \Pi^* \cdot \UU_\delta .
\end{equation}

\begin{lem}\label{lem:1n} The operator \eqref{eq:1b} is a multiplication operator by a function $\UUU^\delta : \R \rightarrow M_2(\C)$ with two-scale structure:
\begin{equation}\label{eq:1c}
\UUU^\delta(t) = \UUU\left(\dfrac{t}{\delta}, t  \right), \ \ \ \ \UUU \in C^\infty_0\big(\R/\Z\times \R , M_2(\C)\big).
\end{equation} 
The function $\UUU^\delta$ converges weakly to 
\begin{equation}\label{eq:2w}
\UUU^0 \in C^\infty_0\big( \R, M_2(\C) \big), \ \ \ \ 
\UUU^0(t) \de \var_F^2 \left(\kappa(t)^2-1\right) + \var_\star\matrice{0 & -\nu_\star k' \\ \ove{\nu_\star k'} & 0} (D_t\kappa)(t).
\end{equation}
Finally, if $\UUU^\delta - \UUU^0$ is seen as a multiplication operator from $H^1$ to $H^{-1}$,
\begin{equation}\label{eq:1l}
\UUU^\delta - \UUU^0  = \OO_{H^1 \rightarrow H^{-1}}(\delta).
\end{equation}
\end{lem}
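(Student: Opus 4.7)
First, I would compute the kernel of \eqref{eq:1b} explicitly. Using $(\Pi^* \UU_\delta f)(x) = f(\delta\lr{k',x})$, $(\UU_\delta^{-1} \Pi g)(t) = \int_0^1 g(sv + (t/\delta) v') ds$, and the identity $\lr{k',\, sv + (t/\delta) v'} = t/\delta$ (from $\lr{k',v}=0$ and $\lr{k',v'}=1$), a direct calculation reduces \eqref{eq:1b} to multiplication by
\begin{equation}
\UUU^\delta(t) \de \var_\star \int_0^1 \ove{\matrice{\phi_1 \\ \phi_2}(sv + (t/\delta)v')}\, N\big(sv + (t/\delta)v',\, t\big)\, \bst\, ds,
\end{equation}
where $N(x,t)$ denotes the row-vector bracket in \eqref{eq:1b} with $\kappa_\delta(x), (D_t\kappa)_\delta(x)$ replaced by $\kappa(t), (D_t\kappa)(t)$ respectively (an exact substitution, since $\lr{k',x} = t/\delta$ on the integration path). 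This yields the two-scale form $\UUU^\delta(t) = \UUU(t/\delta, t)$. Smoothness of $\UUU$ is inherited from $\phi_j, W, \kappa$; compact support in $t$ from that of $\kappa^2-1$ and $D_t\kappa$; and periodicity of $\UUU(\cdot, t)$ of period $1$ from $\Lambda$-periodicity of $W$ together with the cancellation of the quasi-periodic phases $e^{\pm i\lr{\xi_\star, v'}}$ in the pairings $\ove{\phi_j}\phi_k$ and $\ove{\phi_j}(D_x \phi_k)$.

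Next, I would identify the weak limit $\UUU^0(t) \de \int_0^1 \UUU(u, t) du$. The change of variables $x = sv + uv'$ has Jacobian $\Det[v, v'] = \Det[v_1, v_2] = 1$ (by \eqref{eq:4a}), so the double integral becomes an integral over the fundamental cell $\Ll$. The resulting $(j,k)$-entry, before the rightmost $\bst$, equals
\begin{equation}
2(D_t\kappa)(t)\, \blr{\phi_j,\, (k' \cdot D_x)\phi_k}_{L^2_{\xi_\star}} + (\kappa(t)^2 - 1)\, \blr{\phi_j, W\phi_k}_{L^2_{\xi_\star}}.
\end{equation}
I would now apply Lemma \ref{lem:1i} (off-diagonal gradient brackets give $\nu_\star k'$ and $\ove{\nu_\star k'}$; diagonal ones vanish) and Lemma \ref{lem:1h} (diagonal $W$-brackets give $\pm\var_\star$; off-diagonal ones vanish). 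Multiplying by the outer $\var_\star$ and by $\bst$ on the right, and using $\var_\star \in \R$ so that $\var_\star^2 = \var_F^2$, recovers exactly the formula \eqref{eq:2w} for $\UUU^0$.

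Finally, the quantitative estimate \eqref{eq:1l} is where the real work lies. Let $\Phi(u,t) \de \UUU(u,t) - \UUU^0(t)$; by construction $\int_0^1 \Phi(u,t)\, du = 0$ for every $t$. Hence $\Psi(u,t) \de \int_0^u \Phi(u', t)\, du'$ is $1$-periodic in $u$, smooth, and compactly supported in $t$. Differentiating $t \mapsto \Psi(t/\delta, t)$ yields
\begin{equation}
\UUU^\delta(t) - \UUU^0(t) = \delta\, \frac{d}{dt}\Big[\Psi\big(t/\delta, t\big)\Big] - \delta\, \Psi_t\big(t/\delta, t\big),
\end{equation}
where $\Psi_t$ denotes the partial in the second slot. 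Pairing with $f, g \in H^1(\R, \C^2)$ and integrating by parts on the first term (the compactly supported $\Psi$ kills all boundary terms) transfers the $t$-derivative onto either $f$ or $g$; the second term is already a uniformly bounded multiplication. This yields $|\blr{(\UUU^\delta - \UUU^0)f, g}| \leq C\delta |f|_{H^1}|g|_{H^1}$, which is \eqref{eq:1l}. The weak convergence \eqref{eq:2w} then follows by density of $H^1$ in $L^2$ combined with the uniform $L^\infty$ bound on $\UUU^\delta$. The main obstacle is the matrix bookkeeping in the second step: the orderings of factors, the complex conjugations, and the placement of $\bst$ must line up precisely with Lemmas \ref{lem:1i}--\ref{lem:1h} to produce the Dirac-type off-diagonal block appearing in \eqref{eq:2w}.
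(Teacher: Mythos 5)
Your proposal is correct, and its first two steps (computing the kernel of \eqref{eq:1b} to get the two-scale form $\UUU^\delta(t)=\UUU(t/\delta,t)$, then averaging over the fundamental cell with Jacobian $\Det[v,v']=1$ and invoking Lemmas \ref{lem:1i}--\ref{lem:1h} to identify $\UUU^0$) coincide with the paper's argument; your matrix bookkeeping, including the factor $\var_\star^2=\var_F^2$ and the sign flip from the right multiplication by $\bst$, lands exactly on \eqref{eq:2w}. The only genuine divergence is in the proof of the quantitative estimate \eqref{eq:1l}. The paper first reduces the operator norm to a scalar norm via $\|\UUU^\delta-\UUU^0\|_{H^1\to H^{-1}}\leq C|\UUU^\delta-\UUU^0|_{H^{-1}}$ (a one-dimensional multiplication estimate quoted from earlier work), then expands $\UUU(\cdot,t)$ in a Fourier series in the fast variable, so that $\UUU^\delta-\UUU^0=\sum_{m\neq 0}b_m(t)e^{2i\pi m t/\delta}$ with rapidly decaying coefficients, and concludes from the $O(\delta)$ size of oscillatory exponentials in $H^{-1}$. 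You instead introduce the periodic antiderivative $\Psi(u,t)=\int_0^u(\UUU-\UUU^0)(u',t)\,du'$ (periodic precisely because the fast average of $\UUU-\UUU^0$ vanishes), write $\UUU^\delta-\UUU^0=\delta\frac{d}{dt}[\Psi(t/\delta,t)]-\delta\Psi_t(t/\delta,t)$, and integrate by parts against $H^1$ test functions. The two routes are equivalent in substance — both exploit that a mean-zero fast oscillation gains a factor $\delta$ after one integration — but yours is self-contained and avoids the external citations, while the paper's Fourier route packages the smoothness of $\UUU$ in the fast variable more systematically (useful if one wanted higher-order corrections). Your closing remark that weak convergence follows from the $H^1\to H^{-1}$ bound plus the uniform $L^\infty$ bound and density is also fine.
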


\begin{proof} 1. We set
\begin{equations}
F(x,t) \de \var_\star \ove{\matrice{\phi_1(x) \\ \phi_2(x)}} \left((D_t\kappa)(t) \cdot  \matrice{ 2 k'\cdot D_x \phi_1(x) \\ 2 k'\cdot D_x \phi_2(x)}^\top + \big(\kappa(t)^2-1\big) W(x) \matrice{ \phi_1(x) \\ \phi_2(x)}^\top \right) \bst,
\\
F^\delta(x) \de F\big(x,\delta \blr{k',x}\big).
\end{equations}
Fix $g \in C_0^\infty\big(\R,\C^2\big)$. The action of  the operator \eqref{eq:1b} on $g$ is given by:
\begin{equations}
\big(\UU_\delta^{-1} \cdot \Pi F^\delta \Pi^*\cdot  \UU_\delta g \big)(t) 
= \int_0^1 F^\delta\left(sv+\dfrac{t}{\delta}v'\right)   g\left(\blr{k', \delta \left( sv+\dfrac{t}{\delta}v'\right)}\right) ds
\\
 = \int_0^1 F\left(sv+\dfrac{t}{\delta}v',t\right) g(t) ds = \int_0^1 F\left(sv+\dfrac{t}{\delta}v',t\right) ds \cdot g(t).
\end{equations}
Therefore \eqref{eq:1b} is the multiplication operator by
\begin{equation}
\UUU^\delta(t) \de \int_0^1 F\left(sv+\dfrac{t}{\delta}v',t\right) ds.
\end{equation}
Note that $F$ is $\Lambda$-periodic in the first two variables and compactly supported in the last variable. Therefore $\UUU^\delta$ has the two-scale structure \eqref{eq:1c}:
\begin{equation}\label{eq:5s}
\UUU^\delta(t) = \UUU\left( \dfrac{t}{\delta},t \right), \ \ \ \ \UUU(\tau,t) \de \int_0^1 F\left(sv+\tau v',t\right) ds.
\end{equation}

2. The function $\UUU$ is periodic in the first variable and compactly supported in the second one. Therefore the weak limit of $\UUU^\delta$ is 
\begin{equation}\label{eq:5t}
\UUU^0(t) \de \int_0^1 \UUU(\tau,t) d\tau = \int_0^1 \int_0^1 F\left(sv+\tau v',t\right) d\tau ds = \int_{\Ll} F\left(x,t\right) dx.
\end{equation}
In the last inequality, we changed variables: $sv+\tau v'$ became $sv_1+\tau v_2$ (with Jacobian equal to $1$); hence $[0,1]^2$ became $\Ll$, the fundamental cell of $\R^2/\Lambda$ given in \eqref{eq:4c}. Going back to the definition of $F$, we end up with:
\begin{equations}
\UUU^0(t) =  \left(\matrice{\var_F^2 & 0 \\ 0 & -\var_F^2} \left(\kappa(t)^2-1\right) + \var_\star\matrice{0 & \nu_\star k' \\ \ove{\nu_\star k'} & 0} (D_t\kappa)(t)\right) \bst
\\
=
 \var_F^2 \left(\kappa(t)^2-1\right) + \var_\star\matrice{0 & -\nu_\star k' \\ \ove{\nu_\star k'} & 0} (D_t\kappa)(t).
\end{equations}

3. We show the quantitative estimate \eqref{eq:1l}. Since $\UUU^\delta$ and $\UUU^0$ are functions on $\R$,
\begin{equation}
\left\| \UUU^\delta - \UUU^0 \right\|_{H^1 \rightarrow H^{-1}} \leq C \left| \UUU^\delta - \UUU^0\right|_{H^{-1}}.
\end{equation}
See e.g. \cite[Lemma 2.1]{Dr4}. Recall that $\UUU^\delta$ is related to  $\UUU$ via \eqref{eq:5s}. The function $\UUU$ is periodic in the first variable and compactly supported in the second variable. We write a Fourier decomposition of $\UUU$:
\begin{equation}
\UUU(t,\tau) = \sum_{m \in \Z} b_m(t) e^{2i
\pi m \tau}, \ \ \ \ b_m(t) \de \int_0^1 e^{-2i
\pi m \tau'} \UUU(t,\tau') d\tau'.
\end{equation}
Because of \eqref{eq:5s} and \eqref{eq:5t},
\begin{equation}
\UUU^\delta(t) - \UUU^0(t) = \sum_{m \neq 0} b_m(t) e^{2i
\pi m t/\delta}.
\end{equation}
In other words, $\UUU^\delta - \UUU^0$ has a highly oscillatory structure. The coefficients $b_m$ are smooth functions of $t$. Their Sobolev norms decay rapidly since $\UUU$ depends smoothly on $\tau$.  We can then conclude as in the proof of \cite[Lemma 3.1]{Dr3}
\end{proof}

The function $\UUU^0$ is an effective potential that arises as the homogenized limit of $\UUU^\delta$. It appears in the Dirac operator $\Di(\mu)$. Indeed, a computation shows that
\begin{equations}
\Di(\mu)^2 =
\nu_F^2 |k'|^2 D_t^2 + \mu^2 \cdot \nu_F^2 | \ell|^2 + \var_F^2 \kappa^2
+  \var_\star \matrice{0 & -\nu_\star k' \\ \ove{\nu_\star k'} & 0} (D_t \kappa).
\end{equations}
Because of \eqref{eq:2w}, we deduce that
\begin{equation}\label{eq:2x}
\Di(\mu)^2 = \nu_F^2 |k'|^2 D_t^2 + \mu^2 \cdot \nu_F^2 | \ell|^2 + \var_F^2 + \UUU^0.
\end{equation}
We will apply this identity in the next section.

\subsection{A cyclicity argument} The next result is stated abstractly. It relies on the cyclicity principle.

\begin{lem}\label{lem:1l} Let $A, B, C, D,E$ be bounded operators: 
\begin{equations}
A : H^1\big(\R,\C^2\big) \rightarrow L^2[\zeta], \ \ \ B : L^2[\zeta] \rightarrow L^2\big(\R,\C^2\big), 
\\ 
C : L^2\big(\R,\C^2\big) \rightarrow L^2[\zeta], \ \ \ \ D : L^2\big(\R,\C^2\big) \rightarrow H^1\big(\R,\C^2\big), 
\\
E : L^2\big(\R,\C^2\big) \rightarrow L^2(\R,\C^2\big).
\end{equations}
Assume that for some $M \geq 1$,
\begin{itemize}
\item[(a)] The operator norms of $A, B, C, D, E$  are bounded by $M$.
\item[(b)] The operator $\Id + D E D : L^2(\R, \C^2) \rightarrow L^2(\R,\C^2)$ is invertible and
\begin{equation}
\left\| (\Id + DED)^{-1} \right\|_{L^2(\R,\C^2)} \leq M.
\end{equation}
\item[(c)] The following estimate holds: 
\begin{equation}
\epsi \de \big\| D (BC-E) D\big\|_{L^2(\R,\C^2)} \leq \dfrac{1}{2M}.
\end{equation}
\end{itemize}
Then the operator $\Id + C D^2 B : L^2[\zeta] \rightarrow L^2[\zeta]$ is invertible;
\begin{equations}\label{eq:5v}
\left\| \big(\Id + C D^2 B\big)^{-1} \right\|_{L^2[\zeta]} \leq 3M^5;  \ \ \ \ \text{ and }
\\
\left\| AD^2B \cdot (\Id + C D^2 B)^{-1} -  AD \cdot (\Id + DED)^{-1} \cdot DB \right\|_{L^2[\zeta]} \leq  2M^6 \epsi.
\end{equations}
\end{lem}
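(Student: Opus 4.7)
The plan is to exploit the \emph{push-through identity} $(\Id + XY)^{-1}X = X(\Id + YX)^{-1}$, which is the operator-theoretic expression of the cyclicity principle mentioned in the text. Writing $CD^2B$ as the composition of $X = CD : L^2(\R,\C^2) \to L^2[\zeta]$ and $Y = DB : L^2[\zeta] \to L^2(\R,\C^2)$, the operator $CD^2B = XY$ on $L^2[\zeta]$ has the same non-zero spectrum as $YX = D(BC)D$ on $L^2(\R,\C^2)$. The point is that the assumptions (b) and (c) pertain to the latter, on the small space.

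First I would invert $\Id + D(BC)D$ on $L^2(\R,\C^2)$ via a Neumann series. Writing
\begin{equation*}
\Id + D(BC)D = \big(\Id + DED\big)\big(\Id + (\Id + DED)^{-1}D(BC-E)D\big),
\end{equation*}
the second factor is invertible because $\|(\Id + DED)^{-1}D(BC-E)D\| \leq M\epsilon \leq 1/2$ by (b)--(c), and its inverse has norm at most $2$. Hence $(\Id + D(BC)D)^{-1}$ exists with norm at most $2M$. Then the identity
\begin{equation*}
(\Id + CD^2B)^{-1} = \Id - CD\,(\Id + D(BC)D)^{-1}\,DB
\end{equation*}
(a direct verification) shows $\Id + CD^2B$ is invertible on $L^2[\zeta]$ with norm bounded by $1 + M\cdot M \cdot 2M \cdot M \cdot M \leq 3M^5$, giving the first assertion of \eqref{eq:5v}.

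Next I would establish the key algebraic identity
\begin{equation*}
AD^2B \cdot (\Id + CD^2B)^{-1} = AD \cdot (\Id + D(BC)D)^{-1} \cdot DB,
\end{equation*}
which follows from the push-through computation: substituting the above formula for $(\Id+CD^2B)^{-1}$, expanding, and simplifying using $D^2BC = D \cdot D(BC)$ and $\Id - D(BC)D(\Id + D(BC)D)^{-1} = (\Id + D(BC)D)^{-1}$. At this point the problem reduces to comparing the two resolvents $(\Id + D(BC)D)^{-1}$ and $(\Id + DED)^{-1}$ on $L^2(\R,\C^2)$. The second resolvent identity gives
\begin{equation*}
(\Id + D(BC)D)^{-1} - (\Id + DED)^{-1} = -(\Id + D(BC)D)^{-1}\cdot D(BC-E)D \cdot (\Id + DED)^{-1},
\end{equation*}
whose operator norm is bounded by $2M\cdot\epsilon\cdot M = 2M^2\epsilon$. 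Sandwiching between $AD$ and $DB$ (each of norm $\leq M^2$) yields the final bound $2M^6\epsilon$, completing \eqref{eq:5v}.

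The proof is essentially algebraic: no hard analysis is involved, and the main conceptual step is recognizing that the push-through identity transplants the problem from the large space $L^2[\zeta]$ (where $\Id + CD^2B$ lives) to the small space $L^2(\R,\C^2)$ (where the given invertibility hypothesis on $\Id + DED$ lives). The only mild bookkeeping obstacle is tracking the various source and target spaces of $A, B, C, D, E$ to ensure each composition is well-defined; in particular one should view $D$ as an operator $L^2 \to L^2$ via the inclusion $H^1 \hookrightarrow L^2$ whenever $D^2$ or $D(\cdots)D$ appears.
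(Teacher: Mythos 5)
Your proposal is correct and follows essentially the same route as the paper: the push-through/cyclicity identity $(\Id + CD\cdot DB)^{-1} = \Id - CD(\Id + DB\cdot CD)^{-1}DB$, a Neumann series for $\Id + D(BC)D$ factored through $\Id + DED$, and the second resolvent identity to compare $(\Id + D(BC)D)^{-1}$ with $(\Id + DED)^{-1}$, with the same norm bookkeeping yielding $3M^5$ and $2M^6\epsilon$. No gaps.
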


\begin{proof} Below we use $L^2$ and $H^1$ to denote $L^2(\R,\C^2)$ and $H^1(\R,\C^2)$. 

1. Recall that $\Id + C D^2 B = \Id + CD \cdot DB : L^2[\zeta] \rightarrow L^2[\zeta]$ is invertible if and only if $\Id + DB \cdot CD : L^2 \rightarrow L^2$ is invertible. In this case, the inverses are related via
\begin{equation}\label{eq:2y}
\big(\Id + C D^2 B\big)^{-1} =  \Id - CD \big(\Id + D B \cdot C D\big)^{-1} DB.
\end{equation}
Because of (b), $\Id + DED$ is invertible and
\begin{equations}\label{eq:1h}
\Id + DB \cdot CD = \Id + DED + D(BC-E)D
\\ 
= (\Id + DED) \cdot \left(\Id + (\Id + DED)^{-1} \cdot D(BC-E)D \right).
\end{equations}
Because of both (b) and (c),
\begin{equation}
\left\| (\Id + DED)^{-1} \cdot D(BC-E)D \right\|_{L^2} \leq \dfrac{1}{2}.
\end{equation}
This implies that $\Id + (\Id + DED)^{-1} \cdot D(BC-E)D$ is invertible by a Neumann series; the inverse has operator norm controlled by $2$. Thanks to \eqref{eq:1h}, $\Id + DB CD$ is invertible and the inverse has norm controlled by $2M$. Hence $\Id + C D^2 B$ is invertible. Thanks to \eqref{eq:2y} and (a), 
\begin{equation}
\left\| (\Id + C D^2 B)^{-1}\right\|_{L^2[\zeta]} \leq 1 + M^2 \cdot 2M \cdot M^2 \leq 3M^5.
\end{equation}
This proves the first estimate of \eqref{eq:5v}.

2. Observe that
\begin{equation}
(\Id + D BC D)^{-1} - (\Id + D E D)^{-1} = (\Id + D E D)^{-1}  \cdot D(E-BC)D \cdot (\Id + D BC D)^{-1}.
\end{equation}
Because of the bounds proved in Step 1 and of (c),
\begin{equation}\label{eq:5w}
\left\| (\Id + D BC D)^{-1} - (\Id + D E D)^{-1} \right\|_{L^2} \leq 2M^2 \epsi.
\end{equation}

We write
\begin{equations}
AD^2B \cdot \big(\Id + C D^2 B\big)^{-1} = AD^2B \cdot \left( \Id - CD \big(\Id + D BC D\big)^{-1} DB \right)
\\
= AD \cdot DB - AD \cdot D B CD \big(\Id + D BC D\big)^{-1} \cdot DB 
\\
= AD \cdot \left( \Id - D B CD \big(\Id + D BC D \big)^{-1} \right) \cdot DB = AD \cdot \big(\Id + D BC D \big)^{-1} \cdot DB.
\end{equations}
The operator norms of $AD : L^2 \rightarrow L^2[\zeta]$ and $DB :  L^2[\zeta] \rightarrow H^1$ are each bounded by $M^2$ because of (a). We deduce from \eqref{eq:5w} that
\begin{equation}\label{eq:1e}
\left\| AD^2B \cdot \big(\Id + C D^2 B\big)^{-1} - AD \cdot \big(\Id + D E D \big)^{-1} \cdot DB \right\|_{L^2[\zeta]} \leq 2M^6 \epsi.
\end{equation}
This proves the second estimate of \eqref{eq:5w}, hence completes the proof of the lemma.\end{proof}

We would like to apply Lemma \ref{lem:1l} with the choices:
\begin{equations}
A \de \delta^{1/2} \cdot  \matrice{ \phi_1 \\ \phi_2}^\top  e^{-i  \mu \delta \blr{\ell,x} }  \Pi^* \cdot  \UU_\delta \big( \Di(\mu)+z \big), \ \ \ \  
B \de \dfrac{1}{\delta^{1/2}} \cdot \UU_\delta^{-1} \Pi  e^{i  \mu \delta \blr{\ell,x} } \ove{\matrice{\phi_1 \\ \phi_2}}
\\
C \de  \delta^{1/2} \var_\star \left(2(D_t\kappa)_\delta \cdot  \matrice{ k'\cdot D_x \phi_1 \\ k'\cdot D_x \phi_2}^\top + (\kappa_\delta^2-1) W \matrice{ \phi_1 \\ \phi_2}^\top \right) 
 \cdot  e^{-i  \mu \delta \blr{\ell,x} }  \Pi^*  
 \cdot \ \UU_\delta
  \bst
\end{equations}
\begin{equations}\label{eq:2u}
  D = \left(\Di_+(\mu)^2-z^2\right)^{-1/2} = R_0(\mu,z)^{1/2}, \ \ \ E = \UUU^0.
\end{equations}
These operators are manufactured so that
\begin{equation}\label{eq:2v}
\QQ_\delta(\mu,z) = \dfrac{1}{\delta} AD^2B, \ \ \ \ \KK_\delta(\mu,z) = CD^2B,
\end{equation}
see the formula of Lemma \ref{lem:1d}. Recall that $\UUU^\delta, \UUU^0$ were defined in Lemma \ref{lem:1n} and observe that $BC = \UUU^\delta \rightarrow E = \UU^0$ (for the operator norm $H^1 \rightarrow H^{-1}$). This provides the favorable setting needed for the use of  the cyclicity argument (Lemma \ref{lem:1l}).

The definition of $D$ requires some precisions. Let $\vp(\w) = (\w^2-z^2)^{-1/2}$, where the squareroot is holomorphic on $\C \setminus (-\infty,0]$. If $|z| < \sqrt{\var_F^2 +  \mu^2\cdot \nu_F^2 |\ell|^2}$ and
\begin{equations}
\w \in \Sigma_{L^2}\big(\Di_+(\mu)\big) = \R \setminus \Big[ - \sqrt{\var_F^2 +  \mu^2\cdot \nu_F^2 |\ell|^2}, \sqrt{\var_F^2 +  \mu^2\cdot \nu_F^2 |\ell|^2} \Big].
\end{equations}
then $\Re\big(\w^2 - z^2 \big) > 0$. Hence $\vp$ is well defined on the spectrum of $\Di_+(\mu)$. This allows to define $D = \vp\big( \Di_+(\mu) \big)$ using the spectral theorem.

\begin{lem}\label{lem:1m} Fix $\epsilon_1 > 0, \ \tmu \in \R$. There exists $\delta_0 > 0$ such that if
\begin{equations}\label{eq:1g}
\delta \in (0,\delta_0), \ \ \mu \in (-\tmu,\tmu), \\ 
z^2 \in \Dd\big(0,\var_F^2 + \mu^2 \cdot \nu_F^2 | \ell|^2\big), \ \ \dist\left( \Sigma_{L^2}\big(\Di(\mu)^2 \big),z^2 \right) \geq \epsilon_1^2
\end{equations} 
then $\big(\Id + D E D \big)^{-1}$ and $\big(\Id + C D^2 B\big)^{-1}$ are invertible on $L^2[\zeta]$. Moreover,
\begin{equations}\label{eq:6s}
 AD^2B \cdot \big(\Id + C D^2 B\big)^{-1} = AD \cdot \big(\Id + D E D \big)^{-1} \cdot DB + \OO_{L^2[\zeta]}(\delta),
 \\
 \big(\Id + C D^2 B\big)^{-1} = \OO_{L^2[\zeta]}(1).
\end{equations} 
\end{lem}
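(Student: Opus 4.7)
The plan is to verify the three hypotheses of the cyclicity lemma (Lemma \ref{lem:1l}) for the operators $A,B,C,D,E$ defined in \eqref{eq:2u}, since by \eqref{eq:2v} the two estimates in \eqref{eq:6s} then follow directly from the conclusion of Lemma \ref{lem:1l}, applied to $CD^2B = \KK_\delta(\mu,z)$ and $AD^2B = \delta \cdot \QQ_\delta(\mu,z)$.

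I first check the uniform operator bounds required by (a). The $\delta^{1/2}$ prefactors in $A$ and $C$ are tuned exactly to cancel the $\delta^{-1/2}$ operator norm of $\UU_\delta$ on $L^2(\R,\C^2)$, and the $\delta^{-1/2}$ prefactor in $B$ balances $\UU_\delta^{-1}$. Multiplication by the uniformly bounded smooth functions $\phi_j$, $k' \cdot D_x \phi_j$, $(\kappa_\delta^2-1)W$, $(D_t\kappa)_\delta$ and $e^{\pm i\mu\delta\blr{\ell,x}}$ contributes uniformly bounded factors; $\Pi$ and $\Pi^*$ each have norm $1$ between the natural $L^2$ spaces (by the Jacobian-$1$ change of variables $x \mapsto (\blr{k,x},\blr{k',x})$); and $\Di(\mu)+z : H^1(\R,\C^2) \to L^2(\R,\C^2)$ is uniformly bounded for $z$ in a bounded set. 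Since $\Sigma_{L^2}(\Di_+(\mu)^2)$ equals the essential spectrum of $\Di(\mu)^2$ and hence is contained in $\Sigma_{L^2}(\Di(\mu)^2)$, the hypothesis on $z^2$ implies $\dist(z^2, \Sigma_{L^2}(\Di_+(\mu)^2)) \geq \epsilon_1^2$, and the spectral theorem applied to the explicit second-order operator $\Di_+(\mu)^2$ yields $\|D\|_{L^2 \to H^1} \leq C(\epsilon_1,\tmu)$.

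Hypothesis (b) rests on the homogenization identity \eqref{eq:2x}, which reads $\Di(\mu)^2 = \Di_+(\mu)^2 + \UUU^0$. Using $D^{-2} = \Di_+(\mu)^2 - z^2$ and $E = \UUU^0$, I factor
\[
\Id + DED \;=\; D\bigl(D^{-2}+\UUU^0\bigr)D \;=\; D\bigl(\Di(\mu)^2 - z^2\bigr)D.
\]
The spectral gap hypothesis $\dist(\Sigma_{L^2}(\Di(\mu)^2), z^2) \geq \epsilon_1^2$ provides a uniform $H^{-1} \to H^1$ bound on $(\Di(\mu)^2 - z^2)^{-1}$, so $(\Id + DED)^{-1} = D^{-1}(\Di(\mu)^2-z^2)^{-1}D^{-1}$ is bounded on $L^2(\R,\C^2)$ with a constant depending only on $\epsilon_1$ and $\tmu$.

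For (c), I observe that $BC$ is precisely the multiplication operator $\UUU^\delta$ computed in \eqref{eq:1b}: this is the reason for the particular placement of the factors in the definitions of $A,B,C$. Hence $BC - E = \UUU^\delta - \UUU^0$, and the quantitative weak-limit estimate \eqref{eq:1l} of Lemma \ref{lem:1n} gives $\|BC-E\|_{H^1 \to H^{-1}} = O(\delta)$. Composing with the $L^2 \to H^1$ bound on $D$ from (a) and its dual $H^{-1} \to L^2$ bound yields $\epsi = \|D(BC-E)D\|_{L^2(\R,\C^2)} = O(\delta)$, which is at most $1/(2M)$ once $\delta_0$ is small enough. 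Lemma \ref{lem:1l} then delivers \eqref{eq:6s} directly. The conceptual heart of the argument is the factorization in (b): it is what converts the two-scale averaged potential $\UUU^0$ arising from the homogenization of $BC$ into the full Dirac operator $\Di(\mu)$ that appears on the right-hand side of the final resolvent estimate.
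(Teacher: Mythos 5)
Your proposal is correct and follows essentially the same route as the paper: verify hypotheses (a)--(c) of the cyclicity lemma using the isometry of $\delta^{\pm 1/2}\UU_\delta^{\pm 1}$, the inclusion $\Sigma_{L^2}(\Di_+(\mu)^2)\subset \Sigma_{L^2}(\Di(\mu)^2)$ for the bound on $D$, the factorization $\Id+DED=D(\Di(\mu)^2-z^2)D$ via \eqref{eq:2x}, and the identification $BC=\UUU^\delta$ combined with the $O(\delta)$ estimate \eqref{eq:1l} of Lemma \ref{lem:1n}. The only cosmetic difference is that in (b) you invoke an $H^{-1}\to H^1$ bound on $(\Di(\mu)^2-z^2)^{-1}$ whereas the paper uses the plain $L^2$ spectral-theorem bound $\epsilon_1^{-2}$; both are valid.
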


\begin{proof} Below we use $L^2$ and $H^1$ to denote $L^2(\R,\C^2)$ and $H^1(\R,\C^2)$. The equation \eqref{eq:6s} is a consequence of Lemma \ref{lem:1l}, assuming that the assumptions (a), (b) and (c) hold with a constant $M$ independent of $\delta, \mu, z$ satisfying \eqref{eq:1g}.

1. We first verify (a). We observe that that the only singular dependence of $A, B, C$ and $E$ is in $\delta$. It arises only in the operators $\delta^{1/2} \UU_\delta$ and $\delta^{-1/2} \UU_\delta^{-1}$, which are both isometries on $L^2$. In addition,
\begin{equation}
\dist\left( \Sigma_{L^2}\big(\Di(\mu)^2 \big),z^2 \right) \geq \epsilon_1^2 \ \ \Rightarrow  \ \ \dist\left( \Sigma_{L^2}\big(\Di_+(\mu)^2 \big),z^2 \right) \geq \epsilon_1^2.
\end{equation}
Therefore $D$ is controlled by $\epsi_1^{-2}$; and (a) holds 
independently of $\delta, \mu, z$ satisfying \eqref{eq:1g}.

2. From the definition \eqref{eq:2u} of $D$, $D$ is invertible. Therefore we can write
\begin{equation}
\Id + D E D = D \big( D^{-2}+E \big)  D.
\end{equation}
Moreover, thanks to \eqref{eq:2x},
\begin{equation}\label{eq:1x}
D^{-2}+E = \Di(\mu)^2-z^2.
\end{equation}
When $z$ satisfies the condition of \eqref{eq:1g}, the operator $\Di(\mu)^2-z^2$ is invertible. This comes with the bound:
\begin{equation}
\left\| \left(\Di(\mu)^2-z^2\right)^{-1} \right\|_{L^2} \leq \dfrac{1}{\epsilon_1^2}.
\end{equation}
This is independent of $\delta$: (b) holds.

3. The operator $D$ maps $L^2$ to $H^1$ and $H^{-1}$ to $L^2$, with uniformly bounded norm in $\mu, z$ satisfying \eqref{eq:1g}. Therefore (c) holds -- possibly after shrinking $\delta_0$ -- if
\begin{equation}\label{eq:1m}
\| BC - E \|_{H^1 \rightarrow H^{-1}} = O(\delta).
\end{equation}
We observe that $BC = \UUU^\delta$ and recall that $E = \UUU^0$. Therefore \eqref{eq:1m} reduces to the quantitative estimate 
\eqref{eq:1l} proved in Lemma \ref{lem:1n}.

4. Because of Steps 1, 2 and 3, we can apply Lemma \ref{lem:1l}. It yields Lemma \ref{lem:1m}.\end{proof}

According to this lemma, when \eqref{eq:1g} holds, $\Id + \KK_\delta(\mu,z)$ is invertible. Hence
\begin{equation}
\QQ_\delta(\mu,z) \cdot \big(\Id + \KK_\delta(\mu,z)\big)^{-1}
\end{equation}
is well-defined. Thanks to \eqref{eq:2v}, 
\begin{equations}
\QQ_\delta(\mu,z) \cdot \big(\Id + \KK_\delta(\mu,z)\big)^{-1} = \dfrac{1}{\delta} AD \cdot \big(\Id + D E D \big)^{-1} \cdot DB + \OO_{L^2[\zeta]}(1)
\\
= \dfrac{1}{\delta} AD \cdot D^{-1} \left( D^{-2}+E \right)^{-1} D^{-1} \cdot DB + \OO_{L^2[\zeta]}(1)
= \dfrac{1}{\delta} A \cdot \left( D^{-2}+E \right)^{-1} \cdot B + \OO_{L^2[\zeta]}(1).
\end{equations}
We now plug in the formula \eqref{eq:2u} for $A,B,C,D,E$, and we use the relation \eqref{eq:1x}. This yields:
\begin{equations}
\QQ_\delta(\mu,z) \cdot \big(\Id + \KK_\delta(\mu,z)\big)^{-1} 
\\
= \dfrac{1}{\delta} \cdot  \matrice{ \phi_1 \\ \phi_2}^\top  e^{-i  \mu \delta \blr{\ell,x} }  \Pi^*  \UU_\delta  \cdot (\Di(\mu)+z) \cdot \left(\Di(\mu)^2-z^2 \right)^{-1} \cdot 
 \UU_\delta^{-1} \Pi  e^{i  \mu \delta \blr{\ell,x} } \ove{\matrice{\phi_1 \\ \phi_2}} + \OO_{L^2[\zeta]}(1)
\end{equations}
\begin{equations}\label{eq:3s}
 = \dfrac{1}{\delta} \cdot  \matrice{ \phi_1 \\ \phi_2}^\top  e^{-i  \mu \delta \blr{\ell,x} } \cdot \Pi^*   \UU_\delta \cdot \left(\Di(\mu)-z\right)^{-1} \cdot 
 \UU_\delta^{-1} \Pi \cdot e^{i  \mu \delta \blr{\ell,x} } \ove{\matrice{\phi_1 \\ \phi_2}} + \OO_{L^2[\zeta]}(1).
\end{equations}
We are now ready to prove Theorem \ref{thm:2}.

\begin{proof}[Proof of Theorem \ref{thm:2}] 1. Fix $\epsilon > 0$ and $\mu_\sharp > 0$. Fix $z \in \C$ satisfying 
\begin{equation}\label{eq:5u}
z \in \Dd\Big(0,\sqrt{\var_F^2 + \mu^2\cdot \nu_F^2 | \ell|^2} -\dfrac{\epsilon}{3} \Big); \ \ \ \ \dist\Big( \Sigma_{L^2}\big(\Di(\mu)^2 \big), z^2 \Big) \geq \dfrac{\epsilon^2}{9}.
\end{equation}
Note that this does not quite correspond to the assumptions of Theorem \ref{thm:2}. Instead it is a stronger form of the assumptions of Lemma \ref{lem:1m} with $\epsilon_1 = \epsilon/3$. The equation \eqref{eq:5u} implies that $\Id + \KK_\delta(\mu,z)$ is invertible. Apply Lemma \ref{lem:1d} with
\begin{equation}
\te = 1 - \dfrac{\epsilon}{3\sqrt{\var_F^2 + \mu^2 \cdot \nu_F^2 | \ell|^2}}.
\end{equation}
It implies that
\begin{equation}
\Id + \KKK_\delta(\zeta,\lambda) = \Id + \KK_\delta(\mu,z) + \OO_{L^2[\zeta]}(\delta^{2/3}).
\end{equation} 
Hence -- after possibly shrinking $\delta_0$ -- the operator $\Id + \KKK_\delta(\zeta,\lambda)$ is invertible. The inverses of $\Id + \KKK_\delta(\zeta,\lambda)$ and $\Id + \KK_\delta(\mu,z)$ are related via
\begin{equation}
\big(\Id + \KKK_\delta(\zeta,\lambda)\big)^{-1} =  \big(\Id + \KK_\delta(\mu,z)\big)^{-1} + \OO_{L^2[\zeta]}\left(\delta^{2/3}\right),
\end{equation}
because $\big(\Id + \KK_\delta(\mu,z)\big)^{-1} = (\Id + CD^2B)^{-1}$ is uniformly bounded by Lemma \ref{lem:1m}. It follows that under \eqref{eq:5u}, $\PP_\delta[\zeta] - \lambda$ is invertible and
\begin{equation}
\big( \PP_\delta[\zeta] - \lambda \big)^{-1} = \QQQ_\delta(\zeta,\lambda) \cdot \big(\Id + \KKK_\delta(\zeta,\lambda)\big)^{-1}.
\end{equation}

2. Observe that $\QQ_\delta(\mu,z) = \OO_{L^2[\zeta]}(\delta^{-1})$: this comes the relation between  $\QQQ_\delta(\zeta,\lambda)$ and  $\QQ_\delta(\mu,z)$ provided by Lemma \ref{lem:1d}. We deduce that  $\PP_\delta[\zeta] - \lambda$ is invertible and
\begin{equations}
\big( \PP_\delta[\zeta] - \lambda \big)^{-1} = \QQ_\delta(\mu,z) \cdot \big( \Id + \KK_\delta(\mu,z) \big)^{-1} + \OO_{L^2[\zeta]}\left(\delta^{-1/3}\right).
\end{equations}
Thanks to \eqref{eq:3s}, this simplifies to $\big( \PP_\delta[\zeta] - \lambda \big)^{-1} =$
\begin{equations}\label{eq:3q}
 \dfrac{1}{\delta} \cdot  \matrice{ \phi_1 \\ \phi_2}^\top  e^{-i  \mu \delta \blr{\ell,x} }  \Pi^* \UU_\delta \cdot \left(\Di(\mu)-z\right)^{-1} \cdot 
 \UU_\delta^{-1} \Pi  e^{i  \mu \delta \blr{\ell,x} } \ove{\matrice{\phi_1 \\ \phi_2}} + \OO_{L^2[\zeta]}\left(\delta^{-1/3}\right).
\end{equations} 

\vspace*{-.5cm}

\begin{center}
\begin{figure}
\caption{The top blue area represents the domain of validity of \eqref{eq:3q} provided by Step 1 and 2. The bottom blue area represents the domain of validity of \eqref{eq:3q} as specified by Theorem \ref{thm:2}. In Step 3 we prove that \eqref{eq:3q} holds near $\var = -\var_0^\mu$, at the price of increasing $\epsilon/3$ to $\epsilon$.
}\label{fig:11}
{\includegraphics{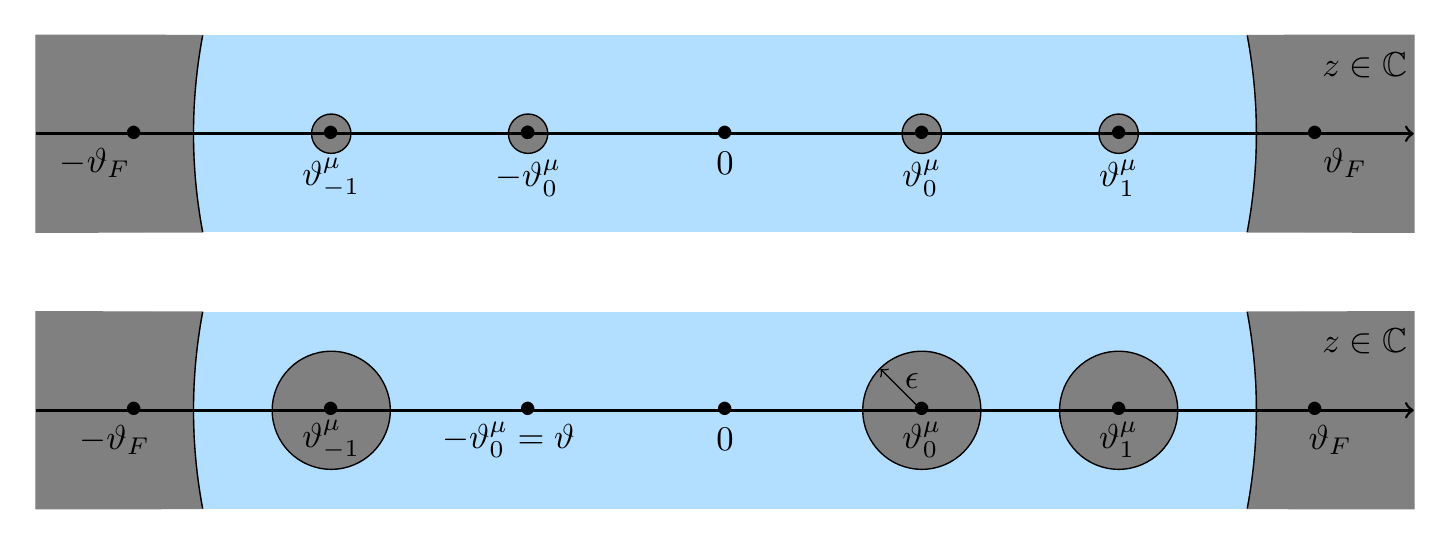}}
\end{figure}
\end{center}

3. The estimate \eqref{eq:3q} is valid as long as $z$ satisfies \eqref{eq:5u}. There is a subtlety here: \eqref{eq:5u} does not quite correspond to the assumption of Theorem \ref{thm:2}. To conclude the proof, we must show that \eqref{eq:5u} is unnecessarily strong. In other words, we assume in these final steps that
\begin{equation}
z \in \Dd\Big(0,\sqrt{\var_F^2 + \mu^2\cdot \nu_F^2 | \ell|^2} -\epsilon \Big), \  \
 \dist\Big( \Sigma_{L^2}\big(\Di(\mu) \big), z \Big) \geq \epsilon; \ \ \ \ \dist\Big( \Sigma_{L^2}\big(\Di(\mu)^2 \big), z^2 \Big) < \dfrac{\epsilon^2}{9}.
\end{equation}
The third condition implies
\begin{equation}
\dist\big(\Sigma_{L^2}\big(\Di(\mu)\big),z\big) < \dfrac{\epsilon}{3} \ \text{ or } \ \dist\big(\Sigma_{L^2}\big(-\Di(\mu)\big),z\big) < \dfrac{\epsilon}{3}.
\end{equation} 
From the second condition, we deduce that $\dist\big(\Sigma_{L^2}\big(-\Di(\mu)\big),z\big) < \epsilon/3$. The spectra of $\Di(\mu)$ and $-\Di(\mu)$ differ by at most one eigenvalue: 
\begin{equation}\label{eq:6v}
\Sigma_{L^2}\big(-\Di(\mu)\big) \setminus \Sigma_{L^2}\left(\Di(\mu)\right) \subset \{\var\}, \ \ \ \var \de - \mu \cdot \nu_F |\ell| \cdot \sgn(\var_\star),
\end{equation}
see Lemma \ref{lem:1a}. Hence, $z$ must belong to $\Dd(\var,\epsilon/3)$. 

4. Because of Step 3, the proof of Theorem \ref{thm:2} is complete if we can show that \eqref{eq:3q} holds when
\begin{equation}
z \in \Dd\Big(0,\sqrt{\var_F^2 + \mu^2\cdot \nu_F^2 | \ell|^2} -\epsilon \Big), \  \
 \dist\Big( \Sigma_{L^2}\big(\Di(\mu) \big), z \Big) \geq \epsilon; \ \ \ \ z \in \Dd\left( \var, \dfrac{\epsilon}{3} \right).
\end{equation}
Fix $s \in \p\Dd(\var,\epsilon/3)$. Then, $|z-s| < 2\epsilon/3$. This implies that
\begin{equation}
s \in \Dd\Big(0,\sqrt{\var_F^2 + \mu^2\cdot \nu_F^2 | \ell|^2} -\dfrac{\epsilon}{3} \Big), \  \
 \dist\Big( \Sigma_{L^2}\big(\Di(\mu) \big), s \Big) \geq \dfrac{\epsilon}{3}, \ \ |\var-s| = \dfrac{\epsilon}{3}.
\end{equation}
Because of \eqref{eq:6v}, $s$ satisfies
\begin{equation}
\dist\Big( \Sigma_{L^2}\big(\Di(\mu) \big), s \Big) \geq \dfrac{\epsilon}{3}, \ \ \dist\Big( \Sigma_{L^2}\big(-\Di(\mu) \big), s \Big) = \dfrac{\epsilon}{3} \ \ \Rightarrow \ \ \dist\big( \Sigma_{L^2}\big(\Di(\mu)^2 \big), s \Big) \geq \dfrac{\epsilon^2}{9}.
\end{equation}
In particular, $s$ satisfies \eqref{eq:5u}.

Therefore Steps 1 and 2 apply to $s \in \p \Dd\left(\var,\epsilon/3\right)$. They yield $\big( \PP_\delta[\zeta] - E_\star - \delta s \big)^{-1} =$
\begin{equations}\label{eq:6k}
 \dfrac{1}{\delta} \cdot  \matrice{ \phi_1 \\ \phi_2}^\top  e^{-i  \mu \delta \blr{\ell,x} }  \Pi^* \UU_\delta \cdot \left(\Di(\mu)-s\right)^{-1} \cdot 
 \UU_\delta^{-1} \Pi  e^{i  \mu \delta \blr{\ell,x} } \ove{\matrice{\phi_1 \\ \phi_2}} + \OO_{L^2[\zeta]}\left(\delta^{-1/3}\right).
\end{equations} 
Note that $\left(\Di(\mu)-s\right)^{-1}$ has no poles in the disk $\Dd(\var,\epsilon/3)$: otherwise $z$ could not be at distance at least $\epsilon$ from $\Sigma_{L^2}\big(\Di(\mu) \big)$. Thus, integrating \eqref{eq:6k} over the circle $\p \Dd(\var,\epsilon/3)$,
\begin{equation}\label{eq:6l}
\dfrac{1}{2\pi i} \oint_{\p \Dd(\var,\epsilon/3)} \big( \PP_\delta[\zeta] - E_\star-\delta s \big)^{-1} ds  = \OO_{L^2[\zeta]}\left(\delta^{-1/3}\right).
\end{equation}
We substitute $\lambda = E_\star+\delta s$ in \eqref{eq:6l} to get
\begin{equation}\label{eq:6j}
\dfrac{1}{2\pi i} \oint_{\p \Dd(E_\star+\delta\var,\epsilon \delta/3)} \big( \PP_\delta[\zeta] - \lambda \big)^{-1} d\lambda  = \OO_{L^2[\zeta]}\left(\delta^{2/3}\right).
\end{equation}
The equation \eqref{eq:6j} implies that $\big( \PP_\delta[\zeta] - \lambda \big)^{-1}$ cannot have a pole in $\Dd(E_\star + \var\delta,\epsilon \delta/3)$. Indeed, since $\PP_\delta[\zeta]$ is selfadjoint, the non-zero residues of its resolvent are non-zero projectors, hence have $L^2[\zeta]$-operator norm at least equal to $1$.

We deduce that $s \mapsto \big( \PP_\delta[\zeta] - E_\star-\delta s \big)^{-1}$ is holomorphic in the disk $\Dd(\var,\epsilon/3)$. So is the leading term in \eqref{eq:6k}. Their difference is bounded by $\OO_{L^2[\zeta]}(\delta^{-1/3})$ on the boundary of the disk. By the maximal principle, this difference is $\OO_{L^2[\zeta]}(\delta^{-1/3})$ also inside the disk. This shows that \eqref{eq:6k} holds when $s$ is in the disk $\Dd(\var,\epsilon/3)$. Equivalently \eqref{eq:3q} holds when $z \in \Dd(\var,\epsilon/3)$. This completes the proof of Theorem \ref{thm:2}.
\end{proof}



\section{A topological perspective}\label{sec:8}

\subsection{The role of $\var_\star^A$ and $\var_\star^B$ in the spectral flow}\label{sec:7.1} Assume that $P_0$ has Dirac points $(\xi_\star^A,E_\star)$ and $(\xi_\star^B,E_\star)$ -- where $\xi_\star^A$ and $\xi_\star^B$ were defined in \eqref{eq:3y}. 
Following Definition \ref{def:1}, these Dirac points are associated to Dirac eigenbasis $(\phi_1^A,\phi_2^A)$ and $(\phi_1^B,\phi_2^B)$: 
\begin{equation}\label{eq:1s}
\phi_1^J \in L^2_{\xi_\star^J,\tau}, \ \ \phi_2^J \in L^2_{\xi_\star^J,\ove{\tau}}, \ \ J=A,B; \ \  \text{ and  } \ \var_\star^J = \blr{\phi_1^J, W\phi_1^J}_{L^2_{\xi_\star^J}}.
\end{equation}
We recall that $\var_\star^J$ does not depend on the choice of Dirac eigenbasis satisfying \eqref{eq:1s}. The next result is a key identity -- see also \cite[\S7.1]{LWZ}.

\begin{lem}\label{lem:1o} The identity $\var_\star^A+\var_\star^B = 0$ holds. 
\end{lem}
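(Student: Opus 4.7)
The plan is to construct an explicit Dirac eigenbasis at $\xi_\star^B$ from one at $\xi_\star^A$ using spatial inversion, and then exploit the oddness of $W$. The key observation is the arithmetic identity
\begin{equation}
\xi_\star^A + \xi_\star^B = \tfrac{2\pi}{3}(3k_1 + 3k_2) = 2\pi(k_1+k_2) \in 2\pi \Lambda^*,
\end{equation}
which implies $L^2_{\xi_\star^B} = L^2_{-\xi_\star^A}$. Since $\II L^2_\xi = L^2_{-\xi}$ by \eqref{eq:2f}, the operator $\II$ intertwines the relevant quasi-periodic spaces at the two Dirac momenta.

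First I would verify that if $\phi_1^A \in L^2_{\xi_\star^A,\tau}$ (with $\tau = e^{2i\pi/3}$), then $\II \phi_1^A$ lies in $L^2_{\xi_\star^B,\tau}$. This uses $R(-x) = -Rx$: indeed $\RR \II \phi_1^A(x) = \phi_1^A(-Rx) = \phi_1^A(R(-x)) = \tau \phi_1^A(-x) = \tau \II\phi_1^A(x)$. Since $P_0$ commutes with $\II$ and $\II$ is an isometry, $\II\phi_1^A$ satisfies the eigenvalue equation at $\xi_\star^B$ with energy $E_\star$ and has unit norm; one checks similarly that $\CCC \II (\II \phi_1^A) = \CCC \phi_1^A$ belongs to $L^2_{\xi_\star^B,\bar\tau}$. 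Hence $(\phi_1^B, \phi_2^B) := (\omega \II \phi_1^A,\, \bar\omega \CCC \phi_1^A)$ is a valid Dirac eigenbasis at $\xi_\star^B$ for any $\omega \in \Ss^1$. Since $\var_\star^B$ does not depend on the choice of Dirac eigenbasis (Lemma~\ref{lem:1h} and the remark following it), we may compute it with this particular choice.

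Next I would plug in:
\begin{equation}
\var_\star^B = \blr{\phi_1^B, W \phi_1^B}_{L^2_{\xi_\star^B}} = \int_\Ll |\phi_1^A(-x)|^2\, W(x)\, dx.
\end{equation}
Performing the substitution $y = -x$ and using $W(-y) = -W(y)$ gives
\begin{equation}
\var_\star^B = -\int_{-\Ll} |\phi_1^A(y)|^2\, W(y)\, dy.
\end{equation}
Both $\Ll$ and $-\Ll$ are fundamental domains for $\Lambda$, and the integrand $|\phi_1^A|^2 W$ is $\Lambda$-periodic (since $\phi_1^A$ is quasi-periodic and $W$ is periodic), so the integral over $-\Ll$ equals the integral over $\Ll$. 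This yields $\var_\star^B = -\var_\star^A$.

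I do not foresee a real obstacle here; the only subtle point is making sure that $\II \phi_1^A$ genuinely lands in the correct rotational eigenspace at $\xi_\star^B$, which requires both the identity $\xi_\star^A + \xi_\star^B \in 2\pi \Lambda^*$ and the commutation $R \II = \II R$ at the level of operators on functions. Once this algebraic setup is in place, the proof reduces to a one-line substitution exploiting $W(-x) = -W(x)$.
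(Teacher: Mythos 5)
Your proposal is correct and follows essentially the same route as the paper: construct the Dirac eigenbasis at $\xi_\star^B$ by applying $\II$ to the one at $\xi_\star^A$ (using $-\xi_\star^A = \xi_\star^B \mod 2\pi\Lambda^*$ and $R\II = \II R$), invoke the independence of $\var_\star^B$ from the choice of eigenbasis, and conclude from the oddness of $W$. The paper compresses the final step into the single identity $\blr{\II\phi_1^A, W\II\phi_1^A} = -\blr{\phi_1^A, W\phi_1^A}$, which is exactly the change of variables $y=-x$ you carry out explicitly.
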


\begin{proof} 1. We claim that $\II \phi_1^A \in L^2_{\xi_\star^B,\tau}$. Thanks to \eqref{eq:3y},
\begin{equation}
-\xi_\star^A = -\dfrac{2\pi}{3}(2k_1+k_2) = \dfrac{2\pi}{3}(k_1+2k_2) = \xi_\star^B \mod 2\pi\Lambda^*.
\end{equation}
Because $\phi_1^A \in L^2_{\xi_\star^A,\tau}$,
\begin{equations}
(\II \phi_1^A)(x+w) = \phi_1^A(-x-w) = e^{-i\lr{\xi_\star^A,w}} (\II\phi_1^A)(x) = e^{i\lr{\xi_\star^B,w}} (\II\phi_1^A)(x), 
\\
(\RR \II \phi_1^A)(x) = \phi_1^A(-Rx) = \tau \phi_1^A(-x) = \tau (\II \phi_1^A)(x).
\end{equations}
It follows that $\II \phi_1^A \in L^2_{\xi_\star^B,\tau}$ -- as claimed. The same calculation shows that $\II \phi_2^A \in L^2_{\xi_\star^B,\otau}$. 

The operator $P_0$ is $\II$-invariant. Therefore, $\II \phi_1^A$ and $\II \phi_2^A$ form an orthonormal basis of $\ker_{L^2_{\xi_\star}}\hspace*{-1.5mm}\big(P_0(\xi_\star^B)-E_\star\big)$; and 
$(\II \phi_1^A,\II \phi_2^A)$ is a Dirac eigenbasis for $(\xi_\star^B,E_\star)$. 

2. Because $W$ is odd and $\var_\star^B$ does not depend on the choice of Dirac eigenbasis,
\begin{equation}
\var_\star^B = \blr{ \II \phi_1^A, W \II \phi_1^A }_{L^2_{\xi_\star^B}} = - \blr{\phi_1^A, W \phi_1^A}_{L^2_{\xi_\star^A}} = -\var_\star^A.
\end{equation}
This completes the proof. \end{proof}

Recall the assumption (H4): for every $\zeta \notin \{ 2\pi/3, \ 4\pi/3  \} \mod 2\pi \Z$ and $\tau, \ \tau' \in \R$,
\begin{equation}
\lambda_{0,j_\star}(\zeta k + \tau k') < \lambda_{0,j_\star+1}(\zeta k + \tau' k').
\end{equation}

\begin{lem}\label{lem:1t} Assume $\operatorname{(H1)}$ -- $\operatorname{(H4)}$ hold for both $\xi_\star^A$ and $\xi_\star^B$. There exists a function
$E \in C^0\big(\R/(2\pi \Z), \R\big)$ with $E(\zeta_\star^A) = E(\zeta_\star^B) = E_\star$ and such that
\begin{equation}\label{eq:6f}
\forall \zeta \in \R, \ \  E(\zeta) \notin \Sigma_{L^2[\zeta],\ess} \big( \PP_\delta[\zeta] \big).
\end{equation}
Moreover, there exist $\mub > 0$ and $\delta_0 > 0$ such that if
\begin{equation}
\delta \in(0,\delta_0), \ \ \zeta \in [0,2\pi], \ \  |\zeta - 2\pi/3| \geq \mub \delta, \ \ |\zeta-4\pi/3| \geq \mub \delta,
\end{equation}
then the operator $\PP_\delta[\zeta]$ has no spectrum in $[E(\zeta)-\delta,E(\zeta)+\delta]$. 
\end{lem}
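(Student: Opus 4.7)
The plan is to define $E(\zeta)$ as the midpoint of the $j_\star$-th gap of $P_0[\zeta]$:
\[
E(\zeta) \de \tfrac{1}{2}\Bigl(\max_{\tau\in\R/(2\pi\Z)}\lambda_{0,j_\star}(\zeta k+\tau k') + \min_{\tau\in\R/(2\pi\Z)}\lambda_{0,j_\star+1}(\zeta k+\tau k')\Bigr).
\]
Continuity of the dispersion surfaces and compactness of the $\tau$-circle make $E(\zeta)$ continuous in $\zeta$. At $\zeta=\zeta_\star^J$ ($J=A,B$), the no-fold hypothesis $\operatorname{(H2)}$ combined with the conical structure of $\operatorname{(H1)}$ forces both extrema to equal $E_\star$, giving $E(\zeta_\star^A)=E(\zeta_\star^B)=E_\star$; $\operatorname{(H4)}$ makes the max strictly less than the min for $\zeta\notin\{\zeta_\star^A,\zeta_\star^B\}\bmod 2\pi\Z$, so $E(\zeta)$ lies in the open $L^2[\zeta]$-gap of $P_0[\zeta]$.

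To check \eqref{eq:6f}, I would use that $\Sigma_{L^2[\zeta],\ess}(\PP_\delta[\zeta])$ is contained in the union over $\tau\in\R$ of $\Sigma_{L^2_\xi}(P_{\pm\delta}(\zeta k+\tau k'))$ by the Floquet--Bloch argument behind \eqref{eq:2z} together with inversion symmetry. For $\zeta$ at distance $\geq\eta>0$ from $\{\zeta_\star^A,\zeta_\star^B\}$, $\operatorname{(H4)}$ and a compactness argument in $\tau$ yield a uniform $P_0[\zeta]$-gap of width $d_\eta>0$ around $E(\zeta)$, which persists under the $O(\delta)$-perturbation $\delta W$. For $\zeta$ near $\zeta_\star^J$, I combine Lemma~\ref{lem:1x} (applied to fibers $|\xi-\xi_\star^J|\geq\delta^{1/3}$) with Lemma~\ref{lem:1g} (applied to fibers $|\xi-\xi_\star^J|\leq\delta^{1/3}$): each fiber $P_\delta(\zeta k+\tau k')$ has no spectrum in $\Dd(E_\star,\tfrac{1}{2}\var_F\delta)$, hence neither does $\Sigma_\ess(\PP_\delta[\zeta])$. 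Since $E(\zeta)=E_\star+O((\zeta-\zeta_\star^J)^2)$ (the linear contributions to the sup and inf cancel by the symmetry of the Dirac cone about $\xi_\star^J$), $E(\zeta)$ lies inside this disk for small $\delta$. A compactness argument on $\R/(2\pi\Z)$ then delivers a uniform $\delta_0$.

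For the spectrum-free interval, fix $\mub>2/(\nu_F|\ell|)$ and a large constant $\tmu>\mub$. In the near-Dirac regime $\mub\delta\leq|\zeta-\zeta_\star^J|\leq\tmu\delta$, set $\mu=(\zeta-\zeta_\star^J)/\delta\in[\mub,\tmu]$: Corollary~\ref{cor:2} (with any $\tvar\in(\var_N,\var_F)$) identifies the eigenvalues of $\PP_\delta[\zeta]$ near $E_\star$ as $\{E_\star+\delta\var_j^\mu+O(\delta^2)\}_{j\in[-N,N]}$, and Lemma~\ref{lem:1a} gives $|\var_j^\mu|\geq|\mu|\nu_F|\ell|\geq 2$; Theorem~\ref{thm:1} places the essential-spectrum edges at $E_\star\pm\delta\sqrt{\var_F^2+\mu^2\nu_F^2|\ell|^2}+O(\delta^2)$, at distance $>2\delta$ from $E_\star$. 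Combined with $E(\zeta)=E_\star+O(\delta^2)$, neither type of spectrum meets $[E(\zeta)-\delta,E(\zeta)+\delta]$. For $|\zeta-\zeta_\star^J|\geq\eta>0$ fixed, $\operatorname{(H4)}$ handles the essential spectrum directly; point spectrum in this regime would be a continuation of the eigenvalue curves of Corollary~\ref{cor:2}, which by the previous step have already left the interval.

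The hard part will be the intermediate regime $\tmu\delta\leq|\zeta-\zeta_\star^J|\leq\eta$ in which $|\mu|$ grows unboundedly as $\delta\to 0$, so Corollary~\ref{cor:2} does not directly apply and $\operatorname{(H4)}$ alone is not quantitative in $\delta$. My plan there is a continuity/tracking argument: the $2N+1$ eigenvalue branches produced on $|\mu|\leq\tmu$ are smooth and simple functions of $\zeta$ (by analytic perturbation of a selfadjoint family), cross $E_\star$ with slopes bounded below by $\nu_F|\ell|$, and their analytic continuation to larger $|\zeta-\zeta_\star^J|$ must stay inside the essential gap of $\PP_\delta[\zeta]$; simplicity together with the lower slope bound prevents them from turning back into $[E(\zeta)-\delta,E(\zeta)+\delta]$ once they have exited at $|\zeta-\zeta_\star^J|=\mub\delta$.
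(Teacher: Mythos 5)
Your construction of $E(\zeta)$ and your verification of \eqref{eq:6f} are essentially sound: the midpoint of the $j_\star$-th gap works just as well as the paper's choice $E(\zeta)=2a\,r(\zeta)+\sup_\tau\lambda_{0,j_\star}(\zeta k+\tau k')$, and your observation that the linear terms in $\sup_\tau\lambda_{0,j_\star}$ and $\inf_\tau\lambda_{0,j_\star+1}$ cancel near $\zeta_\star^J$ is correct. The genuine gap is exactly where you locate it, the intermediate regime $\tmu\delta\le|\zeta-\zeta_\star^J|\le\eta$, and the tracking argument you propose there does not close it. The slope bound $\nu_F|\ell|$ on the eigenvalue branches is only available where Corollary \ref{cor:2} and Lemma \ref{lem:1a} apply, i.e. for $|\zeta-\zeta_\star^J|=O(\delta)$; beyond that range nothing prevents an analytic eigenvalue branch from turning around, and simplicity is no obstruction to non-monotonicity. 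Worse, eigenvalues of $\PP_\delta[\zeta]$ in the gap need not be continuations of the $2N+1$ branches at all: as $\zeta$ varies, new eigenvalues can detach from the edges of the essential spectrum, and neither your intermediate-regime argument nor your treatment of the far regime ($|\zeta-\zeta_\star^J|\ge\eta$, where you again invoke ``continuation'') rules these out.

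The missing idea, which is the heart of the paper's proof, is a \emph{quantitative} lower bound on the unperturbed gap: by a compactness argument exploiting the conical structure at the Dirac points, there is $a>0$ with $\inf_{\tau,\tau'}\big(\lambda_{0,j_\star+1}(\zeta k+\tau'k')-\lambda_{0,j_\star}(\zeta k+\tau k')\big)\ge 4a\,r(\zeta)$, where $r(\zeta)=\dist\big(\zeta,\{2\pi/3,4\pi/3\}\big)$. For $\lambda\in[E(\zeta)-\delta,E(\zeta)+\delta]$ this gives $\big\|(P_0[\zeta]-\lambda)^{-1}\big\|_{L^2[\zeta]}\le (a\,r(\zeta))^{-1}$, and the factorization
\begin{equation}
\PP_\delta[\zeta]-\lambda=(P_0[\zeta]-\lambda)\cdot\Big(\Id+\delta\,(P_0[\zeta]-\lambda)^{-1}\kappa_\delta W\Big)
\end{equation}
is then inverted by a Neumann series as soon as $r(\zeta)\ge\mub\delta$ with $\mub$ large, since the second factor differs from $\Id$ by an operator of norm $O\big(\delta/(a\,r(\zeta))\big)\le 1/2$. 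This single estimate excludes \emph{all} spectrum, essential and discrete, uniformly over the near-Dirac, intermediate and far regimes, and makes the eigenvalue tracking (as well as the appeal to Corollary \ref{cor:2} and Theorem \ref{thm:1} for the spectrum-free interval) unnecessary. You should replace your regime-by-regime analysis by this linear gap bound plus the Neumann series; Theorem \ref{thm:1} is then only needed, as in the paper, to verify \eqref{eq:6f} in the residual region $r(\zeta)<\mub\delta$.
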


\begin{proof} 1.  Set $r(\zeta) = \dist\big(\zeta,\{2\pi/3, 4\pi/3\}\big)$. We first show that there exists $a > 0$ such that for $\zeta \in [0,2\pi]$,
\begin{equation}\label{eq:6b}
\inf_{\tau, \tau' \in \R} \big(\lambda_{0,j_\star+1}(\zeta k + \tau' k') - \lambda_{0,j_\star}(\zeta k + \tau k') \big)  \geq 4a \cdot r(\zeta).
\end{equation}
Otherwise, we can find $\zeta_n \in [0,2\pi], \tau_n, \tau'_n \in \R$ such that 
\begin{equation}\label{eq:6a}
\lambda_{0,j_\star+1}(\zeta_n k + \tau'_n k') - \lambda_{0,j_\star}(\zeta_n k + \tau_n k') \big)  \leq \dfrac{r(\zeta_n)}{n} = \dfrac{1}{n} \cdot \dist\big(\zeta_n, \{2\pi/3,4\pi/3\} \big).
\end{equation}
Using periodicity of the dispersion curves, we can assume that $\tau_n, \tau'_n$ both live in $[0,2\pi]$. In particular we can pass to converging subsequences: there exist $\zeta_\infty, \tau_\infty$ and $\tau'_\infty$ with
\begin{equation}\label{eq:3d}
\lambda_{0,j_\star}(\zeta_\infty k + \tau_\infty k') = 
\lambda_{0,j_\star+1}(\zeta_\infty k + \tau'_\infty k').
\end{equation}
Because of (H4), $\zeta_\infty \in \{2\pi/3,4\pi/3\} = \{\zeta_\star^A,\zeta_\star^B\} \mod 2\pi$. In the proof of Lemma \ref{lem:1x}, we showed that
\begin{equation}
\zeta_\star \in \{2\pi/3,4\pi/3\}, \ \ \tau, \ \tau' \in \R \ \ \Rightarrow \ \ 
\lambda_{0,j_\star}(\zeta_\star k + \tau k') \leq E_\star, \ \  \lambda_{0,j_\star+1}(\zeta_\infty k + \tau' k') \geq E_\star.
\end{equation}
Thanks to \eqref{eq:3d}, we deduce that $\lambda_{0,j_\star+1}(\zeta_\infty k + \tau'_\infty k') = E_\star = \lambda_{0,j_\star}(\zeta_\infty k + \tau_\infty k')$. The no-fold condition implies that $\zeta_\infty k + \tau_\infty k' = \zeta_\infty k + \tau'_\infty k' = \xi_\star$, where $\xi_\star \in \{\xi_\star^A,\xi_\star^B\}$ is a Dirac point momentum. In particular,
$\zeta_n k + \tau'_n k'$ and $\zeta_n k + \tau_n k'$ both converge to $\xi_\star$. We deduce that for $n$ sufficiently large,
\begin{equation}
\lambda_{0,j_\star+1}(\zeta_n k + \tau'_n k') - \lambda_{0,j_\star}(\zeta_n k + \tau_n k') \geq \nu_F \big| \zeta_n k + \tau'_n k'-\xi_\star \big| \geq \nu_F |k'| \cdot r(\zeta_n),
\end{equation}
because $\lr{\xi_\star,v} \in \{2\pi/3,4\pi/3\}$.
This contradicts \eqref{eq:6a}. We deduce that \eqref{eq:6b} holds for some $a>0$. Without loss of generalities, we assume below that $a < \nu_F |\ell|$.

2. Define 
\begin{equation}
E(\zeta) \de 2a \cdot r(\zeta) + \sup_{\tau \in \R} \lambda_{0,j_\star}(\zeta k+\tau k').
\end{equation}
This is a continuous, $2\pi$-periodic function.  Observe that  for every $\xi \in \zeta k + \R k'$,
\begin{equation}\label{eq:6c}
\lambda_{0,j_\star}(\xi) \leq E(\zeta) - 2a \cdot r(\zeta) \leq E(\zeta) + 2a \cdot r(\zeta) \leq \lambda_{0,j_\star+1}(\xi).
\end{equation}
Assume that $a \cdot r(\zeta) \geq \delta$ and that $\lambda \in [E(\zeta)-\delta,E(\zeta)+\delta]$. Since the dispersion surfaces are labeled in increasing order, we deduce that
\begin{equation}
\xi \in \zeta k + \R k' \ \ \Rightarrow \ \ \dist\big( \Sigma_{L^2_\xi}\big( P_0(\xi) \big), \lambda \big) \geq a \cdot r(\zeta).
\end{equation}
The reconstruction formula \eqref{eq:3z} and the spectral theorem yield
\begin{equation}\label{eq:6d}
a \cdot r(\zeta) \geq \delta , \  \ \lambda \in [E(\zeta)-\delta,E(\zeta)+\delta] \ \ \Rightarrow \ \ 
\left\| \big( P_0[\zeta] - \lambda \big)^{-1} \right\|_{L^2[\zeta]} \leq \dfrac{1}{a \cdot r(\zeta)}.
\end{equation}

3. We now observe that under the assumptions of \eqref{eq:6d},
\begin{equation}\label{eq:6e}
\PP_\delta[\zeta] - \lambda  = (P_0[\zeta] - \lambda) \cdot \left( \Id + \delta \cdot \big(P_0[\zeta] - \lambda\big)^{-1} \cdot   \kappa_\delta W  \right).
\end{equation}
Because of \eqref{eq:6d} and since $\kappa, W$ are in $L^\infty$, there exist $\delta_0 > 0$ and $\mub > 0$ with
\begin{equation}
\delta \in (0,\delta_0), \ \ \zeta \in [0,2\pi], \ \ r(\zeta) \geq \mub \delta \ \ \Rightarrow \ \ 
\left\|\delta \cdot \big(P_0[\zeta] - \lambda\big)^{-1} \cdot   \kappa_\delta W \right\|_{L^2[\zeta]} \leq \dfrac{1}{2}.
\end{equation}
In particular, the second factor in the RHS of \eqref{eq:6e} is invertible via a Neumann series. We deduce that $\PP_\delta[\zeta] - \lambda$ is invertible. This implies that $\PP_\delta[\zeta]$ has no spectrum in $[E(\zeta)-\delta,E(\zeta)+\delta]$, as long as $r(\zeta) \geq \mub \delta$.

4. It remains to show that $E(\zeta)$ is not in the essential spectrum of $\PP_\delta[\zeta]$, independently of $\zeta$. Because of Step 3, this holds for every $\zeta$ such that $r(\zeta) \geq \mub \delta$. Fix $\zeta$ such that $r(\zeta) < \mub \delta$. Let $\xi_\star$ be a Dirac point closest to $\zeta k + \R k'$: the distance between $\xi_\star$ and the line $\zeta k + \R k'$ is $r(\zeta) |\ell|$. Because of \eqref{eq:6c},
\begin{equation}
\lambda_{0,j_\star}\big(\zeta k + \tau k' \big) + 2a \cdot r(\zeta) \leq E(\zeta) \leq \lambda_{0,j_\star+1}\big( \zeta k + \tau k' \big) - 2a \cdot r(\zeta).
\end{equation}
Since $\xi_\star$ is a Dirac point, we get
\begin{equation}
E_\star - \big(\nu_F|\ell| - 2a\big) \cdot r(\zeta)  + O\left(r(\zeta)^2\right) \leq E(\zeta) \leq E_\star + \big(\nu_F|\ell| + 2a\big) \cdot r(\zeta)  + O\left(r(\zeta)^2\right).
\end{equation}
Hence, for $\delta$ sufficiently small,
\begin{equation}
E(\zeta) \in \big[ E_\star - \big(\nu_F|\ell|- \big) \cdot r(\zeta), E_\star + \big(\nu_F|\ell|- a\big) \cdot r(\zeta) \big].
\end{equation}
Fix $\te \in (0,1)$ such that $\nu_F|\ell|- a = \te \nu_F |\ell|$; $\te$ exists because $a \in (0,\nu_F |\ell|)$. Then 
\begin{equation}
E(\zeta) \in \Dd \left(E_\star, \te \sqrt{\var_F^2 \delta^2 +  r(\zeta)^2 \cdot \nu_F^2 |\ell|^2} \right).
\end{equation}
Apply Theorem \ref{thm:1} with $\mu_\sharp > \mu_\flat$: for $\delta$ sufficiently small and $|\zeta - \zeta_\star| < \mu_\sharp \delta$,  
$E(\zeta) \notin \Sigma_{L^2[\zeta],\ess}\big( P_{\pm\delta}(\zeta) \big)$. This implies that $E(\zeta)$ is not in the essential spectrum of $\PP_\delta[\zeta]$ as long as $r(\zeta) < \mu_\flat \delta$, which concludes the proof.\end{proof}

Lemma \ref{lem:1t} allows to define the spectral flow of the family $\zeta \mapsto \PP_\delta[\zeta]$ as $\zeta$ runs from $0$ to $2\pi$: it is the signed number of eigenvalues of $\PP_\delta[\zeta]$ that cross the curve $\zeta \mapsto E(\zeta)$ (with downwards crossings counted positively). Because $\PP_\delta[\zeta]$ depends periodically on $\zeta$, the spectral flow of $\PP_\delta$ is a topological invariant. We refer to \cite[\S4]{Wa} for an introduction to spectral flow. We are now ready to prove Corollary \ref{cor:4}.

\begin{proof}[Proof of Corollary \ref{cor:4}] We split $[0,2\pi]$ in three parts: $[0,2\pi] = I_A \cup I_B \cup I_0$ with
\begin{equation}
I_J \de \left[\zeta_\star^J - \mub \delta , \zeta_\star^J+\mub \delta \right], \ J = A,B; \ \ I_0 \de [0,2\pi] \setminus \big( I_A \cup I_B \big),
\end{equation}
where we identified $\zeta_\star^J$ with their reduction modulo $2\pi \Z$.
The spectral flow of $\zeta \in I_0 \mapsto \PP_\delta[\zeta]$ through $E_\star$ vanishes because of Lemma \ref{lem:1t}. 

In order to compute the spectral flow of $\zeta \in I_J \mapsto \PP_\delta[\zeta]$ through $E_\star$, we fix $\tmu > \mub, \ \tvar > \var_N$ and we apply Corollary \ref{cor:2}. This result allows to precisely count the number $N_\pm^J$ of eigenvalues of $\PP_\delta[\zeta_\star^J \pm \mub \delta]$ in the set 
\begin{equation}
\EE \de \left[ E_\star - \delta \sqrt{\tvar^2 + \mub^2 \cdot \nu_F^2 | \ell|^2}, E_\star \right]
\end{equation}
in terms of the number of eigenvalues $2N+1$ of the Dirac operator $\Di(\mu)$. Thanks to Lemma \ref{lem:1a}, we find:
\begin{equation}
N_-^J = N+1, \ \ N_+^J = N \text{ if } \var_\star^J > 0; \ \ \ \ N_-^J = N, \ \ N_+^J = N+1 \text{ if } \var_\star^J < 0.
\end{equation}
In particular, the spectral flow of $\zeta \in I_J \mapsto \PP_\delta[\zeta]$ through $E_\star$ is $N_+^J - N_-^J = -\sgn(\var_\star^J)$ -- see e.g. \cite[\S4.1]{Wa}. Since $\var_\star^A$ and $\var_\star^B$ have opposite sign, the spectral flow of the whole family $\zeta \in [0,2\pi] \mapsto \PP_\delta[\zeta]$ vanishes.\end{proof}

\subsection{Magnetic perturbations of honeycomb Schr\"odinger operators}\label{sec:7.2} Let $V$ be a honeycomb potential and $\Aa \in C^\infty\big(\R^2,\R^2\big)$ be $\Lambda$-periodic, odd and real-valued. Set 
\begin{equation}
\tPP_\delta \de -\Delta + V + \delta \cdot \kappa_\delta \cdot \tW, \ \ \ \ \tW \de \Aa \cdot D_x + D_x \cdot \Aa.
\end{equation}
This operator is a non-local perturbation of $P_0 = -\Delta+V$, where $\delta \cdot \Aa$ plays the role of a perturbing magnetic field. We introduce similarly to $\PP_\delta[\zeta]$ the operator $\tPP_\delta[\zeta]$ formally equal to $\tPP_\delta$ but acting on $L^2[\zeta]$. 

We first state a simple analog of Lemma \ref{lem:1h} regarding the coefficient $\var_\star$:

\begin{lem}\label{lem:1p} Let $(\xi_\star,E_\star)$ be a Dirac point of $P_0$ with Dirac eigenbasis $(\phi_1,\phi_2)$ -- see Definition \ref{def:1}. Then $\blr{\phi_1,\tW\phi_2 }_{L^2_{\xi_\star}} = \blr{\phi_2,\tW\phi_1}_{L^2_{\xi_\star}} = 0$. Furthermore,
\begin{equation}
\te_\star \de \blr{\phi_1,\tW\phi_1}_{L^2_{\xi_\star}} = -\blr{\phi_2,\tW\phi_2}_{L^2_{\xi_\star}}.
\end{equation}
\end{lem}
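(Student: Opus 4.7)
The plan is to follow exactly the template of Lemma~\ref{lem:1h}, but with the roles of parity and time-reversal exchanged: the first-order perturbation $\tW$ is \emph{$\II$-even} and \emph{$\CCC$-odd}, whereas the multiplication operator $W$ was $\II$-odd and $\CCC$-even. All the algebra is driven by these two intertwining relations.

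The first step is to establish
\[
\II\,\tW\,\II = \tW, \qquad \CCC\,\tW\,\CCC = -\tW,
\]
together with the fact that $\tW$ is self-adjoint. Rewriting $\tW = 2\,\Aa\cdot D_x + (\nabla\cdot\Aa)/i$, the first relation follows from $\II\Aa\II = -\Aa$ and $\II D_x\II = -D_x$ (the two sign flips cancel in the first term) combined with the fact that $\nabla\cdot\Aa$ is even whenever $\Aa$ is odd. The second relation follows because $\Aa$ is real-valued, $D_x$ is purely imaginary, and $\nabla\cdot\Aa$ is real. Self-adjointness is immediate from $\tW = \Aa\cdot D_x + (\Aa\cdot D_x)^*$, which in particular will force $\te_\star\in\R$.

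With these symmetry relations in hand, the two identities to prove reduce to short manipulations based on $\phi_2 = \CCC\II\phi_1$, equivalently $\CCC\phi_1 = \II\phi_2$. For the off-diagonal statement I would conjugate $\blr{\phi_1,\tW\phi_2}$ via $\CCC$, use $\tW\CCC = -\CCC\tW$, then move $\II$ across with $\tW\II = \II\tW$; this transforms the bracket into $-\overline{\blr{\phi_2,\tW\phi_1}}$, and self-adjointness of $\tW$ collapses that into $-\blr{\phi_1,\tW\phi_2}$, forcing vanishing. The identity $\blr{\phi_2,\tW\phi_1} = 0$ then follows from Hermitian symmetry. For the diagonal claim, substituting $\phi_2 = \CCC\II\phi_1$ and pulling $\CCC,\II$ outside using the same two relations yields $\blr{\phi_2,\tW\phi_2} = -\overline{\blr{\phi_1,\tW\phi_1}} = -\te_\star$.

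The only minor book-keeping is that $\II$ and $\CCC$ each map $L^2_{\xi_\star}$ onto $L^2_{-\xi_\star}$, so the inner products should be reinterpreted accordingly; since every bracket is ultimately an integral over the fundamental cell $\Ll$, this is purely cosmetic. The main technical point is the verification of the two intertwining relations for $\tW$, which is a direct computation using the decomposition $\tW = 2\,\Aa\cdot D_x + (\nabla\cdot\Aa)/i$; I expect no deeper obstacle than careful tracking of how $D_x$ interacts with parity and conjugation.
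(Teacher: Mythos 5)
Your proposal is correct and follows essentially the same route as the paper's proof: both rest on the two intertwining relations $\II\tW\II=\tW$ and $\CCC\tW\CCC=-\tW$, the selfadjointness of $\tW$, and the relation $\II\phi_j=\ove{\phi_{3-j}}$ coming from $\phi_2=\CCC\II\phi_1$. The only difference is presentational — you isolate the symmetry relations up front via the decomposition $\tW=2\,\Aa\cdot D_x+(\nabla\cdot\Aa)/i$, whereas the paper invokes them inline — and your remark about $\II,\CCC$ exchanging $L^2_{\pm\xi_\star}$ being cosmetic is the same observation the paper makes.
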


See Appendix \ref{app:2} or \cite[Proposition 5.1]{LWZ} for the proof. Below we state Corollary \ref{cor:6}, which condenses the analog of Theorem \ref{thm:2} and Corollary \ref{cor:2} for the magnetic operator $\tPP_\delta[\zeta]$. We shall assume
\begin{enumerate}
\item[(H3')] The non-degeneracy condition $\te_\star \neq 0$ holds.
\end{enumerate}
We need the operator
\begin{equation}
\tDi(\mu) \de \matrice{0 & \nu_\star k' \\ \ove{\nu_\star k'} & 0} D_t + \mu \matrice{0 & \nu_\star \ell \\ \ove{\nu_\star \ell} & 0} + \te_\star \matrice{1 & 0 \\ 0 & -1} \kappa.
\end{equation}
We denote by $\{\te_j^\mu\}_{j=-n}^n$ its eigenvalues. They are all simple -- see Lemma \ref{lem:1a} -- and lie in $(-\te_F,\te_F)$, where $\te_F = |\te_\star|$.

\begin{cor}\label{cor:6} Assume that $\operatorname{(H1)}$, $\operatorname{(H2)}$ and $\operatorname{(H3')}$ hold and fix $\te_\sharp \in (\te_N,\te_\star)$ and $\tmu > 0$. There exists $\delta_0 > 0$ such that for 
\begin{equation}
\delta \in (0,\delta_0), \ \  \mu \in (- \tmu,\tmu), \ \ \zeta = \zeta_\star + \delta \mu,
\end{equation}
the operator $\Pp_\delta[\zeta]$ has exactly $2n+1$ eigenvalues $\{\lambda_{\delta,j}^\zeta\}_{j \in [-n,n]}$ in 
\begin{equation}
\left[E_\star - \delta \sqrt{\te_\sharp^2 +  \mu^2\cdot \nu_F^2 | \ell|^2}, \ E_\star + \delta\sqrt{\te_\sharp^2 + \mu^2\cdot \nu_F^2 | \ell|^2 } \ \right].
\end{equation}

\noindent These eigenvalues are simple. Furthermore, for each $j \in [-N,N]$, the eigenpairs $(\lambda_{\delta,j}^\zeta,v_{\delta,j}^\zeta)$ admit full expansions in powers of $\delta$:
\begin{equations}
\lambda_{\delta,j}^\zeta = E_\star +\te_j^\mu \cdot \delta + b_2^\mu \cdot  \delta^2   + \dots + b_M^\mu \cdot \delta^M + O\left( \delta^{M+1} \right), 
\\
v_{\delta,j}^\zeta(x) = e^{i(\zeta-\zeta_\star) \blr{\ell,x}} \Big( g_0^\mu\big(x,\delta \blr{k',x} \hspace{-.5mm}\big) + \dots + \delta^M \cdot g_M\big(x,\delta \blr{k',x}\hspace{-.5mm}\big) \Big) + o_{H^k}\left( \delta^M \right).
\end{equations}
In the above expansions:
\begin{itemize}
\item $M$ and $k$ are any integer; $H^k$ is the $k$-th order Sobolev space.
\item $\te_j^\mu$ is the $j$-th eigenvalue of $\tDi(\mu)$.
\item The terms $b_m^\mu \in \R$, $g_m^\mu \in X$ are recursively constructed via multiscale analysis.
\item The leading order term $g_0^\mu$ satisfies
\begin{equation}
g_0^\mu(x,t) = \beta_1^\mu(t) \phi_1(x) + \beta_2^\mu(t) \phi_2(x), \ \ \ \ \left(\tDi(\mu) - \te_j^\mu\right) \matrice{\beta_1^\mu \\ \beta_2^\mu} = 0.
\end{equation}
\end{itemize}
\end{cor}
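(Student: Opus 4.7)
The plan is to mirror the derivation of Corollary \ref{cor:2} from Theorem \ref{thm:2}, replacing the multiplicative perturbation $W$ with the first-order operator $\tW = \Aa \cdot D_x + D_x \cdot \Aa$ throughout. The first major step is to establish a magnetic analog of Theorem \ref{thm:2}: under $\operatorname{(H1)}$, $\operatorname{(H2)}$ and $\operatorname{(H3')}$, for $\delta$ small, $\mu \in (-\tmu,\tmu)$, and $z$ at distance at least $\epsilon$ from the spectrum of $\tDi(\mu)$ inside the appropriate disk, the resolvent $(\tPP_\delta[\zeta]-\lambda)^{-1}$ at $\zeta = \zeta_\star+\delta\mu$, $\lambda = E_\star+\delta z$ equals the conjugated Dirac-resolvent sandwich involving $(\tDi(\mu)-z)^{-1}$ modulo $\OO_{L^2[\zeta]}(\delta^{-1/3})$. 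Once this is in hand, the proof of Corollary \ref{cor:2} applies verbatim: contour integration of the resolvent about $E_\star + \delta \te_j^\mu$ produces rank-one residues, giving precisely $2n+1$ simple eigenvalues, and the stated expansions follow from the selfadjoint variational principle of \cite[Lemma 3.1]{DFW} combined with the multiscale construction of arbitrarily accurate quasimodes.

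To establish this magnetic resolvent estimate, I would retrace Sections \ref{sec:5}--\ref{sec:8} in order. Lemma \ref{lem:1x} is unchanged since it concerns only $P_0(\xi)$. In the Grushin-type fiberwise analysis of Lemma \ref{lem:1g}, the $2 \times 2$ matrix $M_\delta(\xi)$ is replaced by its magnetic analog with $\var_\star$ swapped for $\te_\star$, thanks to Lemma \ref{lem:1p} which supplies exactly the block-diagonal structure of $\tW$ on the Dirac eigenbasis. Integration along the dual edge proceeds as in Section \ref{sec:6}, producing a magnetic analog of Theorem \ref{thm:1}. The parametrix construction of Section \ref{sec:6.0} goes through with a kicker $\KKK_\delta(\zeta,\lambda)$ that also picks up derivative-of-$\kappa_\delta$ terms from $[\tW, \chi_{\pm,\delta}]$, each of which carries the factor $\delta k' (D_t \kappa)_\delta$ needed to preserve the order counting. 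The weak-convergence computation of Lemma \ref{lem:1n} then yields the effective homogenized potential matching $\tDi(\mu)^2 - z^2$ via the algebraic identity \eqref{eq:2x} applied to $\tDi(\mu)$.

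The main obstacle is the first-order differential nature of $\tW$, which demands care in two places. First, in the Neumann series producing $(P_0(\xi) - \lambda + \delta \tW)^{-1}$ from $(P_0(\xi)-\lambda)^{-1}$, one must exploit elliptic regularity: $(P_0(\xi)-\lambda)^{-1}$ maps $L^2_\xi \to H^2_\xi$ with norm $\OO(\delta^{-1/3})$, and $\tW : H^2_\xi \to L^2_\xi$ is bounded, so $\delta \tW (P_0(\xi)-\lambda)^{-1} = \OO_{L^2_\xi}(\delta^{2/3})$ and the perturbation argument still closes. Second, in the cyclicity computation of Section \ref{sec:8}, the operator analog of the factor $C$ now involves $\tW \phi_j$, which contains first derivatives of $\phi_j$; smoothness of $\phi_j$ keeps everything bounded, but the quantitative estimate $\| \UUU^\delta - \UUU^0\|_{H^1 \to H^{-1}} = \OO(\delta)$ of Lemma \ref{lem:1n} must be redone with both $\Aa$-dependent and $(D_x \Aa)$-dependent contributions in the effective potential.

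Finally, the multiscale procedure of Section \ref{sec:3.1} requires only cosmetic changes: the solvability condition at order $\delta$ now reads that $[\beta_1,\beta_2]^\top$ solves $(\tDi(\mu) - \te)[\beta_1,\beta_2]^\top = 0$, using Lemmas \ref{lem:1i} and \ref{lem:1p} to evaluate the matrix entries $\blr{\phi_j, \tW \phi_k}_{L^2_{\xi_\star}}$; the higher-order terms $g_m^\mu$, $b_m^\mu$ are then built recursively exactly as in \cite[\S6]{FLTW3}. Combined with the resolvent estimate and the contour-integration argument sketched above, this delivers all three conclusions of Corollary \ref{cor:6}: the exact count of $2n+1$ eigenvalues, their simplicity, and the full two-scale expansions of the eigenpairs.
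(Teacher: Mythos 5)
Your proposal is correct and follows exactly the route the paper intends: the paper's entire justification of Corollary \ref{cor:6} is the remark that ``the proof is identical to that of Theorem \ref{thm:2} and Corollary \ref{cor:2},'' with Lemma \ref{lem:1p} supplying the block structure of $\tW$ on the Dirac eigenbasis in place of Lemma \ref{lem:1h}. Your additional care about the first-order nature of $\tW$ (elliptic regularity in the Neumann series, the commutator terms in the parametrix, and the revised homogenization estimate) correctly identifies the only points where ``identical'' requires actual checking, so this is a faithful and if anything more detailed version of the paper's argument.
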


The proof is identical to that of Theorem \ref{thm:2} and Corollary \ref{cor:2}; we do not reproduce it here. Let $\te_\star^J$ be the coefficient $\te_\star$ associated to the Dirac point $(\xi_\star^J,E_\star)$. The main difference between $\PP_\delta[\zeta]$ and $\tPP_\delta[\zeta]$ lies in the next identity -- see also \cite[\S7.1]{LWZ}.

\begin{lem} The identity $\te_\star^A=\te_\star^B$ holds. 
\end{lem}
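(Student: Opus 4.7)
The plan is to mirror the argument of Lemma \ref{lem:1o}, but to exploit the crucial difference between $W$ and $\tW$ under spatial inversion. Whereas $W$ is odd and satisfies $\II W \II^{-1} = -W$, the operator $\tW = \Aa \cdot D_x + D_x \cdot \Aa$ is \emph{even} under $\II$: the oddness of $\Aa$ cancels against the sign picked up by $D_x$ when conjugated with $\II$. This sign flip will convert the $+\te_\star^A = \te_\star^B$ identity out of what was a $\var_\star^A = -\var_\star^B$ identity.

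First, I would establish the symmetry $\II \tW \II^{-1} = \tW$. The two ingredients are $\II \Aa \II^{-1} = -\Aa$ (as a multiplication operator, because $\Aa$ is odd) and $\II D_x \II^{-1} = -D_x$ (because $\nabla_x [u(-x)] = -(\nabla u)(-x)$). Consequently $\II (\Aa \cdot D_x) \II^{-1} = (-\Aa)(-D_x) = \Aa \cdot D_x$, and similarly for $D_x \cdot \Aa$, so $\II \tW = \tW \II$.

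Next I would transplant the Dirac eigenbasis. As in Step 1 of the proof of Lemma \ref{lem:1o}, $(\II \phi_1^A, \II \phi_2^A)$ is a Dirac eigenbasis for $(\xi_\star^B, E_\star)$. Since $\te_\star^B$ is independent of the choice of such a basis,
\begin{equation}
\te_\star^B = \big\langle \II \phi_1^A, \tW \, \II \phi_1^A \big\rangle_{L^2_{\xi_\star^B}} = \big\langle \II \phi_1^A, \II \, \tW \phi_1^A \big\rangle_{L^2_{\xi_\star^B}},
\end{equation}
where the second equality uses $\II \tW = \tW \II$. The remaining step is to recognize that $\II$ intertwines the Hermitian pairings on $L^2_{\xi_\star^A}$ and $L^2_{\xi_\star^B}$. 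Unfolding the pairing as an integral over $\Ll$ and changing variables $y = -x$ turns it into an integral over $-\Ll$; since the integrand $\overline{\phi_1^A(y)} (\tW \phi_1^A)(y)$ is $\Lambda$-periodic (both $\phi_1^A$ and $\tW \phi_1^A$ lie in $L^2_{\xi_\star^A}$), the integral over $-\Ll$ equals the integral over $\Ll$, which yields $\te_\star^A$.

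The only real subtlety is the first step, namely the careful bookkeeping of signs when conjugating $D_x$ by $\II$; everything else follows formally from the proof of Lemma \ref{lem:1o} with the sign flipped. This is also the conceptual point of the lemma: the magnetic perturbation $\tW$ preserves spatial inversion while breaking time reversal, so the coefficients at the two Dirac points add rather than cancel, and this is exactly what produces a non-vanishing spectral flow of $\pm 2$ in Corollary \ref{cor:5}.
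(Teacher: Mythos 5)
Your proof is correct and follows essentially the same route as the paper's: transplant the Dirac eigenbasis via $\II$ (Step 1 of Lemma \ref{lem:1o}), use the basis-independence of $\te_\star^B$, and exploit that $\tW$ commutes with $\II$ (the sign from odd $\Aa$ cancelling against the sign from $D_x$) together with $\II$ being an isometry from $L^2_{\xi_\star^A}$ to $L^2_{\xi_\star^B}$. You simply spell out the sign bookkeeping and the change-of-variables isometry that the paper leaves implicit.
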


\begin{proof} Because of Step 1 in the proof of Lemma \ref{lem:1o}, $(\II \phi_1^A,\II \phi_2^A)$ is a Dirac eigenbasis for $(\xi_\star^B,E_\star)$. Since $\te_\star^B$ does not depend on the choice of Dirac eigenbasis and $\tW$ commutes with $\II$,
\begin{equation}
\te_\star^B = \bblr{ \II \phi_1^A, \tW \II \phi_1^A }_{L^2_{\xi_\star^B}} = \bblr{\phi_1^A, \tW \phi_1^A}_{L^2_{\xi_\star^A}} = \te_\star^A.
\end{equation}
This completes the proof. \end{proof}

Corollary \ref{cor:5} has the same proof as Corollary \ref{cor:4}. We find that the spectral flow of $\tPP_\delta$ in the $j_\star$-th gap as $\zeta$ runs from $0$ to $2\pi$ is equal to 
\begin{equation}
-\sgn\big(\te_\star^A\big) - \sgn\big(\te_\star^B\big) = -2 \cdot \sgn(\te_\star).
\end{equation}


\appendix

\section{}

\subsection{Proof of some identities}\label{app:2} We prove the identities relating the Dirac eigenbasis and $W$. Similar proofs arise in \cite{FLTW2,FLTW3,LWZ}.

\begin{proof}[Proof of Lemma \ref{lem:1i}] Below we use $\blr{\cdot,\cdot}$ instead of $\lr{\cdot,\cdot}_{L^2_{\xi_\star}}$ to simplify notations.

1. We first analyze the (2-vector) $\blr{\phi_1,D_x\phi_1}$. We observe that $\blr{\phi_1,D_x\phi_1} \in \R^2$ because $D_x$ is selfadjoint. Since $\phi_1 \in L^2_{\xi_\star,\tau}$, 
\begin{equation}
\blr{\phi_1, D_x \phi_1} = \blr{\RR \phi_1, \RR D_x \phi_1} = \blr{\tau \phi_1, \RR D_x \RR^{-1} \cdot \tau \phi_1} = \blr{\phi_1, \big( \RR D_x \RR^{-1} \big) \cdot \phi_1}.
\end{equation}
As $\RR D_x \RR^{-1} = R^{-1}D_x$, we conclude that $\blr{\phi_1,D_x\phi_1}$ is either $0$ or an eigenvector of $R$. Since the latter cannot be real, we conclude $\blr{\phi_1,D_x\phi_1} = 0$. The same argument applies to $\blr{\phi_2,D_x\phi_2}$.

2. We now analyze $\blr{\phi_1,D_x\phi_2}$. Since $\phi_1 \in L^2_{\xi_\star,\tau}$ and $\phi_2 \in L^2_{\xi_\star,\otau}$ 
\begin{equation}
\blr{\phi_1, D_x \phi_2} = \blr{\RR \phi_1, \RR D_x \phi_2} = \blr{\tau \phi_1, \RR D_x \RR^{-1} \cdot \otau \phi_2} = \otau^2 \blr{\phi_1, \big( \RR D_x \RR^{-1} \big) \cdot \phi_2}.
\end{equation}
As $\RR D_x \RR^{-1} = R^{-1}D_x$ and $\otau^2 = \tau$ we deduce 
$R\blr{\phi_1, D_x \phi_2} = \tau \blr{\phi_1, D_x \phi_2}$. 
This yields $\blr{\phi_1, D_x \phi_2} \in \ker_{\C^2}(R-\tau)$. This eigenspace is $\C \cdot [1,i]^\top$; thus there exists $\nu_\star \in \C$ with
\begin{equation}
2\blr{\phi_1, D_x \phi_2} = \nu_\star \cdot \matrice{1 \\ i}.
\end{equation}
If we identify the point $\eta = (\eta_1,\eta_2) \in \R^2$ with $\eta_1+i\eta_2 \in \C$, then 
\begin{equation}
2\blr{\phi_1, (\eta \cdot D_x) \phi_2} = 2\blr{\phi_1, (\eta_1 D_{x_1} + \eta_2 D_{x_2}) \phi_2} = \nu_\star \eta_1 + i \nu_\star \eta_2 = \nu_\star \eta.
\end{equation}
Above $\nu_\star \eta$ denotes the multiplication of $\nu_\star$ with $\eta = \eta_1+i\eta_2$. Taking the complex conjugate of this identity and observing that $\eta \cdot D_x$ is a selfadjoint operator, we get
\begin{equation}
2\blr{\phi_2, (\eta \cdot D_x) \phi_1} = \ove{\nu_\star \eta}.
\end{equation}

3. It remains to show that $|\nu_\star| = \nu_F$. Fix $\eta \in \R^2$ with $|\eta|=1$. Because of perturbation theory of eigenvalues, the operator $P_0(\xi_\star + t\eta)$ has precisely two eigenvalues near $E_\star$ when $t$ is sufficiently small -- see \cite[\S VII1.3, Theorem 1.8]{Ka}. Because $(\xi_\star, E_\star)$ is a Dirac point of $P_0$, they are
\begin{equation}\label{eq:2g}
E_\star \pm \nu_F t + O\left(t^2\right).
\end{equation}

Let $\xi = \xi_\star+t\eta$.
We want to construct approximate eigenvectors of $P_0(\xi)$. Let $a, b \in \C^2$, $\mu \in \R$ and $v \in H^2_{\xi_\star}$, with $v = O_{H^2_{\xi_\star}}(1)$ uniformly in $t$. Then
\begin{equations}\label{eq:5j}
e^{-it\blr{\eta,x}} \big(P_0  - E_\star + \mu t\big) e^{it\blr{\eta,x}} \cdot (a \phi_1 + b \phi_2 + tv)  \\
 = \big((D_x+t\eta)^2 + V - E_\star + \mu t\big)(a \phi_1 + b \phi_2 + tv)
\\
= t(P_0-E_\star)v + t (2\eta \cdot D_x + \mu) (a \phi_1 + b \phi_2) + O_{L^2_{\xi_\star}}\left(t^2\right). 
\end{equations}
We now construct $v$ such that
\begin{equation}\label{eq:5i}
(P_0-E_\star)v + (2\eta \cdot D_x + \mu) (a \phi_1 + b \phi_2) = 0.
\end{equation}
 This equation admits a solution if and only if $(2\eta \cdot D_x + \mu) (a \phi_1 + b \phi_2)$ is orthogonal to $\phi_1$ and $\phi_2$. This solvability condition is equivalent to
\begin{equation}\label{eq:2h}
\systeme{\blr{\phi_1,(2\eta \cdot D_x + \mu) (a \phi_1 + b \phi_2)}  = 0 
\\
\blr{\phi_2,(2\eta \cdot D_x + \mu) (a \phi_1 + b \phi_2)}  = 0} \ \Leftrightarrow \systeme{ \nu_\star \eta \cdot b + \mu a = 0  \\
\ove{\nu_\star \eta} \cdot b + \mu a = 0}.
\end{equation}
A non-trivial solution of \eqref{eq:2h} exists if and only if
\begin{equation}\label{eq:2i}
\Det \matrice{\mu & \nu_\star \eta \\ \ove{\nu_\star \eta} & \mu} = 0 \ \ \Leftrightarrow \ \ |\nu_\star \eta|^2 = |\nu_\star|^2 = \mu^2.
\end{equation}
Therefore, when $\mu = |\nu_\star|$, we can construct $(a,b) \neq (0,0)$ satisfying \eqref{eq:2h} for $\mu = \pm |\nu_\star|$. With this choice,  \eqref{eq:5i} admits a solution $v$. It follows from \eqref{eq:5j} that
\begin{equation}
\big(P_0(\xi)  - E_\star + |\nu_\star| t\big) \cdot e^{it\blr{\eta,x}} (a \phi_1 + b \phi_2 + tv) = O\left(t^2\right).
\end{equation}
In other words, we constructed a $O(t^2)$-accurate quasimode for $P_0(\xi)$, with energy $E_\star + |\nu_\star| t$. A general principle -- see e.g. \cite[Lemma 3.1]{DFW} -- implies that $P_0(\xi)$ has an eigenvalue at $E_\star - |\nu_\star| t + O(t^2)$. Because of \eqref{eq:2g}, this eigenvalue must be $E_\star - \nu_F t + O(t^2)$. This implies $|\nu_\star| = \nu_F$ and completes the proof.\end{proof}

\begin{proof}[Proof of Lemma \ref{lem:1h}] Below we use $\blr{\cdot,\cdot}$ instead of $\lr{\cdot,\cdot}_{L^2_{\xi_\star}}$ to simplify notations. We start by proving the first identity. Since $\II$ is an isometry and $\II \phi_2 = \ove{\phi_1}$, 
\begin{equation}
\blr{\phi_2,W\phi_1} = \blr{\II \phi_2, \II W \II \phi_1} = -\blr{\overline{\phi_1},W\overline{\phi_2}} = - \blr{\phi_2,W\phi_1}.
\end{equation}
This implies $\blr{\phi_2,W\phi_1} = 0$. Using that $W$ is real-valued, $\blr{\phi_1,W\phi_2} = 0$ as well.
We prove now the second identity: for the same reasons as above,
\begin{equation}
\blr{\phi_1,W\phi_1} = \blr{\II \phi_1, \II W \II \phi_1} = -\blr{\overline{\phi_2},W\overline{\phi_2}} = -\blr{\phi_2,W\phi_2}.
\end{equation} 
This completes the proof. \end{proof}

\begin{proof}[Proof of Lemma \ref{lem:1p}] Below we use $\blr{\cdot,\cdot}$ instead of $\lr{\cdot,\cdot}_{L^2_{\xi_\star}}$ to simplify notations. We start by proving the first identity. Since $\II$ is an isometry from $L^2_{\xi_\star^A}$ to $L^2_{\xi_\star^B}$ and $\II \phi_2 = \ove{\phi_1}$, $\II \tW = \tW \II$,
\begin{equation}
\blr{\phi_2,\tW\phi_1} = \blr{\II \phi_2, \II \tW \II \phi_1} = \blr{\overline{\phi_1}, \tW\overline{\phi_2}}.
\end{equation}
Moreover, $\ove{\tW} = -\tW$ because $\Aa$ is real-valued and $D_x = \frac{1}{i} \nabla$. Therefore,
\begin{equation}
\blr{\phi_2,\tW\phi_1} = -\blr{\overline{\phi_1}, \overline{\tW\phi_2}} = -\blr{\tW\phi_2, \phi_1} = -\blr{\phi_2, \tW\phi_1}.
\end{equation}
We used in the last equality the selfadjointness of $\tW$. We deduce $\blr{\phi_2,\tW\phi_1} = 0$. Similarly, $\blr{\phi_1,\tW\phi_2} = 0$.

We prove now the second identity: for the same reasons as above,
\begin{equation}
\blr{\phi_1,\tW\phi_1} = \blr{\II \phi_1, \II \tW \phi_1} = -\blr{\overline{\phi_2},\tW\overline{\phi_2}} = -\blr{\ove{\phi_2},\ove{\tW\phi_2}} = -\blr{\phi_2,\tW\phi_2}.
\end{equation} 
This completes the proof.
\end{proof}

\subsection{Spectrum of the Dirac operator}\label{app:1}

\begin{proof}[Proof of Lemma \ref{lem:1a}] 1. Introduce the matrices:
\begin{equations}
\bmu = \dfrac{1}{\nu_F |k'|}\matrice{0 & \nu_\star k' \\ \ove{\nu_\star k'} & 0}, \ \ \bmd = \dfrac{1}{\nu_F |\ell|}\matrice{0 & \nu_\star \ell \\ \ove{\nu_\star \ell'} & 0}, \ \ \bmt = \matrice{1 & 0 \\ 0 & -1}.
\end{equations}
Note that $\bmj^2 = \Id$. Moreover, the matrices $\bmj$ anticommute: $\bmj \bmk + \bmk \bmj = 0$ when $j \neq k$. Indeed,  $\bmu \bmd + \bmd \bmu$ equals
\begin{equation}
\dfrac{1}{\nu_F |k'| \cdot \nu_F |\ell|}\matrice{\nu_\star k' \cdot \ove{\nu_\star \ell} + \nu_\star \ell \cdot \ove{\nu_\star k'} & 0 \\ 0 & \nu_\star k' \cdot \ove{\nu_\star \ell} + \nu_\star \ell \cdot \ove{\nu_\star k'}} = \dfrac{2\Re(k' \ove{\ell})}{|\ell k'|} = 0,
\end{equation}
because $\Re(k' \ove{\ell}) = \blr{k',\ell} = 0$. 
With these notations, 
\begin{equation}
\Di(\mu) =  \nu_F |k'| \bmu D_t +  \mu \cdot \nu_F |\ell| \bmd + \var_\star \bmt \kappa = \Di_\star + \mu \cdot \nu_F |\ell| \bmd.
\end{equation}

2. The formula for the essential spectrum is derived by looking at those of the asymptotic operators:
\begin{equation}
\Di_\pm(\mu) \de \nu_F |k'| \bmu D_t +  \mu \cdot \nu_F |\ell| \bmd \pm \var_\star \bmt.
\end{equation}
These are Fourier multipliers. Their essential spectrum corresponds to the possible eigenvalues of their symbol as the Fourier parameter runs through $\R$. We find
\begin{equation}
\Sigma_{L^2,\ess}\big(\Di_\pm(\mu) \big) =
\R \setminus \left[-\sqrt{\var_F^2 + \mu^2\cdot \nu_F^2 |\ell|^2 }, \sqrt{\var_F^2 + \mu^2 \cdot \nu_F^2 |\ell|^2} \right].
\end{equation}

3. We start by studying the bifurcation of the zero mode of $\Di_\star = \Di(0)$. This mode satisfies the equation $\Di(0) u = 0$ or equivalently
\begin{equation}
\big(\nu_F |k'|\p_t + \var_\star i \bmu \bmt \kappa \big) u =0.
\end{equation}
The matrix $i\bmu\bmt$ has eigenvalues $\pm 1$. Let $u_0$ be an eigenvector of $i\bmu\bmt$ associated with the eigenvalue $\sgn(\var_\star)$ and set
\begin{equation}
u(t) = u_0 \cdot \exp\left( - \dfrac{\var_F}{\nu_F |k'|}\int_0^t \kappa(s) ds \right).
\end{equation} 
A direct calculation shows that $u$ is an eigenvector of $\Di(0)$.

We claim that $\bmd u_0 = \sgn(\var_\star) u_0$. Since $i\bmu\bmt u_0 = \sgn(\var_\star) u_0$,
\begin{equation}
i\bmd \bmu \bmt u_0 = \sgn(\var_\star) \bmd u_0, \ \ \ \ i\bmd \bmu \bmt = \dfrac{i}{|k'\ell|}  \matrice{ \ell \ove{k'} & 0 \\ 0 & -\ove{\ell} k' }.
\end{equation}
Recall that $\Re(\ell \ove{k'})=0$ because $\ell$ and $k'$ are orthogonal. Therefore $-\ove{\ell} k' = \ell \ove{k'}$,  and we deduce that
\begin{equations}
\sgn(\var_\star) \bmd u_0 = -\dfrac{i}{|k'\ell|}  k' \ove{\ell}  u_0 
\ \  \Leftrightarrow \ \ \bmd u_0 = \sgn \left(\Im\left( k'\ove{\ell} \right) \right) \cdot \sgn(\var_\star) u_0.
\end{equations}
We recall that $k' = -a_2k_1 + a_1 k_2$, $k = b_2 k_1-b_1 k_2$, $a_2b_1 - b_2 a_1  = 1$ -- see \S\ref{sec:6.1}. Hence:
\begin{equation}
\Im\left( k'\ove{\ell} \right) = \Det[k,k'] = (a_2b_1 - b_2 a_1) \cdot \Det[k_1,k_2] = 1 >0.
\end{equation}
We deduce that $\bmd u_0 = \sgn(\var_\star) u_0$ and $\bmd u = \sgn(\var_\star)  u$.
 
We recall that $\Di(\mu) = \Di_\star+\mu \cdot \nu_F|\ell| \bmd$, $\Di_\star u = 0$ and obtain
\begin{equations}
\Di(\mu)u = \mu \cdot \nu_F |\ell|\cdot \sgn(\var_\star) u.
\end{equations}
This shows that $\mu \cdot \nu_F|\ell| \cdot \sgn(\var_\star)$ is an eigenvalue of $\Di(\mu)$.

4. Let $\var_j > 0$ be an eigenvalue of $\Di_\star$. Since $\bmd \Di_\star = -\Di_\star \bmd$, we deduce that $-\var_j$ is also eigenvalue of $\Di_\star$. The respective eigenvectors are denoted $f_+,f_-$ and are related via $\bmd f_+ = f_-$.  We look for an eigenpair $(E,a_+f_++a_-f_-)$ of $\Di(\mu) = \Di_\star+\mu \cdot \nu_F |\ell| \bmd$: it suffices to solve the equation
\begin{equations}
(\Di_\star + \mu \nu_F |\ell| \bmd) \sum_\pm a_\pm f_\pm = E \sum_\pm a_\pm f_\pm
\\
\Leftrightarrow \  \sum_\pm \pm \var_j a_\pm f_\pm + \mu \cdot \nu_F| \ell| a_\pm f_\mp = E \sum_\pm a_\pm f_\pm \ \Leftrightarrow \ (\var_j \bst + \mu \cdot \nu_F | \ell| \bsu) \matrice{a_+ \\a_-} = E a.
\end{equations}
This is equivalent to $(E,a)$ being an eigenpair of $\var_j \bst + \mu \cdot \nu_F | \ell| \bsu$. We conclude that $E = \pm \sqrt{\var_j^2+ \mu^2 \cdot \nu_F^2 | \ell|^2}$ are both eigenvalues of $\Di(\mu)$.

5. So far we only showed that the eigenvalues of $\Di_\star$ induces eigenvalues of $\Di(\mu)$. We must prove the converse statement. Without loss of generalities, $\mu \neq 0$. We first deal with eigenvalues of $\Di(\mu)$ which \textit{apparently} do not bifurcate from the zero mode of $\Di_\star$. That is, we assume first that $(E,f)$ is an eigenpair of $\Di(\mu) = \Di_\star+\mu \cdot  \nu_F| \ell| \bmd$ with $E \neq \sgn(\var_\star) \cdot \nu_F |\ell| \mu$. 

We first claim that $f$ and $g = \bmd f$ are linearly independent. Otherwise, we would have $f=\bmd f$ or $f=-\bmd f$ because $\bmd = \Id$. This would imply  respectively in the first and second case
\begin{equation}\label{eq:3u}
\Di_\star f = (E-\mu \cdot \nu_F |\ell|)f \  \text{ or } \ \Di_\star f = (E+\mu \cdot \nu_F |\ell|)f.
\end{equation}
In particular, $f$ is an eigenvector of $\Di_\star$ with $\bmd f$ and $f$ colinear. Because of Step 3 it must be a zero mode of $\Di_\star$. Because of Step 2 we must have $\bmd f = \sgn(\var_\star) f$. Going back to \eqref{eq:3u}, $E=\sgn(\var_\star)\cdot \nu_F |\ell|$, which contradicts our assumption.

We now look for an eigenpair of $\Di_\star$ in the form $(\var_j,af+bg)$. We get the equation:
\begin{equations}
\Di_\star (af+bg) = \var_j (af+bg) \  \Leftrightarrow \ a (Ef-\mu \cdot \nu_F |\ell| g) + b (\mu \cdot \nu_F |\ell| f - E g) = \var_j (af+bg)
\\
  \Leftrightarrow \ \big( E\bsu + i\mu \cdot \nu_F |\ell|\bsd \big) \matrice{a \\ b} = \var_j \matrice{a \\ b}.
\end{equations}
Hence, $\var$ is an eigenvalue of $E\bsu + i\mu \cdot \nu_F |\ell|\bsd$; equivalently, $\var_j = \pm \sqrt{E^2- \mu^2 \cdot \nu_F^2 |\ell|^2}$. 

6. To conclude we deal with the case of an eigenpair $(E,f)$ of $\Di(\mu)$ with $E =  \mu \cdot \nu_F |\ell| \cdot \sgn(\var_\star)$ -- i.e. when $E$ seemingly bifurcates from the zero mode of $\Di_\star$. 

We claim that $f$ and $g = \bmd f$ are colinear. Otherwise, following the last part of Step 5, we would be able to construct $[a,b]^\top$ eigenvector of $\sgn(\var_\star) \bsu + i \bsd$ such that $af+bg$ is an eigenvector of $\Di_\star$. The matrix $\sgn(\var_\star) \bsu + i \bsd$ has only one eigenvector, which is either $[0,1]^\top$ or $[1,0]^\top$. Therefore either $f$ or $g$ -- but not both -- are eigenvector of $\Di_\star$. This implies that $f$ or $g$ is a zero eigenvector of $\Di_\star$. In particular, $f$ and $\bmd f$ (or $g$ and $\bmd g$) are colinear -- which is a contradiction.

It follows that deduce that $f = \bmd f$ or $f = -\bmd f$. If $\bmd f = \sgn(\var_\star) f$, we are done. In the other case, we deduce existence of an eigenpair $(f,2\mu \cdot \nu_F |\ell| \cdot \sgn(\var_\star))$ of $\Di_\star$. This would require $f$ and $\bmd f$ to be colinear, which is impossible. This completes the proof of the converse statement.

7. The argument presented in Step 5 and 6 shows that the eigenvalues of $\Di(\mu)$ and $\Di_\star$ have the same multiplicity. \cite[Appendix C]{DFW} shows that $\Di_\star$ has only simple eigenvalues. This completes the proof.
\end{proof}

\subsection{A calculation}\label{app:3}

\begin{proof}[Proof of Lemma \ref{lem:1d}] 1. From Theorem \ref{thm:1}, when \eqref{eq:3l} is satisfied,
\begin{equation}
(P_\delta[\zeta]-\lambda)^{-1} \pm (P_{-\delta}[\zeta]-\lambda)^{-1} = S_\delta(\mu,z) \pm S_{-\delta}(\mu,z) + \OO_{L^2[\zeta]}\left(\delta^{-1/3}\right).
\end{equation}
A calculation yields $S_\delta(\mu,z) \pm S_{-\delta}(\mu,z) =$
\begin{equations}
\dfrac{1}{  \delta} \cdot  \matrice{ \phi_1 \\ \phi_2}^\top  e^{-i  \mu \delta \blr{\ell,x} }  \Pi^* \cdot  \UU_\delta \Big( \big( \Di_+(\mu) - z \big)^{-1} \pm \big( \Di_-(\mu) - z \big)^{-1} \Big) \UU_\delta^{-1} \cdot \Pi  e^{i  \mu \delta \blr{\ell,x} } \ove{\matrice{\phi_1 \\ \phi_2}}.
\end{equations}
We now compute the resolvent computation $\big( \Di_+(\mu) - z \big)^{-1} \pm \big( \Di_-(\mu) - z \big)^{-1}$. We have:
\begin{equations}
\big( \Di_+(\mu) - z \big)^{-1} +  \big( \Di_-(\mu) - z \big)^{-1} = 2\matrice{ z &   \nu_\star k'D_t+\mu \nu_\star \ell  \\    \ove{\nu_\star k'}D_t+\mu\ove{ \nu_\star\ell} 
  & z} R_0(\mu,z), 
\\
\big( \Di_+(\mu) - z \big)^{-1} -  \big( \Di_-(\mu) - z \big)^{-1} = 2\var_\star\matrice{1 & 0 \\ 0 & -1} R_0(\mu,z)= 2\var_\star\bst R_0(\mu,z).
\end{equations}
Above we recall that $R_0(\mu,z) = \big( \nu_F^2 |k'|^2 D_t^2 + \mu^2 \cdot \nu_F^2|\ell|^2 + \var_F^2 - z^2 \big)^{-1}$. This implies that $S_\delta(\mu,z) + S_{-\delta}(\mu,z)$ equals
\begin{equations}
\dfrac{2}{  \delta} \cdot  \matrice{ \phi_1 \\ \phi_2}^\top  e^{-i  \mu \delta \blr{\ell,x} }  \Pi^* \cdot  \UU_\delta \matrice{z & \nu_\star k'D_t + \mu \nu_\star\ell \\ \ove{\nu_\star k'} D_t + \mu\ove{\nu_\star \ell} & z} R_0(\mu,z) \UU_\delta^{-1} \cdot \Pi  e^{i  \mu \delta \blr{\ell,x} } \ove{\matrice{\phi_1 \\ \phi_2}}; 
\\
\text{and } \ 
S_\delta(\mu,z) - S_{-\delta}(\mu,z) = \dfrac{2}{  \delta} \cdot  \matrice{ \phi_1 \\ \phi_2}^\top  e^{-i  \mu \delta \blr{\ell,x} }  \Pi^* \cdot  \UU_\delta \var_\star \bst R_0(\mu,z) \UU_\delta^{-1} \cdot \Pi  e^{i  \mu \delta \blr{\ell,x} } \ove{\matrice{\phi_1 \\ \phi_2}}.
\end{equations}
We similarly obtain $(k' \cdot D_x) \big(S_\delta(\mu,z) - S_{-\delta}(\mu,z)\big)$
\begin{equations}
 = \dfrac{2}{ \delta} \cdot  \matrice{ (k' \cdot D_x) \phi_1 \\ (k' \cdot D_x) \phi_2}^\top  e^{-i  \mu \delta \blr{\ell,x} }  \Pi^* \cdot  \UU_\delta \var_\star \bst R_0(\mu,z) \UU_\delta^{-1} \cdot \Pi  e^{i  \mu \delta \blr{\ell,x} } \ove{\matrice{\phi_1 \\ \phi_2}}.
\end{equations}

2. From the definition of $\KKK_\delta[\zeta](z)$, we see that
\begin{equations}
\KKK_\delta[\zeta](z) = \dfrac{1}{2} \left( [-\Delta,\kappa_\delta] + \delta(\kappa_\delta^2-1) W \right)\left((P_\delta[\zeta]-\lambda)^{-1} - (P_{-\delta}[\zeta]-\lambda)^{-1}\right)
\\
= \dfrac{1}{2} \left( 2(D_t \kappa)_\delta \cdot (k' \cdot D_x) + \delta(\kappa_\delta^2-1) W \right)\left(S_\delta(\mu,z) - S_{-\delta}(\mu,z)\right) + \OO_{L^2[\zeta]}\left(\delta^{2/3}\right).
\end{equations}
Thanks to Step 1, the leading order term is $\KK_\delta(\mu,z) \de $
\begin{equations}
\var_\star \left(2(D_t\kappa)_\delta \cdot  \matrice{ k'\cdot D_x \phi_1 \\ k'\cdot D_x \phi_2}^\top + (\kappa_\delta^2-1) W \matrice{ \phi_1 \\ \phi_2}^\top \right) 
 \cdot  e^{-i  \mu \delta \blr{\ell,x} }  \Pi^*  
 \\ 
 \cdot \ \UU_\delta
  \bst R_0(\mu,z) \UU_\delta^{-1} \cdot \Pi  e^{i  \mu \delta \blr{\ell,x} } \ove{\matrice{\phi_1 \\ \phi_2}}.
\end{equations}

3. Because of the definition \eqref{eq:2e} and of Theorem \ref{thm:1},
\begin{equations}
\QQQ_\delta(\zeta,\lambda) = \dfrac{1}{2} \cdot \Big( \big(P_\delta[\zeta]-\lambda\big)^{-1} + \big(P_{-\delta}[\zeta]-\lambda \big)^{-1} \Big) + \dfrac{\kappa_\delta}{2} \cdot \Big( \big( P_\delta[\zeta]-\lambda \big)^{-1} - \big( P_{-\delta}[\zeta]-\lambda \big)^{-1} \Big)
\\
= \dfrac{1}{2} \Big(S_\delta(\mu,z) + S_{-\delta}(\mu,z)\Big) + \dfrac{\kappa_\delta}{2} \cdot \Big(S_\delta(\mu,z) - S_{-\delta}(\mu,z)\Big) + \OO_{L^2[\zeta]}\left(\delta^{-1/3}\right).
\end{equations}
Thanks to the first step the leading order term is $\QQ_\delta(\mu,z) \de $
\begin{equations}
 \dfrac{1}{  \delta} \cdot  \matrice{ \phi_1 \\ \phi_2}^\top  e^{-i  \mu \delta \blr{\ell,x} }  \Pi^* \cdot  \UU_\delta \cdot \matrice{z & \nu_\star k'D_t + \mu \nu_\star\ell \\ \ove{\nu_\star k'} D_t + \mu\ove{\nu_\star \ell} & z} R_0(\mu,z) \cdot \UU_\delta^{-1} \cdot \Pi  e^{i  \mu \delta \blr{\ell,x} } \ove{\matrice{\phi_1 \\ \phi_2}} 
\\
+ 
\kappa_\delta \cdot \dfrac{1}{  \delta} \cdot  \matrice{ \phi_1 \\ \phi_2}^\top  e^{-i  \mu \delta \blr{\ell,x} }  \Pi^* \cdot  \UU_\delta \cdot \var_\star \bst R_0(\mu,z) \cdot  \UU_\delta^{-1} \cdot \Pi  e^{i  \mu \delta \blr{\ell,x} } \ove{\matrice{\phi_1 \\ \phi_2}}.
\end{equations}
A key identity is $\kappa_\delta \Pi^* \UU_\delta = \Pi^* \UU_\delta \kappa$. Therefore, we deduce that
$\QQ_\delta(\mu,z) =$
\begin{equations}
 \dfrac{1}{  \delta} \cdot  \matrice{ \phi_1 \\ \phi_2}^\top  e^{-i  \mu \delta \blr{\ell,x} }  \Pi^* \cdot  \UU_\delta \cdot \matrice{\var_\star \kappa + z & \nu_\star k'D_t + \mu \nu_\star\ell \\ \ove{\nu_\star k'} D_t + \mu\ove{\nu_\star \ell} & - \var_\star \kappa + z}  R_0(\mu,z) \cdot \UU_\delta^{-1} \cdot \Pi  e^{i  \mu \delta \blr{\ell,x} } \ove{\matrice{\phi_1 \\ \phi_2}} .
\end{equations}
The operator
\begin{equations}
\Di(\mu)+z =\matrice{\var_\star \kappa + z & \nu_\star k'D_t + \mu \nu_\star\ell \\ \ove{\nu_\star k'} D_t + \mu\ove{\nu_\star \ell} & - \var_\star \kappa + z}
\end{equations}
emerges and we end up with
\begin{equations}
\QQ_\delta(\mu,z) = \dfrac{1}{  \delta} \cdot  \matrice{ \phi_1 \\ \phi_2}^\top  e^{-i  \mu \delta \blr{\ell,x} }  \Pi^* \cdot  \UU_\delta (\Di(\mu)+z) \cdot R_0(\mu,z) \UU_\delta^{-1} \cdot \Pi  e^{i  \mu \delta \blr{\ell,x} } \ove{\matrice{\phi_1 \\ \phi_2}}.
\end{equations}
This completes the proof. \end{proof}

\end{document}